\date{}
\newtheorem{theorem}{Theorem}[section]
\newtheorem{proposition}[theorem]{Proposition}
\newtheorem{lemma}[theorem]{Lemma}
\theoremstyle{definition}
\theoremstyle{remark}
\newtheorem{remark}[theorem]{Remark}
\numberwithin{equation}{section}
\newcommand{\vare}{\varepsilon}
\newcommand{\id}{\mathcal{I}}
\newcommand{\laplace}{\Delta}
\newcommand{\dive}{\operatorname{div}}
\newcommand{\trace}{\operatorname{Tr}}
\newcommand{\tizeta}{\tilde{\zeta}}
\newcommand{\tiOmega}{\tilde{\Omega}}
\newcommand{\tiomega}{\tilde{\omega}}
\newcommand{\tiv}{\tilde{v}}
\newcommand{\tiw}{\tilde{w}}
\newcommand{\tiG}{\tilde{G}}
\newcommand{\tiX}{\tilde{X}}
\newcommand{\tiq}{\tilde{q}}
\newcommand{\tif}{\tilde{f}}
\newcommand{\tig}{\tilde{g}}
\newcommand{\tih}{\tilde{h}}
\newcommand{\tiF}{\tilde{F}}
\newcommand{\tiK}{\tilde{K}}
\newcommand{\tiH}{\tilde{H}}
\begin{document}

\title{Splash singularity for the free boundary incompressible viscous MHD}

\author{Chengchun Hao}
\email{hcc@amss.ac.cn}

\author{Siqi Yang}
%\cortext[corauth]{Corresponding author.}
\email{yangsiqi@amss.ac.cn}

\address{Academy of Mathematics \& Systems Science, Chinese Academy of Sciences, Beijing 100190, China}
\address{Hua Loo-Keng Key Laboratory of Mathematics,
	Chinese Academy of Sciences, Beijing 100190, China}
\address{School of Mathematical Sciences,
	University of Chinese Academy of Sciences, 
	Beijing 100049, China
}

\begin{abstract}
    In this paper, we prove the existence of smooth initial data for the two-dimensional free boundary incompressible viscous magnetohydrodynamics (MHD) equations, for which the interface remains regular but collapses into a splash singularity (self-intersects in at least one point) in finite time. The existence of the splash singularities is guaranteed by a local existence theorem, in which we need suitable spaces for the modified magnetic field and modification of the velocity and the pressure such that the modified initial velocity is zero, and a stability result which allows us to construct a class of initial velocities and domains for an arbitrary initial magnetic field. It turns out that the presence of the magnetic field does not prevent the viscous fluid to form splash singularities for certain smooth initial data.
\end{abstract}

\keywords{Finite-time singularity; free boundary problem; incompressible viscous magnetohydrodynamics; interface singularity; splash singularity}
\subjclass{35R35; 35A21; 76D27}

\maketitle

\tableofcontents

\section{Introduction}\label{Introduction}
In the present paper, we are concerned with the formation of splash singularities for two-dimensional incompressible viscous magnetohydrodynamics (MHD) without  magnetic diffusivity. It consists of solving a bounded variable domain $ \Omega(t)\subset\mathbb{R}^2 $
filled with incompressible viscous electrically conducting homogeneous plasma, the density of which is a positive constant, together with the vector field of velocity $ u(t, x) = (u^1, u^2)^\top, $ the scalar
pressure $ q(t,x) $ and the magnetic field $ H(t,x)=(H^1,H^2)^\top  $ satisfying the system of  MHD. The boundary $ \partial\Omega(t) $ of $ \Omega(t) $ is the free surface of the plasma.

The problem can be represented in the following form (cf. \cite{padula2010nonlinear,Padula2011}). In the plasma region,  the incompressible viscous MHD equations read
\begin{align*}
	\begin{cases} 
	\partial_t u+u \cdot \nabla u-\dive T(u,q)=\dive T_M(H), & \text { in } \Omega(t), \\
	\partial_t H+u \cdot \nabla H=H\cdot \nabla u,  & \text { in } \Omega(t), \\
	\operatorname{div} u=0,\quad \operatorname{div} H=0, & \text { in } \Omega(t), 
	\end{cases}
\end{align*}
where the time $ t>0; $  $ u\cdot \nabla  $ and $ H\cdot \nabla $ are directional derivatives;  $ T(u,q)=-q\id+\nu S(u) $ is the viscous stress tensor; $\id$ is the $2\times 2$ identity matrix; $ \nu $ is the kinematic viscosity;
$ S(u)=\nabla u +(\nabla u)^\top  $ is the doubled rate-of-strain tensor; $ (\nabla u)_{ij}= \partial_j u^i; $ $ T_M(H)=\mu(H\otimes H-\frac{1}{2}|H|^2\id) $ is the magnetic stress tensor;  $ (H\otimes H)_{ij}=H^i H^j; $ $ \mu $ is the magnetic permeability. We assume $ \nu=1 $ and $ \mu=1 $ for simplicity.

The plasma surface is free to move and the presence of the kinematic viscosity leads to the following condition that must be satisfied on the surface, i.e.,
\begin{align*}
	(T(u,q)+T_M(H)) n=0,\text { on } \partial \Omega(t), 
\end{align*}
where  $ n $ is the outward normal to $ \partial\Omega(t) $. 

For convenience, we denote 
\begin{align*}
	p=q+\frac 12|H|^2.
\end{align*}
The system can be rewritten as
\begin{equation}\label{mhd}
	\begin{cases}
		\partial_t u+u \cdot \nabla u-\laplace u+\nabla p=H\cdot \nabla H, & \text { in } \Omega(t), \\
		\partial_t H+u \cdot \nabla H=H\cdot \nabla u,  & \text { in } \Omega(t), \\ 
		\operatorname{div} u=0,\quad \operatorname{div} H=0, & \text { in } \Omega(t), \\
		(-p\id+\nabla u+(\nabla u)^\top +H\otimes H) n=0, & \text { on } \partial \Omega(t), \\ 
		u(0, \cdot)=u_0, \quad H(0, \cdot)=H_0, & \text { in } \Omega_0,
	\end{cases}
\end{equation}
where $ n=(n^1, n^2)^\top  $ is the unit outer normal to $ \partial  \Omega(t) $;
$\Omega_0, u_0$ and $H_0$ are prescribed initial data which satisfy the compatibility conditions
\begin{equation}\label{equinitialdata}
	\begin{cases}
		\operatorname{div} u_0=0, \quad \operatorname{div} H_0=0, & \text { in }  \Omega_0, \\ 
		n_0^{\perp} ( (\nabla u_0+\nabla u_0^\top  )+ H_0 \otimes H_0 ) n_0=0, & \text { on } \partial \Omega_0,
	\end{cases}
\end{equation}
where $ n_0=(n_0^1, n_0^2)^\top  $ is the unit outer normal to $ \partial \Omega_0 $ and $ n_0^{\perp}=(-n_0^2,n_0^1) $.

We first state the main result of this paper as follows.

\begin{theorem}
	There exists a bounded domain $ \Omega  $  such that for any divergence free $ H_0\in H^k(\Omega)$, there exists  a   solution $ (u,p,H) $ for the viscous MHD equations \eqref{mhd} in $ [0, t^*) $ for some $ t^*>0 $ and the surface $ \partial\Omega(t) $ self-intersects at time $  t^* $ in at least  one point which creates a splash singularity.
\end{theorem}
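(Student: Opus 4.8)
The plan is to follow the now-standard route to splash singularities — transport the (degenerate) free-boundary problem to a genuinely regular fixed reference geometry by a square-root–type conformal map, prove local well-posedness there with bounds \emph{uniform in the degeneracy of the physical domain}, and then build the singular data by a stability argument — now in the presence of a non-diffusive magnetic field. First I would fix once and for all a \emph{splash domain}: a bounded open $\Omega^s\subset\mathbb R^2$ whose boundary is an $H^k$ (or real-analytic) arc-length–regular curve that self-intersects at a single point $q$, the two boundary branches through $q$ being tangent only to second order so that they meet at exactly one point. Choosing a branch cut emanating from (a point near) $q$ into the complement of $\overline{\Omega^s}$ gives a holomorphic injective branch $P$ of a square root on $\overline{\Omega^s}$ which maps it onto a domain $\tiOmega:=P(\Omega^s)$ with a \emph{regular Jordan} boundary (the two branches through $q$ get pulled apart). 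The map $P$, its inverse, and the conformal factor $|P'|$ are fixed, smooth and non-degenerate on the relevant sets, and the branch cut stays away from a neighbourhood of $\overline{\Omega^s}$.

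Next I would write the MHD system \eqref{mhd} in the uniformized variables: for a flow $(u,p,H)$ on the unknown domain $\Omega(t)$, set $\tiOmega(t):=P(\Omega(t))$ (a small, uniformly regular perturbation of $\tiOmega$ on short times) and pull the unknowns back through $P$. Conformality keeps $\dive$-free fields essentially $\dive$-free, turns $\laplace$ into $|P'|^{-2}\laplace$, sends the stress boundary condition to a comparable Neumann-type condition on $\partial\tiOmega(t)$, and preserves the transport character of the $H$-equation; since the magnetic field is advected rather than forced, it is natural to measure it through a \emph{modified} uniformized field $\tiH$ (carrying a $|P'|$-type weight) in Sobolev spaces tailored so that its transport estimate and the coupling term $H\cdot\nabla H$ close without loss. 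One obtains a quasilinear system for $(\tiv,\tiq,\tiH)$ on the moving \emph{regular} domain $\tiOmega(t)$ that is parabolic in $\tiv$, elliptic in $\tiq$, hyperbolic in $\tiH$, with smooth coefficients that are uniformly non-degenerate as long as $\tiOmega(t)$ stays regular — which it does near a physical splash. To run the local theory cleanly and to prepare the stability step, I would split $\tiv=\tiw+(\text{new unknown})$ with $\tiw$ an explicit reference profile matching the prescribed boundary data and the compatibility conditions \eqref{equinitialdata}, and adjust the pressure accordingly, so the new velocity unknown vanishes at $t=0$; then, flattening $\tiOmega(t)$ onto $\tiOmega$ by its Lagrangian (or an ALE) flow map, prove local existence and uniqueness on some $[0,T]$ by the usual scheme (linearize; solve the parabolic/elliptic/transport triple; close weighted energy estimates; fixed point), with $T$ and the norms depending only on the data and on the fixed geometric quantities, \emph{not} on how close $\Omega(t)$ is to self-intersecting.

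With the local theory in hand I would fix a single regular domain $\Omega$: a small perturbation of $\Omega^s$ obtained by pulling the two pre-splash branches apart by an amount $\vare$, so that $P(\Omega)$ is uniformly regular and $P(\Omega)\to\tiOmega$ as $\vare\to0$. Given a divergence-free $H_0\in H^k(\Omega)$, push it through $P$ to the weighted uniformized field and choose an admissible initial velocity $u_0$ (equivalently $\tiv_0$) whose boundary trace, together with the initial pressure gradient produced by the constraint, drives the two branches toward each other; the local existence theorem supplies a solution of the uniformized system on a time interval $[0,T]$ independent of $\vare$ and of $H_0$, and the continuous dependence on the data (the stability result) shows this choice can be made for \emph{every} admissible $H_0$ and varies continuously with it. Pulling back by $P^{-1}$ gives a genuine solution $(u,p,H)$ of \eqref{mhd} on $[0,T]$ on $\Omega(t)=P^{-1}(\tiOmega(t))$. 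Since the uniformized motion is $O(1)$ and smooth on $[0,T]$ while the physical branches start at distance $O(\vare)$, they must collide at some $t^*=O(\vare)\in(0,T]$; because the collision time is $O(\vare)$ the leading-order approach is essentially ballistic, so the pressure (which appears large in the physical picture but is harmless in the uniformized one) and the viscous and magnetic forces cannot reverse it, and the second-order tangency of the branches forces the collision to occur at a single point with the interface otherwise regular — i.e. a splash — while $\Omega(t)$ is a regular domain for $t\in[0,t^*)$. This proves the theorem for this $\Omega$.

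The main obstacle is the local well-posedness of the free-boundary \emph{viscous MHD} system in the uniformized variables, uniformly as the physical domain degenerates. The non-diffusive $\tiH$ is merely transported, so its equation gains no regularity and must be controlled in precisely the weighted space dictated by the conformal factor, while one still closes the estimates for the parabolic velocity, the elliptic pressure, the forcing $H\cdot\nabla H$ and the boundary conditions, and tracks the low-regularity bookkeeping forced by the free surface — and all of this must close \emph{simultaneously}, with constants independent of the near-self-intersection of $\Omega(t)$. Once that is done, the splash itself follows from the stability statement and the elementary collision argument of the last step.
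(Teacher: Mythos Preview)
Your overall architecture matches the paper's: square-root conformal map to a regular reference domain, local existence there for the transformed MHD system (with the magnetic field placed in a carefully chosen space so that the transport estimate closes against the parabolic velocity and elliptic pressure), a stability result, and a stream-function construction of $u_0$ satisfying the compatibility conditions with $u_0\cdot n>0$. Two points, however, need correction.

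First, the stability result is not about varying $H_0$; it compares the solution on $\tiOmega_0=P(\Omega^s)$ (the image of the actual splash domain) with the solution on the \emph{translated} domain $\tiOmega_\vare(0)=\tiOmega_0+\vare b$ (whose physical preimage $P^{-1}(\tiOmega_\vare(0))$ is regular), with the \emph{same} shifted initial data, and yields $\|\tiX-\tiX_\vare\|_{L^\infty H^{s+1}}\le C\vare$. Second, your ``ballistic $O(\vare)$'' collision argument is the right intuition but not a proof: knowing that the uniformized motion is $O(1)$ gives no lower bound on the approach speed of the two physical branches and no control on the sign of the normal velocity past $t=0$. The paper makes this step rigorous by first solving on the splash domain itself (in tilde coordinates this is a perfectly regular problem), using $\tiv_0\cdot\tilde n_0>0$ at the two preimages of the splash point to conclude that at some fixed $\bar t>0$ the curve $P^{-1}(\partial\tiOmega(\bar t))$ genuinely \emph{crosses} itself, then invoking the stability bound to deduce that $P^{-1}(\partial\tiOmega_\vare(\bar t))$ also self-intersects for all small $\vare$, and finally using continuity in $t$ to locate the splash time $t^*_\vare\in(0,\bar t)$. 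Without this comparison to the splash-domain solution your argument has a gap. Also, the local existence time $T$ does depend on $\tiG_0$ (hence on $H_0$); it is uniform only in $\vare$.
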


%\subsection{Scheme  of the proof}

The analogous result for the two-dimensional inviscid water wave equation was studied by  Castro \emph{et al.} in \cite{castro2012splash}. By proving a   local existence theorem  and a structural  stability result in the  transformed
domain, they exhibited smooth initial data   for which the smoothness of the interface breaks down in finite time, i.e., collapses in a splash singularity. At the same time, their stability theorem shows that a sufficiently small
perturbation of initial data which leads to a splash singularity  still generates a
splash singularity. In their proofs, a conformal map was used to transform the equation  in $ \Omega(t) $  to a new domain $ \tiOmega(t) $ and the transformed water wave equations behave much like the original equations. By adapting the  energy estimates in \cite{Ambrose_2005} and  \cite{cordoba2010interface}, the existence of solutions was proved in the transformed domain for times in $ [t^*-\vare, t^*+\vare], $ with a given splash domain $ \Omega(t^*) $. Then, they chose a suitable initial velocity to complete the proofs.

Except for the splash singularity, the splat singularity defined in \cite{castro2013finite} is another singularity that can arise in  the water waves, i.e., the solutions collapse along an arc in finite time but remain smooth otherwise. It turned out that  there exist smooth initial data  for which the smoothness
of the interface breaks down in finite time into a splash singularity or a
splat singularity  in \cite{castro2013finite}. 

In the presence of viscosity, the strategy for the inviscid case could not work since
the equations failed to be solved backward in time, i.e., the existence of solutions in the transformed domain for times $ \left[t^*-\vare,t^*\right)  $ is  rather difficult. For this problem, Castro \emph{et al.} utilized the
transformation to the tilde domain in a new way and proved that there exist  solutions to the viscous water wave equation that remain smooth for short time  but form a splash singularity at a finite time in \cite{castro2019splash}. 
For the three-dimensional case, Coutand and Shkoller have proved that the three-dimensional  free-surface incompressible Euler equations with regular initial geometries and velocity fields that satisfy the Taylor sign condition have solutions that form a finite-time splash
or splat singularity in \cite{coutand2014finite}. They do not assume that the fluid
is irrotational, and as such, their method can be used for   other fluid interface problems, including compressible flows, plasma, as well as the inclusion of surface tension
effects. 
  For the two-dimensional or three-dimensional  viscous water
wave, they also proved that  given a
sufficiently smooth initial boundary  which is close to self-intersection  and a divergence-free velocity field
designed to push the boundary towards self-intersection, the  free-surface   will indeed self-intersect in finite time in \cite{Coutand2019}.

In \cite{di2020splash},  the two-dimensional free boundary viscoelastic fluid model of the Oldroyd-B type  at a high  Weissenberg number  was studied by applying the classical conformal mapping
method and  the existence of splash singularities was proved. It turned out that the action of the viscoelastic deformation does not prevent the existence of splash singularities.

%For a two-fluid interface, Fefferman \textit{et al.} showed that the formation of splash  and splat singularity in the case of the water wave system is completely prevented in the more stable case of a two-fluid interface in \cite{FEFFERMAN2016}.   

%Beale proved a local existence result for the free boundary problem of incompressible viscous Navier-Stokes equations in  \cite{beale1981initial} and the
%global regularity for surface tension case in \cite{Beale_1984} with mild regular initial
%surface and velocity field. Wu, in \cite{Wu_1997} and  \cite{Wu_1999}, proved well-posedness of the inviscid incompressible irrotational fluid without surface tension in $ 2 $-D and $ 3 $-D, respectively. For general rotational cases, only local well-posedness is known, see  \cite{Lindblad_2005,zhang} for examples. 

%If we consider a conducting fluid, the MHD equations are derived. The motion of the fluid itself produces the Lorentz force, while the magnetic field satisfies Faraday's law, which is part of Maxwell's equations. 

In many important physical situations, magnetic fields are essential  (cf. \cite{ST83, Z88, CG68}). Examples include  solar flares  in astrophysics  \cite{CG68}. 
For the incompressible inviscid MHD in bounded domain,  Hao and Luo established  a priori estimates for the free boundary problem in \cite{Hao_2014} under the Taylor-type sign condition, and the ill-posedness for the two-dimensional case in \cite{Hao_2019} when the Taylor-type sign condition is violated.  Luo and Zhang obtained a priori estimates for the low regularity solution in the case when the domain has small volume in \cite{LZ20}. A local existence result was established in \cite{GW19}, for which the detailed proof is given in an initial flat domain of the form $\mathbb{T}^2 \times (0,1)$, for a two-dimensional period box $\mathbb{T}^2$ in $x_1$ and $x_2$. With the same set-up of the initial domain, the local well-posedness is obtained in \cite{GLZ22} by Gu, Luo and Zhang with surface tension. For the case where the magnetic field is zero on the free boundary and in vacuum, in the three-dimensional space with infinite and finite depth settings, Lee proved the local existence and uniqueness of the free boundary problem of
incompressible viscous-diffusive MHD flow in \cite{Lee_2017}, he also proved in \cite{Lee_2018} a local unique solution for the free boundary MHD without kinetic viscosity and magnetic diffusivity via zero kinetic viscosity-magnetic diffusivity limit.

%For viscous MHD, Padula and Solonnikov got some results in \cite{Solonnikov1977,Padula_2010,Padula2011} where they used Stokes regularity  coming from the smoothing effect of the velocity and the magnetic field. For the plasma-vacuum interface problem, the linearized compressible plasma-vacuum problem was studied by Trakhinin 
%\cite{Trakhinin_2010}. The existence and uniqueness of the solution to the nonlinear plasma-vacuum interface problem in
%ideal compressible MHD  was established by Secchi and Trakhinin \cite{Secchi_2013}.  The global well-posedness of the free interface problem of the incompressible inviscid resistive MHD  was studied in \cite{Wang2020}. The nonlinear stability of the current-vortex sheet in the ideal incompressible MHD was proved in \cite{Sun_2017}.  We refer the readers to \cite{Trakhinin_2005,Chen_2007,Alazard_2011,Wang2020} for other results of the current-vortex sheet problem. For recent results for the free boundary problems of compressible inviscid MHD, we  refer to some works by
%Trakhinin and Wang in \cite{Trakhinin_2020,Trakhinin_2021}.

To our knowledge, we are not aware of any previous mathematical research on the splash singularity for the MHD equations with free surface. To prove the existence of splash singularity for the  two-dimensional incompressible viscous MHD model, we reduce the original system to a system in conformal Lagrangian coordinates to form a  fixed boundary that is not  self-intersecting. However, to prove the local existence theorem, we must estimate the magnetic field additionally.
We have found that the choice of spaces to which the magnetic field belongs could not be treated in the same way as the viscoelastic fluid model in \cite{di2020splash}. Indeed, the estimates of the magnetic field involve some product of the iterative solutions, but the insufficient regularity of the functional space for spatial variables makes it impossible to use the bilinear inequalities as in the viscoelastic case. For this reason, we need to choose the  indices of the functional spaces carefully. Nevertheless, we could not simply raise the regularities for all spatial variables since the spaces to which the velocity and the pressure belong are space-time coupled Beale spaces. If we raise the regularities for all variables, the initial second-order derivatives of the solutions must be zero and  consequently, some key lemmas fail. In fact, we can choose suitable spaces for the modified magnetic field and  flux and  do not change the spaces for the velocity and the pressure to keep some lemmas valid in the present spaces. To obtain the desired estimates, we will also prove the estimates of the flux (cf. Lemma \ref{x-omega}) and the magnetic field (cf. Lemma \ref{G-G0}) in the present spaces to bound to the iterative sequence and show that the sequence is Cauchy. Moreover, to apply  Lemma \ref{lem4.1}, we still need a modification of the velocity and the pressure such that the modified initial velocity is zero. Due to the presence of a magnetic field,  we extend the analysis for the choice of the approximated solution  made in \cite{castro2019splash}. Finally, we show that the presence of the  magnetic field does not prevent the fluid to form splash singularities either.

The key element for the proof is the use of the classical method of conformal
mapping, see \cite{cordoba2010interface}, and the definition of the splash curve and  the splash singularity can be found in \cite{castro2013finite} and \cite{castro2019splash}. This method has been used recently for this kind of problem in \cite{castro2019splash}. We introduce the map
\begin{align*}
	P(z) =\sqrt{z-\alpha},\quad \text{ for } z \in \mathbb{C}\setminus \Gamma,
\end{align*}
which is defined as an analytic branch of $ \sqrt{z-\alpha}, $ where $  \Gamma $ is a branch cut, passed through the splash point (see Fig. \ref{fig:0}). 
\begin{figure}[h]
 	\centering
 	\includegraphics[width=0.8\linewidth]{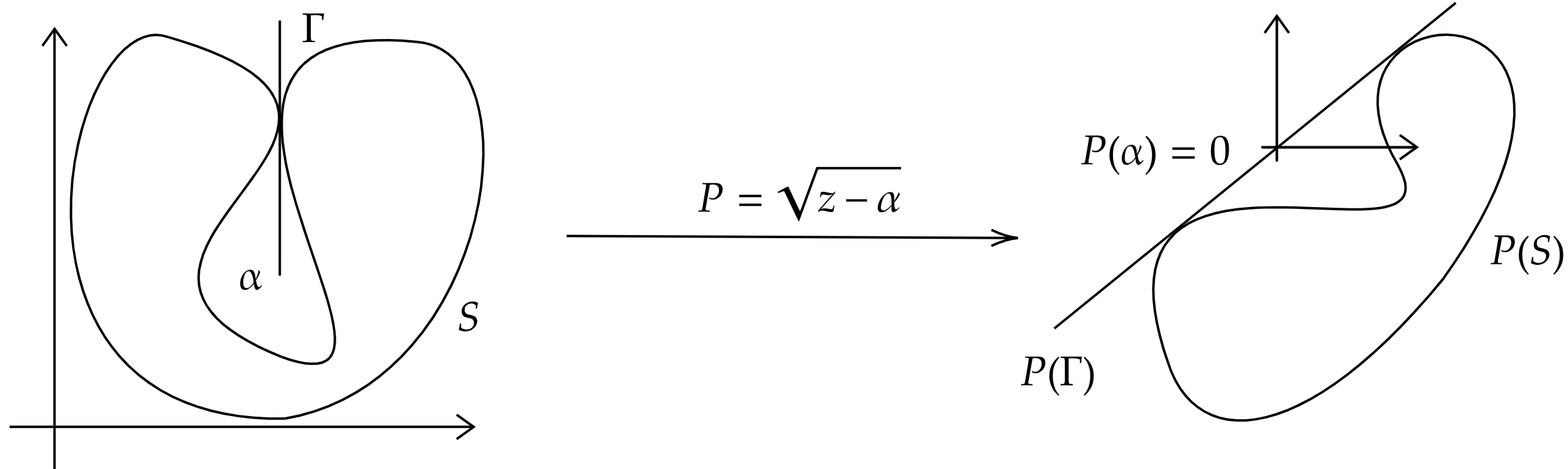}
 	\caption{Conformal mapping $ P. $}
 	\label{fig:0}
 \end{figure}
We take $ z \in \mathbb{C}\setminus \Gamma  $ to make $ P(z) $ an analytic function and to have $ P^{-1}(w)=w^2+\alpha, $ an entire function. The idea to prove our theorem is to reduce system \eqref{mhd}, in Eulerian coordinates, to a system in Lagrangian coordinates to form a fixed boundary, as in \cite{beale1981initial}. The second key observation
regards the behavior of the magnetic field at the Lagrangian boundary.
As a result, it shows that the way the viscokinematic deformation acts on the boundary does
not prevent the natural tendency of the fluid to form splash singularities. The idea hidden
in the proof is inspired by the geometric construction in \cite{castro2019splash}, as explained below. 
\begin{remark}
	Another type of splash scenario is illustrated in Fig. \ref{fig:00}. The proof can be easily adapted by replacing $ P(z)=\sqrt{z-\alpha} $ by a branch of $ \sqrt{\frac{z-\alpha}{z-\beta}} $  with suitable $ \alpha, \beta $ and the branch cut.
\end{remark}
\begin{figure}[h]
	\centering
	\includegraphics[width=0.8\linewidth]{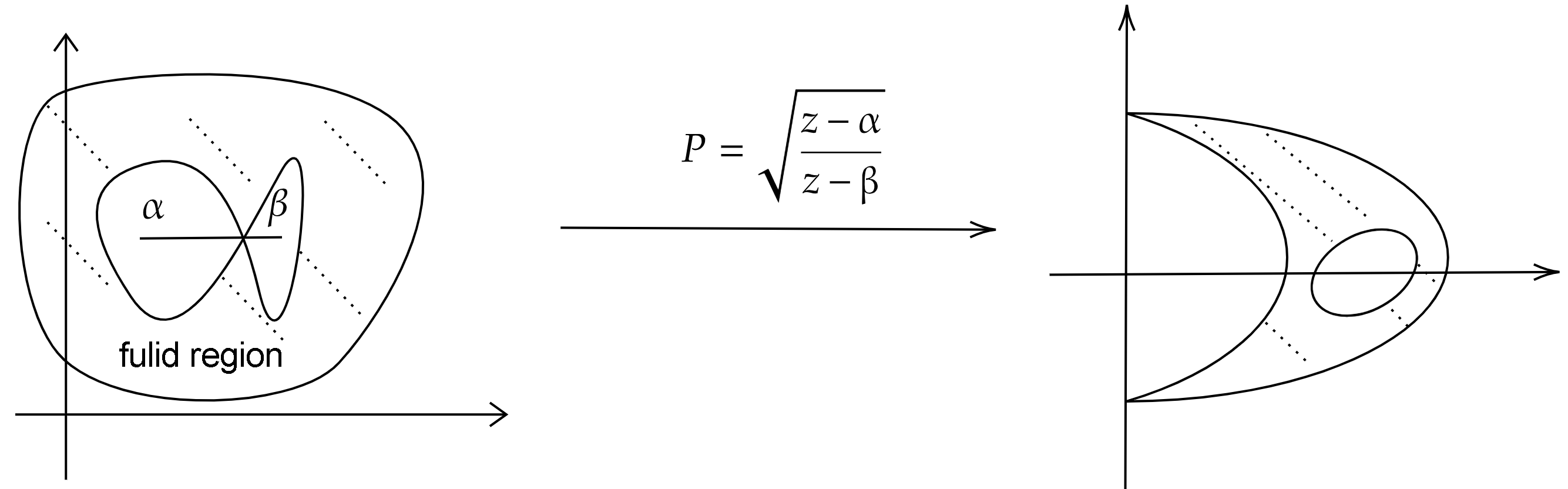}
	\caption{Another type of splash scenario.}
	\label{fig:00}
\end{figure}

From now on, we assume that $ \alpha=0 $, then $ P(z)=\sqrt{z}  $   for   $ z \in \mathbb{C}\setminus \Gamma.$

\begin{itemize}
	\item Let the initial domain $ \Omega_0 $ be a non-regular domain as in Fig. \ref{fig:1}. We use the conformal map $ P $ to get $ \tiOmega_0=P(\Omega_0), $ a non-splash type domain.
		
\begin{figure}[h]
	\centering
	\includegraphics[width=0.8\linewidth]{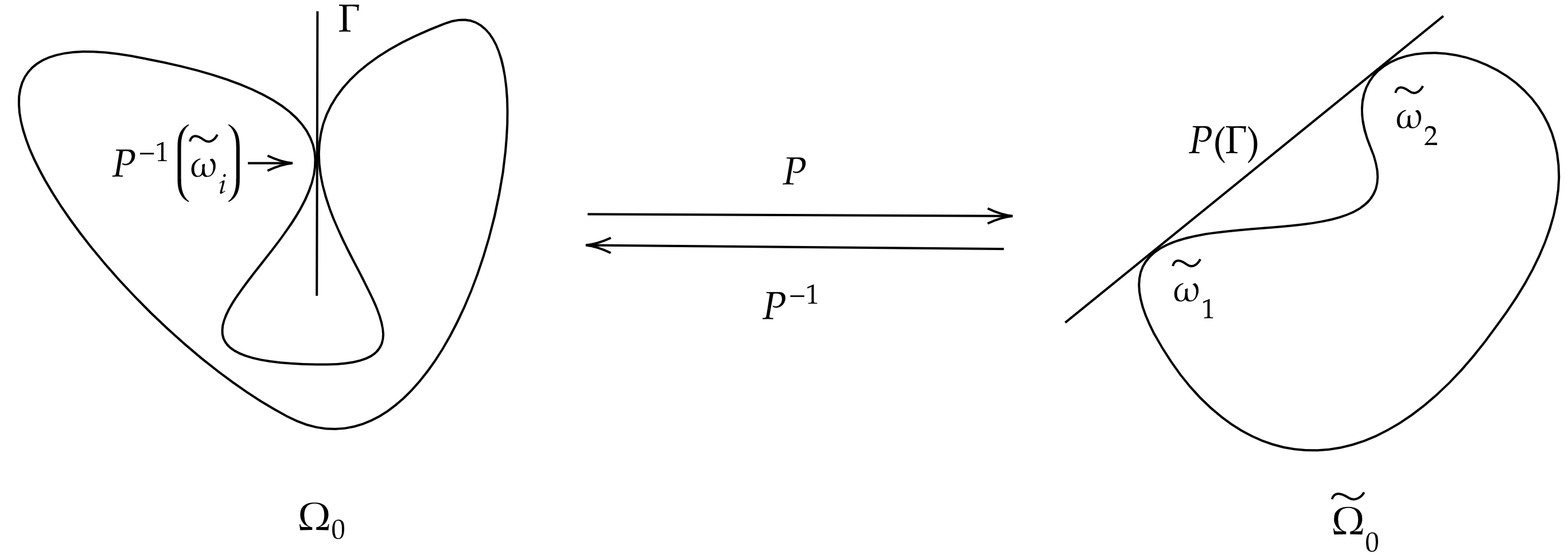}
	\caption{Initial domains $ \Omega_0 $ and $ \tiOmega_0. $}
	\label{fig:1}
\end{figure}

	\item For smooth initial data $ (\tiOmega_0, \tiv_0, \tiG_0)  $, we  prove the local existence  in Section \ref{Local existence of smooth solutions} and obtain a solution $  (\tiOmega(t), \tiv(t,\cdot), \tiq(t,\cdot), \tiG(t,\cdot) ) $ for $ t\in[0,T], $ with $ T>0. $ 	

	\item By a suitable choice of the initial velocity (Section \ref{Existence of splash singularity}) as in Fig. \ref{fig:2} such that $  \tiv_{0}(\tiomega_1)\cdot \tilde{n}_0(\tiomega_1)>0  $ and $  \tiv_{0}(\tiomega_2)\cdot \tilde{n}_0(\tiomega_2)>0, $
	 we have for $ \bar{t}\in (0, T) $ so small  that $ P^{-1}(\partial\tiOmega(\bar{t})) $ is self-intersecting  as in Fig. \ref{fig:3}. This solution lives only in the complex plane so it cannot be reversed into a solution in the non-tilde domain by $ P^{-1}. $ Hence, it is   insufficient to prove the existence of splash singularity.

	 \begin{figure}[h]
	 	\centering
	 	\includegraphics[width=0.8\linewidth]{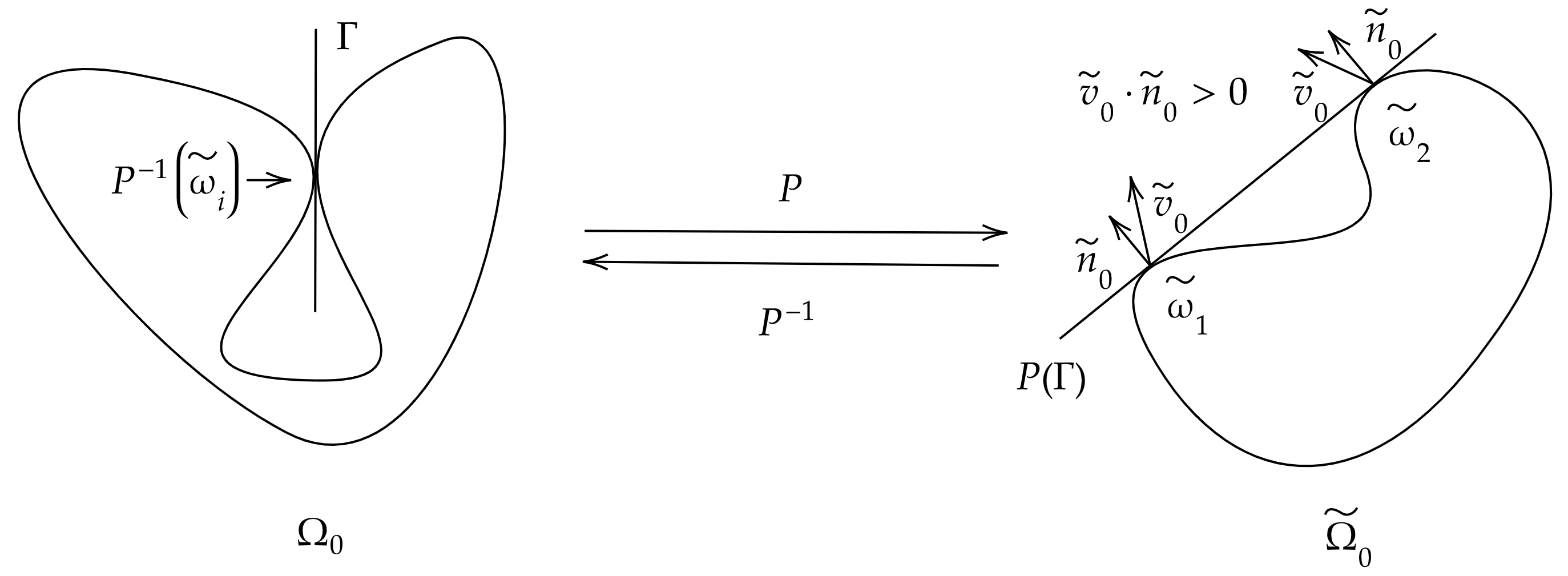}
	 	\caption{A suitable choice of the initial velocity.}
	 	\label{fig:2}
	 \end{figure}
\begin{figure}[h]
	\centering
	\includegraphics[width=0.8\linewidth]{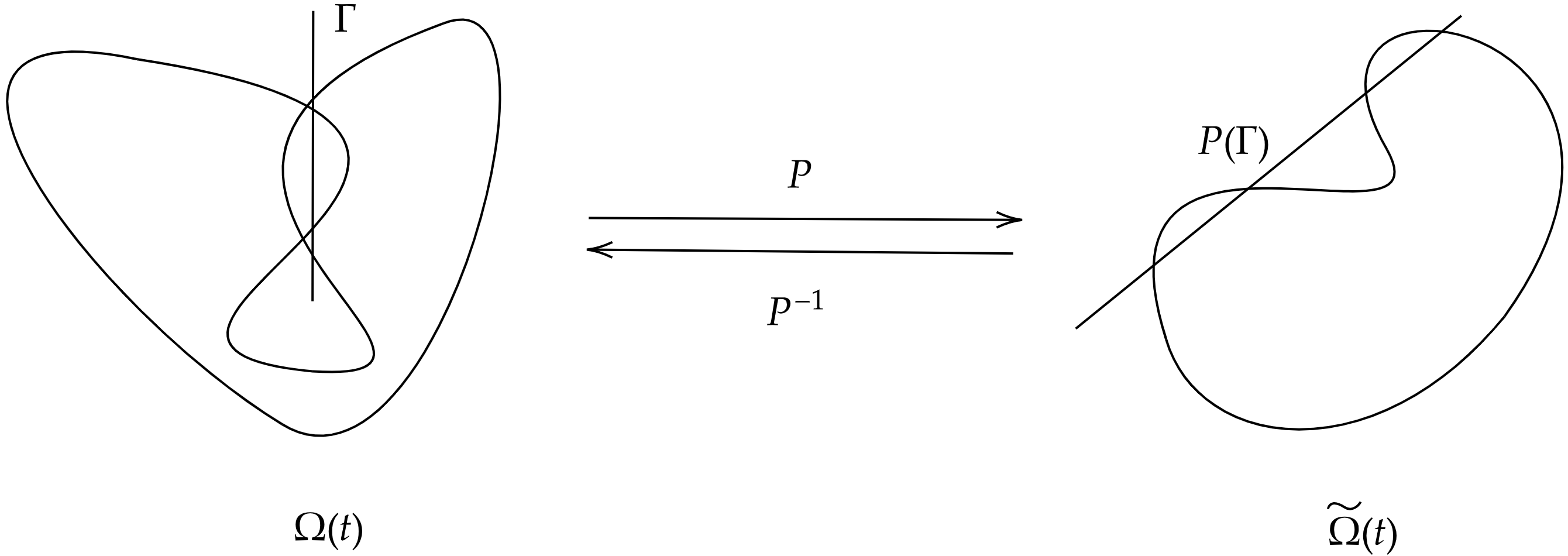}
	\caption{Solutions in the  tilde domain.}
	\label{fig:3}
\end{figure}

	\item To solve the problem in the non-tilde domain, we take a one-parameter family of initial data
	$ \{ (\tiOmega_\vare(0), \tiv_{\vare, 0}^\prime, \tiG_{\vare, 0}^\prime) :\vare>0 \}  $ with $ \tiOmega_{\vare}(0)=\tiOmega_0+\vare b $ and $ |b| = 1, $ such that $ P^{-1}(\partial\tiOmega_{\vare}(0)) $
	is regular as in Fig. \ref{fig:4}, then there exists a local-in-time smooth solution $  (\tiOmega_\vare(t), \tiv_{\vare}^\prime(t,\cdot), \tiq_{\vare}^\prime(t,\cdot), \tiG_{\vare}^\prime(t,\cdot))$,  and by the inverse mapping there exists a local-in-time smooth solution $ (\Omega_\vare(t), u_{\vare}^\prime(t,\cdot), p_{\vare}^\prime(t,\cdot),$  $ H_{\vare}^\prime(t,\cdot))$ in the non-tilde domain.
	\begin{figure}[h]
	\centering
	\includegraphics[width=0.8\linewidth]{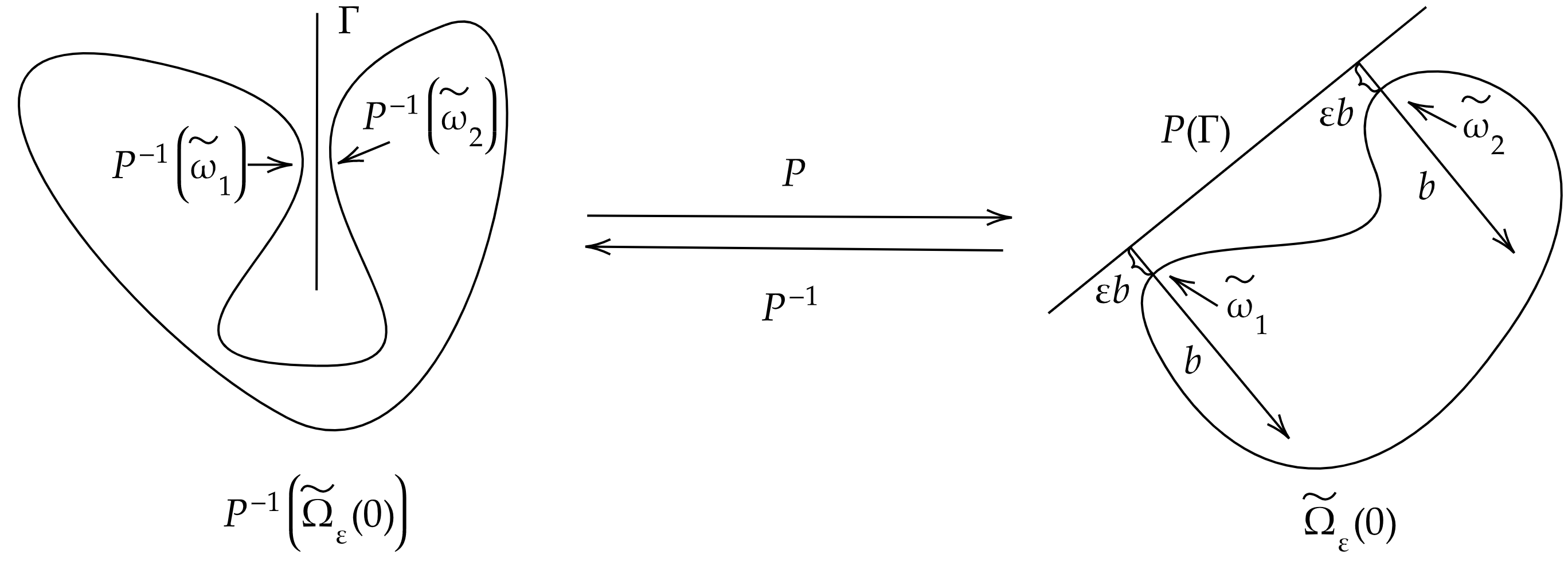}
	\caption{Regular domain $ P^{-1}(\partial\tiOmega_{\vare}(0)). $ }
	\label{fig:4}
\end{figure}	
	\item Then, for sufficiently small $ \vare>0, $ the stability result (Section \ref{Stability estimates}) compares the difference between $ (\tiOmega_\vare(t),$   $ \tiv_{\vare}^\prime(t,\cdot),$ $\tiq_{\vare}^\prime(t,\cdot),$ $\tiG_{\vare}^\prime(t,\cdot)) $ and $ (\tiOmega(t),$  $\tiv(t,\cdot),$  $\tiq(t,\cdot)$, $\tiG(t,\cdot))$ by shifting the solution $ (\tiOmega_\vare(t), \tiv_{\vare}^\prime(t,\cdot),$  $ \tiq_{\vare}^\prime(t,\cdot), \tiG_{\vare}^\prime(t,\cdot))  $ to $ (\tiOmega(t),$ $\tiv_{\vare}(t,\cdot),$ $\tiq_{\vare}(t,\cdot),$ $\tiG_{\vare}(t,\cdot))  $ which is also defined in $ \tiOmega(t) $ and we get $ \operatorname{dist}(\partial\tiOmega_{\vare}(\bar{t}), \partial\tiOmega(\bar{t})) $  $\le C \vare$. Thus, we have $ \operatorname{dist}(P^{-1}(\partial\tiOmega_{\vare}(\bar{t})),$  $ P^{-1} (\partial\tiOmega(\bar{t}))) \le C\vare $ and so $ P^{-1}(\partial\tiOmega_{\vare}(\bar{t})) $ self-intersects as in Fig. \ref{fig:5}.
	\begin{figure}[h]
	\centering
	\includegraphics[width=0.8\linewidth]{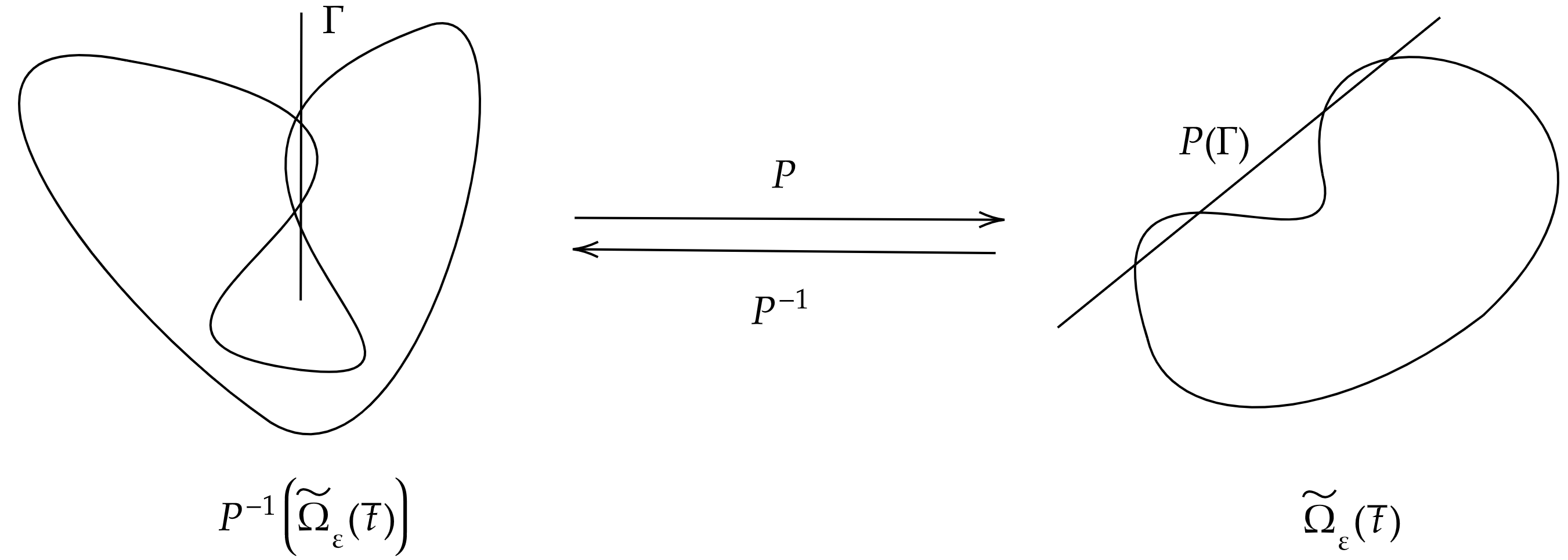}
	\caption{ Self-intersecting domain $ P^{-1}(\tiOmega_{\vare}(\bar{t})). $ }
	\label{fig:5}
\end{figure}
	\item Since $ P^{-1}(\partial\tiOmega_{\vare}(0)) $ is regular and $ P^{-1}(\partial\tiOmega_{\vare}(\bar{t})) $ is self-intersecting, there exists a time $ t^*_\vare $ such that $ P^{-1}(\partial\tiOmega_{\vare}(t^*_\vare)) $ has a splash singularity as in Fig. \ref{fig:6}.
	\begin{figure}[h]
		\centering
		\includegraphics[width=0.8\linewidth]{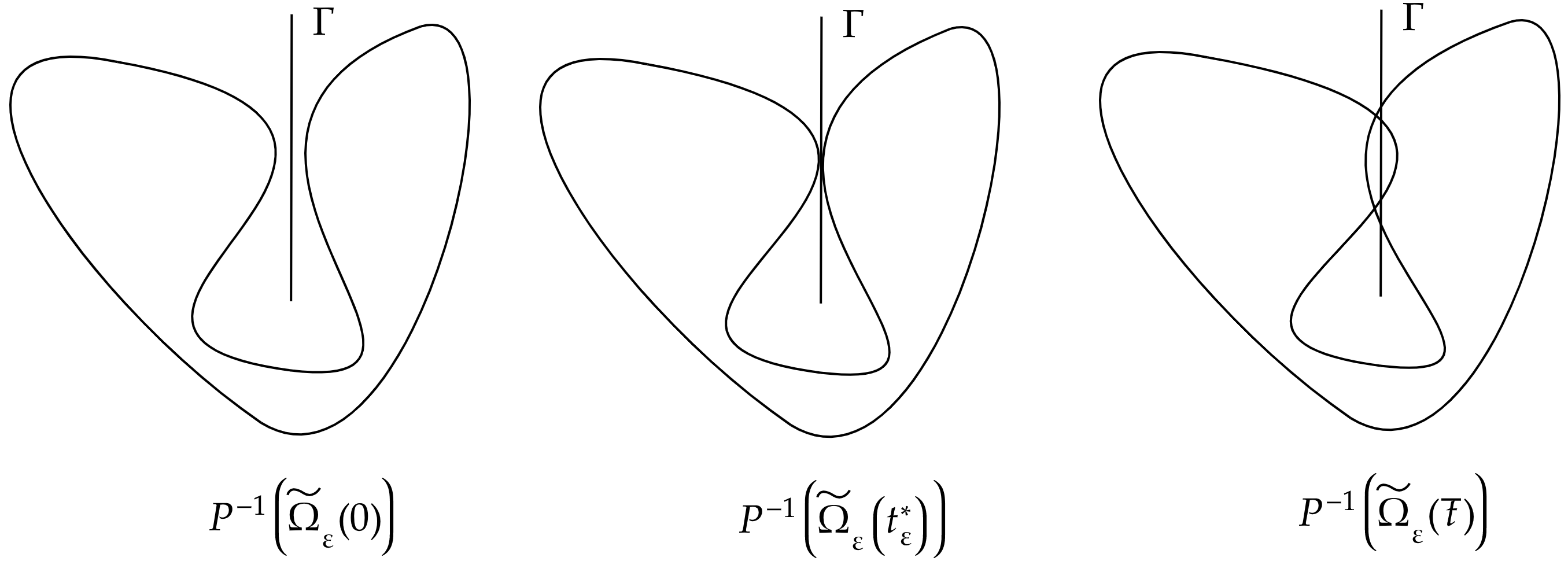}
		\caption{The existence of a plash singularity.}
		\label{fig:6}
	\end{figure}
\end{itemize}

%\subsection{Outline of the paper}

The paper is organized as follows. In Section \ref{Function spaces and preliminary lemmas}, we introduce the function spaces for the estimates.  In Section \ref{Coordinate transformations}, we define all the variables and deal with the transformations from $ \Omega(t) $ into a fixed domain $ \tiOmega_0 $ by using the conformal map and the Lagrangian coordinates. In Section \ref{Local existence of smooth solutions}, we solve the viscous MHD model by constructing an iterative Cauchy sequence for $ T>0 $  small enough. To  show that the iterative sequence is Cauchy, we prove some estimates of the flux and the magnetic field with the help of lemmas given in the appendix. This local existence result is one of the main ingredients for proving the existence of a splash singularity. In Section \ref{Stability estimates}, we show the stability estimate which is another key ingredient. In Section \ref{Existence of splash singularity}, we construct a suitable initial velocity according to the initial magnetic field and other initial data such that the splash occurs. Therefore, we  obtain a finite-time splash singularity for the MHD model even in the presence of the kinematic viscosity. In the appendix, we list some estimates and  key lemmas for readers to consult.

\section{Beale spaces}\label{Function spaces and preliminary lemmas}
We introduce the Beale spaces which we will use later.
\par As in the classical paper \cite{beale1981initial}, the space $ H^s_{(0)}([0,T]) $ is defined as the interpolation between $ L^2([0,T]) $ and $ H^1_{(0)}([0,T])  $  for $ 0<s<1 .$ The operator used to interpolate is $ S=1-\partial^2_t $ with domain $ D(S)=\{ u\in H^2[0,T]:u(0)=\partial_tu(T)=0 \}. $ The details for this construction can be found in \cite{beale1981initial}. 

The norm of this space can be explicitly formulated. Note that the operator $ S $ has eigenvectors $\big\{\sin (\frac{(2 n+1) \pi}{2 T} t) \sqrt{\frac{2}{T}}\big\}_{n=0}^{\infty}$ with eigenvalues $ \big\{1+\frac{(2 n+1)^2 \pi^2}{4 T^2} \big\}_{n=0}^{\infty} $ which is also an orthogonal basis of $L^2([0, T]).$  Then, the space  $H_{(0)}^s([0, T])$ consists of functions $u \in L^2([0, T])$ such that
\begin{align*}
		\|v\|_{H_{(0)}^s}^2=\sum_{n=0}^{\infty} (v_n^s )^2 (\frac{(2 n+1) \pi}{2 T} )^{2 s}<\infty, 
\end{align*}
where
\begin{align*}
	v_n^s=\int_0^T  v(t) \sin  (\frac{(2 n+1) \pi}{2 T} t ) \sqrt{\frac{2}{T}} d t.
\end{align*}
For $s>\frac{1}{2},$  $v \in H_{(0)}^s([0, T])$ implies $v(0)=0$.
 
For  $ m=1,2,3, \ldots$ and $0 \leq s<1, $  the space $H_{(0)}^{m+s}([0, T])  $ with larger exponent is regarded as the subspace of $ \{ u \in H^m([0, T]): (\partial_t^k u )(0)=0, k=0, \ldots, m-1 \}$ with $\partial_t^m u \in H_{(0)}^s([0, T])$. We equip this space with the norm for fractional derivatives in time which we will use in this paper
\begin{align*}
	\|v\|_{H_{(0)}^{m+s}([0, T])}^2=\|v\|_{L^2([0, T])}^2+ \|\partial_t v \|_{L^2([0, T])}^2+\ldots+ \|\partial_t^m v \|_{H_{(0)}^s([0, T])}^2 .
\end{align*}

We also introduce the space $H^s([0, T])$ for $0<s<1$, which is defined as the interpolation of $H^1([0, T])$ and $L^2([0, T])$ with $S=1-\partial_t^2$ and domain $D(S)= \{ u \in H^2([0, T]): (\partial_t u )(0)= (\partial_t u )(T)=0 \}$. In this case, $ S $ has eigenvectors 
$$ \bigg\{\frac{1}{\sqrt{T}}, \big\{\cos  (\frac{n \pi}{T} t ) \sqrt{\frac{2}{T}} \big\}_{n=1}^{\infty} \bigg\}$$
 with eigenvalues $ \big\{1+\frac{n^2 \pi^2}{T^2} \big\}_{n=0}^{\infty}$ which  is a basis of $L^2([0, T]).$  Thus, we define
\begin{align*}
	\|u\|_{H^s([0, T])}^2=\sum_{n=0}^{\infty} \left(1+\frac{n^2 \pi^2}{T^2} \right)^s (u_n^c )^2,
\end{align*}
where
\begin{align*}
	u_0^c=\int_0^T  \frac{u(t)}{\sqrt{T}} d t, \quad u_n^c=\int_0^T  u(t) \cos  \left(\frac{n \pi}{T} t \right) \sqrt{\frac{2}{T}} d t, \quad n \geq 1.
\end{align*}
 For larger exponents, we regard $H^{m+s}([0, T]), m=1,2,3, \ldots,$ and $ 0<s<1$ as the subspace of $H^m([0, T])$ with $\partial_t^m u \in$ $H^s([0, T])$. It happens that $H_{(0)}^{m+s}([0, T])= \{u \in H^{m+s}([0, T]): (\partial_t^k u)(0)= $ $0, k=0,1, \ldots, m\}$, for $s>\frac{1}{2}$ and $H_{(0)}^{m+s}([0, T])= \{ u \in H^{m+s}([0, T]): (\partial_t^k u )(0)= $ $0, k=0,1, \ldots, m-1\}$, for $s<\frac{1}{2}$.

Once we have defined the spaces $ H^s $, we introduce the spaces we will use to solve
the free boundary MHD equations
\begin{align*}
	\mathcal{K}_{(0)}^s([0, T] ; \Omega)&:=L^2([0, T] ; H^s(\Omega)) \cap H_{(0)}^{\frac{s}{2}}([0, T] ; L^2(\Omega))\nonumber \\
	\mathcal{K}_{p r(0)}^s([0, T] ; \Omega)&:=\big\{q \in L^{\infty}([0, T] ; \dot{H}^1(\Omega)): \nabla q \in \mathcal{K}_{(0)}^{s-1}([0, T] ; \Omega),\\
	&\qquad\qquad\qquad\qquad\qquad\qquad  q \in \mathcal{K}_{(0)}^{s-\frac{1}{2}}([0, T] ; \partial \Omega)\big\},\nonumber\\ 
	\overline{\mathcal{K}}_{(0)}^s([0, T] ; \Omega)&:=L^2([0, T] ; H^s(\Omega)) \cap H_{(0)}^{\frac{s+1}{2}}([0, T] ; H^{-1}(\Omega)),\nonumber \\
	\mathcal{A}^{s, \gamma}([0, T] ; \Omega)&:=L_{\frac{1}{4}}^{\infty}([0, T] ; H^{s}(\Omega)) \cap H_{(0)}^2([0, T] ; H^\gamma(\Omega)), \nonumber
\end{align*}
for $ \vare>0 $  small enough and $ \gamma $ satisfying $ s-1-\vare<\gamma<s-1, $
with 
\begin{align*}
	\|u\|_{L^\infty_{\frac 14}H^s}:=\sup_{t\in[0,T]}t^{-\frac 14}\|u(t)\|_{H^s}.
\end{align*}
These spaces are important for the use of the embedding theorems and
interpolation estimates to get constants independent of time,  see
\cite{beale1981initial} and \cite{lions1972hilbert} for details.

\section{Coordinate transformations}\label{Coordinate transformations}

In the two-dimensional free boundary  incompressible viscous MHD model \eqref{mhd},
%\begin{equation}\label{mhd}
%	\begin{cases}
%		\partial_t u+u \cdot \nabla u-\laplace u+\nabla p=H \cdot \nabla H, & \text { in } \Omega(t), \\
%		\partial_t H+u \cdot \nabla H=H\cdot \nabla u,  & \text { in } \Omega(t), \\ 	
%		\operatorname{div} u=0, \quad \operatorname{div} H=0, & \text { in } \Omega(t), \\ 
%		(-p \id+ (\nabla u+\nabla u^\top )+ H \otimes H) n=0, & \text { on } \partial \Omega(t), \\ 
%		u(0, \cdot)=u_0, \quad H(0, \cdot)=H_0, & \text { in } \Omega_0,
%	\end{cases}
%\end{equation}
$ \Omega_0 $ is a domain in $ \mathbb{R}^2$;   $\Omega(t)=X(t, \Omega_0)$ moves according to the flux $ X $ associated to the velocity, which solves
\begin{align*}
	\begin{cases}
		\dfrac{d}{d t} X(t, \omega)=u(t, X(t, \omega)), \\
		X(0, \omega)=\omega, \quad \text{ in } \Omega_0.
	\end{cases}
\end{align*}

\par 
We apply the conformal map $P$ and   change the coordinates from $\Omega(t)  $  to  $ \tiOmega(t)=P(\Omega(t))$. 

\begin{remark}
Let $ \omega $ and $ \tiomega $ denote the typical points in $ \Omega_0 $ and $ \tiOmega_0 $, respectively. $ \alpha $ and $ \beta $ denote the typical points in $ \Omega(t) $ and $ \tiOmega(t), $ respectively.
$     H=(H^1, H^2)^\top , u=(u^1, u^2)^\top, X=(X^1, X^2)^\top  $ and $ \nabla p=(\partial_1 p, \partial_2 p)^\top $ are column vectors. The gradient of $ u $ is defined by
\begin{align*}
	\nabla u= 
	\left( 
	\begin{matrix}
	\partial_1u^1	& \partial_2u^1  \\
	\partial_1u^2	& \partial_2u^2
	\end{matrix} 
	\right),
\end{align*}
and $ \nabla u^i=(\partial_1 u^i, \partial_2 u^i) $ is a row vector. 
\end{remark}

The transformed velocity field, pressure and magnetic field are defined by
\begin{align*}
	\begin{cases}
		\tilde{u}(t,\beta):=u(t,P^{-1}(\beta)),\\
		\tilde{p}(t,\beta):=p(t,P^{-1}(\beta)),\\
		\tilde{H}(t,\beta):=H(t,P^{-1}(\beta)),
	\end{cases}	
\end{align*}
hence
\begin{align*}
	\begin{cases}
		\tilde{u}(t,P(\alpha))=u(t,\alpha),\\
		\tilde{p}(t,P(\alpha))=p(t,\alpha),\\
		\tilde{H}(t,P(\alpha))=H(t,\alpha).
	\end{cases}	
\end{align*}

Define
\begin{align*}
	J_{kj}(\cdot):=(\partial_jP^k)\circ P^{-1}(\cdot),
\end{align*}
and for the derivatives of the transformed velocity field, we have
\begin{align*}
	\sum_{k=1}^{2}J_{kj}(\beta)(\partial_k\tilde{u}^i)(t,\beta)=(\partial_ju^i)(t,P^{-1}(\beta)),
\end{align*}
i.e.,
\begin{align*}
	\sum_{k=1}^{2}J_{kj} \partial_k\tilde{u}^i=(\partial_ju^i) \circ P^{-1}.
\end{align*}

For the transformation in conformal coordinates, we have the following lemma.
\begin{lemma}
	Let $ P $ be the conformal map described above, $ J=(J_{ij}) $ and $Q^2(\cdot)=\left|\dfrac{dP}{dz}\circ P^{-1}(\cdot)\right|^2 $. Under this transformation,  system \eqref{mhd} becomes
	\begin{equation}\label{MHD conformal coordinate}
		\begin{cases}
			\partial_t \tilde{u}+ \nabla \tilde{u} J \tilde{u} - Q^2\laplace \tilde{u}+ J^\top \nabla \tilde{p}=\nabla \tilde{H} J \tilde{H}, &  \text { in } \tiOmega(t), \\
			\partial_t \tilde{H} + \nabla \tilde{H} J \tilde{u} =\nabla \tilde{u} J \tilde{H}, & \text { in } \tiOmega(t), \\ 
			\trace(\nabla \tilde{u} J) =0, \quad
			\trace(\nabla \tilde{H} J) =0, & 
			\text { in } \tiOmega(t), \\ 
			(-\tilde{p} \id+ (\nabla \tilde{u} J+(\nabla \tilde{u}J)^\top )+ \tilde{H}\tilde{H}^\top ) J^{-1} \tilde{n}=0, & 
			\text { on } \partial \tiOmega(t), \\ 
			\tilde{u}(t)(0, \cdot)=\tilde{u}_0, \quad \tilde{H}(t)(0, \cdot)=\tilde{H}_0, & 
			\text { in } \tiOmega
			_0,
		\end{cases}
	\end{equation}
	or in components
	\begin{align*}
		\begin{cases}\displaystyle
			\partial_t \tilde{u}^i+\sum_{k,j=1}^{2} \partial_k\tilde{u}^i J_{kj} \tilde{u}^j - Q^2\laplace \tilde{u}^i+\sum_{k=1}^{2}  \partial_k\tilde{p}J_{ki}=\sum_{k,j=1}^{2}\partial_k\tilde{H}^i J_{kj} \tilde{H}^j, &  \text { in } \tiOmega(t), \\
			\displaystyle	\partial_t \tilde{H}^i +\sum_{k,j=1}^{2} \partial_k\tilde{H}^i J_{kj} \tilde{u}^j =\sum_{k,j=1}^{2}\partial_k\tilde{u}^i J_{kj} \tilde{H}^j, & \text { in } \tiOmega(t), \\ 
			\displaystyle \sum_{k,i=1}^{2}\partial_k\tilde{u}^i J_{ki} =0, \quad
			\sum_{k,i=1}^{2}\partial_k\tilde{H}^i J_{ki} =0, & 
			\text { in } \tiOmega(t), \\ 
			\displaystyle \left(-\tilde{p} \id+ (\nabla \tilde{u} J+(\nabla \tilde{u}J)^\top )+ \tilde{H}\tilde{H}^\top \right) J^{-1} \tilde{n}=0, & 
			\text { on } \partial \tiOmega(t), \\ 
		\displaystyle	\tilde{u}(t)(0, \cdot)=\tilde{u}_0, \quad \tilde{H}(t)(0, \cdot)=\tilde{H}_0, & 
			\text { in } \tiOmega
			_0,
		\end{cases}
	\end{align*}
	 where $ \tilde{n}=-\Lambda J|_{\partial\tiOmega(t)}\Lambda n $ and $ \Lambda=\left( \begin{array}{cc}
		0	&  -1\\
		1	&  0
	\end{array}\right). $
\end{lemma}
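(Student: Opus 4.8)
The plan is to derive system \eqref{MHD conformal coordinate} directly from \eqref{mhd} via the change of variables $\beta = P(\alpha)$, carefully tracking how the conformality of $P$ converts the Euclidean Laplacian into the conformal Laplacian $Q^2\laplace$ while leaving the first-order structure essentially unchanged up to the matrix $J$. First I would establish the chain-rule dictionary: since $\tilde f(t,P(\alpha)) = f(t,\alpha)$ for $f \in \{u,p,H\}$, differentiating in $\alpha_j$ gives $\partial_j f = \sum_k (\partial_k \tilde f \circ P) (\partial_j P^k)$, which after composing with $P^{-1}$ is exactly $(\partial_j f)\circ P^{-1} = \sum_k J_{kj}\partial_k \tilde f$, i.e. $(\nabla f)\circ P^{-1} = (\nabla\tilde f) J$ as matrices (with the row/column conventions fixed in the Remark). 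This immediately handles the transport terms: $(u\cdot\nabla u)\circ P^{-1} = (\nabla u)\circ P^{-1}\,(u\circ P^{-1}) = (\nabla\tilde u) J \tilde u$, and similarly for $\nabla H \cdot H$ terms and for the time derivative $\partial_t u \circ P^{-1} = \partial_t \tilde u$ (since $P$ is time-independent). The divergence-free conditions become $\operatorname{div} u \circ P^{-1} = \sum_{k,j}\delta_{ij}J_{kj}\partial_k\tilde u^i = \trace(\nabla\tilde u\,J) = 0$, and likewise for $H$.

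The second key step is the Laplacian. Here I would invoke the Cauchy–Riemann equations for $P$: writing $P = (P^1,P^2)$ with $\partial_1 P^1 = \partial_2 P^2$ and $\partial_1 P^2 = -\partial_2 P^1$, one computes $\sum_j J_{kj}J_{\ell j} = |dP/dz|^2\circ P^{-1}\;\delta_{k\ell} = Q^2\delta_{k\ell}$ — that is, $JJ^\top = Q^2 \id$. Then for the second derivatives, using that $P$ is harmonic ($\laplace P^k = 0$, so $\sum_j \partial_j J_{kj}$-type terms vanish after accounting for the pullback), one gets $(\laplace u^i)\circ P^{-1} = \sum_{j}(\partial_j\partial_j u^i)\circ P^{-1} = \sum_{k,\ell}\big(\sum_j J_{kj}J_{\ell j}\big)\partial_k\partial_\ell \tilde u^i = Q^2 \laplace \tilde u^i$, so $-\laplace u$ transforms to $-Q^2\laplace\tilde u$. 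For the pressure gradient, $(\nabla p)\circ P^{-1}$ has $i$-th component $(\partial_i p)\circ P^{-1} = \sum_k J_{ki}\partial_k\tilde p = (J^\top\nabla\tilde p)_i$, giving the term $J^\top\nabla\tilde p$. Assembling these proves the first three equations of \eqref{MHD conformal coordinate}, and the initial conditions are immediate.

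The remaining work is the boundary condition. The stress tensor $\Sigma := -p\id + \nabla u + (\nabla u)^\top + H\otimes H$ pulls back, using the dictionary above, to $\tilde\Sigma := -\tilde p\id + \nabla\tilde u\,J + (\nabla\tilde u\,J)^\top + \tilde H\tilde H^\top$, so $\Sigma\circ P^{-1} = \tilde\Sigma$ entrywise. What requires care is the normal: if $n$ is the outward unit normal to $\partial\Omega(t)$ at $\alpha$, then the pushed-forward normal direction to $\partial\tiOmega(t)$ at $\beta = P(\alpha)$ is obtained by rotating the pushed-forward tangent by $\Lambda$; since the tangent transforms via the Jacobian $dP$ (equivalently via $J$ after composition), a short computation with $\Lambda$ and the antisymmetry identity $\Lambda A\Lambda = (\det A)(A^{-1})^\top$-type relations for $2\times2$ matrices shows the correct normal direction is proportional to $J^{-1}\tilde n$ with $\tilde n = -\Lambda J|_{\partial\tiOmega(t)}\Lambda n$ (the proportionality constant, a positive scalar, drops out of the homogeneous condition $\Sigma n = 0$). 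Hence $\Sigma n = 0$ becomes $\tilde\Sigma\, J^{-1}\tilde n = 0$, which is the fourth equation.

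I expect the main obstacle to be the bookkeeping in the boundary condition — specifically, getting the direction and normalization of $\tilde n$ exactly right, since it involves composing the tangent-vector transformation law with the rotation $\Lambda$ and then re-expressing everything at the boundary in terms of $J$ and $J^{-1}$; the Laplacian identity $JJ^\top = Q^2\id$ coming from the Cauchy–Riemann equations is the technical heart, but it is a clean two-line computation once conformality is invoked. Everything else is a systematic application of the chain rule with the fixed index conventions.
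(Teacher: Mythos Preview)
Your proposal is correct and follows essentially the same approach as the paper: the paper's own proof is a brief sketch that says ``by direct calculation'' the first-order terms transform via $J$ and the Laplacian becomes $Q^2\laplace\tilde u$ by the Cauchy--Riemann equations, with the remaining terms handled ``similarly.'' Your write-up is simply a more explicit version of that same computation, including the identity $JJ^\top = Q^2\id$ and the harmonicity of $P$ for the Laplacian, and a more careful treatment of the normal in the boundary condition than the paper itself provides.
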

\begin{proof}
	By direct calculation, $ u\cdot \nabla H$ becomes $ \nabla \tiH J\tilde{u}; $  $\nabla p$  becomes $ J^\top\nabla\tilde{p};$  $ \laplace u $ becomes $ Q^2\laplace \tilde{u} $ by using that $ P=P_1+i P_2 $ satisfies the Cauchy-Riemann equations
	\begin{align*}
		\begin{cases}
			\partial_1 P_1=\partial_2 P_2,\\
			\partial_2 P_1=-\partial_1 P_2.
		\end{cases}
	\end{align*}
The other terms are calculated similarly.
\end{proof}

Now, we pass from Eulerian to Lagrangian coordinates. For $ \tiomega\in \tiOmega_0, $ we define $ \tiX(t, \tiomega)\in \tiOmega(t) $ by
\begin{align*}
	\begin{cases}
		\dfrac{d}{d t} \tiX(t, \tiomega)=J(\tiX(t,\tiomega))\tilde{u}(t, \tiX(t, \tiomega)), & \text { in } \tiOmega(t), \\ 
		\tiX(0, \tiomega)=\tiomega,& \text { in } \tiOmega(0), \\ 
	\end{cases}
\end{align*}
i.e.,
\begin{align*}
	\begin{cases}
		\dfrac{d}{d t} \tiX^k(t, \tiomega)=J_{kj}(\tiX(t,\tiomega))\tilde{u}^j(t, \tiX(t, \tiomega)),& \text { in } \tiOmega(t), \\
		\tiX^k(0, \tiomega)=\tiomega^k,& \text { in } \tiOmega(0).
	\end{cases}
\end{align*}
The Lagrangian variables are defined as  
\begin{equation}\label{Lagrangian coordinate}
	\begin{cases}
		\tiv(t,\tiomega):=\tilde{u}(t,\tiX(t,\tiomega)),\\
		\tiq(t,\tiomega):=\tilde{p}(t,\tiX(t,\tiomega)),\\
		\tiG(t,\tiomega):=\tilde{H}(t,\tiX(t,\tiomega)).
	\end{cases}
\end{equation}

Define
\begin{align*}
	\left( \begin{matrix}
	\tizeta_{11}	& \tizeta_{12}  \\
	\tizeta_{21}	& \tizeta_{22}
	\end{matrix}\right)(t,\tiomega):= 
	\left( \begin{matrix}
		\partial_1\tiX^1
			& 	\partial_2\tiX^1  \\
			\partial_1\tiX^2	& 	\partial_2\tiX^2
	\end{matrix}\right)^{-1}(t,\tiomega),
\end{align*}
and for the transformation in Lagrangian coordinates, we have the following lemma.
\begin{lemma}
	Under the change of coordinates \eqref{Lagrangian coordinate}, system \eqref{MHD conformal coordinate} becomes
	\begin{equation}\label{MHD Lagrangian coordinate}
		\begin{cases} \displaystyle
			\partial_t \tiv^i - Q^2(\tiX)\sum_{j,k,m=1}^{2}\partial_m(\partial_k\tiv^i\tizeta_{kj})\tizeta_{mj} +\sum_{k,l=1}^{2} \partial_l\tiq\tizeta_{lk}J_{ki}(\tiX)\\
			\displaystyle\quad\quad=\sum_{j,k,l=1}^{2}\partial_l \tiG^i \tizeta_{lk}J_{kj}(\tiX) \tiG^j, &  \text { in } \tiOmega_0, \\
			\displaystyle\partial_t \tiG^i =\sum_{j,k,l=1}^{2}\partial_j \tiv^i \tizeta_{jk} J_{kl}(\tiX) \tiG^l, & \text { in } \tiOmega_0, \\
			\trace(\nabla \tiv \tizeta J(\tiX)) =0, \quad
			\trace(\nabla \tiG \tizeta J(\tiX)) =0, & 
			\text { in } \tiOmega_0, \\ 
			(-\tiq \id+ (\nabla \tiv \tizeta J(\tiX)+(\nabla \tiv \tizeta J(\tiX))^\top )+ \tiG\tiG^\top ) (J(\tiX))^{-1} \nabla_\Lambda\tiX\tilde{n}_0=0, & 
			\text { on } \partial \tiOmega_0, \\ 
			\tiv(t)(0, \cdot)=\tiv_0, \quad \tiG(t)(0, \cdot)=\tiG_0, & 
			\text { in } \tiOmega_0,
		\end{cases}
	\end{equation}
 	on the fixed domain $ [0,T]\times \tiOmega_0 $ where $ \nabla_{\Lambda}\tiX=-\Lambda\nabla \tiX \Lambda $ with $ \Lambda=\left( \begin{array}{cc}
 	0	&  -1\\
 	1	&  0
 	\end{array}\right). $
\end{lemma}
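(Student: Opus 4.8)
The plan is to obtain system \eqref{MHD Lagrangian coordinate} from \eqref{MHD conformal coordinate} by a straightforward change of variables along the flow $\tiX(t,\tiomega)$, using the chain rule to convert Eulerian spatial derivatives in $\beta$ into Lagrangian spatial derivatives in $\tiomega$ via the inverse Jacobian $\tizeta = (\nabla\tiX)^{-1}$. First I would record the basic transport identity: for any function $\tilde\phi(t,\beta)$ with Lagrangian version $\phi(t,\tiomega)=\tilde\phi(t,\tiX(t,\tiomega))$, one has $\partial_t\phi = (\partial_t\tilde\phi)\circ\tiX + \sum_k (\partial_k\tilde\phi)\circ\tiX \,\partial_t\tiX^k$, and by the defining ODE for $\tiX$ the last term equals $\sum_{k,j}(\partial_k\tilde\phi)\circ\tiX\,J_{kj}(\tiX)\tilde u^j$, which is exactly $\nabla\tilde\phi\,J\tilde u$ evaluated along the flow. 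Applied to $\tilde u$ this turns $\partial_t\tilde u + \nabla\tilde u\,J\tilde u$ into $\partial_t\tiv$; applied to $\tilde H$ it turns $\partial_t\tilde H + \nabla\tilde H\,J\tilde u$ into $\partial_t\tiG$. So the material-derivative structure on the left of the first two equations of \eqref{MHD conformal coordinate} collapses to the bare time derivatives $\partial_t\tiv^i$ and $\partial_t\tiG^i$ in \eqref{MHD Lagrangian coordinate}, which is the cleanest part of the argument.

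Next I would handle the spatial derivatives. Differentiating $\tilde\phi(t,\tiX(t,\tiomega)) = \phi(t,\tiomega)$ in $\tiomega_j$ gives $\sum_k (\partial_k\tilde\phi)\circ\tiX\,\partial_j\tiX^k = \partial_j\phi$, hence $(\partial_k\tilde\phi)\circ\tiX = \sum_j \partial_j\phi\,\tizeta_{jk}$ with $\tizeta = (\nabla\tiX)^{-1}$ as defined before the lemma. This single substitution rule, applied once, converts $\nabla\tilde p$ into $\nabla\tiq\,\tizeta$ (so $J^\top\nabla\tilde p$ becomes, in components, $\sum_{k,l}\partial_l\tiq\,\tizeta_{lk}J_{ki}(\tiX)$), converts $\nabla\tilde H\,J\tilde H$ into $\sum_{j,k,l}\partial_l\tiG^i\,\tizeta_{lk}J_{kj}(\tiX)\tiG^j$, converts $\nabla\tilde u\,J\tilde H$ into $\sum_{j,k,l}\partial_j\tiv^i\,\tizeta_{jk}J_{kl}(\tiX)\tiG^l$ in the induction equation, and converts the divergence constraints $\trace(\nabla\tilde u\,J)=0$, $\trace(\nabla\tilde H\,J)=0$ into $\trace(\nabla\tiv\,\tizeta J(\tiX))=0$, $\trace(\nabla\tiG\,\tizeta J(\tiX))=0$. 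The Laplacian term $Q^2\laplace\tilde u^i = Q^2\sum_k\partial_k\partial_k\tilde u^i$ requires applying the substitution rule twice: writing $\partial_k\tilde u^i\circ\tiX = \sum_j\partial_j\tiv^i\,\tizeta_{jk}$ and then differentiating this composite again in $\tiomega_m$ and contracting with $\tizeta_{mk}$ produces $Q^2(\tiX)\sum_{j,k,m}\partial_m(\partial_j\tiv^i\,\tizeta_{jk})\tizeta_{mk}$, matching \eqref{MHD Lagrangian coordinate} after relabeling indices. For the boundary condition I would note that $\partial\tiOmega(t) = \tiX(t,\partial\tiOmega_0)$, that the unit normal $\tilde n$ at $\tiX(t,\tiomega)$ is, up to a positive scalar that may be absorbed, proportional to $\nabla_\Lambda\tiX\,\tilde n_0 = -\Lambda\nabla\tiX\Lambda\,\tilde n_0$ (the standard Piola-type identity relating normals under a diffeomorphism in two dimensions), and that each occurrence of $\nabla\tilde u$ in the stress tensor is replaced as above; combining gives the fourth equation of \eqref{MHD Lagrangian coordinate}. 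The initial conditions are immediate since $\tiX(0,\tiomega)=\tiomega$.

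The only genuine subtlety, and the step I would treat most carefully, is the boundary normal identity $\tilde n = c\,\nabla_\Lambda\tiX\,\tilde n_0$ for some scalar $c>0$: one must check that pulling back the normal to $\partial\tiOmega(t)$ under $\tiX$ yields precisely $-\Lambda\nabla\tiX\Lambda$ acting on $\tilde n_0$, using that in two dimensions the cofactor matrix of $\nabla\tiX$ equals $-\Lambda\nabla\tiX\Lambda$ (equivalently $(\nabla\tiX)^{-\top}\det\nabla\tiX = -\Lambda\nabla\tiX\Lambda$), and that the scalar factors $\det\nabla\tiX$ and $|\nabla\tiX^{-\top}\tilde n_0|^{-1}$ are positive and can be cancelled from the homogeneous condition $(\cdots)J^{-1}\tilde n = 0$. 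Everything else is bookkeeping with the chain rule and the inverse-function formula $\tizeta = (\nabla\tiX)^{-1}$, together with the already-established conformal identities (the Cauchy–Riemann relations and the formula $Q^2 = |dP/dz\circ P^{-1}|^2$) from the previous lemma, so I would present the derivation term by term and leave the routine index manipulations to the reader.
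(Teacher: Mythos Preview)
Your proposal is correct and follows exactly the approach the paper takes: the paper's proof consists of the single sentence ``By calculating the derivative of both ends of \eqref{Lagrangian coordinate} and with the help of $\tizeta$, we obtain immediately the system in Lagrangian coordinates,'' and your detailed term-by-term chain-rule computation is precisely what that sentence abbreviates. Your additional care with the boundary-normal identity $\tilde n \propto \nabla_\Lambda\tiX\,\tilde n_0$ via the two-dimensional cofactor formula $-\Lambda\nabla\tiX\Lambda = (\det\nabla\tiX)(\nabla\tiX)^{-\top}$ is a welcome elaboration of a point the paper leaves implicit.
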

\begin{proof}
	By calculating the derivative of both ends of \eqref{Lagrangian coordinate} and with the help of $ \tizeta $, we obtain immediately the system in Lagrangian coordinates.
\end{proof}

%这个记号能看出来这个式子涉及什么导数,但是并非是矩阵相乘
We define 
\begin{align*}
	(\nabla(\nabla \tiv \tizeta)\tizeta)^i:=\sum_{j,k,m=1}^{2}\partial_m(\partial_k\tiv^i\tizeta_{kj})\tizeta_{mj}.
\end{align*}
For this expression, we can not simply write it in the form of matrix multiplication, so we adopt this notation. Note that the notation $ (\nabla(\nabla \tiv \tizeta)\tizeta)^i $  does not represent the usual matrix multiplication. Formally,  all we can  tell from it is that  the first-order derivatives of $ \tizeta, \tiv $ and the second-order derivatives of $ \tiv $ are involved, which is enough for us to estimate.

Used this notation, system \eqref{MHD Lagrangian coordinate} becomes
\begin{equation}\label{MHD Lagrangian coordinate2}
	\begin{cases}
		\partial_t \tiv - Q^2(\tiX)\nabla(\nabla \tiv \tizeta)\tizeta + J(\tiX)^\top \tizeta^\top \nabla\tiq=\nabla\tiG\tizeta J(\tiX)\tiG, &  \text {in } \tiOmega_0, \\
			\partial_t \tiG =\nabla \tiv \tizeta J(\tiX) \tiG, & \text {in } \tiOmega_0, \\ 
		\trace(\nabla \tiv \tizeta J(\tiX)) =0, \quad 
		\trace(\nabla \tiG \tizeta J(\tiX)) =0, & 
		\text {in } \tiOmega_0, \\ 
		(-\tiq \id+ (\nabla \tiv \tizeta J(\tiX)+(\nabla \tiv \tizeta J(\tiX))^\top )+ \tiG\tiG^\top ) (J(\tiX))^{-1} \nabla_\Lambda\tiX \tilde{n}_0=0, & 
		\text {on } \partial \tiOmega_0, \\ 
		\tiv(t)(0, \cdot)=\tiv_0, \quad \tiG(t(0, \cdot)=\tiG_0, & 
		\text {in } \tiOmega_0.
	\end{cases}
\end{equation}

\section{Local existence of smooth solutions for system \eqref{MHD Lagrangian coordinate2}}\label{Local existence of smooth solutions}

\par 
To prove a local existence result for  system \eqref{MHD Lagrangian coordinate2}, we define the iterative sequence $ \{ (\tiX^{(n)},\tiv^{(n)},\tiq^{(n)},$  $\tiG^{(n)}):n=0,1,\cdots \} $ and we show that $ \{ (\tiX^{(n)}-\hat{X},\tiv^{(n)}-\phi,\tiq^{(n)}-\tiq_\phi,\tiG^{(n)}-\hat{G}):n=0,1,\cdots \} $ is Cauchy in  suitable spaces if $ T>0 $ is sufficiently small. Here we have introduced
\begin{align*}
	 \hat{X}=\tiomega+t J \tiv_0,\quad \hat{G}=\tiG_0+t  \nabla \tiv_0 J \tiG_0,
\end{align*}  
and the construction of $ (\phi, \tiq_\phi) $ will be clear in the following discussion.

We separate the iteration for $ (\tiv, \tiq) $ from the iteration for $ \tiX $ and $ \tiG $. For $ (\tiv, \tiq), $ we have
\begin{equation}\label{iterative equation of v}
	\begin{cases}
		\partial_t \tiv^{(n+1)}-Q^2 \Delta \tiv^{(n+1)}+ J^\top \nabla \tiq^{(n+1)}=\tif^{(n)}, & \text{ in }  (0,T)\times\tiOmega_0, \\
		\trace (\nabla \tiv^{(n+1)} J )=\tig^{(n)}, & \text{ in }  (0,T)\times\tiOmega_0, \\
		{ [-\tiq^{(n+1)} \id+ ( (\nabla \tiv^{(n+1)} J  )+ (\nabla \tiv^{(n+1)} J)^\top  ) ] J^{-1} \tilde{n}_0=\tih^{(n)}}, & \text{ on }  (0,T)\times\partial\tiOmega_0, \\
		\tiv(0, \tiomega)=\tiv_0(\tiomega),& \text{ on }  \{ t=0  \}\times\tiOmega_0,
	\end{cases}
\end{equation}
where $\tif^{(n)}, \tig^{(n)} $ and  $ \tih^{(n)}$ collect all the nonlinear terms at the $n$-th step, namely
\begin{align}
		\tif^{(n)}= & -Q^2 \Delta \tiv^{(n)}+ J^\top \nabla \tiq^{(n)}+Q^2(\tiX^{(n)}) \nabla(\nabla\tiv^{(n)}\tizeta^{(n)})\tizeta^{(n)}\nonumber \\
		&-J(\tiX^{(n)})^\top  \tizeta^{(n)\top} \nabla \tiq^{(n)} +\nabla \tiG^{(n)}\tizeta^{(n)} J(\tiX^{(n)})\tiG^{(n)},\label{tifn} \\
		\tig^{(n)}= & \trace (\nabla \tiv^{(n)} J)-\trace (\nabla \tiv^{(n)} \tizeta^{(n)} J (\tiX^{(n)} ),\label{tign}\\
		\tih^{(n)}= & -\tiq^{(n)}J^{-1} \tilde{n}_0+\tiq^{(n)}(J(\tiX^{(n)}))^{-1} \nabla_{\Lambda} \tiX^{(n)} \tilde{n}_0 \nonumber\\
		&+((\nabla \tiv^{(n)} J) +(\nabla \tiv^{(n)} J)^\top )J^{-1} \tilde{n}_0\nonumber \\
		& - ( (\nabla \tiv^{(n)} \tizeta^{(n)} J (\tiX^{(n)} ) )+ (\nabla \tiv^{(n)} \tizeta^{(n)} J (\tiX^{(n)} ) )^\top  ) J (\tiX^{(n)} ) ^{-1} \nabla_{\Lambda} \tiX^{(n)} \tilde{n}_0\nonumber\\
		&-\tiG^{(n)}\tiG^{(n)\top}J(\tiX^{(n)})^{-1} \nabla_{\Lambda} \tiX^{(n)} \tilde{n}_0.\label{tihn}
\end{align}
For the magnetic field $ \tiG, $ we consider the following ODE
\begin{equation}\label{iterative equation of G}
	\begin{cases}
		\partial_t \tiG^{(n+1)}(t, \tiomega)=\nabla \tiv^{(n)}(t, \tiomega) \tizeta^{(n)}(\tiomega) J(\tiX^{(n)}(t, \tiomega) )   \tiG^{(n)}(t,\tiomega), \\
		\tiG(0, \tiomega)=\tiG_0(\tiomega), \quad \text { in } \tiOmega_0 .
	\end{cases}
\end{equation}
Finally, the flux $ \tiX $ satisfies
\begin{equation}\label{iterative equation of X}
	\begin{cases}
		\dfrac{d}{d t} \tiX^{(n+1)}(t, \tiomega)=J  (\tiX^{(n)}(t, \tiomega) ) \tiv^{(n)}(t, \tiomega), \\ 
		\tiX(0, \tiomega)=\tiomega, \quad \text { in } \tiOmega_0 .
	\end{cases}
\end{equation}

We study  systems \eqref{iterative equation of v}, \eqref{iterative equation of G} and \eqref{iterative equation of X} separately. 
Similar to the form of   linear system \eqref{iterative equation of v}, we consider the following system with zero initial velocity
\begin{equation}\label{system lemma 4.1}
	\begin{cases}
		\partial_t \tiv-Q^2 \Delta \tiv+ J^\top \nabla \tiq=\tif, & \text{ in }  (0,T)\times\tiOmega_0, \\
		\trace (\nabla \tiv J )=\tig, & \text{ in }  (0,T)\times\tiOmega_0, \\
		 [-\tiq \id+ ( (\nabla \tiv J  )+ (\nabla \tiv J)^\top  ) ] J^{-1} \tilde{n}_0=\tih, & \text{ on }  (0,T)\times\partial\tiOmega_0, \\
		\tiv(0, \tiomega)=0, & \text{ in }  \{ t=0 \}\times\tiOmega_0,
	\end{cases}
\end{equation}
and the  initial data satisfy the compatibility conditions
\begin{equation}\label{compatibility lemma4.1}
	\begin{cases}
		\trace (\nabla \tiv_0 J )=\tig(0), & \text{ in }   \tiOmega_0, \\
		(J^{-1} \tilde{n}_0)^{\perp} [ ( (\nabla \tiv J  )+ (\nabla \tiv J)^\top  ) ] J^{-1} \tilde{n}_0=\tih(0) (J^{-1} \tilde{n}_0)^{\perp}, & \text{ on }   \partial\tiOmega_0. \\
	\end{cases}
\end{equation}

We define the following functional space of the solution 
\begin{align*}
	X_0:=\{ (\tiv, \tiq)\in \mathcal{K}_{(0)}^{s+1}([0,T];\tiOmega_0)\times\mathcal{K}_{pr(0)}^s ([0,T];\tiOmega_0)\},
\end{align*}
the functional space of data $  (\tif, \tig, \tih)  $
\begin{align*}
	Y_0:=\{ (\tif, \tig, \tih)\in
	 \mathcal{K}_{(0)}^{s-1}([0,T];\tiOmega_0)\times \bar{\mathcal{K}}_{(0)}^{s}([0,T];\tiOmega_0)\times&\mathcal{K}_{(0)}^{s-\frac 12}([0,T];\partial\tiOmega_0) :\\
	 &\quad\eqref{compatibility lemma4.1} \text{ are satisfied} \},
\end{align*}
and a linear operator  $ L:X_0\to Y_0, $ related to  system \eqref{system lemma 4.1} by
\begin{equation}\label{L}
	L(\tiv, \tiq)=(\tif, \tig, \tih, 0).
\end{equation}

For system \eqref{system lemma 4.1}, we recall the following result:
\begin{lemma}\label{lem4.1}(\cite[Theorem 4.1]{castro2019splash}).
For system \eqref{system lemma 4.1}, 	the operator $ L $ defined in \eqref{L} is invertible for $ 2<s<\frac 52. $ Moreover, $ \|L^{-1}\| $ is bounded uniformly if $ T $ is bounded above and the following estimate holds
\begin{align*}
	\|(\tiv, \tiq)\|_{X_0}\le C\|(\tif, \tig, \tih)\|_{Y_0}.
\end{align*} 
\end{lemma}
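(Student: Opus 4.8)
The plan is to treat Lemma~\ref{lem4.1} as a statement about the linear theory of a Stokes-type system with a variable diffusion coefficient and a stress (Neumann-type) boundary condition, and to reduce it to the flat constant-coefficient case treated by Beale~\cite{beale1981initial} together with the anisotropic Sobolev calculus of~\cite{lions1972hilbert}. The key structural facts are that $P$ is conformal with $dP/dz$ nonvanishing on a neighborhood of $\overline{\tiOmega_0}$, so that $Q^2 = |dP/dz \circ P^{-1}|^2$ is smooth, strictly positive and bounded on $\overline{\tiOmega_0}$, while $J = (J_{ij})$ is smooth with smooth inverse there; hence \eqref{system lemma 4.1} is genuinely parabolic, uniformly elliptic in its principal part, and its boundary operator is a smooth invertible modification of the standard Neumann stress operator.

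First I would establish the result for the model operator $L_0$ associated with the constant-coefficient system $\partial_t \tiv - \laplace \tiv + \nabla \tiq = \tif$, $\dive \tiv = \tig$, $[-\tiq \id + (\nabla \tiv + (\nabla\tiv)^\top)]n_0 = \tih$, $\tiv(0) = 0$, on the fixed domain $\tiOmega_0$. This is exactly the linearization underlying the free-surface Navier--Stokes analysis of~\cite{beale1981initial}: the Lopatinskii--Shapiro complementing condition holds for the stress boundary operator, and the spaces $\mathcal{K}^{s+1}_{(0)}$, $\mathcal{K}^s_{pr(0)}$, $\overline{\mathcal{K}}^s_{(0)}$ and $\mathcal{K}^{s-\frac12}_{(0)}(\partial\tiOmega_0)$ are built precisely so that the resulting maximal-regularity estimate has constants independent of $T$ for $T$ in a bounded interval — this is the purpose of the vanishing-trace interpolation spaces $H^\sigma_{(0)}([0,T])$ and of the weighted $L^\infty_{1/4}$ scale. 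The compatibility conditions \eqref{compatibility lemma4.1} are the exact conditions under which $(\tif,\tig,\tih)$ lies in the range of $L_0$; together with $\tiv(0)=0$ they make all the initial-time traces appearing in the construction consistent. Invertibility of $L_0$ with a $T$-uniform bound on $L_0^{-1}$ is then the content of the Beale-type theorem, valid in the range $2 < s < \tfrac52$ in which the trace of $\tiq$ on $\partial\tiOmega_0$ lies in $\mathcal{K}^{s-\frac12}_{(0)}$ and the bilinear estimates used later remain available.

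Next I would pass from $L_0$ to $L$ by a localization-plus-perturbation argument. Cover $\overline{\tiOmega_0}$ by finitely many small charts with a subordinate partition of unity; in each interior chart \eqref{system lemma 4.1} differs from a frozen-coefficient copy of $L_0$ — with $Q^2$ and $J$ replaced by their values at the chart center after an affine change of variables — by terms whose coefficients are either of lower order or have oscillation controlled by the chart diameter, and likewise in each boundary chart for the boundary operator carrying the factor $J^{-1}\tilde n_0$. Taking the charts small enough makes these perturbations small in operator norm from $X_0$ to $Y_0$; assembling the local estimates with the partition of unity and commutator bounds yields the a priori estimate $\|(\tiv,\tiq)\|_{X_0} \le C\|(\tif,\tig,\tih)\|_{Y_0}$ for solutions of $L(\tiv,\tiq)=(\tif,\tig,\tih,0)$. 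Combining this with the invertibility of $L_0$ via the method of continuity along a path of parabolic operators joining $L_0$ to $L$ with uniformly the same structure (uniform parabolicity of the principal part and the complementing condition preserved along the path) gives invertibility of $L$, and the a priori bound is precisely the asserted estimate $\|L^{-1}\|\le C$, uniform for $T$ bounded above.

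The main obstacle is the boundary analysis: one must verify that the transformed stress operator $[-\tiq\id + (\nabla\tiv J + (\nabla\tiv J)^\top)]J^{-1}\tilde n_0$ still satisfies the complementing condition (so that the trace of $\tiq$ is controlled in $\mathcal{K}^{s-\frac12}_{(0)}(\partial\tiOmega_0)$, not merely its normal component), track the pressure trace through the localization without losing $T$-uniformity, and check that the anisotropic regularity $L^2_tH^s_x \cap H^{s/2}_{(0),t}L^2_x$ is preserved under multiplication by the smooth but genuinely non-constant coefficients $Q^2(\beta)$ and $J(\beta)$ near $\partial\tiOmega_0$. This last point is where the precise range $2 < s < \tfrac52$ and the fine structure of the Beale spaces are essential, and it is the reason one cannot simply invoke an off-the-shelf $L^p$ maximal-regularity result.
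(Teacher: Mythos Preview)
The paper does not prove this lemma at all: it is quoted verbatim as \cite[Theorem~4.1]{castro2019splash} and used as a black box throughout Sections~\ref{Local existence of smooth solutions} and~\ref{Stability estimates}. Your outline is therefore not competing with any argument in the present paper; it is a sketch of what the cited reference does.

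That said, your sketch is essentially the right one and matches the strategy of \cite{castro2019splash}: start from Beale's maximal-regularity result \cite{beale1981initial} for the constant-coefficient Stokes system with stress boundary condition on a fixed smooth domain, then pass to the variable-coefficient operator with $Q^2\Delta$ and the $J$-twisted divergence and boundary operators by localization, freezing, and the method of continuity. One structural simplification you could exploit more explicitly is that $P$ is conformal, so $J$ is pointwise a positive scalar times a rotation and $J^\top J = Q^2\id$; this makes the frozen-coefficient problem in each chart an honest rescaled copy of the standard Stokes--Neumann problem, and the complementing condition is then automatic rather than something to be checked anew. Your identification of the delicate points---the pressure boundary trace in $\mathcal{K}^{s-\frac12}_{(0)}$, the $T$-uniformity coming from the vanishing-trace spaces $H^\sigma_{(0)}$, and the need for $2<s<\tfrac52$ so that the anisotropic product estimates of Lemmas~\ref{lem3.4}--\ref{lem3.6} apply---is accurate.
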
 
\par To apply  Lemma \ref{lem4.1}, we need a modification of the velocity and the pressure such that the modified terms belong to $ X_0 $ and the modified initial velocity is zero. For this reason, we extend the analysis for the choice of the approximated solution  made in \cite[Section 5]{castro2019splash}. For our problem, we  need to choose $\phi$   such  that $  \tiv^{(n)}(0)=\tiv_0=\phi(0) $ and $  (\partial_t \tiv^{(n)})(0)=(\partial_t \phi)(0)$, for all $ n\ge 0$. 

Indeed, if we set $ t=0 $ in \eqref{MHD Lagrangian coordinate2}, for $ \tiv,  $ we have
\begin{align*}
	(\partial_t \tiv)(0) - Q^2\laplace\tiv_0 + J^\top  \nabla\tiq(0)=\nabla\tiG_0 J\tiG_0 , 
\end{align*}
in  $ \tiOmega_0.$ Therefore, $ \phi $  should satisfy
\begin{align*}
	(\partial_t \phi)(0)= Q^2\laplace\tiv_0 - J^\top  \nabla\tiq(0)+\nabla\tiG_0 J\tiG_0 , \quad \phi(0)=\tiv_0.
\end{align*}
A reasonable choice of $ \phi $ is
\begin{align*}
	\phi(t)=\tiv_0+t(   Q^2\laplace\tiv_0 - J^\top  \nabla\tiq(0)+\nabla\tiG_0 J\tiG_0).
\end{align*}
Note that in the above expression, we need to specify $ \nabla \tiq (0) $ and we need to investigate the iterative system for $ \tiv $ and $ \tiq $ in the original domain $ \Omega_0. $

From the coordinate transformation through the inverse of the conformal mapping $ P, $ we obtain the iterative system  of the velocity and the pressure in $ \Omega_0. $ More precisely, we define
\begin{align*}
	\begin{cases}
		v^{(n)}(t,P^{-1}(\tiomega)):=\tiv^{(n)}(t,\tiomega),\\
		q^{(n)}(t,P^{-1}(\tiomega)):=\tiq^{(n)}(t,\tiomega),\\
		f^{(n)}(t,P^{-1}(\tiomega)):=\tif^{(n)}(t,\tiomega),\\
		g^{(n)}(t,P^{-1}(\tiomega)):=\tig^{(n)}(t,\tiomega),\\
		h^{(n)}(t,P^{-1}(\tiomega)):=\tih^{(n)}(t,\tiomega),
	\end{cases}
\end{align*} 
then we have
\begin{equation*}
	\begin{cases}
		\partial_t v^{(n+1)}- \laplace v^{(n+1)}+ \nabla q^{(n+1)}=f^{(n)}, & \text{ in }  (0,T)\times\Omega_0, \\
		\dive v^{(n+1)}=g^{(n)}, & \text{ in }  (0,T)\times\Omega_0, \\
		{ [-q^{(n+1)} \id+ ( (\nabla v^{(n+1)} )+ (\nabla v^{(n+1)} )^\top  ) ] {n}_0= h^{(n)}}, & \text{ on }  (0,T)\times\partial\Omega_0, \\
		v(0,\omega)=v_0(\omega),& \text{ on }  \{ t=0  \}\times\Omega_0.
	\end{cases}
\end{equation*}
Note that
\begin{align}
	\partial_t g^{(n)}&=\partial_t \dive v^{(n+1)}\nonumber\\
	&=\laplace g^{(n)}-\laplace q^{(n+1)}+\dive f^{(n)},\nonumber
\end{align}
therefore, for $ t=0, $ we obtain
\begin{equation}\label{equq0}
	\begin{cases}
		-\laplace q^{(n+1)}(0)=(\partial_t g^{(n)})(0)-\laplace g^{(n)}(0)-\dive f^{(n)}(0), &\text{ in } \Omega_0,\\
		q^{(n+1)}(0)n_0=(\nabla v_0+\nabla v_0^\top)n_0-h^{(n)} n_0, &\text{ on } \partial\Omega_0.
	\end{cases}
\end{equation}

From \eqref{tifn}, \eqref{tign} and  \eqref{tihn},   we obtain 
\begin{align*}
	\begin{cases}
		f^{(n)}(0)=\nabla G_0 G_0,\\
		g^{(n)}(0)=0,\\
		h^{(n)}(0)=-G_0G_0^\top n_0,\\		
		\dive f^{(n)}(0)=\trace(\nabla G_0 \nabla G_0),
	\end{cases}
\end{align*}
where we have used $  G_0=H_0$ and $ \dive H_0=0. $
Note that $ \partial_t \tizeta^{(n)}=-\tizeta^{(n)}  \nabla \partial_t\tiX^{(n)}\tizeta^{(n)}, $ we obtain from \eqref{tign} that
\begin{align*}
	(\partial_t \tig^{(n)})(0)=\trace(\nabla\tiv_0J\nabla\tiv_0J)
\end{align*}
by direct calculation,  therefore,
\begin{align*}
	(\partial_t  g^{(n)})(0)=\trace(\nabla v_0 \nabla v_0 ). 
\end{align*}
We conclude that $ q^{(n+1)}(0) $ satisfies
\begin{align*}
	\begin{cases}
		-\laplace q^{(n+1)}(0)=\trace(\nabla v_0 \nabla v_0 )-\trace(\nabla G_0\nabla G_0), &\text{ in } \Omega_0,\\
		q^{(n+1)}(0) n_0=(\nabla v_0+\nabla v_0^\top +G_0 G_0^\top)n_0, &\text{ on } \partial\Omega_0,
	\end{cases}
\end{align*}
which is independent of  $ n $.

Back to the domain $ \tiOmega_0, $ we obtain
\begin{align*}
	\begin{cases}
		-Q^2 \Delta \tiq^{(n+1)}(0)=\trace (\nabla \tiv_0 J  \nabla \tiv_0 J  )-\trace (\nabla \tiG_0 J  \nabla \tiG_0 J  ), & \text { in } \tiOmega_0, \\ 
		\tiq^{(n+1)}(0) J^{-1} \tilde{n}_0=( \nabla \tiv_0 J+( \nabla \tiv_0 J)^\top +\tiG_0 \tiG_0^\top )J^{-1} \tilde{n}_0, & \text { on }  \partial \tiOmega_0,
	\end{cases}
\end{align*}
for all $ n. $
Therefore,   $ \phi $  and  $\tiq_\phi $ are well-defined and satisfy
\begin{align*}
	\phi:=\tiv_0+t \hat{\phi}:=\tiv_0+t (Q^2 \Delta \tiv_0- J^\top \nabla \tiq_\phi+\nabla \tiG_0 J \tiG_0  ),
\end{align*}
and
\begin{align*}
	\begin{cases}
		-Q^2 \Delta \tiq_\phi=\trace (\nabla \tiv_0 J  \nabla \tiv_0 J  )-\trace (\nabla \tiG_0 J  \nabla \tiG_0 J  ), & \text { in } \tiOmega_0, \\ 
		\tiq_\phi J^{-1} \tilde{n}_0=( \nabla \tiv_0 J+( \nabla \tiv_0 J)^\top +\tiG_0 \tiG_0^\top )J^{-1} \tilde{n}_0, & \text { on }  \partial \tiOmega_0.
	\end{cases}
\end{align*}

Once we have defined $ \phi  $ and $ \tiq_\phi $, we define the modified velocity $ \tiw^{(n)}  $ and the modified pressure $ \tiq_w^{(n)} $ by 
\begin{align*}
	\begin{cases}
		\tiw^{(n)}:=\tiv^{(n)}-\phi,\\
		\tiq_w^{(n)}:=\tiq^{(n)}-\tiq_\phi.
	\end{cases}
\end{align*}
We write  system \eqref{iterative equation of v} in terms of $ \tiw^{(n)} $ and $ \tiq_w^{(n)} $ as the following 
\begin{equation}\label{equ4.9}
	\begin{cases}
		\partial_t \tiw^{(n+1)}-Q^2 \Delta \tiw^{(n+1)}+ J^\top \nabla \tiq_w^{(n+1)}\\
		 \quad =\tif^{(n)}-\partial_t \phi+Q^2 \Delta \phi- J^\top \nabla \tiq_\phi, & \text{in }  (0,T)\times\tiOmega_0, \\
		\trace (\nabla \tiw^{(n+1)} J )=\tig^{(n)}-\trace(\nabla \phi J), & \text{in }  (0,T)\times\tiOmega_0, \\
		 [-\tiq_w^{(n+1)} \id+ ((\nabla \tiw^{(n+1)} J )+ (\nabla \tiw^{(n+1)} J )^\top  )] J^{-1} \tilde{n}_0\\
		 \quad =\tih^{(n)}+\tiq_\phi J^{-1} \tilde{n}_0 - ( (\nabla \phi J )+ (\nabla \phi J )^\top  ) J^{-1} \tilde{n}_0, & \text{on }  (0,T)\times\partial\tiOmega_0, \\
		\tiw^{(n+1)}(0,\cdot)=0, & \text{in }  \{ t=0 \}\times\tiOmega_0,
	\end{cases}
\end{equation}
where
\begin{align*}
		\tif^{(n)}= & -Q^2 \Delta (\tiw^{(n)}+\phi)+ J^\top \nabla (\tiq_w^{(n)}+\tiq_\phi)+Q^2(\tiX^{(n)}) \nabla(\nabla(\tiw^{(n)}+\phi)\tizeta^{(n)})\tizeta^{(n)}\nonumber \\
		& -J(\tiX^{(n)})^\top  \tizeta^{(n)\top} \nabla (\tiq_w^{(n)}+\tiq_\phi)+\nabla \tiG^{(n)}\tizeta^{(n)} J(\tiX^{(n)})\tiG^{(n)}, \nonumber\\
		\tig^{(n)}= & \trace (\nabla (\tiw^{(n)}+\phi) J)-\trace (\nabla (\tiw^{(n)}+\phi) \tizeta^{(n)} J (\tiX^{(n)} ),\nonumber \\
		\tih^{(n)}= & -(\tiq_w^{(n)}+\tiq_\phi)J^{-1} \tilde{n}_0+(\tiq_w^{(n)}+\tiq_\phi)(J(\tiX^{(n)}))^{-1} \nabla_{\Lambda} \tiX^{(n)} \tilde{n}_0 \nonumber\\
		&+((\nabla (\tiw^{(n)}+\phi) J) +(\nabla (\tiw^{(n)}+\phi) J)^\top )J^{-1} \tilde{n}_0\\
		& - \big( (\nabla (\tiw^{(n)}+\phi) \tizeta^{(n)} J (\tiX^{(n)} ) )\nonumber \\
		&\qquad + (\nabla (\tiw^{(n)}+\phi) \tizeta^{(n)} J (\tiX^{(n)} ) )^\top  \big) J (\tiX^{(n)} ) ^{-1} \nabla_{\Lambda} \tiX^{(n)} \tilde{n}_0\nonumber\\
		&-\tiG^{(n)}\tiG^{(n)\top}J(\tiX^{(n)})^{-1} \nabla_{\Lambda} \tiX^{(n)} \tilde{n}_0.\nonumber
\end{align*}
Furthermore, for the magnetic field, we have
\begin{align}\label{equ4.10}
		&\tiG^{(n+1)}(t, \tiomega)\nonumber \\ =&\tiG_0+\int_0^t\nabla(\tiw^{(n)}(\tau,\tiomega)+\phi(\tau,\tiomega))\tizeta^{(n)}(\tiomega)J(\tiX^{(n)}(\tau,\tiomega))\tiG^{(n)}(\tau,\tiomega)d\tau,
\end{align}
and for the flux, we obtain
\begin{equation}\label{equ4.11}
	\tiX^{(n+1)}(t, \tiomega)=\tiomega+\int_0^t J(\tiX^{(n)}(\tau,\tiomega))(\tiw^{(n)}(\tau,\tiomega)+\phi(\tau,\tiomega))d\tau .
\end{equation}

To prove the  local existence theorem, we need to prove the following estimates of the flux and the magnetic field.
\begin{lemma}\label{x-omega} 	Let $2<s<\frac{5}{2}, 1<\gamma< s-1, \delta,\mu>0 $ small enough and $ \tilde{X}-\tiomega-\int_{0}^{t}J \nabla \phi d\tau\in \mathcal{A}^{s+1,\gamma+1} $. Then, for $ T>0 $ small enough we have
	\begin{align}
		\|\tilde{X}-\tiomega\|_{L^{\infty} H^{s+1}}  &\leq T^{\frac{1}{4}}\left\|\tilde{X}-\tiomega-\int_{0}^{t}J \nabla \phi d\tau \right\|_{\mathcal{A}^{s+1, \gamma+1}}+C(\tilde{v}_0, \tiG_0) T,\nonumber\\
		\|\tilde{X}-\tiomega\|_{L_{\frac 14}^{\infty} H^{s+1}}  &\leq \left\|\tilde{X}-\tiomega-\int_{0}^{t}J \nabla \phi d\tau \right\|_{\mathcal{A}^{s+1, \gamma+1}}+C(\tilde{v}_0, \tiG_0) T^{\frac 34},\nonumber\\
		\|\tilde{X}-\tiomega\|_{H_{(0)}^1 H^{\gamma+1}}  &\leq C T^\delta \left\|\tilde{X}-\tiomega-\int_{0}^{t}J \nabla \phi d\tau \right\|_{\mathcal{A}^{s+1,\gamma+1}}+C(\tilde{v}_0, \tiG_0) T^{\frac{1}{2}}, \nonumber\\
		\|\tilde{X}-\tiomega\|_{H_{(0)}^{\frac{s-1}{2}} H^{2+\mu}} 
		&\leq C T^\delta \left\|\tilde{X}-\tiomega-\int_{0}^{t}J \nabla \phi d\tau \right\|_{\mathcal{A}^{s+1,\gamma+1}}+C(\tilde{v}_0, \tiG_0) T^{\frac{1}{2}},\nonumber\\
		\|\tilde{X}-\tiomega\|_{H_{(0)}^{\frac{s}{2}-\frac 14} H^{2+\mu}} 
		&\leq C T^\delta \left\|\tilde{X}-\tiomega-\int_{0}^{t}J \nabla \phi d\tau \right\|_{\mathcal{A}^{s+1,\gamma+1}}+C(\tilde{v}_0, \tiG_0) T^{\frac{1}{8}},\nonumber\\
		\|\tilde{X}-\hat{X}\|_{ \mathcal{A}^{s+1,\gamma+1} } 
		&\leq  \left\|\tilde{X}-\tiomega-\int_{0}^{t}J \nabla \phi d\tau \right\|_{ \mathcal{A}^{s+1,\gamma+1} }+C(\tilde{v}_0, \tiG_0) T^{\frac{1}{2}},\nonumber\\
		\|\tiX\|_{L^\infty H^{s+1}}&\le T^{\frac{1}{4}} \left\|\tilde{X}-\tiomega-\int_{0}^{t}J \nabla \phi d\tau \right\|_{\mathcal{A}^{s+1, \gamma+1}}+ C(\tiv_0,\tiG_0).\nonumber
	\end{align}
\end{lemma}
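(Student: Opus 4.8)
The plan is to write $\tiX-\tiomega=Z+\ell$, where $\ell(t):=\int_0^tJ\nabla\phi\,d\tau$ is the explicit affine‑in‑time correction already subtracted off in the hypothesis and $Z:=\tiX-\tiomega-\ell$, which by assumption lies in $\mathcal{A}^{s+1,\gamma+1}=L^\infty_{1/4}([0,T];H^{s+1})\cap H^2_{(0)}([0,T];H^{\gamma+1})$. All seven bounds then follow from the triangle inequality once $\ell$ and $Z$ are estimated separately; no use of the flux ODE itself is needed, only the explicit form of $\phi$. Throughout, $J$ and $Q$ are fixed functions on $\tiOmega_0$, smooth because $\tiOmega_0$ stays away from the branch cut of $P$, so multiplication by them is bounded on every $H^k(\tiOmega_0)$; and $C(\tiv_0,\tiG_0)$ will denote a constant depending on a fixed, sufficiently high Sobolev norm of the (smooth) data $\tiv_0,\tiG_0$, on $\tiOmega_0$ and on $P$, hence independent of $T$ for $T$ bounded.

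First I would dispose of $\ell$. Since $\phi(t)=\tiv_0+t\hat\phi$ is affine in $t$ with $\hat\phi=Q^2\laplace\tiv_0-J^\top\nabla\tiq_\phi+\nabla\tiG_0J\tiG_0$ and $\tiq_\phi$ solves the elliptic problem displayed just before its definition, elliptic regularity gives $\sup_{\tau\le T}\big(\|\phi(\tau)\|_{H^{k}}+\|\partial_t\phi\|_{H^{k}}\big)\le C(\tiv_0,\tiG_0)$ for every $k$ below the data regularity. Hence $\ell(0)=0$, $\|\ell(t)\|_{H^{k}}\le t\,C(\tiv_0,\tiG_0)$ and $\|\partial_t\ell(t)\|_{H^{k}}=\|J\nabla\phi(t)\|_{H^{k}}\le C(\tiv_0,\tiG_0)$. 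From these three facts one reads off $\|\ell\|_{L^\infty H^{s+1}}\le T\,C(\tiv_0,\tiG_0)$; $\|\ell\|_{L^\infty_{1/4}H^{s+1}}=\sup_t t^{-1/4}\|\ell(t)\|_{H^{s+1}}\le T^{3/4}C(\tiv_0,\tiG_0)$; and, since $\ell$ vanishes at $t=0$, a Poincar\'e‑in‑time estimate gives $\|\ell\|_{H^1_{(0)}H^{\gamma+1}}\le CT^{1/2}C(\tiv_0,\tiG_0)$, whence, interpolating this against $\|\ell\|_{L^2H^{2+\mu}}\le CT^{3/2}C(\tiv_0,\tiG_0)$, the fractional‑time bounds $\|\ell\|_{H^{(s-1)/2}_{(0)}H^{2+\mu}}\le CT^{1/2}C(\tiv_0,\tiG_0)$ and $\|\ell\|_{H^{s/2-1/4}_{(0)}H^{2+\mu}}\le CT^{1/8}C(\tiv_0,\tiG_0)$ (the weaker power matching the larger time index $\tfrac s2-\tfrac14>\tfrac{s-1}{2}$). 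Finally $\hat X-\tiomega-\ell$ is again an explicit function, of order $t^2$, so $\|\hat X-\tiomega-\ell\|_{\mathcal{A}^{s+1,\gamma+1}}\le C(\tiv_0,\tiG_0)T^{1/2}$.

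Next I would handle $Z$ using only the definition of $\mathcal{A}^{s+1,\gamma+1}$. The weight conversion $\|Z(t)\|_{H^{s+1}}=t^{1/4}\big(t^{-1/4}\|Z(t)\|_{H^{s+1}}\big)\le t^{1/4}\|Z\|_{L^\infty_{1/4}H^{s+1}}$ gives $\|Z\|_{L^\infty H^{s+1}}\le T^{1/4}\|Z\|_{\mathcal A^{s+1,\gamma+1}}$, while $\|Z\|_{L^\infty_{1/4}H^{s+1}}\le\|Z\|_{\mathcal A^{s+1,\gamma+1}}$ is trivial. For the three remaining norms, note that all time derivatives of $Z$ of order $<2$ vanish at $t=0$, so the Beale‑space embedding with a positive power of $T$ (cf.\ \cite{beale1981initial,lions1972hilbert}) gives $\|Z\|_{H^1_{(0)}H^{\gamma+1}}\le CT^{\delta}\|Z\|_{H^2_{(0)}H^{\gamma+1}}\le CT^{\delta}\|Z\|_{\mathcal A^{s+1,\gamma+1}}$; and the two fractional‑in‑time, intermediate‑in‑space norms are obtained by interpolating between the two components of $\mathcal A^{s+1,\gamma+1}$ (first converting $L^\infty_{1/4}H^{s+1}\hookrightarrow L^2H^{s+1}$ at the cost of $T^{3/4}$, then interpolating with $H^2_{(0)}H^{\gamma+1}$, then trading the surplus time regularity down to the target index for a further $T$‑gain). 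This uses $\gamma+1<2+\mu<s+1$: the left inequality forces $\mu>\gamma-1$, which is why one takes $\gamma$ close to $1$ before choosing $\mu$ small, and the right one holds because $s>2$; it places $H^{2+\mu}$ strictly inside the scale $[H^{\gamma+1},H^{s+1}]$ and well below the time regularity ($\approx H^2$) the interpolation supplies, so that the slack can be absorbed into $T^{\delta}$, yielding $\|Z\|_{H^{(s-1)/2}_{(0)}H^{2+\mu}}+\|Z\|_{H^{s/2-1/4}_{(0)}H^{2+\mu}}\le CT^{\delta}\|Z\|_{\mathcal A^{s+1,\gamma+1}}$. Adding $\|\tiomega\|_{H^{s+1}(\tiOmega_0)}\le C$ for the final estimate where $\tiX=\tiomega+Z+\ell$ occurs, and combining with the $\ell$‑bounds by the triangle inequality, produces exactly the seven asserted inequalities.

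The step requiring the most care is the family of fractional‑in‑time, intermediate‑in‑space estimates: one must verify that the target pairs $\big(\tfrac{s-1}{2},\,2+\mu\big)$ and $\big(\tfrac s2-\tfrac14,\,2+\mu\big)$ are admissible interpolants of $\big(L^\infty_{1/4}H^{s+1},\,H^2_{(0)}H^{\gamma+1}\big)$ over the whole range $2<s<\tfrac52$, $1<\gamma<s-1$, $\delta,\mu$ small, that interpolating the $(0)$‑type spaces (whose norms depend on $T$ through the eigenvalues $\big(\tfrac{(2n+1)\pi}{2T}\big)^{2s}$) is legitimate, and, above all, that every embedding and interpolation constant is independent of $T$ for $T$ bounded — which is precisely the reason the spaces $\mathcal{K}^s_{(0)}$, $\overline{\mathcal K}^s_{(0)}$, $\mathcal A^{s,\gamma}$ were set up following \cite{beale1981initial,lions1972hilbert}. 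The magnetic field intervenes only through the extra term $\nabla\tiG_0J\tiG_0$ in $\hat\phi$; as $\tiG_0$ is as smooth as needed this alters none of the estimates, only the value of $C(\tiv_0,\tiG_0)$.
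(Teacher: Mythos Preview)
Your proposal is correct and follows essentially the same route as the paper: the same splitting $\tiX-\tiomega=Z+\ell$ with $\ell=\int_0^t J\nabla\phi\,d\tau$, explicit bounds on $\ell$ from the affine form of $\phi$, and for $Z$ the weight conversion $L^\infty_{1/4}\to L^\infty$ together with the $T^\delta$--gaining time embeddings (the paper packages these as Lemma~\ref{lem3.3} plus the interpolation Lemma~\ref{lem3.7}, which is exactly your ``interpolate $L^2H^{s+1}$ against $H^2_{(0)}H^{\gamma+1}$ and trade down'' step). Your remark that $2+\mu$ must sit strictly between $\gamma+1$ and $s+1$, forcing $\gamma$ close to $1$ before $\mu$ is chosen small, is a useful clarification the paper leaves implicit in its appeal to Lemma~\ref{lem3.7}.
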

\begin{proof}
	Without loss of generality, we may	assume $ T<1. $ We have
	\begin{align*}
		\|\tilde{X}-\tiomega\|_{L^{\infty} H^{s+1}} & \leq \left\| \tilde{X}-\tiomega-\int_{0}^{t}J \nabla \phi d\tau  \right\|_{L^{\infty} H^{s+1}}+ \left\|\int_{0}^{t}J \nabla \phi d\tau \right\|_{L^{\infty} H^{s+1}}\nonumber \\
		& \leq T^{\frac{1}{4}} \left\|\tilde{X}-\tiomega-\int_{0}^{t}J \nabla \phi  d\tau \right\|_{L^\infty_{\frac{1}{4}} H^{s+1}}+ \left\|\int_{0}^{t}J (\nabla \tiv_0+\tau\nabla \hat{\phi}) d\tau \right\|_{L^{\infty} H^{s+1}}\nonumber\\
		& \leq  T^{\frac{1}{4}} \left\|\tilde{X}-\tiomega-\int_{0}^{t}J \nabla \phi d\tau \right\|_{\mathcal{A}^{s+1, \gamma+1}}+C(\tilde{v}_0, \tiG_0) T,\nonumber
	\end{align*}
and
	\begin{align*}
		\|\tilde{X}-\tiomega\|_{L_{\frac 14}^{\infty} H^{s+1}} & \leq \left\|\tilde{X}-\tiomega-\int_{0}^{t}J \nabla \phi  d\tau \right\|_{L_{\frac 14}^{\infty} H^{s+1}}+ \left\|\int_{0}^{t}J \nabla \phi  d\tau \right\|_{L_{\frac 14}^{\infty} H^{s+1}}\nonumber \\
		& \leq \left\|\tilde{X}-\tiomega-\int_{0}^{t}J \nabla \phi d\tau \right\|_{L_{\frac 14}^{\infty} H^{s+1}}+ \left\|\int_{0}^{t}J (\nabla \tiv_0+\tau\nabla \hat{\phi}) d\tau \right\|_{L_{\frac 14}^{\infty} H^{s+1}}\nonumber\\
		& \leq  \left\|\tilde{X}-\tiomega-\int_{0}^{t}J \nabla \phi d\tau \right\|_{\mathcal{A}^{s+1, \gamma+1}}+C(\tilde{v}_0, \tiG_0) T^\frac34 .\nonumber
	\end{align*}
	
For some $ \delta>0 $ small enough and $ \frac 12<\eta<1  $, we have from Lemmas \ref{lem3.3} and \ref{lem3.7} that
	\begin{align}
		&\|\tilde{X}-\tiomega\|_{H_{(0)}^1 H^{\gamma+1}}\nonumber\\
		\leq&  \left\|\tilde{X}-\tiomega-\int_{0}^{t}J \nabla \phi d\tau \right\|_{H_{(0)}^1 H^{\gamma+1}}+ \left\|\int_{0}^{t}J \nabla \phi  d\tau \right\|_{H_{(0)}^1 H^{\gamma+1}}\nonumber \\
		\leq&  \left\|\int_0^t \partial_\tau (\tilde{X}-\tiomega-\int_{0}^{\tau}J \nabla \phi d\xi ) d\tau  \right\|_{H_{(0)}^{1+\eta-\delta} H^{\gamma+1}}+C(\tilde{v}_0)\|t\|_{H_{(0)}^1}+C(\tilde{v}_0, \tiG_0)\|t^2\|_{H_{(0)}^1} \nonumber\\
		\leq & C T^\delta \left\|\tilde{X}-\tiomega-\int_{0}^{t}J \nabla \phi d\tau \right\|_{H_{(0)}^{1+\eta} H^{\gamma+1}}+C(\tilde{v}_0, \tiG_0) T^{\frac{1}{2}}\nonumber \\
		\leq&  C T^\delta \left\|\tilde{X}-\tiomega-\int_{0}^{t}J \nabla \phi d\tau \right\|_{\mathcal{A}^{s+1,\gamma+1}}+C(\tilde{v}_0, \tiG_0) T^{\frac{1}{2}},\nonumber
	\end{align}
and	
%	From Lemma \ref{lem3.3}, for some $ \delta >0 $  small enough, we have
	\begin{align*}
		&\|\tilde{X}-\tiomega\|_{H_{(0)}^{\frac{s-1}{2}} H^{2+\mu}}\nonumber\\
		\leq& \left\|\tilde{X}-\tiomega-\int_{0}^{t}J \nabla \phi d\tau \right\|_{H_{(0)}^{\frac{s-1}{2}} H^{2+\mu}}+ \left\|\int_{0}^{t}J \nabla \phi d\tau \right\|_{H_{(0)}^{\frac{s-1}{2}} H^{2+\mu}}\nonumber \\
		\leq & \left\|\int_0^t \partial_\tau (\tilde{X}-\tiomega-\int_{0}^{\tau}J \nabla \phi  d\xi ) d\tau  \right\|_{H_{(0)}^{\frac{s-1}{2}+\delta-\delta} H^{2+\mu}}\\
		&+C(\tilde{v}_0)\|t\|_{H_{(0)}^{\frac{s-1}{2}}}+C(\tilde{v}_0, \tiG_0)\|t^2\|_{H_{(0)}^{\frac{s-1}{2}}}\nonumber \\
		\leq& C T^\delta \left\|\tilde{X}-\tiomega-\int_{0}^{t}J \nabla \phi  d\tau \right\|_{H_{(0)}^{\frac{s-1}{2}+\delta}H^{2+\mu}}+C(\tilde{v}_0, \tiG_0) T^{\frac{1}{2}}\nonumber \\
		\leq& C T^\delta \left\|\tilde{X}-\tiomega-\int_{0}^{t}J \nabla \phi d\tau \right\|_{\mathcal{A}^{s+1,\gamma+1}}+C(\tilde{v}_0, \tiG_0) T^{\frac{1}{2}} .\nonumber
	\end{align*}
	Similarly, we have  
	\begin{align*}
		  \|\tilde{X}-\tiomega\|_{H_{(0)}^{\frac{s}{2}-\frac 14} H^{2+\mu}}\leq C T^\delta \left\|\tilde{X}-\tiomega-\int_{0}^{t}J \nabla \phi d\tau \right\|_{\mathcal{A}^{s+1,\gamma+1}}+C(\tilde{v}_0, \tiG_0) T^{\frac{1}{8}}, 
	\end{align*}
	\begin{align*}
		\|\tilde{X}-\hat{X}\|_{ \mathcal{A}^{s+1,\gamma+1} } & \leq \left\|\tilde{X}-\tiomega-\int_{0}^{t}J \nabla \phi d\tau \right\|_{ \mathcal{A}^{s+1,\gamma+1} }+ \left\|\int_{0}^{t} \tau J \nabla \hat{\phi} d\tau \right\|_{ \mathcal{A}^{s+1,\gamma+1} }\nonumber \\
		&\leq \left\|\tilde{X}-\tiomega-\int_{0}^{t}J \nabla \phi d\tau\right\|_{ \mathcal{A}^{s+1,\gamma+1} }+C(\tilde{v}_0, \tiG_0) T^{\frac{1}{2}} ,\nonumber
	\end{align*}
and
	\begin{align*}
		\|\tiX\|_{L^\infty H^{s+1}}\le & \|\tiX-\tiomega\|_{L^\infty H^{s+1}}+\|\tiomega\|_{L^\infty H^{s+1}}\nonumber\\
		\le &T^{\frac{1}{4}} \left\|\tilde{X}-\tiomega-\int_{0}^{t}J \nabla \phi d\tau \right\|_{\mathcal{A}^{s+1, \gamma+1}}+ C(\tiv_0,\tiG_0).
	\end{align*}
\end{proof}

\begin{lemma}\label{G-G0} 	Let $2<s<\frac{5}{2}, 1<\gamma< s-1,  \delta,\mu>0 $ small enough and $ \tiG -\tiG_0-\int_{0}^{t}\nabla \phi J \tiG_0d\tau \in \mathcal{A}^{s,\gamma} $. Then, for $ T>0 $ small enough we have
	\begin{align}
		\|\tiG -\tiG_0\|_{L^\infty H^s} &\leq T^{\frac{1}{4}} \left\|\tiG -\tiG_0-\int_{0}^{t}\nabla \phi J \tiG_0 d\tau \right\|_{\mathcal{A}^{s,\gamma}}+ C(\tiv_0,\tiG_0)T,\nonumber\\
		\|\tiG -\tiG_0\|_{L_{\frac14}^\infty H^s} &\leq \left\|\tiG -\tiG_0-\int_{0}^{t}\nabla \phi J \tiG_0 d\tau \right\|_{\mathcal{A}^{s,\gamma}}+ C(\tiv_0,\tiG_0)T^{\frac 34},\nonumber\\
		\|\tiG -\tiG_0\|_{H^1_{(0)}H^\gamma}  &\leq C T^\delta \left\|\tiG -\tiG_0-\int_{0}^{t}\nabla \phi J \tiG_0 d\tau  \right\|_{\mathcal{A}^{s,\gamma}}+C(\tilde{v}_0, \tiG_0) T^{\frac{1}{2}},\nonumber \\
		\|\tiG -\tiG_0\|_{H^{\frac{s-1}{2}}_{(0)}H^{1+\mu}} 
		&\leq C T^\delta \left\|\tiG -\tiG_0-\int_{0}^{t}\nabla \phi J \tiG_0 d\tau  \right\|_{\mathcal{A}^{s,\gamma}}+C(\tilde{v}_0, \tiG_0) T^{\frac{1}{2}},\nonumber\\
		\|\tiG-\tiG_0\|_{H_{(0)}^{\frac{s}{2}-\frac 14}H^{1+\mu}}  &\leq C T^\delta \left\|\tiG -\tiG_0-\int_{0}^{t}\nabla \phi J \tiG_0 d\tau \right\|_{\mathcal{A}^{s,\gamma}}+C(\tilde{v}_0, \tiG_0) T^{\frac 18},\nonumber\\
		\|\tilde{G}-\hat{G}\|_{\mathcal{A}^{s,\gamma} } &\leq  \left\|\tilde{G}-\tiG_0-\int_{0}^{t}\nabla \phi J \tiG_0 d\tau \right\|_{ \mathcal{A}^{s,\gamma} }+C(\tilde{v}_0, \tiG_0) T^{\frac{1}{2}},\nonumber \\
		\|\tiG\|_{L^\infty H^s} & 
		\le T^{\frac{1}{4}} \left\|\tiG-\tiG_0-\int_{0}^{t}\nabla \phi J \tiG_0 d\tau \right\|_{\mathcal{A}^{s,\gamma}}+ C(\tiv_0,\tiG_0).\nonumber
	\end{align}
\end{lemma}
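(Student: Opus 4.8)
The plan is to follow the proof of Lemma~\ref{x-omega} line by line, with $\tiX-\tiomega$ replaced by $\tiG-\tiG_0$, the reference integral $\int_0^t J\nabla\phi\,d\tau$ replaced by $\int_0^t \nabla\phi\,J\tiG_0\,d\tau$, the comparison function $\hat{X}$ replaced by $\hat{G}=\tiG_0+t\nabla\tiv_0 J\tiG_0$, and the target space $\mathcal{A}^{s+1,\gamma+1}$ replaced by $\mathcal{A}^{s,\gamma}$ --- so that every spatial Sobolev index drops by one. The one input special to this lemma is the elementary splitting
\begin{align*}
  \tiG-\tiG_0=\Big(\tiG-\tiG_0-\int_0^t\nabla\phi\,J\tiG_0\,d\tau\Big)+\int_0^t\nabla\phi\,J\tiG_0\,d\tau=:\mathrm{I}+\mathrm{II}.
\end{align*}
Here $\mathrm{I}$ is controlled in $\mathcal{A}^{s,\gamma}$ by hypothesis, while $\mathrm{II}$ is completely explicit: from $\phi=\tiv_0+t\hat{\phi}$ with $\hat{\phi}=Q^2\laplace\tiv_0-J^\top\nabla\tiq_\phi+\nabla\tiG_0 J\tiG_0$ one gets $\mathrm{II}=t\,\nabla\tiv_0 J\tiG_0+\tfrac{t^2}{2}\,\nabla\hat{\phi}\,J\tiG_0$, a polynomial in $t$ whose coefficients are fixed functions of $\tiv_0,\tiG_0$ (with $\tiq_\phi$ the solution of the elliptic problem of Section~\ref{Local existence of smooth solutions}). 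Every term $C(\tiv_0,\tiG_0)T^{\bullet}$ on the right-hand sides is simply the norm of this polynomial in the appropriate time space, the power of $T$ coming from $\|t\|$ and $\|t^2\|$ in $L^\infty$, $L^\infty_{1/4}$, $H^1_{(0)}$, $H^{(s-1)/2}_{(0)}$, $H^{s/2-1/4}_{(0)}$ and $\mathcal{A}^{s,\gamma}$ (and one may freely bound $T^a\le T^b$ for $a\ge b$ since $T<1$).

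For the two sup-type inequalities I would use only the triangle inequality, the embeddings $\|\cdot\|_{L^\infty H^s}\le T^{1/4}\|\cdot\|_{L^\infty_{1/4}H^s}\le T^{1/4}\|\cdot\|_{\mathcal{A}^{s,\gamma}}$ (resp.\ $\|\cdot\|_{L^\infty_{1/4}H^s}\le\|\cdot\|_{\mathcal{A}^{s,\gamma}}$), and the explicit bound for $\mathrm{II}$. For the fractional-in-time Beale norms $H^1_{(0)}H^\gamma$, $H^{(s-1)/2}_{(0)}H^{1+\mu}$ and $H^{s/2-1/4}_{(0)}H^{1+\mu}$, I would write $\mathrm{I}=\int_0^t\partial_\tau\mathrm{I}\,d\tau$ and apply Lemmas~\ref{lem3.3} and~\ref{lem3.7}, exactly as in the proof of Lemma~\ref{x-omega}, to trade the time integration for a gain $T^\delta$ at the cost of raising the temporal index slightly (from $1$ to $1+\eta$ with $\tfrac12<\eta<1$, or from $\tfrac{s-1}{2},\tfrac{s}{2}-\tfrac14$ to that index plus $\delta$), and then embed the resulting space into $\mathcal{A}^{s,\gamma}$. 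For the $\mathcal{A}^{s,\gamma}$-bound of $\tiG-\hat{G}$ one uses instead $\tiG-\hat{G}=\mathrm{I}+\tfrac{t^2}{2}\nabla\hat{\phi}\,J\tiG_0$ and estimates the last term by $C(\tiv_0,\tiG_0)T^{1/2}$; the final inequality follows from $\|\tiG\|_{L^\infty H^s}\le\|\tiG-\tiG_0\|_{L^\infty H^s}+\|\tiG_0\|_{H^s}$ and the first inequality already proved.

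The part that needs genuine care --- as opposed to the routine arithmetic of the powers of $T$ --- is checking that each temporal index produced above stays below $2$ and that each spatial index ($\gamma$ and $1+\mu$) lies in the range for which $\mathcal{A}^{s,\gamma}$ embeds, with a constant independent of $T$, into the relevant $H^\theta_{(0)}H^\sigma$; this is where the hypotheses $2<s<\tfrac52$, $1<\gamma<s-1$ and $\delta,\mu$ small enough (in particular $1+\mu<\gamma$) are used, precisely as for $\tiX$ in Lemma~\ref{x-omega}. The only feature not present in that lemma is the extra factor $\tiG_0$ in the product $\nabla\phi\,J\tiG_0$; this is harmless because $s>2$ makes $H^s(\tiOmega_0)$ a Banach algebra in two dimensions, so all these products remain in the spaces where they are needed.
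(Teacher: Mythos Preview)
Your proposal is correct and follows essentially the same route as the paper's proof: the paper splits $\tiG-\tiG_0$ exactly as your $\mathrm{I}+\mathrm{II}$, handles the $L^\infty H^s$ and $L^\infty_{1/4}H^s$ bounds by the triangle inequality and the embedding $\|\cdot\|_{L^\infty H^s}\le T^{1/4}\|\cdot\|_{L^\infty_{1/4}H^s}$, and for the $H^1_{(0)}H^\gamma$, $H^{(s-1)/2}_{(0)}H^{1+\mu}$ and $H^{s/2-1/4}_{(0)}H^{1+\mu}$ norms writes $\mathrm{I}=\int_0^t\partial_\tau\mathrm{I}\,d\tau$ and invokes Lemmas~\ref{lem3.3} and~\ref{lem3.7} to gain $T^\delta$, then embeds into $\mathcal{A}^{s,\gamma}$. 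The treatment of $\tiG-\hat G$ and of $\|\tiG\|_{L^\infty H^s}$ in the paper is also identical to what you describe.
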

\begin{proof}
	Without loss of generality, we may	assume $ T<1 $. We obtain
	\begin{align}
		\|\tiG -\tiG_0\|_{L^\infty H^s}=& \left\|\tiG -\tiG_0-\int_{0}^{t}\nabla \phi J \tiG_0 d\tau  \right\|_{L^\infty H^s}+\left\|\int_{0}^{t}\nabla \phi J \tiG_0 d\tau \right\|_{L^\infty H^s}\nonumber\\
		\le &T^{\frac{1}{4}} \left\|\tiG -\tiG_0-\int_{0}^{t}\nabla \phi J \tiG_0 d\tau \right\|_{L_{\frac 14}^\infty H^s}+\left\|\int_{0}^{t}\nabla \phi J \tiG_0 d\tau \right\|_{L^\infty H^s}\nonumber\\
		\le &T^{\frac{1}{4}} \left\|\tiG -\tiG_0-\int_{0}^{t}\nabla \phi J \tiG_0 d\tau \right\|_{\mathcal{A}^{s,\gamma}}+ C(\tiv_0,\tiG_0)T.\nonumber
	\end{align}
Similarly, we have
\begin{align*}
		\|\tiG -\tiG_0\|_{L_{\frac14}^\infty H^s} \leq \left\|\tiG -\tiG_0-\int_{0}^{t}\nabla \phi J \tiG_0 d\tau \right\|_{\mathcal{A}^{s,\gamma}}+ C(\tiv_0,\tiG_0)T^{\frac 34}.
\end{align*}
	
	For some $ \delta>0 $ small enough and $ \frac 12<\eta<1  $, we have from Lemmas \ref{lem3.3} and \ref{lem3.7} that
	\begin{align*}
		\|\tiG -\tiG_0\|_{H^1_{(0)}H^\gamma}=& \left\|\tiG -\tiG_0-\int_{0}^{t}\nabla \phi J \tiG_0d\tau \right\|_{H^1_{(0)}H^\gamma}+\left\|\int_{0}^{t}\nabla \phi J \tiG_0d\tau  \right\|_{H^1_{(0)}H^\gamma}\nonumber\\
		\le &  \left\|\int_0^t \partial_\tau (\tiG -\tiG_0-\int_{0}^{\tau}\nabla \phi J \tiG_0 d\xi ) d\tau  \right\|_{H_{(0)}^{1+\eta-\delta} H^{\gamma}}\\
		&+C(\tilde{v}_0, \tiG_0)(\|t\|_{H_{(0)}^1}+\|t^2\|_{H_{(0)}^1}) \nonumber\\
		\leq &C T^\delta \left\|\tiG -\tiG_0-\int_{0}^{t}\nabla \phi J \tiG_0 d\tau \right\|_{H_{(0)}^{1+\eta} H^{\gamma}}+C(\tilde{v}_0, \tiG_0) T^{\frac{1}{2}}\nonumber \\
		\leq &C T^\delta \left\|\tiG -\tiG_0-\int_{0}^{t}\nabla \phi J \tiG_0 d\tau \right\|_{\mathcal{A}^{s,\gamma}}+C(\tilde{v}_0, \tiG_0) T^{\frac{1}{2}},\nonumber 
	\end{align*}
and %	From Lemma \ref{lem3.3} for some $ \delta >0 $ small enough, we have
	\begin{align*}
		&\|\tiG -\tiG_0\|_{H^{\frac{s-1}{2}}_{(0)}H^{1+\mu}}\nonumber\\
		=& \left\|\tiG -\tiG_0-\int_{0}^{t}\nabla \phi J \tiG_0 d\tau \right\|_{H^{\frac{s-1}{2}}_{(0)}H^{1+\mu}}+\left \|\int_{0}^{t}\nabla \phi J \tiG_0 d\tau  \right\|_{H^{\frac{s-1}{2}}_{(0)}H^{1+\mu}}\nonumber\\
		\le &  \left\|\int_0^t \partial_\tau (\tiG -\tiG_0-\int_{0}^{\tau}\nabla \phi J \tiG_0 d\xi ) d\tau \right\|_{H^{\frac{s-1}{2}+\delta-\delta}_{(0)}H^{1+\mu}}\\
		&+C(\tilde{v}_0, \tiG_0)(\|t\|_{H_{(0)}^1}+\|t^2\|_{H_{(0)}^1})\nonumber \\
		\leq &C T^\delta \left\|\tiG -\tiG_0-\int_{0}^{t}\nabla \phi J \tiG_0 d\tau \right\|_{H_{(0)}^{\frac{s-1}{2}+\delta} H^{1+\mu}}+C(\tilde{v}_0, \tiG_0) T^{\frac{1}{2}}\nonumber \\
		\leq &C T^\delta \left\|\tiG -\tiG_0-\int_{0}^{t}\nabla \phi J \tiG_0 d\tau \right\|_{\mathcal{A}^{s,\gamma}}+C(\tilde{v}_0, \tiG_0) T^{\frac{1}{2}} .\nonumber
	\end{align*}
	Similarly, we have 
\begin{align*}
		 \|\tiG-\tiG_0\|_{H_{(0)}^{\frac{s}{2}-\frac 14}H^{1+\mu}} \leq C T^\delta \left\|\tiG -\tiG_0-\int_{0}^{t}\nabla \phi J \tiG_0 d\tau \right\|_{\mathcal{A}^{s,\gamma}}+C(\tilde{v}_0, \tiG_0) T^{\frac 18},   
\end{align*}
	\begin{align*}
		\|\tilde{G}-\hat{G}\|_{ \mathcal{A}^{s,\gamma} } & \leq \left\|\tilde{G}-\tiG_0-\int_{0}^{t}\nabla \phi J \tiG_0 d\tau \right\|_{ \mathcal{A}^{s,\gamma} }+ \left\|\int_{0}^{t} \tau  \nabla \hat{\phi} J\tiG_0 d\tau  \right\|_{ \mathcal{A}^{s,\gamma} }\nonumber \\
		&\leq  \left\|\tilde{G}-\tiG_0-\int_{0}^{t}\nabla \phi J \tiG_0 d\tau \right\|_{ \mathcal{A}^{s,\gamma} }+C(\tilde{v}_0, \tiG_0) T^{\frac{1}{2}} ,
	\end{align*}
and
	\begin{align*}
		\|\tiG\|_{L^\infty H^s}\le & \|(\tiG-\tiG_0)\|_{L^\infty H^s}+\|\tiG_0\|_{L^\infty H^s}\nonumber\\
		\le &T^{\frac{1}{4}} \left\|\tiG-\tiG_0-\int_{0}^{t}\nabla \phi J \tiG_0 d\tau \right\|_{\mathcal{A}^{s,\gamma}}+ C(\tiv_0,\tiG_0).
	\end{align*}
\end{proof}
\par
Note that we assume that 
\begin{equation*}
 \tiX-\tiomega-\int_{0}^{t}J\nabla\phi d\tau\in\mathcal{A}^{s+1,\gamma+1},\quad \tiG-\tiG_0-\int_{0}^{t}\nabla\phi J \tiG_0 d\tau\in\mathcal{A}^{s,\gamma}, 
\end{equation*}	
and  so we obtain that $ \tiX-\hat{X} $ and $ \tiG-\hat{G} $  belong to the same spaces  $ \mathcal{A}^{s+1,\gamma+1} $ and $ \mathcal{A}^{s,\gamma}, $  respectively, since $ \phi=\tiv_0+t\hat{\phi} $ and $ \partial_t(\int_0^t J\tau \nabla \hat{\phi}d\tau)|_{t=0}=0. $ However, $ \tiX-\tiomega $  does not belong to $ \mathcal{A}^{s+1,\gamma+1} $ because the initial value of its time derivative does not vanish. Similarly, $ \tiG-\tiG_0 $  does not belong to $ \mathcal{A}^{s,\gamma} $ either. Therefore, for the flux and the  magnetic field, we need to adopt these technical modifications to ensure that the modified quantities belong to suitable spaces. In this way, we are able to obtain the desired estimates  and prove the local existence theorem.

Now we prove the following local existence theorem.
\begin{theorem}	\label{thm1}
 Let $2<s<\frac{5}{2} $ and $ 1<\gamma<s-1$. If $\tiv(0)=\tiv_0$ and $\tiG(0)=\tiG_0$ are in $H^k (\tiOmega_0)$ for $k$ large enough,  there exist a sufficiently small $T>0$ and a solution $ (\tiX-\hat{X}, \tiw, \tiq_w, \tiG-\hat{G}) \in$ $\mathcal{A}^{s+1, \gamma+1} \times \mathcal{K}_{(0)}^{s+1} \times \mathcal{K}_{p r(0)}^s \times \mathcal{A}^{s, \gamma}$ in $(0, T] \times \tiOmega_0$ satisfying
\begin{align}
	&\left\| \tiX-\tiomega-\int_{0}^{t}J\nabla \phi d\tau \right\|_{\mathcal{A}^{s+1,\gamma+1}}\le N,\nonumber\\
	& \| (\tiw,\tiq_w)-L^{-1}( \tilde{f}_\phi^L+\tilde{f}_{G_0}, \bar{g}_\phi^L, \tilde{h}_\phi^L+\tilde{h}_{G_0}, 0 )\|_{\mathcal{K}^{s+1}_{(0)}\times \mathcal{K}^{s}_{pr(0)}}\le N,\nonumber\\
	&\left\| \tiG-\tiG_0-\int_{0}^{t}\nabla \phi J \tiG_0 d\tau \right\|_{\mathcal{A}^{s,\gamma}}\le N,\nonumber
\end{align}
where
\begin{align}
	N:=&  \left\|\int_{0}^{1} \tau J\nabla 
	\hat{\phi} d\tau\right\|_{\mathcal{A}^{s+1,\gamma+1}([0,1];\tiOmega_0)}+ \left\|\int_{0}^{1} \tau \nabla 
	\hat{\phi}J \tiG_0 d\tau \right\|_{\mathcal{A}^{s,\gamma}([0,1];\tiOmega_0)}\nonumber\\ &+\|L^{-1}(\tilde{f}_\phi^L+\tilde{f}_{G_0}, \bar{g}_\phi^L, \tilde{h}_\phi^L+\tilde{h}_{G_0}, 0)\|_{\mathcal{K}_{(0)}^{s+1}([0,1];\tiOmega_0)\times \mathcal{K}_{pr(0)}^s([0,1];\tiOmega_0)}.\nonumber 
\end{align}

\end{theorem}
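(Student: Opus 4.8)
The plan is to run a Picard iteration on the coupled system \eqref{equ4.9}--\eqref{equ4.11}, centred at the explicit leading--order profile $(\hat X,\phi,\tiq_\phi,\hat G)$, and to show that for $T$ small the iterates $(\tiX^{(n)}-\hat X,\tiw^{(n)},\tiq_w^{(n)},\tiG^{(n)}-\hat G)$ remain in a fixed ball of the product space $\mathcal{A}^{s+1,\gamma+1}\times\mathcal{K}_{(0)}^{s+1}\times\mathcal{K}_{pr(0)}^{s}\times\mathcal{A}^{s,\gamma}$ and form a Cauchy sequence there. I start the iteration from $(\tiw^{(0)},\tiq_w^{(0)},\tiX^{(0)},\tiG^{(0)})=(0,0,\hat X,\hat G)$, which lies in the ball of radius $N$: the residuals $\tiX^{(0)}-\tiomega-\int_0^t J\nabla\phi\,d\tau=-\int_0^t\tau J\nabla\hat\phi\,d\tau$ and $\tiG^{(0)}-\tiG_0-\int_0^t\nabla\phi J\tiG_0\,d\tau=-\int_0^t\tau\nabla\hat\phi J\tiG_0\,d\tau$, together with $-L^{-1}(\tilde f_\phi^L+\tilde f_{G_0},\bar g_\phi^L,\tilde h_\phi^L+\tilde h_{G_0},0)$ for the $(\tiw,\tiq_w)$-slot, have $\mathcal{A}$- and $\mathcal{K}$-norms over $[0,T]$ (with $T<1$, using monotonicity of the norms in $T$) bounded by the three summands defining $N$ after rescaling $[0,T]$ to $[0,1]$.

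For the inductive step I would treat the slots one by one. Assuming the $n$-th iterate lies in the ball of radius $N$, I first solve \eqref{equ4.9} for $(\tiw^{(n+1)},\tiq_w^{(n+1)})$ via Lemma \ref{lem4.1}; the data on the right-hand side lie in $Y_0$, i.e.\ satisfy the compatibility conditions \eqref{compatibility lemma4.1}, precisely because $\phi$ and $\tiq_\phi$ were constructed so that $\phi(0)=\tiv_0$, $(\partial_t\phi)(0)=Q^2\Delta\tiv_0-J^\top\nabla\tiq_\phi+\nabla\tiG_0 J\tiG_0$, and the elliptic problem for $\tiq^{(n+1)}(0)=\tiq_\phi$ is $n$-independent. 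Splitting the data as the fixed linear part $(\tilde f_\phi^L+\tilde f_{G_0},\bar g_\phi^L,\tilde h_\phi^L+\tilde h_{G_0})$ plus a genuinely nonlinear remainder and using the uniform bound on $L^{-1}$ gives
\[
\big\|(\tiw^{(n+1)},\tiq_w^{(n+1)})-L^{-1}(\tilde f_\phi^L+\tilde f_{G_0},\bar g_\phi^L,\tilde h_\phi^L+\tilde h_{G_0},0)\big\|\le C\,\|\text{nonlinear remainder}\|_{Y_0},
\]
where the remainder carries a positive power $T^{\delta}$ of $T$: every term of $\tif^{(n)},\tig^{(n)},\tih^{(n)}$ other than the linear part contains a factor of the form $\tizeta^{(n)}-\id$, $J(\tiX^{(n)})-J$, $\nabla_\Lambda\tiX^{(n)}-\id$ (all vanishing at $t=0$, hence $O(T^{\delta})$ by Lemma \ref{x-omega}), or one of the quadratic magnetic contributions, which is then estimated through Lemmas \ref{x-omega}, \ref{G-G0} and the product estimates \ref{lem3.3}--\ref{lem3.7} of the appendix in the auxiliary spaces $H^{2+\mu}$, $H^{1+\mu}$. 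So for $T$ small this remainder is $\le N$ in $Y_0$. The residuals for $\tiX^{(n+1)}$ and $\tiG^{(n+1)}$ are then closed directly from \eqref{equ4.11}, \eqref{equ4.10} by writing the integrands as $\big[J(\tiX^{(n)})(\tiw^{(n)}+\phi)-J\nabla\phi\big]$ and $\big[\nabla(\tiw^{(n)}+\phi)\tizeta^{(n)}J(\tiX^{(n)})\tiG^{(n)}-\nabla\phi J\tiG_0\big]$ and invoking Lemmas \ref{x-omega}, \ref{G-G0} to bound their time integrals in $\mathcal{A}$ by $CT^{\delta}$ times the ball radius plus $C(\tiv_0,\tiG_0)T^{1/2}$, again $\le N$ for $T$ small. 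This gives the uniform bound and the three displayed inequalities of the theorem.

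The contraction step is structurally the same: the differences $(\tiX^{(n+1)}-\tiX^{(n)},\tiw^{(n+1)}-\tiw^{(n)},\tiq_w^{(n+1)}-\tiq_w^{(n)},\tiG^{(n+1)}-\tiG^{(n)})$ solve the same three (in)equalities with right-hand sides equal to the corresponding differences of the nonlinear terms, which are multilinear in the $n$- and $(n-1)$-th iterates; the uniform bound and the same product, trace and flux/field estimates then yield $\|\delta^{(n+1)}\|\le C\,T^{\delta}\,\|\delta^{(n)}\|$ in the product norm. Choosing $T$ so small that $CT^{\delta}<\tfrac12$ makes the sequence Cauchy; its limit $(\tiX-\hat X,\tiw,\tiq_w,\tiG-\hat G)$ lies in the closed ball of radius $N$ and, by passing to the limit in \eqref{equ4.9}--\eqref{equ4.11} (using lower semicontinuity of the $\mathcal{A}$- and $\mathcal{K}$-norms and continuity of the nonlinear maps in the slightly weaker topologies supplied by Lemmas \ref{x-omega}, \ref{G-G0}), solves \eqref{MHD Lagrangian coordinate2}. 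The main obstacle is the nonlinear estimate for the quadratic magnetic terms $\nabla\tiG\tizeta J(\tiX)\tiG$ and $\tiG\tiG^\top(J(\tiX))^{-1}\nabla_\Lambda\tiX\,\tilde n_0$: since $\mathcal{A}^{s,\gamma}$ carries only spatial regularity $\gamma$ (barely above $1$) in its $H^2_{(0)}$-in-time component, the clean bilinear inequalities used for the viscoelastic model are unavailable, forcing one to interpolate into the auxiliary spaces $H^{2+\mu}$, $H^{1+\mu}$ of Lemmas \ref{x-omega}, \ref{G-G0} and to exploit the vanishing-at-$t=0$ structure to extract the powers $T^{\delta}$ — which is exactly why the functional indices were chosen as they are and why the flux and magnetic-field estimates were established beforehand.
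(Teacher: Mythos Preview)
Your proposal is correct and follows essentially the same strategy as the paper: a Picard iteration initialized at $(0,0,\hat X,\hat G)$, uniform bounds obtained by splitting the data into the fixed linear piece $(\tilde f_\phi^L+\tilde f_{G_0},\bar g_\phi^L,\tilde h_\phi^L+\tilde h_{G_0})$ plus a nonlinear remainder carrying a factor $T^\delta$, and a contraction estimate of the same shape; the paper organizes this as Proposition~\ref{pro1}, broken into Lemma~\ref{lem1} (for $\tiX$), Proposition~\ref{pro2} (for $\tiG$), and Proposition~\ref{pro3} (for $(\tiw,\tiq_w)$), with exactly the interpolation into the auxiliary $H^{2+\mu}$, $H^{1+\mu}$ scales you identify as the crux for the magnetic quadratic terms. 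One minor remark: the bound $N$ over $[0,1]$ dominates the $[0,T]$ norms simply by monotonicity in $T$ (no rescaling is needed or used), which is what the paper's Remark after Theorem~\ref{thm1} records.
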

\begin{remark}
	 In the definition of $ N, $ we have taken the integral from $ 0 $ to $ 1 $ since we merely need to prove the existence of solutions for  $ T>0 $   small enough. From now on,  we assume that $ T $ is less than $ 1 $ for simplicity.
\end{remark}

In this theorem we recall $ \hat{X}=\tiomega+t J \tiv_0 $ and $\hat{G}=\tiG_0+t  \nabla \tiv_0 J \tiG_0$. Combining these two terms, we have $\tiX-\hat{X}\in \mathcal{A}^{s+1, \gamma+1}$ and $\tiG-\hat{G}\in\mathcal{A}^{s, \gamma} $. Also, we have introduced some terms which will be defined later for technical modifications. To prove Theorem \ref{thm1}, we need to show the following estimates.
\begin{proposition}\label{pro1}
	Let $2<s<\frac{5}{2} $ and $ 1<\gamma<s-1.$ If $ ( (\tiw^{(0)}, \tiq^{(0)}_w), \tiX^{(0)}, \tiG^{(0)} )=((0,0), \hat{X}, \hat{G})   $ and $ N $ is defined as above, for $ T>0 $ small enough depending on $ N, \tiv_0 $ and $ \tiG_0 $, we have
\begin{align*}
	&\tiX^{(n)}-\hat{X}\in \mathcal{A}^{s+1,\gamma+1},\quad (\tiw^{(n)},\tiq_w^{(n)}) \in \mathcal{K}^{s+1}_{(0)}\times \mathcal{K}^{s}_{pr(0)},\quad \tiG^{(n)}-\hat{G}\in \mathcal{A}^{s,\gamma},\nonumber \\
	&\left\| \tiX^{(n)}-\tiomega-\int_{0}^{t}J\nabla \phi d\tau \right\|_{\mathcal{A}^{s+1,\gamma+1}}\le N,\nonumber\\
	&\| (\tiw^{(n)},\tiq_w^{(n)})-L^{-1}( \tilde{f}_\phi^L+\tilde{f}_{G_0}, \bar{g}_\phi^L, \tilde{h}_\phi^L+\tilde{h}_{G_0}, 0 )\|_{\mathcal{K}^{s+1}_{(0)}\times \mathcal{K}^{s}_{pr(0)}}\le N,\nonumber\\
	&\left\| \tiG^{(n)}-\tiG_0-\int_{0}^{t}\nabla \phi J \tiG_0 d\tau \right\|_{\mathcal{A}^{s,\gamma}}\le N,\nonumber
\end{align*}
where $ n\ge 0. $

Moreover, we have
\begin{align*}
		 &\|\tiw^{(n+1)}-\tiw^{(n)}\|_{\mathcal{K}_{(0)}^{s+1}}+\|\tiq_w^{(n+1)}-\tiq_w^{(n)}\|_{\mathcal{K}_{p r(0)}^s}+\|\tiX^{(n+1)}-\tiX^{(n)}\|_{\mathcal{A}^{s+1, \gamma+1}}\nonumber \\
		&+\|\tiG^{(n+1)}-\tiG^{(n)}\|_{\mathcal{A}^{s, \gamma}}\nonumber\\
		\le & C(N,\tiv_0,\tiG_0) T^{\delta_1}   \left( \|\tiw^{(n)}-\tiw^{(n-1)}\|_{\mathcal{K}_{(0)}^{s+1}}+\|\tiq_w^{(n)}-\tiq_w^{(n-1)}\|_{\mathcal{K}_{p r(0)}^s}\right) \nonumber \\			
		& +C(N,\tiv_0,\tiG_0) T^{\delta_2}\|\tiX^{(n)}-\tiX^{(n-1)}\|_{\mathcal{A}^{s+1, \gamma+1}}\\
		&+C(N,\tiv_0,\tiG_0) T^{\delta_3}\|\tiG^{(n)}-\tiG^{(n-1)}\|_{\mathcal{A}^{s, \gamma}}, \nonumber
\end{align*}
where $ \delta_i>0, i=1,2,3 $ and $ n\ge 1. $
In particular, $ ( (\tiw^{(n)}, \tiq^{(n)}_w), \tiX^{(n)}-\hat{X}, \tiG^{(n)}-\hat{G} ) $ is a Cauchy sequence  for $ T>0 $ small enough and the limit $ ( (\tiw, \tiq_w), \tiX-\hat{X}, \tiG-\hat{G} ) $ satisfies
\begin{align*}
	&\tiX-\hat{X}\in \mathcal{A}^{s+1,\gamma+1},\quad (\tiw,\tiq_w) \in \mathcal{K}^{s+1}_{(0)}\times \mathcal{K}^{s}_{pr(0)},\quad \tiG-\hat{G}\in \mathcal{A}^{s,\gamma},\nonumber \\
	&\left\| \tiX-\tiomega-\int_{0}^{t}J\nabla \phi d\tau \right\|_{\mathcal{A}^{s+1,\gamma+1}}\le N,\nonumber\\
	& \| (\tiw,\tiq_w)-L^{-1}( \tilde{f}_\phi^L+\tilde{f}_{G_0}, \bar{g}_\phi^L, \tilde{h}_\phi^L+\tilde{h}_{G_0}, 0 )\|_{\mathcal{K}^{s+1}_{(0)}\times \mathcal{K}^{s}_{pr(0)}}\le N,\nonumber\\
	&\left\| \tiG-\tiG_0-\int_{0}^{t}\nabla \phi J \tiG_0 d\tau \right\|_{\mathcal{A}^{s,\gamma}}\le N.\nonumber
\end{align*}
Therefore, system \eqref{MHD Lagrangian coordinate2} is solved.
\end{proposition}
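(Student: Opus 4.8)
The plan is to prove Proposition \ref{pro1} by a standard induction-and-contraction argument, treating the four unknowns $(\tiw,\tiq_w)$, $\tiX$ and $\tiG$ through their respective defining problems \eqref{equ4.9}, \eqref{equ4.11} and \eqref{equ4.10}, and using Lemma \ref{lem4.1} to invert the linear Stokes-type system, Lemma \ref{x-omega} for the flux, and Lemma \ref{G-G0} for the magnetic field. First I would set up the bookkeeping: decompose the right-hand sides $\tif^{(n)},\tig^{(n)},\tih^{(n)}$ of \eqref{equ4.9} into a ``linear-in-data'' part $\tilde f_\phi^L+\tilde f_{G_0}$, $\bar g_\phi^L$, $\tilde h_\phi^L+\tilde h_{G_0}$ (the terms obtained by freezing $\tizeta^{(n)}=\id$, $\tiX^{(n)}=\tiomega$ and keeping only the $\tiv_0,\tiG_0$-contributions, which is exactly what enters $N$) plus a remainder that is at least quadratic in the ``fluctuations'' $\tiw^{(n)}$, $\tiX^{(n)}-\hat X$, $\tiG^{(n)}-\hat G$ and carries a positive power of $T$ by time-integration. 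Then I would verify the base case $n=0$: with $((\tiw^{(0)},\tiq_w^{(0)}),\tiX^{(0)},\tiG^{(0)})=((0,0),\hat X,\hat G)$ all three norms in the statement vanish identically (using $\partial_t(\int_0^t J\tau\nabla\hat\phi\,d\tau)|_{t=0}=0$ and $\phi=\tiv_0+t\hat\phi$ so that $\hat X-\tiomega-\int_0^t J\nabla\phi\,d\tau=-\int_0^t J\tau\nabla\hat\phi\,d\tau$), hence are $\le N$.

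For the inductive step I would assume the three uniform bounds $\le N$ hold at level $n$ and prove them at level $n+1$. Apply $L^{-1}$ to \eqref{equ4.9}: by Lemma \ref{lem4.1}, $\|(\tiw^{(n+1)},\tiq_w^{(n+1)})\|_{\mathcal{K}^{s+1}_{(0)}\times\mathcal{K}^s_{pr(0)}}\le C\|(\tif^{(n)}-\partial_t\phi+Q^2\Delta\phi-J^\top\nabla\tiq_\phi,\ \tig^{(n)}-\trace(\nabla\phi J),\ \tih^{(n)}+\cdots)\|_{Y_0}$; subtracting off $L^{-1}(\tilde f_\phi^L+\tilde f_{G_0},\bar g_\phi^L,\tilde h_\phi^L+\tilde h_{G_0},0)$ leaves a right-hand side consisting only of the genuinely nonlinear remainders, each of which I would bound by $C(N,\tiv_0,\tiG_0)T^{\delta}$ using the product estimates in the $\mathcal{K},\bar{\mathcal{K}},\mathcal{A}$ spaces from the appendix (Lemmas \ref{lem3.3}, \ref{lem3.7} and the trace/embedding lemmas) together with the estimates of $\tizeta^{(n)}$ obtained from $\tiX^{(n)}$ via Lemma \ref{x-omega} (e.g. $\tizeta^{(n)}=(\nabla\tiX^{(n)})^{-1}$ is controlled once $\|\tiX^{(n)}-\tiomega\|_{L^\infty H^{s+1}}$ is small, which holds for $T$ small by the first inequality of Lemma \ref{x-omega}). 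Choosing $T$ small makes this $\le N$. Similarly, feeding the level-$n$ data into \eqref{equ4.11} and invoking Lemma \ref{x-omega} gives $\|\tiX^{(n+1)}-\tiomega-\int_0^t J\nabla\phi\,d\tau\|_{\mathcal{A}^{s+1,\gamma+1}}\le N$ after absorbing the $C(\tiv_0,\tiG_0)T^{1/2}$-type terms; and feeding them into \eqref{equ4.10} and invoking Lemma \ref{G-G0} gives the bound for $\tiG^{(n+1)}-\tiG_0-\int_0^t\nabla\phi J\tiG_0\,d\tau$. This closes the induction.

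For the contraction estimate I would subtract the level-$(n+1)$ and level-$n$ equations. Each difference $\tif^{(n)}-\tif^{(n-1)}$, $\tig^{(n)}-\tig^{(n-1)}$, $\tih^{(n)}-\tih^{(n-1)}$ is, by multilinearity, a sum of terms each containing at least one factor of the form $\tiw^{(n)}-\tiw^{(n-1)}$, $\tiq_w^{(n)}-\tiq_w^{(n-1)}$, $\tiX^{(n)}-\tiX^{(n-1)}$ (through $\tizeta^{(n)}-\tizeta^{(n-1)}$, $J(\tiX^{(n)})-J(\tiX^{(n-1)})$) or $\tiG^{(n)}-\tiG^{(n-1)}$, with the remaining factors bounded by $N$; the time-integral structure of \eqref{equ4.10}–\eqref{equ4.11} and the ``$T^\delta$-gain'' lemmas of the appendix then produce the factors $T^{\delta_i}$. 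Applying $L^{-1}$ (bounded uniformly for $T\le 1$ by Lemma \ref{lem4.1}), Lemma \ref{x-omega} and Lemma \ref{G-G0} to the three differences yields exactly the stated inequality with $C(N,\tiv_0,\tiG_0)T^{\delta_i}$. Taking $T$ so small that $C(N,\tiv_0,\tiG_0)T^{\min\delta_i}<\tfrac12$ makes the map a contraction on the closed ball of radius $N$ in $\mathcal{A}^{s+1,\gamma+1}\times(\mathcal{K}^{s+1}_{(0)}\times\mathcal{K}^s_{pr(0)})\times\mathcal{A}^{s,\gamma}$ (shifted by the fixed $L^{-1}$-term), so the sequence is Cauchy and its limit inherits the three $\le N$ bounds by lower semicontinuity of the norms; passing to the limit in \eqref{equ4.9}, \eqref{equ4.10}, \eqref{equ4.11} shows the limit solves \eqref{MHD Lagrangian coordinate2}.

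\textbf{Main obstacle.} The delicate point — and the reason the authors stress the careful index choice $2<s<\tfrac52$, $1<\gamma<s-1$ — is the nonlinear magnetic terms $\nabla\tiG^{(n)}\tizeta^{(n)}J(\tiX^{(n)})\tiG^{(n)}$ and $\tiG^{(n)}\tiG^{(n)\top}(\cdots)$: they are quadratic in $\tiG$ and contain one spatial derivative, so controlling them in $\mathcal{K}^{s-1}_{(0)}$ resp. $\mathcal{K}^{s-1/2}_{(0)}(\partial\tiOmega_0)$ requires a product/trace estimate $H^{a}\cdot H^{b}\hookrightarrow H^{s-1}$ that only barely closes with $\tiG\in\mathcal{A}^{s,\gamma}$ (so $\nabla\tiG\in$ something like $H^{s-1}$ in space with just enough time regularity), and the space-time-coupled nature of the Beale spaces forbids simply raising all indices (which would force vanishing second-order initial data and break Lemma \ref{lem4.1} and the $\phi$-construction). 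Verifying that every such product lands in the right space with a genuine positive power of $T$ to spare — i.e. that the bilinear and trace lemmas of the appendix apply with the chosen exponents — is the technical heart of the argument; everything else is the routine Picard-iteration machinery.
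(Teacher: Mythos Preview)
Your proposal is correct and follows essentially the same approach as the paper: the paper organizes the argument by splitting it into three auxiliary results (Lemma \ref{lem1} for $\tiX$, Proposition \ref{pro2} for $\tiG$, Proposition \ref{pro3} for $(\tiw,\tiq_w)$), each with a ``Part 1'' (self-mapping of the balls $B(X),B(w,q),B(G)$) and a ``Part 2'' (contraction estimate), and then combines them exactly as you describe. One minor slip: at the base case the three norms do not ``vanish identically'' --- for instance $\hat X-\tiomega-\int_0^t J\nabla\phi\,d\tau=-\int_0^t \tau J\nabla\hat\phi\,d\tau$ is nonzero --- but its $\mathcal{A}^{s+1,\gamma+1}$-norm is precisely one of the summands defining $N$, so the bound $\le N$ holds, which is what you ultimately use.
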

\par  We will show that if the initial iterative sequence satisfies the conditions in the first part of the following lemma and  propositions, all subsequent quantities  belong to the predetermined balls uniformly for $ T>0 $ small enough. Furthermore,  the iterative sequence is  Cauchy  and its limit solves system \eqref{MHD Lagrangian coordinate2}. 

We start by writing system \eqref{equ4.9} in the following form
\begin{align*}
	&L(\tiw^{(n+1)}, \tiq^{(n+1)}_w)\nonumber\\
	=&\big( \tif^{(n)}-\partial_t \phi+Q^2 \Delta \phi- J^\top \nabla \tiq_\phi,\; \tig^{(n)}-\trace(\nabla \phi J),\;\\
	&\qquad \tih^{(n)}+\tiq_\phi J^{-1} \tilde{n}_0- ( (\nabla \phi J )+ (\nabla \phi J )^\top  ) J^{-1} \tilde{n}_0,\; 0 \big)\nonumber\\
	=&(\tilde{f}^{(n)}-\tilde{f}_{G_0}, \bar{g}^{(n)}, \tilde{h}^{(n)}-\tilde{h}_{G_0},0)+(\tilde{f}_\phi^L+\tilde{f}_{G_0}, \bar{g}_\phi^L, \tilde{h}_\phi^L+\tilde{h}_{G_0}, 0),\nonumber
\end{align*}
where
\begin{align}
		\tilde{f}_{G_0}&:=\nabla \tilde{G}_0 J \tilde{G}_0,\nonumber\\
		\tilde{h}_{G_0}&:=-\tilde{G}_0 \tilde{G}_0^\top  J^{-1} \tilde{n}_0,\nonumber\\
		\tilde{f}_\phi^L & :=-\partial_t \phi+Q^2 \Delta \phi- J^\top  \nabla \tilde{q}_\phi, \nonumber\\
		\bar{g}^{(n)} & :=\tilde{g}^{(n)}+\operatorname{Tr}(\nabla \phi \tilde{\zeta}_\phi J_\phi)-\operatorname{Tr}(\nabla \phi J), \nonumber\\
		\bar{g}_\phi^L & :=-\operatorname{Tr}(\nabla \phi \tilde{\zeta}_\phi J_\phi), \nonumber\\
		\tilde{h}_\phi^L & :=\tilde{q}_\phi J^{-1} \tilde{n}_0-(\nabla \phi J+(\nabla \phi J)^\top )J^{-1} \tilde{n}_0,\nonumber
\end{align}
with
\begin{align}
	\tilde{\zeta}_\phi(t, \cdot)&:=\id+t\left( \partial_\tau \tizeta^{(n)}(\tau,\cdot)|_{\tau=0}\right)=\mathcal{I}-t \nabla(J \tilde{v}_0)(\cdot),\nonumber\\ (J_\phi)_{i j}(t, \cdot)&:=J_{ij}(\cdot)+t\left(\partial_\tau (J_{ij}(\tiX^{(n)}(\tau ,\cdot)) )\big|_{\tau =0}\right)=J_{i j}+t\sum_{k,l=1}^{2} (\partial_k J_{i j} J_{k l} \tilde{v}_{0}^{l}) (\cdot).\nonumber
\end{align}  
In this way, we have
\begin{align}
	(\tilde{f}^{(n)}-\tilde{f}_{G_0}, \bar{g}^{(n)}, \tilde{h}^{(n)}-\tilde{h}_{G_0} )(0, \cdot)=&0,\nonumber\\  (\tilde{f}_\phi^L+\tilde{f}_{G_0}, \bar{g}_\phi^L, \tilde{h}_\phi^L+\tilde{h}_{G_0})(0, \cdot)=&0,\nonumber\\
	(\partial_t\bar{g}^{(n)})(0, \cdot)=&0,\nonumber\\
	(\partial_t\bar{g}_\phi^L)(0, \cdot)=&0,\nonumber\\ (\partial_t(\tig^{(n)}-\trace(\nabla \phi J) ))(0, \cdot)=&0.\nonumber
\end{align}
These technical modifications are essential since now we have $(\tilde{f}^{(n)}-\tilde{f}_{G_0}$, $\bar{g}^{(n)}$, $\tilde{h}^{(n)}-\tilde{h}_{G_0} ) \in Y_0$ and also $(\tilde{f}_\phi^L+\tilde{f}_{G_0}, \bar{g}_\phi^L, \tilde{h}_\phi^L+\tilde{h}_{G_0}) \in Y_0$, thus we can apply Lemma \ref{lem4.1} to get the independence of time for all constants.

\par  In order to prove Proposition \ref{pro1},  we define the following notations:
\begin{align}
	B(X):=&\left\{\tiX : \tiX -\hat{X}\in \mathcal{A}^{s+1,\gamma+1}, \left\| \tiX -\tiomega-\int_{0}^{t}J\nabla \phi d\tau \right\|_{\mathcal{A}^{s+1,\gamma+1}}\le N \right\},\nonumber\\
	B(w,q):=&\big\{(\tiw ,\tiq_w ): (\tiw ,\tiq_w ) \in \mathcal{K}^{s+1}_{(0)}\times \mathcal{K}^{s}_{pr(0)},\nonumber\\
	& \| (\tiw ,\tiq_w )-L^{-1}( \tilde{f}_\phi^L+\tilde{f}_{G_0},\bar{g}_\phi^L, \tilde{h}_\phi^L+\tilde{h}_{G_0}, 0 )\|_{\mathcal{K}^{s+1}_{(0)}\times \mathcal{K}^{s}_{pr(0)}}\le N \big\},\nonumber\\
	B(G):=&\left\{\tiG : \tiG -\hat{G}\in \mathcal{A}^{s,\gamma}, \left\| \tiG -\tiG_0-\int_{0}^{t}\nabla \phi J \tiG_0 d\tau \right\|_{\mathcal{A}^{s,\gamma}}\le N \right\},\nonumber
\end{align}
where $ N $ is defined as in Theorem \ref{thm1}.
\par 
We investigate the equations  separately.  For \eqref{equ4.11}, we use the result obtained in \cite[Proposition 5.3]{castro2019splash}.
\begin{lemma}\label{lem1}
	Let $ 2<s<\frac52, 1<\gamma<s-1.$ For $ T>0 $ small enough depending on $  N,\tiv_0, $  the following results hold:
	\begin{enumerate}[(1)]
		\item For $ n\ge 0, $ if $ \tiX^{(n)}\in B(X) $ and $ (\tiw^{(n)},\tiq_w^{(n)})\in B(w,q) $, then we have  $ \tiX^{(n+1)}\in B(X). $
		\item For $ n\ge 1, $ if $ \tiX^{(n-1)}, \tiX^{(n)}, \tiX^{(n+1)} \in B(X) $ and $ (\tiw^{(n-1)},\tiq_w^{(n-1)})$, $(\tiw^{(n)},\tiq_w^{(n)}), (\tiw^{(n+1)},$  $ \tiq_w^{(n+1)})$  $\in B(w,q)$, then we have the estimate   
		\begin{align*}
			&\|\tiX^{(n+1)}-\tiX^{(n)}\|_{\mathcal{A}^{s+1,\gamma+1}}\\
			\le& C(\tiv_0, N) T^\delta ( \|\tiw^{(n)}-\tiw^{(n-1)}\|_{\mathcal{K}^{s+1}_{(0)}} + \|\tiX^{(n)}-\tiX^{(n-1)}\|_{\mathcal{A}^{s+1,\gamma+1}})
		\end{align*}
		for some $ \delta>0$. 
	\end{enumerate}
\end{lemma}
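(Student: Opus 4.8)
The plan is to exploit the fact that the flux iteration \eqref{equ4.11},
\[
\tiX^{(n+1)}(t,\tiomega)=\tiomega+\int_0^t J\bigl(\tiX^{(n)}(\tau,\tiomega)\bigr)\bigl(\tiw^{(n)}(\tau,\tiomega)+\phi(\tau,\tiomega)\bigr)\,d\tau ,
\]
does not involve $\tiG^{(n)}$, so its analysis is structurally the same as in the viscous water-wave case: I would follow \cite[Proposition 5.3]{castro2019splash}, only adapting the bookkeeping to the spaces fixed in Section \ref{Function spaces and preliminary lemmas}. Two ingredients do the work. First, the time integral $g\mapsto\int_0^t g\,d\tau$ gains a positive power $T^\delta$ in each of the norms entering the definition of $\mathcal{A}^{s+1,\gamma+1}$; this is exactly what Lemma \ref{x-omega} quantifies, and it rests on Lemmas \ref{lem3.3} and \ref{lem3.7}. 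Second, I would use composition and product estimates in the scale $H^\sigma(\tiOmega_0)$, $\sigma\ge\gamma+1>2$, which are available because $J=(J_{kj})$ and $Q$ are real-analytic on a neighbourhood of $\overline{\tiOmega}_0$ (the branch cut $\Gamma$ of $P$ avoids $\tiOmega_0$) and $H^{\gamma+1}(\tiOmega_0)$ is a Banach algebra.

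For part (1), I would first note that $\tiX^{(n+1)}-\hat X\in\mathcal{A}^{s+1,\gamma+1}$: since $\hat X=\tiomega+tJ\tiv_0$ and $\phi=\tiv_0+t\hat\phi$, the quantity $\tiX^{(n+1)}-\tiomega-\int_0^t J\nabla\phi\,d\tau$ has vanishing time derivative at $t=0$, so it — hence also $\tiX^{(n+1)}-\hat X$ — lies in the $H_{(0)}$-type factors of $\mathcal{A}^{s+1,\gamma+1}$; this is precisely the purpose of the modification explained after Lemma \ref{G-G0}. For the norm bound I would write
\[
\tiX^{(n+1)}-\tiomega-\int_0^t J\nabla\phi\,d\tau=\int_0^t\Bigl(J(\tiX^{(n)})\,(\tiw^{(n)}+\phi)-J\nabla\phi\Bigr)\,d\tau ,
\]
split the integrand in the natural way into a part proportional to $\tiw^{(n)}$, a part measuring $J(\tiX^{(n)})-J$ against $\tiw^{(n)}+\phi$, and a part built only from $J$ and the explicit function $\phi$, and estimate each. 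Assuming $\tiX^{(n)}\in B(X)$, Lemma \ref{x-omega} bounds $\|\tiX^{(n)}-\tiomega\|$ in all the relevant norms by $N+C(\tiv_0,\tiG_0)T$; the composition estimate then gives $\|J(\tiX^{(n)})-J\|_{H^\sigma}\le C\bigl(\|\tiX^{(n)}-\tiomega\|_{H^\sigma}\bigr)\|\tiX^{(n)}-\tiomega\|_{H^\sigma}$ for $\sigma\in\{s+1,\gamma+1\}$, while $(\tiw^{(n)},\tiq_w^{(n)})\in B(w,q)$ controls $\|\tiw^{(n)}\|_{\mathcal{K}^{s+1}_{(0)}}$. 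After integrating in $t$ and using the $T^\delta$-gains of Lemma \ref{x-omega}, each piece is $\le C(\tiv_0,N)\,T^\delta$ times a fixed quantity, hence $\le N$ once $T$ is small enough depending on $N$ and $\tiv_0$; this gives $\tiX^{(n+1)}\in B(X)$.

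For part (2), I would subtract \eqref{equ4.11} at steps $n$ and $n-1$,
\[
\tiX^{(n+1)}-\tiX^{(n)}=\int_0^t\Bigl(J(\tiX^{(n)})(\tiw^{(n)}-\tiw^{(n-1)})+\bigl(J(\tiX^{(n)})-J(\tiX^{(n-1)})\bigr)(\tiw^{(n-1)}+\phi)\Bigr)\,d\tau .
\]
In the first term $J(\tiX^{(n)})$ is bounded in the relevant norms since $\tiX^{(n)}\in B(X)$; in the second, the difference form of the composition estimate gives $\|J(\tiX^{(n)})-J(\tiX^{(n-1)})\|_{H^\sigma}\le C(N,\tiv_0)\|\tiX^{(n)}-\tiX^{(n-1)}\|_{H^\sigma}$, using that $\tiX^{(n)}$ and $\tiX^{(n-1)}$ stay in a fixed compact set by $B(X)$, while $\tiw^{(n-1)}+\phi$ is bounded because $(\tiw^{(n-1)},\tiq_w^{(n-1)})\in B(w,q)$. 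Carrying out the time integration and invoking the $T^\delta$-gains (Lemmas \ref{lem3.3}, \ref{lem3.7}, as in Lemma \ref{x-omega}) yields
\[
\|\tiX^{(n+1)}-\tiX^{(n)}\|_{\mathcal{A}^{s+1,\gamma+1}}\le C(\tiv_0,N)\,T^\delta\bigl(\|\tiw^{(n)}-\tiw^{(n-1)}\|_{\mathcal{K}^{s+1}_{(0)}}+\|\tiX^{(n)}-\tiX^{(n-1)}\|_{\mathcal{A}^{s+1,\gamma+1}}\bigr),
\]
which is the asserted estimate.

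The main obstacle I expect is the regularity bookkeeping forced by the $\mathcal{A}^{s+1,\gamma+1}$-norm, whose $H_{(0)}^2([0,T];H^{\gamma+1})$ component demands two time derivatives: differentiating $J(\tiX^{(n)})(\tiw^{(n)}+\phi)$ twice in time produces factors such as $\partial_t^2\tiX^{(n)}$ that live only in $H^{\gamma+1}$ — one spatial derivative below the $H^{s+1}$ level — so I must always keep the top spatial regularity on the factor not carrying the two time derivatives and close the estimate through the mixed space-time embeddings for the Beale spaces together with $\gamma+1>2$; here the $L^\infty_{1/4}$ weight and the vanishing traces at $t=0$ of the modified quantities are exactly what keep the norms finite. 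Since $\tiG^{(n)}$ does not enter \eqref{equ4.11}, no difficulty beyond \cite{castro2019splash} arises at this step; the genuinely new work lies in the $\tiG^{(n)}$ and $(\tiw^{(n)},\tiq_w^{(n)})$ iterations treated elsewhere in Proposition \ref{pro1}.
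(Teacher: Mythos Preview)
Your proposal is correct and matches the paper's treatment: the paper does not give its own proof of this lemma but simply invokes \cite[Proposition 5.3]{castro2019splash}, noting that the flux iteration \eqref{equ4.11} is independent of $\tiG^{(n)}$. Your outline of that argument---splitting the integrand, using the composition estimates for $J$ (Lemmas \ref{jx-j}, \ref{jx-jy}), and extracting $T^\delta$ via Lemmas \ref{lem3.3}, \ref{lem3.7} exactly as in Lemma \ref{x-omega}---is precisely how the cited proof proceeds, and your remark that the only change is that $\phi$ (hence the constants) now depends on $\tiG_0$ through $\hat\phi$ is the ``small modification'' the paper alludes to elsewhere.
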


For the magnetic field, we can show the following result.
\begin{proposition}\label{pro2}
	For $ 2<s<\frac52, 1<\gamma<s-1,$ and $ T>0 $ small enough depending on $  N,\tiv_0 $ and $ \tiG_0, $ we have
	\begin{enumerate}[(1)]
		\item For $ n\ge 0, $ if $ \tiX^{(n)}\in B(X), (\tiw^{(n)},\tiq_w^{(n)})\in B(w,q)  $ and $ \tiG^{(n)}\in B(G), $ then $ \tiG^{(n+1)}\in B(G). $
		\item For $ n\ge 1, $  if $ \tiX^{(n-1)}, \tiX^{(n)}, \tiX^{(n+1)} \in B(X), (\tiw^{(n-1)},\tiq_w^{(n-1)}), (\tiw^{(n)},\tiq_w^{(n)}), (\tiw^{(n+1)},$ $\tiq_w^{(n+1)})\in B(w,q) $ and $ \tiG^{(n-1)}, \tiG^{(n)}, \tiG^{(n+1)}\in B(G), $ then
		\begin{align*}
			\|\tiG^{(n+1)}-\tiG^{(n)}\|_{\mathcal{A}^{s,\gamma}}\le & C(N,\tiv_0,\tiG_0) T^\delta (\|\tiG^{(n)}-\tiG^{(n-1)}\|_{\mathcal{A}^{s,\gamma}}\nonumber\\
			&+ \|\tiw^{(n)}-\tiw^{(n-1)}\|_{\mathcal{K}^{s+1}_{(0)}} + \|\tiX^{(n)}-\tiX^{(n-1)}\|_{\mathcal{A}^{s+1,\gamma+1}})\nonumber
		\end{align*}
		for some $ \delta>0. $
	\end{enumerate}
\end{proposition}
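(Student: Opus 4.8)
The plan is to estimate $\tiG^{(n+1)}-\hat G$, and then the difference $\tiG^{(n+1)}-\tiG^{(n)}$, directly from the integral representation \eqref{equ4.10}, using the product estimates in the appendix together with Lemma \ref{x-omega} (to control $\tiX^{(n)}$ and $\tizeta^{(n)}$) and Lemma \ref{G-G0} (to convert between the $\mathcal{A}^{s,\gamma}$-norm of $\tiG^{(n)}-\tiG_0-\int_0^t\nabla\phi J\tiG_0\,d\tau$ and the various weaker norms of $\tiG^{(n)}$ appearing on the right-hand side). For part (1), I would write, using \eqref{equ4.10} and subtracting the explicit first-order term,
\begin{align*}
\tiG^{(n+1)}-\tiG_0-\int_0^t \nabla\phi J\tiG_0\,d\tau
=\int_0^t\Big(\nabla(\tiw^{(n)}+\phi)\,\tizeta^{(n)}J(\tiX^{(n)})\tiG^{(n)}-\nabla\phi\, J\,\tiG_0\Big)d\tau,
\end{align*}
and then observe that the integrand vanishes at $t=0$ (since $\tiw^{(n)}(0)=0$, $\tizeta^{(n)}(0)=\mathcal I$, $\tiX^{(n)}(0)=\tiomega$, $\tiG^{(n)}(0)=\tiG_0$). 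The strategy is: bound the integrand in the appropriate spatial Sobolev norms at each fixed time using the multiplicative (Moser-type) inequalities, with $\|\tizeta^{(n)}\|$, $\|J(\tiX^{(n)})\|$ controlled via $\tiX^{(n)}\in B(X)$ and Lemma \ref{x-omega}, $\|\nabla\tiw^{(n)}\|$ via $(\tiw^{(n)},\tiq_w^{(n)})\in B(w,q)$ and the embedding $\mathcal K^{s+1}_{(0)}\hookrightarrow L^\infty H^{s+1}$ type bounds, and $\|\tiG^{(n)}\|$ via $\tiG^{(n)}\in B(G)$ and Lemma \ref{G-G0}; then integrating in time produces a positive power $T^\delta$ in front of the $\mathcal A^{s,\gamma}$-norm, which absorbs $N$ back into the ball for $T$ small. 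The time-regularity components of the $\mathcal A^{s,\gamma}$-norm ($H^2_{(0)}H^\gamma$ and $L^\infty_{1/4}H^s$) are handled the same way, using that differentiating the integral in $t$ removes one integration and that $\partial_t$ of the difference still vanishes at $t=0$, which is exactly why the subtracted linear term was chosen.

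For part (2), I would subtract the representations \eqref{equ4.10} at levels $n$ and $n-1$:
\begin{align*}
\tiG^{(n+1)}-\tiG^{(n)}=\int_0^t\Big(\nabla(\tiw^{(n)}+\phi)\tizeta^{(n)}J(\tiX^{(n)})\tiG^{(n)}-\nabla(\tiw^{(n-1)}+\phi)\tizeta^{(n-1)}J(\tiX^{(n-1)})\tiG^{(n-1)}\Big)d\tau,
\end{align*}
and split the difference of products by the standard telescoping (add and subtract to isolate one factor at a time): a term with $\nabla(\tiw^{(n)}-\tiw^{(n-1)})$, a term with $\tizeta^{(n)}-\tizeta^{(n-1)}$, a term with $J(\tiX^{(n)})-J(\tiX^{(n-1)})$, and a term with $\tiG^{(n)}-\tiG^{(n-1)}$. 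Each difference factor is then bounded in the relevant norm: $\|\tizeta^{(n)}-\tizeta^{(n-1)}\|$ and $\|J(\tiX^{(n)})-J(\tiX^{(n-1)})\|$ are controlled by $\|\tiX^{(n)}-\tiX^{(n-1)}\|_{\mathcal A^{s+1,\gamma+1}}$ (via the algebra/composition estimates and the fact that $\tiX^{(n)},\tiX^{(n-1)}\in B(X)$ stay in a bounded set where $P$, $J$, and matrix inversion are smooth), $\|\nabla(\tiw^{(n)}-\tiw^{(n-1)})\|$ by $\|\tiw^{(n)}-\tiw^{(n-1)}\|_{\mathcal K^{s+1}_{(0)}}$, and $\|\tiG^{(n)}-\tiG^{(n-1)}\|$ by $\|\tiG^{(n)}-\tiG^{(n-1)}\|_{\mathcal A^{s,\gamma}}$; the remaining (non-difference) factors are bounded by $C(N,\tiv_0,\tiG_0)$ using the ball memberships. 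Time integration again yields the prefactor $T^\delta$, giving exactly the claimed contraction-type inequality.

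The main obstacle, as the introduction already flags, is the regularity bookkeeping in the product estimates: the spatial index of the space for $\tiG$ is only $s<5/2$, so the product $\nabla\tiw^{(n)}\,\tizeta^{(n)}J(\tiX^{(n)})\tiG^{(n)}$ involves a factor $\nabla\tiw^{(n)}$ that at fixed time lies only in $H^s\subset H^{s-1}$ while $\tiG^{(n)}\in H^s$, and one must check that $H^{s-1}\cdot H^s\hookrightarrow H^\gamma$ (resp. into $H^{1+\mu}$, $H^s$ with suitable intermediate indices) for the chosen $\gamma\in(1,s-1)$ and small $\mu,\delta$ — this is why $\gamma$ must be taken strictly below $s-1$ and why one cannot naively raise all indices. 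In the time-coupled directions ($\mathcal K^{s+1}_{(0)}$ for $\tiw$ involves $H^{(s+1)/2}_{(0)}L^2$) one must combine spatial product estimates with the time-interpolation lemmas (Lemmas \ref{lem3.3}, \ref{lem3.7} style) to land in $H^2_{(0)}H^\gamma$ with a genuine positive power of $T$; tracking that every term really produces $T^{\delta}$ with $\delta>0$ (rather than $T^0$) for the chosen exponents is the delicate point. Everything else — the vanishing-at-$t=0$ of the subtracted linear terms, membership in $Y_0$, uniform-in-$T$ constants from Lemma \ref{lem4.1} — is routine once the function-space indices are pinned down, and the Cauchy property then follows by combining this Proposition with Lemma \ref{lem1} and summing the geometric series for $T$ small.
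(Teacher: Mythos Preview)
Your proposal is correct and follows essentially the same approach as the paper: the paper also writes $\tiG^{(n+1)}-\tiG_0-\int_0^t\nabla\phi J\tiG_0\,d\tau$ as the integral of an integrand vanishing at $t=0$, estimates the $L^\infty_{1/4}H^s$ part via $t^{-1/4}\int_0^t\|\cdot\|_{H^s}\le T^{1/4}\|\cdot\|_{L^2H^s}$ and product rules, and handles the $H^2_{(0)}H^\gamma$ part via Lemma~\ref{lem3.3} together with the $H^1_{(0)}$-product estimate (Lemma~\ref{lem3.6}), after expanding each factor as $[(\tizeta^{(n)}-\id)+\id]$, $[(J(\tiX^{(n)})-J)+J]$, $[(\tiG^{(n)}-\tiG_0)+\tiG_0]$ so that the nonconstant pieces lie in $H^1_{(0)}$ and contribute the needed $T^\delta$; for part~(2) the paper likewise telescopes the four-factor product exactly as you describe. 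One small correction of phrasing: $\mathcal K^{s+1}_{(0)}$ does not embed into $L^\infty H^{s+1}$, and the paper uses only the $L^2H^{s+1}$ component (hence $\nabla\tiw^{(n)}\in L^2H^s$) in the $L^\infty_{1/4}H^s$ estimate and $\|\nabla\tiw^{(n)}\|_{H^1_{(0)}H^\gamma}$ (via Lemma~\ref{lem3.2}) in the $H^2_{(0)}H^\gamma$ estimate.
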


\begin{proof}
%Without loss of generality, we may	assume $ T<1. $

\textbf{Part 1.}

To prove $ \tiG^{(n+1)}\in B(G), $ we  define $ I_1,I_2 $ and $ I_3 $ by	
\begin{align*}
			&\left\|\tilde{G}^{(n+1)}-\tilde{G}_0-\int_0^t  \nabla \phi J \tilde{G}_0 d\tau \right\|_{\mathcal{A}^{s, \gamma}} \\
			\le & \left\| \int_0^t  \nabla \tilde{w}^{(n)} \tilde{\zeta}^{(n)}  J(\tilde{X}^{(n)}) \tilde{G}^{(n)} d\tau \right\|_{\mathcal{A}^{s, \gamma}}\nonumber \\
			& \quad +\left\|\int_0^t \nabla \tiv_0( \tizeta^{(n)}J(\tiX^{(n)})\tiG^{(n)}-J \tiG_0 ) d\tau \right\|_{\mathcal{A}^{s, \gamma}}\\
			& \quad +\left\|\int_0^t \tau \nabla \hat{\phi}( \tizeta^{(n)}J(\tiX^{(n)})\tiG^{(n)}-J \tiG_0 ) d\tau\right\|_{\mathcal{A}^{s, \gamma}}\nonumber \\
			&=: \left\|I_1\right\|_{\mathcal{A}^{s, \gamma}}+\left\|I_2\right\|_{\mathcal{A}^{s, \gamma}}+\left\|I_3\right\|_{\mathcal{A}^{s, \gamma}} .\nonumber
\end{align*}
Recall $ \mathcal{A}^{s, \gamma} $ involves two functional spaces and we
start with the estimate in $L_{\frac{1}{4}}^{\infty} H^s$. To use the estimates we have proved and lemmas in the appendix, we need to split these terms properly. 
	
For the first term, we have
\begin{align*}
	\left\|I_1 \right\|_{L^\infty_{\frac 14}H^s}
	\le& \sup_{t\in [0,T]}t^{-\frac14} \int_{0}^{t}\left\|\nabla \tiw^{(n)} \tizeta^{(n)} J(\tiX^{(n)}) \tiG^{(n)} \right\|_{H^s} d\tau \nonumber\\
	\le & \sup_{t\in [0,T]}t^{-\frac14} t^{\frac12} \|\nabla \tiw^{(n)} \tizeta^{(n)} J(\tiX^{(n)}) \tiG^{(n)}  \|_{L^2([0,t]; H^s)}\nonumber\\
	\le & T^{\frac14}  \|\nabla \tiw^{(n)} \tizeta^{(n)} J(\tiX^{(n)}) \tiG^{(n)}\|_{L^2([0,T]; H^s)}\nonumber\\
	= & T^{\frac14}  \|\nabla \tiw^{(n)} \tizeta^{(n)} J(\tiX^{(n)}) (\tiG^{(n)}-\tiG_0)\|_{L^2H^s}+T^{\frac14} \|\nabla \tiw^{(n)} \tizeta^{(n)} J(\tiX^{(n)}) \tiG_0\|_{L^2H^s}\nonumber\\
	=&: I_{1,1}+ I_{1,2},\nonumber
\end{align*}
where we have used Minkowski's, Cauchy's inequalities and the definition of  $L_{\frac{1}{4}}^{\infty} H^s.$
We just show the estimate of $ I_{1,1} $ since $ \tiG_0 $  is the initial data in $ I_{1,2} $. Since $ \tiX^{(n)}\in B(X), (\tiw^{(n)},\tiq_w^{(n)})\in B(w,q), $ and $ \tiG^{(n)}\in B(G), $ we apply   Lemmas \ref{G-G0}, \ref{jx-j} and \ref{zeta-} to obtain
\begin{align*}
	I_{1,1}\le & T^{\frac14}  \|\nabla \tiw^{(n)}\|_{L^2H^s}
	 \| \tizeta^{(n)}\|_{L^\infty H^s}
	 \| J(\tiX^{(n)})\|_{L^\infty H^s}
	 \| \tiG^{(n)}-\tiG_0\|_{L^\infty H^s}\nonumber\\
	 \le& T^{\frac14} C(\tiv_0)  \| \tiw^{(n)}\|_{\mathcal{K}^{s+1}_{(0)}}
	 \| \tiG^{(n)}-\tiG_0\|_{L^\infty H^s}\nonumber\\
	 \le& T^{\frac12} C(N, \tiv_0,\tiG_0).\nonumber
\end{align*}
Similarly, we have $ I_{1,2}\le T^{\frac14} C(N, \tiv_0,\tiG_0)$ and therefore $ \sum_{i=1}^{2}I_{1,i}\le T^{\frac14} C(N, \tiv_0,\tiG_0).$

For the second term, we have
\begin{align*}
	\left\|I_2 \right\|_{L^\infty_{\frac 14}H^s}
	\le& \sup_{t\in [0,T]}t^{-\frac14} \int_{0}^{t}\left\|\nabla \tiv_0( \tizeta^{(n)}J(\tiX^{(n)})\tiG^{(n)}-J \tiG_0 ) \right\|_{H^s} d\tau \nonumber\\
	\le & \sup_{t\in [0,T]}t^{-\frac14} t^{\frac12} \| \nabla \tiv_0( \tizeta^{(n)}J(\tiX^{(n)})\tiG^{(n)}-J \tiG_0 )\|_{L^2([0,t]; H^s)}\nonumber\\
	\le & T^{\frac14}  \|\nabla \tiv_0( \tizeta^{(n)}J(\tiX^{(n)})\tiG^{(n)}-J \tiG_0 )\|_{L^2([0,T]; H^s)}\nonumber\\
	= & T^{\frac14}  \|\nabla \tiv_0 \tizeta^{(n)}(J(\tiX^{(n)})-J)\tiG^{(n)}\|_{L^2H^s}+T^{\frac14} \|\nabla \tiv_0( \tizeta^{(n)}-\id)J\tiG^{(n)}\|_{L^2H^s}\nonumber\\
	&+T^{\frac14} \|\nabla \tiv_0J(\tiG^{(n)}-\tiG_0)\|_{L^2H^s}\nonumber\\
	=&: I_{2,1}+ I_{2,2}+ I_{2,3}.\nonumber
\end{align*}
We merely show the estimate of $ I_{2,1} $ since $ I_{2,2} $ and $ I_{2,3} $ are similar with the help of Lemmas \ref{x-omega}, \ref{G-G0} and 
\begin{align}
	\|\tizeta^{(n)}-\id\|_{L^\infty H^s}=&\|\tizeta^{(n)}\nabla(\tiomega-\tiX^{(n)}(t,\tiomega))\|_{L^\infty H^{s}}\nonumber\\
	\le & \|\tizeta^{(n)}\|_{L^\infty H^s} \|\tiomega-\tiX^{(n)}(t,\tiomega)\|_{L^\infty H^{s+1}}\nonumber\\
	\le & T^{\frac 14}C(N, \tiv_0, \tiG_0).\nonumber
\end{align}
From  Lemmas \ref{G-G0},  \ref{jx-j} and  \ref{zeta-}, we have
\begin{align}
	I_{2,1}\le & T^{\frac14}  \|\nabla \tiv_0\|_{L^2H^s}
	\| \tizeta^{(n)}\|_{L^\infty H^s}
	\| J(\tiX^{(n)})-J\|_{L^\infty H^s}
	\| \tiG^{(n)}\|_{L^\infty H^s}\nonumber\\
	\le & T^{\frac34}  \|\nabla \tiv_0\|_{L^\infty H^s}
	\| \tizeta^{(n)}\|_{L^\infty H^s}
	\| J(\tiX^{(n)})-J\|_{L^\infty H^s}
	\| \tiG^{(n)}\|_{L^\infty H^s}\nonumber\\
	\le& T^{\frac34} C(N, \tiv_0,\tiG_0).\nonumber
\end{align}
Thus, we conclude that  $ \sum_{i=1}^{3}I_{2,i}\le T^{\frac34} C(N, \tiv_0,\tiG_0).$

For the third term, we replace $ \nabla\tiv_0 $ by $ \tau \nabla\hat{\phi} $ in the above discussion  to obtain  $ \|I_3 \|_{L^\infty_{\frac 14}H^s}\le T^{\frac74} C(N, \tiv_0,\tiG_0)$ where we have used 
$$ \|\tau \nabla \hat{\phi}\|_{L^2H^s}\le  \|\tau \|_{L^2}C(\tiv_0, \tiG_0)
\le  T^{\frac 32} C(\tiv_0, \tiG_0). $$

To sum up, we obtain
\begin{equation}\label{equpro2.2-1}
	\left\|\tilde{G}^{(n+1)}-\tilde{G}_0-\int_0^t  \nabla \phi J \tilde{G}_0 d\tau \right\|_{L^\infty_{\frac 14}H^s} \le \sum_{i=1}^{3}\|I_i\|_{L^\infty_{\frac 14}H^s}\le T^{\frac 14} C(N, \tiv_0, \tiG_0).
\end{equation}
\par We need to obtain a similar result in $ H^2_{(0)}H^\gamma $. For the first term, we have from Lemma \ref{lem3.3} that
\begin{align}
		&\left\|I_1 \right\|_{H^2_{(0)}H^\gamma}\le  \|\nabla \tiw^{(n)} \tizeta^{(n)} J(\tiX^{(n)}) \tiG^{(n)}\|_{H^1_{(0)}H^{\gamma}}.\nonumber
\end{align}
We need to expand  the above expression in the norm into $ 8 $ terms as follows:
\begin{align}
	 \nabla\tiw^{(n)}[(\tizeta^{(n)}-\id) +\id] [(J(\tiX^{(n)})-J)+J]  [(\tiG^{(n)}-\tiG_0)+\tiG_0].\nonumber
\end{align}
Now we show the estimate of $\nabla \tiw^{(n)}(\tizeta^{(n)}-\id)  (J(\tiX^{(n)})-J) (\tiG^{(n)}-\tiG_0) $ and the others are similar or easier since $ \tiG_0 $ is the initial data. Since $ \tiX^{(n)}\in B(X)$, $(\tiw^{(n)},\tiq_w^{(n)})$ $\in B(w,q), $ and $ \tiG^{(n)}\in B(G), $ we apply Lemmas \ref{x-omega},  \ref{G-G0}, \ref{jx-j}, \ref{zeta-} and \ref{lem3.6} with $ \gamma>1 $ to obtain
\begin{align*}
 &\|\nabla\tiw^{(n)}(\tizeta^{(n)}-\id)  (J(\tiX^{(n)})-J) (\tiG^{(n)}-\tiG_0)\|_{H^1_{(0)}H^{\gamma}}\nonumber\\
 \le& \|\nabla\tiw^{(n)}\|_{H^1_{(0)}H^{\gamma}}\|\tizeta^{(n)}-\id   \|_{H^1_{(0)}H^{\gamma}}\|  J(\tiX^{(n)})-J  \|_{H^1_{(0)}H^{\gamma}}\|
  \tiG^{(n)}-\tiG_0 \|_{H^1_{(0)}H^{\gamma}}\nonumber\\
 \le& C(N,\tiv_0, \tiG_0) \|\tiX^{(n)}-\tiomega   \|_{H^1_{(0)}H^{\gamma+1}}^2T^\delta\nonumber\\
 \le& T^{3\delta} C(N,\tiv_0, \tiG_0) \nonumber
\end{align*}
for some $  \delta>0 $   small enough. Thus, we have
\begin{align*}
	\left\|I_1 \right\|_{H^2_{(0)}H^\gamma} \le T^{\delta_1} C(N,\tiv_0, \tiG_0)
\end{align*}
for some $\delta_1>0. $
\par For the second term, we have
\begin{align*}
	&\left\|I_2  \right\|_{H^2_{(0)}H^\gamma}\le  \|\nabla \tiv_0( \tizeta^{(n)}J(\tiX^{(n)})\tiG^{(n)}-J \tiG_0 )\|_{H^1_{(0)}H^{\gamma}}.\nonumber
\end{align*}
We split the above expression into $ 7 $ terms as before
\begin{align*}
&\nabla \tiv_0( \tizeta^{(n)}J(\tiX^{(n)})\tiG^{(n)}-J \tiG_0 )\nonumber\\
=& \nabla \tiv_0[(\tizeta^{(n)}-\id) +\id] [(J(\tiX^{(n)})-J)+J]  [(\tiG^{(n)}-\tiG_0)+\tiG_0]-\nabla \tiv_0J \tiG_0\nonumber
\end{align*}
since the term $ \nabla \tiv_0J \tiG_0 $ has been eliminated.
In the same manner as the previous one, we conclude that
\begin{equation*}
	\left\|I_2\right\|_{H^2_{(0)}H^\gamma} \le T^{\delta_2} C(N,\tiv_0, \tiG_0) \nonumber
\end{equation*} 
for some $ \delta_2>0, $ where we have noted that $ \nabla\tiv_0 $ is the initial data.
\par For the third term, we have
\begin{align*}
	&\left\|I_3 \right\|_{H^2_{(0)}H^\gamma}\le  \| \tau \nabla \hat{\phi}( \tizeta^{(n)}J(\tiX^{(n)})\tiG^{(n)}-J \tiG_0 )\|_{H^1_{(0)}H^{\gamma}}.\nonumber
\end{align*}
As before, we need to split the above expression into $ 7 $ terms
\begin{align*}
	&\tau \nabla \hat{\phi}( \tizeta^{(n)}J(\tiX^{(n)})\tiG^{(n)}-J \tiG_0 )\nonumber\\
	=& \tau \nabla \hat{\phi}[(\tizeta^{(n)}-\id) +\id] [(J(\tiX^{(n)})-J)+J]  [(\tiG^{(n)}-\tiG_0)+\tiG_0]-\tau \nabla \hat{\phi}J \tiG_0.
\end{align*}
We point out that $ \hat{\phi} $ does not depend on time but just on the initial data $ \tiv_0 $ and $ \tiG_0 $. Besides, we note that $
	\|\tau\|_{H^1_{(0)}}\le  CT^{\frac 12}. $ Therefore, 
we obtain
\begin{align*}
	\left\|I_3 \right\|_{H^2_{(0)}H^\gamma} \le T^{\delta_3} C(N,\tiv_0, \tiG_0)
\end{align*}
for some $ \delta_3>0, $ and 
\begin{align*}
	\left\|\tilde{G}^{(n+1)}-\tilde{G}_0-\int_0^t  \nabla \phi J \tilde{G}_0 d\tau \right\|_{H^2_{(0)}H^\gamma}\le \sum_{i=1}^{3}\|I_i\|_{H^2_{(0)}H^\gamma} \le T^{\delta} C(N, \tiv_0, \tiG_0)
\end{align*}
where $ \delta=\min_{i=1,2,3}\delta_i>0. $ 

Combining with \eqref{equpro2.2-1}, the proof of the first part is complete.

\medskip
\textbf{Part 2.}
\par We consider the  following difference
\begin{align*}
	\tiG^{(n+1)}&-\tiG^{(n+1)} =\int_0^t\nabla\tiv^{(n)}\tizeta^{(n)}J(\tiX^{(n)})\tiG^{(n)}-\nabla\tiv^{(n-1)}\tizeta^{(n-1)}J(\tiX^{(n-1)})\tiG^{(n-1)}d\tau.\nonumber
\end{align*}
We start with the estimate in $ L^\infty_{\frac 14}H^{s} $. From Minkowski's  and Cauchy's inequalities, we have
\begin{align*}
	&\|\tiG^{(n+1)}-\tiG^{(n+1)}\|_{L^\infty_{\frac 14}H^s}\nonumber\\
	=&\left\|\int_0^t\nabla\tiv^{(n)}\tizeta^{(n)}J(\tiX^{(n)})\tiG^{(n)}-\nabla\tiv^{(n-1)}\tizeta^{(n-1)}J(\tiX^{(n-1)})\tiG^{(n-1)} d\tau \right\|_{L^\infty_{\frac 14}H^s}\nonumber\\
	\le& \sup_{t\in [0,T]}t^{-\frac14} \int_{0}^{t}\left\|\nabla\tiv^{(n)}\tizeta^{(n)}J(\tiX^{(n)})\tiG^{(n)}-\nabla\tiv^{(n-1)}\tizeta^{(n-1)}J(\tiX^{(n-1)})\tiG^{(n-1)} \right\|_{H^s}d\tau\nonumber\\
	\le & T^{\frac14}  \|\nabla\tiv^{(n)}\tizeta^{(n)}J(\tiX^{(n)})\tiG^{(n)}-\nabla\tiv^{(n-1)}\tizeta^{(n-1)}J(\tiX^{(n-1)})\tiG^{(n-1)}\|_{L^2([0,T]; H^s)}\nonumber\\
	= &:  T^{\frac14} \|I_{3}\|_{L^2H^s}.\nonumber
\end{align*}
We  split $ I_3 $ as 
\begin{align*}
	I_3	=&(\nabla\tiv^{(n)}-\nabla\tiv^{(n-1)})\tizeta^{(n)}J(\tiX^{(n)})\tiG^{(n)}+\nabla\tiv^{(n-1)}(\tizeta^{(n)}-\tizeta^{(n-1)})J(\tiX^{(n)})\tiG^{(n)}\nonumber\\
	&+\nabla\tiv^{(n-1)}\tizeta^{(n-1)}(J(\tiX^{(n)})-J(\tiX^{(n-1)}))\tiG^{(n)}\\ &+\nabla\tiv^{(n-1)}\tizeta^{(n-1)}J(\tiX^{(n-1)})(\tiG^{(n)}-\tiG^{(n-1)})\nonumber\\
	=&:
	\sum_{i=1}^{4} I_{3,i}.\nonumber
\end{align*}
For $ I_{3,1},$ since $ \tiX^{(n)}\in B(X),  (\tiw^{(n-1)},\tiq_w^{(n-1)}), (\tiw^{(n)},\tiq_w^{(n)})\in B(w,q) $ and $ \tiG^{(n)}\in B(G), $ we apply Lemmas \ref{G-G0},  \ref{jx-j} and \ref{zeta-} to obtain
\begin{align}
	\|I_{3,1}\|_{L^2H^s}\le &\|\nabla\tiv^{(n)}-\nabla\tiv^{(n-1)}\|_{L^2H^s}\|\tizeta^{(n)}\|_{L^\infty H^s}\|J(\tiX^{(n)})\|_{L^\infty H^s}\|\tiG^{(n)}\|_{L^\infty H^s}\nonumber\\
	\le &\|\tiw^{(n)}-\tiw^{(n-1)}\|_{L^2H^{s+1}}C(N,\tiv_0, \tiG_0)\nonumber\\
	\le & C(N,\tiv_0, \tiG_0) \|\tiw^{(n)}-\tiw^{(n-1)}\|_{\mathcal{K}_{(0)}^{s+1}}.\nonumber
\end{align}
For $ I_{3,2}, $ from   Lemmas \ref{G-G0}, \ref{jx-j},  \ref{zetan-zetan-1} and  $ \tiv^{(n)}=\tiw^{(n)}+\tiv_0+t\hat{\phi}, $ we have
\begin{align*}
	\|I_{3,2}\|_{L^2H^s}\le &\|\nabla\tiv^{(n-1)}\|_{L^2H^s}\|\tizeta^{(n)}-\tizeta^{(n-1)}\|_{L^\infty H^s}\|J(\tiX^{(n)})\|_{L^\infty H^s}\|\tiG^{(n)}\|_{L^\infty H^s}\nonumber\\
	\le&(\|\tiw^{(n)}\|_{L^2H^{s+1}}+\|\tiv_0\|_{L^2H^{s+1}}+\|s\hat{\phi}\|_{L^2H^{s+1}})\\
	&\cdot\|\tiX^{(n)}-\tiX^{(n-1)}\|_{L^\infty H^{s+1}}C(N,\tiv_0, \tiG_0)\nonumber\\
	\le&C(N,\tiv_0, \tiG_0)T^{\frac 14}\|\tiX^{(n)}-\tiX^{(n-1)}\|_{L_{\frac 14}^\infty H^{s+1}}\nonumber\\
	\le &T^{\frac 14} C(N,\tiv_0, \tiG_0) \|\tiX^{(n)}-\tiX^{(n-1)}\|_{\mathcal{A}^{s+1,\gamma+1}}.\nonumber
\end{align*}
For $ I_{3,3}, $  the estimate can be obtained in the same manner as $ I_{3,1} $ and $ I_{3,2} $ with the help of Lemmas \ref{G-G0},  \ref{jx-jy} and  \ref{zeta-}. Indeed, we have
\begin{align*}
	\|I_{3,3}\|_{L^2H^s}
	\le T^{\frac 14} C(N,\tiv_0, \tiG_0) \|\tiX^{(n)}-\tiX^{(n-1)}\|_{\mathcal{A}^{s,\gamma+1}}.
\end{align*}
For $ I_{3,4}, $ we have from Lemmas \ref{G-G0},  \ref{jx-j} and \ref{zeta-} that
\begin{align}
	\|I_{3,4}\|_{L^2H^s}\le &\|\nabla\tiv^{(n-1)}\|_{L^2H^s}\|\tizeta^{(n-1)}\|_{L^\infty H^s}\|J(\tiX^{(n-1)})\|_{L^\infty H^s}\|\tiG^{(n)}-\tiG^{(n-1)}\|_{L^\infty H^s}\nonumber\\
	\le&(\|\tiw^{(n)}\|_{L^2H^s}+\|\tiv_0\|_{L^2H^s}+\|t\hat{\phi}\|_{L^2H^s})\|\tiG^{(n)}-\tiG^{(n-1)}\|_{L^\infty H^s}C(N,\tiv_0, \tiG_0)\nonumber\\
	\le&T^{\frac 14}
	C(N,\tiv_0, \tiG_0)\|\tiG^{(n)}-\tiG^{(n-1)}\|_{L^\infty H^s}\nonumber\\
	\le &T^{\frac 14} C(N,\tiv_0, \tiG_0) \|\tiG^{(n)}-\tiG^{(n-1)}\|_{\mathcal{A}^{s,\gamma}}.\nonumber
\end{align}

Now we  estimate  $ \tiG^{(n)}-\tiG^{(n-1)} $ in $ H_{(0)}^2H^\gamma. $  From Lemma \ref{lem3.3}, we have 
\begin{align}
	&\|\tiG^{(n+1)}-\tiG^{(n+1)}\|_{H_{(0)}^2H^\gamma}\nonumber\\
	\le&\|\nabla\tiv^{(n)}\tizeta^{(n)}J(\tiX^{(n)})\tiG^{(n)}-\nabla\tiv^{(n-1)}\tizeta^{(n-1)}J(\tiX^{(n-1)})\tiG^{(n-1)}\|_{H_{(0)}^1H^\gamma}\nonumber\\
	\le &\|\nabla\tiw^{(n)}\tizeta^{(n)}J(\tiX^{(n)})\tiG^{(n)}-\nabla\tiw^{(n-1)}\tizeta^{(n-1)}J(\tiX^{(n-1)})\tiG^{(n-1)}\|_{H_{(0)}^1H^\gamma}\nonumber\\ &+\|\nabla\tiv_0\tizeta^{(n)}J(\tiX^{(n)})\tiG^{(n)}-\nabla\tiv_0\tizeta^{(n-1)}J(\tiX^{(n-1)})\tiG^{(n-1)}\|_{H_{(0)}^1H^\gamma}\nonumber\\
	&+\|t\hat{\phi}\tizeta^{(n)}J(\tiX^{(n)})\tiG^{(n)}-t\hat{\phi}\tizeta^{(n-1)}J(\tiX^{(n-1)})\tiG^{(n-1)}\|_{H_{(0)}^1H^\gamma}\nonumber\\	
	=&: \|I_{4,1}\|_{H_{(0)}^1H^\gamma}+\|I_{4,2}\|_{H_{(0)}^1H^\gamma}+\|I_{4,3}\|_{H_{(0)}^1H^\gamma}.\nonumber
\end{align}
We split  the term $ I_{4,1} $  as follows:
\begin{align*}
	I_{4,1}=&\nabla\tiw^{(n)}[(\tizeta^{(n)}-\tizeta^{(n-1)})+\tizeta^{(n-1)}] [(J(\tiX^{(n)})-J(\tiX^{(n-1)}))+J(\tiX^{(n-1)})]\tiG^{(n)}\nonumber\\
	&-\nabla\tiw^{(n-1)}\tizeta^{(n-1)}J(\tiX^{(n-1)})\tiG^{(n-1)}\nonumber\\
	=&\nabla\tiw^{(n)}[(\tizeta^{(n)}-\tizeta^{(n-1)}) (J(\tiX^{(n-1)})-J)+(\tizeta^{(n)}-\tizeta^{(n-1)}) J\\
	&+(\tizeta^{(n)}-\id)(J(\tiX^{(n)})-J(\tiX^{(n-1)}))\nonumber\\
	&+(J(\tiX^{(n)})-J(\tiX^{(n-1)}))+\tizeta^{(n-1)}J(\tiX^{(n-1)})]\tiG^{(n)}\\
	&-\nabla\tiw^{(n-1)}\tizeta^{(n-1)}J(\tiX^{(n-1)})\tiG^{(n-1)}\nonumber\\
	=& \nabla\tiw^{(n)}(\tizeta^{(n)}-\tizeta^{(n-1)}) (J(\tiX^{(n-1)})-J)[(\tiG^{(n)}-\tiG_0) +\tiG_0]\nonumber\\
	&+\nabla\tiw^{(n)}(\tizeta^{(n)}-\tizeta^{(n-1)}) J[(\tiG^{(n)}-\tiG_0) +\tiG_0]\nonumber\\
	&+ \nabla\tiw^{(n)} (\tizeta^{(n)}-\id)(J(\tiX^{(n)})-J(\tiX^{(n-1)}))[(\tiG^{(n)}-\tiG_0) +\tiG_0]\nonumber\\
	&+ \nabla\tiw^{(n)}(J(\tiX^{(n)})-J(\tiX^{(n-1)}))[(\tiG^{(n)}-\tiG_0) +\tiG_0]\nonumber\\
	&+\nabla\tiw^{(n)}\tizeta^{(n-1)}J(\tiX^{(n-1)})\tiG^{(n)}\nonumber\\
	&-\nabla\tiw^{(n-1)}\tizeta^{(n-1)}J(\tiX^{(n-1)})\tiG^{(n-1)}\nonumber\\
	=& \nabla\tiw^{(n)}(\tizeta^{(n)}-\tizeta^{(n-1)}) (J(\tiX^{(n-1)})-J)(\tiG^{(n)}-\tiG_0)\nonumber\\
	&+\nabla\tiw^{(n)}(\tizeta^{(n)}-\tizeta^{(n-1)}) (J(\tiX^{(n-1)})-J)\tiG_0\nonumber\\
	&+\nabla\tiw^{(n)}(\tizeta^{(n)}-\tizeta^{(n-1)}) J(\tiG^{(n)}-\tiG_0)\nonumber\\
	&+\nabla\tiw^{(n)}(\tizeta^{(n)}-\tizeta^{(n-1)}) J\tiG_0\nonumber\\
	&+ \nabla\tiw^{(n)} (\tizeta^{(n)}-\id)(J(\tiX^{(n)})-J(\tiX^{(n-1)}))(\tiG^{(n)}-\tiG_0) \nonumber\\
	&+ \nabla\tiw^{(n)} (\tizeta^{(n)}-\id)(J(\tiX^{(n)})-J(\tiX^{(n-1)}))\tiG_0\nonumber\\
	&+ \nabla\tiw^{(n)}(J(\tiX^{(n)})-J(\tiX^{(n-1)}))(\tiG^{(n)}-\tiG_0)\nonumber\\
	&+ \nabla\tiw^{(n)}(J(\tiX^{(n)})-J(\tiX^{(n-1)}))\tiG_0\nonumber\\
	&+(\nabla\tiw^{(n)}-\nabla\tiw^{(n-1)})(\tizeta^{(n-1)}-\id)(J(\tiX^{(n-1)})-J)(\tiG^{(n-1)}-\tiG_0)\nonumber\\
	&+(\nabla\tiw^{(n)}-\nabla\tiw^{(n-1)})(\tizeta^{(n-1)}-\id)J(\tiG^{(n-1)}-\tiG_0)\nonumber\\
	&+(\nabla\tiw^{(n)}-\nabla\tiw^{(n-1)})\id(J(\tiX^{(n-1)})-J)(\tiG^{(n-1)}-\tiG_0)\nonumber\\
	&+(\nabla\tiw^{(n)}-\nabla\tiw^{(n-1)})\id J(\tiG^{(n-1)}-\tiG_0)\nonumber\\
	&+(\nabla\tiw^{(n)}-\nabla\tiw^{(n-1)})(\tizeta^{(n-1)}-\id)(J(\tiX^{(n-1)})-J)\tiG_0\nonumber\\
	&+(\nabla\tiw^{(n)}-\nabla\tiw^{(n-1)})(\tizeta^{(n-1)}-\id)J\tiG_0\nonumber\\
	&+(\nabla\tiw^{(n)}-\nabla\tiw^{(n-1)})\id (J(\tiX^{(n-1)})-J)\tiG_0\nonumber\\
	&+(\nabla\tiw^{(n)}-\nabla\tiw^{(n-1)})\id J\tiG_0\nonumber\\
	&+\nabla\tiw^{(n)}(\tizeta^{(n-1)}-\id)(J(\tiX^{(n-1)})-J)(\tiG^{(n)}-\tiG^{(n-1)})\nonumber\\
	&+\nabla\tiw^{(n)}(\tizeta^{(n-1)}-\id)J(\tiG^{(n)}-\tiG^{(n-1)})\nonumber\\
	&+\nabla\tiw^{(n)}\id(J(\tiX^{(n-1)})-J)(\tiG^{(n)}-\tiG^{(n-1)})\nonumber\\
	&+\nabla\tiw^{(n)}\id J(\tiG^{(n)}-\tiG^{(n-1)})\nonumber\\
	=&:\sum_{i=1}^{20} I_{4,1,i}.\nonumber
\end{align*}
In this process, we avoid multiplying $  (\tizeta^{(n)}-\tizeta^{(n-1)}) $ and $ (J(\tiX^{(n)})-J(\tiX^{(n-1)}))$ first. Then, in each product,  we leave only one item as the difference between the iterative steps $ n-1 $ and $ n $. For the other terms, we  subtract their  values at $ t=0 $ to apply the estimates.

We will show the estimate of $ I_{4,1,1} $ which is the most complicated term.
Since $ \gamma>1, $ we have from $ \tiX^{(n-1)}, \tiX^{(n)}\in B(X),  (\tiw^{(n)},\tiq_w^{(n)})\in B(w,q), \tiG^{(n)}\in B(G), $ Lemmas  \ref{x-omega},  \ref{G-G0},  \ref{jx-j},  \ref{zetan-zetan-1} and \ref{lem3.6} that
\begin{align*}
	&\|I_{4,1,1}\|_{H_{(0)}^1H^\gamma}\nonumber\\
	=&\|\nabla\tiw^{(n)}(\tizeta^{(n)}-\tizeta^{(n-1)}) (J(\tiX^{(n-1)})-J)(\tiG^{(n)}-\tiG_0)\|_{H_{(0)}^1H^\gamma}\nonumber\\
	\le&\|\nabla \tiw^{(n)}\|_{H_{(0)}^1H^{\gamma}}\|\tizeta^{(n)}-\tizeta^{(n-1)}\|_{H_{(0)}^1H^\gamma}\| J(\tiX^{(n-1)})-J\|_{H_{(0)}^1H^\gamma}\|\tiG^{(n)}-\tiG_0\|_{H_{(0)}^1H^\gamma}\nonumber\\
	\le& T^\delta C(N, \tiv_0, \tiG_0)\|\tiX^{(n)}-\tiX^{(n-1)}\|_{H_{(0)}^1H^{\gamma+1}} \|\tiX^{(n-1)}-\tiomega\|_{H_{(0)}^1H^\gamma}\nonumber\\
	\le& T^{2\delta} C(N, \tiv_0, \tiG_0)\|\tiX^{(n)}-\tiX^{(n-1)}\|_{\mathcal{A}^{s+1,\gamma+1}},\nonumber
\end{align*}
where $ \delta>0 $ is sufficiently small.
Similarly, for the remaining terms, we have 
\begin{align*}
	\sum_{i=2}^{8}\|I_{4,1,i}\|_{H_{(0)}^1H^\gamma}\le & T^{\delta_{4,1,1}} C(N, \tiv_0, \tiG_0)\|\tiX^{(n)}-\tiX^{(n-1)}\|_{\mathcal{A}^{s+1,\gamma+1}},\nonumber	\\
	\sum_{i=9}^{16}\|I_{4,1,i}\|_{H_{(0)}^1H^\gamma}\le & T^{\delta_{4,1,2}} C(N, \tiv_0, \tiG_0)\|\tiw^{(n)}-\tiw^{(n-1)}\|_{\mathcal{K}^{s+1}_{(0)}},\nonumber\\
	\sum_{i=17}^{20}\|I_{4,1,i}\|_{H_{(0)}^1H^\gamma}\le & T^{\delta_{4,1,3}} C(N, \tiv_0, \tiG_0)\|\tiG^{(n)}-\tiG^{(n-1)}\|_{\mathcal{A}^{s,\gamma}}\nonumber
\end{align*}
for some $ \delta_{4,1,i}>0. $

Next, we divide $ I_{4,2} $ and $ I_{4,3} $ in the same way as $ I_{4,1}. $ Since $ \tiv_0 $ and $ \hat{\phi} $ depend only on the initial data, we obtain similar results by applying the estimates. Thus, for $ i=2$ and $3,$  we have
\begin{align}
	\|I_{4,i}\|_{H_{(0)}^1H^\gamma}\le   T^{\delta_{4,i}} C(N, \tiv_0, \tiG_0)(\|\tiX^{(n)}-\tiX^{(n-1)}\|_{\mathcal{A}^{s+1,\gamma+1}}+\|\tiG^{(n)}-\tiG^{(n-1)}\|_{\mathcal{A}^{s,\gamma}})\nonumber
\end{align}
for some $ \delta_{4,i}>0. $

Combining the estimates in $ L_{\frac 14}^\infty H^s $ and $ H_{(0)}^2H^\gamma, $ the proof is done since
	\begin{align}
	\|\tiG^{(n+1)}-\tiG^{(n)}\|_{\mathcal{A}^{s,\gamma}}\le & C(N,\tiv_0,\tiG_0) T^\delta (\|\tiG^{(n)}-\tiG^{(n-1)}\|_{\mathcal{A}^{s,\gamma}}+ \|\tiw^{(n)}-\tiw^{(n-1)}\|_{\mathcal{K}^{s+1}_{(0)}}\nonumber\\
	& + \|\tiX^{(n)}-\tiX^{(n-1)}\|_{\mathcal{A}^{s+1,\gamma+1}})\nonumber
\end{align}
for $ \delta=\min \{ \frac 14, \frac 12, \delta_{4,1,1}, \delta_{4,1,2}, \delta_{4,1,3}, \delta_{4,2}, \delta_{4,3} \}>0. $
\end{proof}

\begin{proposition}\label{pro3}
	For $ 2<s<\frac52$, $1<\gamma <s-1, $ and $ T>0 $ small enough depending on $  N,\tiv_0$  and $ \tiG_0, $ we have
	
	(1) For $ n\ge 0, $ if $ \tiX^{(n)}\in B(X), (\tiw^{(n)},\tiq_w^{(n)})\in B(w,q) $ and  $ \tiG^{(n)}\in B(G), $ then $ (\tiw^{(n+1)},\tiq_w^{(n+1)})\in B(w,q). $ 
	
	(2) For $ n\ge 1, $ if $ \tiX^{(n-1)}, \tiX^{(n)}, \tiX^{(n+1)} \in B(X)$, $(\tiw^{(n-1)},\tiq_w^{(n-1)}), (\tiw^{(n)},\tiq_w^{(n)}),$ $ (\tiw^{(n+1)},$ $\tiq_w^{(n+1)})\in B(w,q) $ and $ \tiG^{(n-1)}, \tiG^{(n)}, \tiG^{(n+1)}\in B(G), $ then 
	\begin{align}
		\|\tiw^{(n+1)}&-\tiw^{(n)}\|_{\mathcal{K}_{(0)}^{s+1}}+\|\tiq_w^{(n+1)}-\tiq_w^{(n)}\|_{\mathcal{K}_{pr(0)}^{s}}\le C(\tiv_0,\tiG_0) T^\delta (\|\tiG^{(n)}-\tiG^{(n-1)}\|_{\mathcal{A}^{s,\gamma}}\nonumber\\
		&+ \|\tiw^{(n)}-\tiw^{(n-1)}\|_{\mathcal{K}^{s+1}_{(0)}} +\|\tiq_w^{(n)}-\tiq_w^{(n-1)}\|_{\mathcal{K}^{s}_{pr(0)}}+ \|\tiX^{(n)}-\tiX^{(n-1)}\|_{\mathcal{A}^{s+1,\gamma+1}})\nonumber
	\end{align}
for some $ \delta>0. $
\end{proposition}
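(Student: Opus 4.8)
The plan is to run everything through the operator formulation of \eqref{equ4.9} and Lemma \ref{lem4.1}. Writing the iterative system as
\[
L(\tiw^{(n+1)},\tiq_w^{(n+1)})=(\tif^{(n)}-\tif_{G_0},\,\bar g^{(n)},\,\tih^{(n)}-\tih_{G_0},\,0)+(\tif_\phi^L+\tif_{G_0},\,\bar g_\phi^L,\,\tih_\phi^L+\tih_{G_0},\,0),
\]
one uses that the second summand is fixed data lying in $Y_0$ (by construction of $\phi,\tiq_\phi$) and that the first summand, together with the time derivative of its $\bar g$-component, vanishes at $t=0$, hence also lies in $Y_0$; therefore $(\tiw^{(n+1)},\tiq_w^{(n+1)})-L^{-1}(\tif_\phi^L+\tif_{G_0},\bar g_\phi^L,\tih_\phi^L+\tih_{G_0},0)=L^{-1}(\tif^{(n)}-\tif_{G_0},\bar g^{(n)},\tih^{(n)}-\tih_{G_0},0)$, and by the uniform bound on $\|L^{-1}\|$ from Lemma \ref{lem4.1} it suffices to prove $\|(\tif^{(n)}-\tif_{G_0},\bar g^{(n)},\tih^{(n)}-\tih_{G_0})\|_{Y_0}\le C(N,\tiv_0,\tiG_0)\,T^{\delta}$ for some $\delta>0$ (Part 1), and $\|(\tif^{(n)}-\tif^{(n-1)},\bar g^{(n)}-\bar g^{(n-1)},\tih^{(n)}-\tih^{(n-1)})\|_{Y_0}$ dominated by $T^{\delta}$ times the sum of increments (Part 2).

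For Part 1, I would estimate the three data components separately, always exploiting leading-order cancellations. In $\tif^{(n)}$ the highest-order pairs $-Q^2\Delta(\tiw^{(n)}+\phi)+Q^2(\tiX^{(n)})\nabla(\nabla(\tiw^{(n)}+\phi)\tizeta^{(n)})\tizeta^{(n)}$ and $J^\top\nabla(\tiq_w^{(n)}+\tiq_\phi)-J(\tiX^{(n)})^\top\tizeta^{(n)\top}\nabla(\tiq_w^{(n)}+\tiq_\phi)$ cancel when $\tizeta^{(n)}=\id$, $J(\tiX^{(n)})=J$, $Q^2(\tiX^{(n)})=Q^2$, so every surviving contribution carries a factor $\tizeta^{(n)}-\id$, $J(\tiX^{(n)})-J$ or $Q^2(\tiX^{(n)})-Q^2$, each $O(T^{1/4})$ by Lemmas \ref{x-omega}, \ref{jx-j}, \ref{zeta-} together with $\tizeta^{(n)}-\id=\tizeta^{(n)}\nabla(\tiomega-\tiX^{(n)})$; the magnetic term $\nabla\tiG^{(n)}\tizeta^{(n)}J(\tiX^{(n)})\tiG^{(n)}-\nabla\tiG_0 J\tiG_0$ is telescoped and controlled by Lemma \ref{G-G0}, subtracting the $t=0$ value to gain a power of $T$. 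The divergence component $\bar g^{(n)}=\tig^{(n)}+\trace(\nabla\phi\,\tizeta_\phi J_\phi)-\trace(\nabla\phi\,J)$ is estimated in $\bar{\mathcal K}^{s}_{(0)}=L^2H^s\cap H^{\frac{s+1}{2}}_{(0)}H^{-1}$, where the vanishing of $(\partial_t\bar g^{(n)})(0)$ is essential for the fractional-in-time part. The boundary term $\tih^{(n)}-\tih_{G_0}$, which has the most pieces (pressure traces, $\nabla\tiw^{(n)}$-traces, and the quadratic magnetic trace $\tiG^{(n)}\tiG^{(n)\top}$), is expanded the same way: the leading pieces cancel against the $J^{-1}\tilde n_0$ terms, every remainder carries $J(\tiX^{(n)})^{-1}\nabla_\Lambda\tiX^{(n)}-J^{-1}$, $\tizeta^{(n)}-\id$ or $\tiG^{(n)}\tiG^{(n)\top}-\tiG_0\tiG_0^\top$, and the products are closed with the (trace versions of the) product inequalities, using $\gamma>1$ so that $H^\gamma$ is an algebra.

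For Part 2 I would apply $L$ to $(\tiw^{(n+1)}-\tiw^{(n)},\tiq_w^{(n+1)}-\tiq_w^{(n)})$, obtaining the data $(\tif^{(n)}-\tif^{(n-1)},\bar g^{(n)}-\bar g^{(n-1)},\tih^{(n)}-\tih^{(n-1)},0)$, and reduce by Lemma \ref{lem4.1} to bounding these differences in $Y_0$. Each difference is handled by the telescoping device already used in the proof of Proposition \ref{pro2}: one writes the product of $n$-th-step factors minus the product of $(n-1)$-th-step factors as a sum in which exactly one factor is an increment — $\nabla\tiw^{(n)}-\nabla\tiw^{(n-1)}$, $\nabla(\tiq_w^{(n)}-\tiq_w^{(n-1)})$, $\tizeta^{(n)}-\tizeta^{(n-1)}$ (controlled by $\|\tiX^{(n)}-\tiX^{(n-1)}\|_{\mathcal A^{s+1,\gamma+1}}$ via Lemma \ref{zetan-zetan-1}), $J(\tiX^{(n)})-J(\tiX^{(n-1)})$ (Lemmas \ref{jx-j}, \ref{jx-jy}) or $\tiG^{(n)}-\tiG^{(n-1)}$ — while the remaining factors are first reduced by their $t=0$ values to extract a positive power of $T$, using Lemmas \ref{x-omega}, \ref{G-G0}, \ref{zeta-} and the time–space trade-off lemmas. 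Summing yields exactly the stated inequality with $\delta$ the minimum of the powers obtained; the pressure increment $\|\tiq_w^{(n)}-\tiq_w^{(n-1)}\|_{\mathcal K^{s}_{pr(0)}}$ enters the right-hand side only through the terms in which the increment factor is $\nabla(\tiq_w^{(n)}-\tiq_w^{(n-1)})$ (inside $\tif$) or $\tiq_w^{(n)}-\tiq_w^{(n-1)}$ (inside $\tih$).

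The hard part is not any individual estimate but the combined bookkeeping: one must (i) verify at each stage that the data actually lies in $Y_0$, i.e., that the compatibility conditions \eqref{compatibility lemma4.1} hold and that $\bar g^{(n)}$ and $\partial_t\bar g^{(n)}$ vanish at $t=0$ — precisely what the modifications $\phi,\tiq_\phi,\tizeta_\phi,J_\phi$ were designed to ensure — so that Lemma \ref{lem4.1} applies with time-independent constants; (ii) carry out all leading-order cancellations carefully, so that no term survives with the full $\mathcal K^{s+1}$-norm of $\tiw^{(n)}$ or the $\dot H^1$-norm of $\tiq_w^{(n)}$ unaccompanied by a smallness factor $\tizeta^{(n)}-\id$, $J(\tiX^{(n)})-J$, etc.; and (iii) stay within the regularity budget, since the product estimates in $H^\gamma$ and the time-regularity trade-offs force $\gamma>1$ and fix the particular mixed spaces $\mathcal A^{s,\gamma}$, $\bar{\mathcal K}^{s}_{(0)}$, which is exactly why the indices cannot be raised uniformly.
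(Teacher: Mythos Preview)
Your proposal is correct and follows essentially the same approach as the paper: reduce via Lemma \ref{lem4.1} to estimating $(\tif^{(n)}-\tif_{G_0},\bar g^{(n)},\tih^{(n)}-\tih_{G_0})$ in $Y_0$ for Part 1 and the corresponding differences for Part 2, split each component so that every surviving term carries a factor $\tizeta^{(n)}-\id$, $J(\tiX^{(n)})-J$, $Q^2(\tiX^{(n)})-Q^2$, $\tiG^{(n)}-\tiG_0$ (or, in Part 2, an $n$ vs.\ $n-1$ increment), and close with Lemmas \ref{x-omega}--\ref{G-G0} and \ref{jx-j}--\ref{zetan-zetan-1}. The paper, like your outline, delegates the non-magnetic pieces $\tif^{(n)}_w,\tif^{(n)}_\phi,\tif^{(n)}_q$, $\bar g^{(n)}$, and $\tih^{(n)}_w,\tih^{(n)}_{w^\top},\tih^{(n)}_\phi,\tih^{(n)}_{\phi^\top},\tih^{(n)}_q$ to \cite{castro2019splash} and only writes out the new magnetic contributions $\tif^{(n)}_G-\tif_{G_0}$ and $\tih^{(n)}_G-\tih_{G_0}$ in detail.
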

\begin{proof} %Without loss of generality, we may	assume $ T<1. $
	
\textbf{Part 1.}

From Lemma \ref{lem4.1}, we have
\begin{align}
&\|(\tiw^{(n+1)},\tiq_w^{(n+1)})-L^{-1}( \tilde{f}_\phi^L+\tilde{f}_{G_0}, \bar{g}_\phi^L, \tilde{h}_\phi^L+\tilde{h}_{G_0}, 0 )\|_{X_0}\nonumber\\
=&\|L^{-1}( \tilde{f}^{(n)}-\tilde{f}_{G_0}, \bar{g}^{(n)}, \tilde{h}^{(n)}-\tilde{h}_{G_0}, 0 )\|_{X_0}\nonumber\\
\le & C(\| \tilde{f}^{(n)}-\tilde{f}_{G_0}\|_{\mathcal{K}_{(0)}^{s-1}}+\| \bar{g}^{(n)}\|_{\bar{\mathcal{K}}_{(0)}^{s}}+\| \tilde{h}^{(n)}-\tilde{h}_{G_0} \|_{\mathcal{K}_{(0)}^{s-\frac 12}}).\label{equ4.13}
\end{align}
Thus, it is sufficient to estimate $ \| \tilde{f}^{(n)}-\tilde{f}_{G_0}\|_{\mathcal{K}_{(0)}^{s-1}}, \| \bar{g}^{(n)}\|_{\bar{\mathcal{K}}_{(0)}^{s}} $ and $ \| \tilde{h}^{(n)}-\tilde{h}_{G_0} \|_{\mathcal{K}_{(0)}^{s-\frac 12}}. $ 

\textbf{Estimate for $ \tif^{(n)} $}.
We rewrite $  \tif^{(n)} $ as
\begin{align}
		\tif^{(n)}
		= & -Q^2 \Delta (\tiw^{(n)}+\phi)+ J^\top \nabla (\tiq^{(n)})+Q^2(\tiX^{(n)}) \nabla(\nabla(\tiw^{(n)}+\phi)\tizeta^{(n)})\tizeta^{(n)}\nonumber\\
		&-J(\tiX^{(n)})^\top  \tizeta^{(n)\top} \nabla (\tiq^{(n)})\nonumber\\
		=&[-Q^2\laplace\tiw^{(n)}+Q^2(\tiX^{(n)})\nabla(\nabla \tiw^{(n)}\tizeta^{(n)})\tizeta^{(n)}]\nonumber\\
		&+[-Q^2\laplace\phi+Q^2(\tiX^{(n)})\nabla(\nabla \phi\tizeta^{(n)})\tizeta^{(n)}]\nonumber\\
		&+[J^\top\nabla \tiq^{(n)}-J(\tiX^{(n)})^\top \tizeta^{(n)\top}\nabla \tiq^{(n)}]+[\nabla\tiG^{(n)}\tizeta^{(n)}J(\tiX^{(n)})\tiG^{(n)}]\nonumber\\
		=&:\tif^{(n)}_w+\tif^{(n)}_\phi+\tif^{(n)}_q+\tif^{(n)}_G.\label{equ4.14}
\end{align}
Note that $ \tif^{(n)}_w, \tif^{(n)}_\phi $ and  $\tif^{(n)}_q $ have already been studied in \cite[Proposition 5.4]{castro2019splash} where the definition of
$ \tif^{(n)}_w $ and $ \tif^{(n)}_q  $
are the same. However, our definition of $ \phi $ is different from that in \cite{castro2019splash} since we have introduced the initial magnetic field  $ \tiG_0 $. It turns out that this distinction does not significantly affect the estimates for $ \tif^{(n)}_\phi $ in \cite{castro2019splash} but the
constant will depend on $ \tiG_0. $ For this motivation, it suffices to show the estimate of $ \tif^{(n)}_G- \tif_{G_0} $ in  $ L^2H^{s-1} $ and $ H_{(0)}^{\frac{s-1}{2}}L^2. $

For the estimate in $ L^2H^{s-1}, $ we split $ \tif^{(n)}_G- \tif_{G_0} $ as 
\begin{align*} 
	\tif^{(n)}_G- \tif_{G_0}=&\nabla\tiG^{(n)}\tizeta^{(n)}J(\tiX^{(n)})\tiG^{(n)}-\nabla\tiG_0J\tiG_0\nonumber\\
	=&[(\nabla\tiG^{(n)}-\nabla\tiG_0)+\nabla\tiG_0]\tizeta^{(n)}J(\tiX^{(n)})[(\tiG^{(n)}-\tiG_0)+\tiG_0]-\nabla\tiG_0J\tiG_0\nonumber\\
	=&(\nabla\tiG^{(n)}-\nabla\tiG_0)\tizeta^{(n)}J(\tiX^{(n)})(\tiG^{(n)}-\tiG_0)\\
	&+(\nabla\tiG^{(n)}-\nabla\tiG_0)\tizeta^{(n)}J(\tiX^{(n)})\tiG_0+\nabla\tiG_0\tizeta^{(n)}J(\tiX^{(n)})(\tiG^{(n)}-\tiG_0)\nonumber\\
	&+\nabla\tiG_0\tizeta^{(n)}(J(\tiX^{(n)})-J)\tiG_0+\nabla\tiG_0(\tizeta^{(n)}-\id) J\tiG_0\nonumber\\	
	=:&\sum_{i=1}^{5}d_i^f.\nonumber
\end{align*}
We merely show the estimate of $ d_1^f  $ and $ d_4^f. $ From $ \tiX^{(n)}\in B(X),  \tiG^{(n)}\in B(G), $ Lemmas \ref{G-G0}, \ref{jx-j} and  \ref{zeta-}, we have for $ d_1^f $ that
\begin{align}
	&\|d_1^f\|_{L^2H^{s-1}}\nonumber\\
	=&\|(\nabla\tiG^{(n)}-\nabla\tiG_0)\tizeta^{(n)}J(\tiX^{(n)})(\tiG^{(n)}-\tiG_0)\|_{L^2H^{s-1}}\nonumber\\
	\le & \|\nabla\tiG^{(n)}-\nabla\tiG_0\|_{L^2H^{s-1}} \|\tizeta^{(n)}\|_{L^\infty H^{s-1}}\|J(\tiX^{(n)})\|_{L^\infty H^{s-1}}\|\tiG^{(n)}-\tiG_0\|_{L^\infty H^{s-1}}\nonumber\\
	\le & T^{\frac 12+2\delta}C(N,\tiv_0, \tiG_0)\nonumber
\end{align}
for some $ \delta>0 $ small enough. 
For $ d_4^f, $ we apply Lemmas \ref{jx-j} and \ref{zeta-} to obtain
\begin{align}
	&\|\nabla\tiG_0\tizeta^{(n)}(J(\tiX^{(n)})-J)\tiG_0\|_{L^2H^{s-1}}\nonumber\\
	\le & \|\nabla \tiG_0\|_{L^2H^{s-1}} \|\tizeta^{(n)}\|_{L^\infty H^{s-1}}\|J(\tiX^{(n)})-J\|_{L^\infty H^{s-1}}\| \tiG_0 \|_{L^\infty H^{s-1}}\nonumber\\
	\le & T^{\frac 12}C(N,\tiv_0, \tiG_0).\nonumber
\end{align}
Similarly, we can estimate other terms.

For the estimate in $ H_{(0)}^{\frac{s-1}{2}}L^2, $ we use the following splitting:
\begin{align*}
	&\tif^{(n)}_G- \tif_{G_0}\nonumber\\
	=&\nabla\tiG^{(n)}\tizeta^{(n)}J(\tiX^{(n)})\tiG^{(n)}-\nabla\tiG_0J\tiG_0\nonumber\\
	=&[(\nabla\tiG^{(n)}-\nabla\tiG_0)+\nabla\tiG_0][(\tizeta^{(n)}-\id)+\id][(J(\tiX^{(n)})-J)+J][(\tiG^{(n)}-\tiG_0)+\tiG_0]\nonumber\\
	&-\nabla\tiG_0J\tiG_0=:\sum_{i=1}^{15}\bar{d}_i^f,\nonumber
\end{align*}
where we have fully expanded the first product into $ 16 $ terms.
We only show the estimate of $\bar{d}_1^f= (\nabla\tiG^{(n)}-\nabla\tiG_0)(\tizeta^{(n)}-\id)(J(\tiX^{(n)})-J)(\tiG^{(n)}-\tiG_0) $, and the others are similar or easier. From $ \tiX^{(n)}\in B(X), \tiG^{(n)}\in B(G), $ Lemmas   \ref{x-omega},  \ref{G-G0},  \ref{jx-j}, \ref{zeta-}, \ref{lem3.4} and  \ref{lem3.7}, we have
\begin{align}
	&\|\bar{d}_1^f\|_{H_{(0)}^{\frac{s-1}{2}}L^2}\nonumber\\
	\le&\|(\nabla\tiG^{(n)}-\nabla\tiG_0)(\tizeta^{(n)}-\id)(J(\tiX^{(n)})-J)(\tiG^{(n)}-\tiG_0)\|_{H_{(0)}^{\frac{s-1}{2}}L^2}\nonumber\\
	\le & \|(\nabla\tiG^{(n)}-\nabla\tiG_0)(\tizeta^{(n)}-\id)(J(\tiX^{(n)})-J)\|_{H_{(0)}^{\frac{s-1}{2}}L^2}\|\tiG^{(n)}-\tiG_0\|_{H_{(0)}^{\frac{s-1}{2}}H^{1+\eta}}\nonumber\\
	\le & \|\nabla\tiG^{(n)}-\nabla\tiG_0\|_{H_{(0)}^{\frac{s-1}{2}}L^2}\|(\tizeta^{(n)}-\id)(J(\tiX^{(n)})-J)\|_{H_{(0)}^{\frac{s-1}{2}}H^{1+\eta}}\|\tiG^{(n)}-\tiG_0\|_{H_{(0)}^{\frac{s-1}{2}}H^{1+\eta}}\nonumber\\
	\le & \|\tizeta^{(n)}-\id\|_{H_{(0)}^{\frac{s-1}{2}}H^{1+\eta}}\|J(\tiX^{(n)})-J\|_{H_{(0)}^{\frac{s-1}{2}}H^{1+\eta}}\|\tiG^{(n)}-\tiG_0\|^2_{H_{(0)}^{\frac{s-1}{2}}H^{1+\eta}}\nonumber\\
	\le & \|\tiX^{(n)}-\tiomega\|_{H_{(0)}^{\frac{s-1}{2}}H^{2+\eta}}C(N,\tiv_0,\tiG_0)(\|\tiX^{(n)}-\hat{X}\|_{H_{(0)}^{\frac{s-1}{2}+\delta}H^{1+\eta}}+T)T^{2\delta}\nonumber\\
	\le & T^{3\delta}C(N,\tiv_0,\tiG_0)(\|\tiX^{(n)}-\hat{X}\|_{\mathcal{A}^{s+1,\gamma+1}}+T)\nonumber\\
	\le &C(N,\tiv_0, \tiG_0)  T^{3\delta}\nonumber
\end{align}
for some $ \delta, \eta>0 $ small enough. We conclude that $ \|d_i^f\|_{H_{(0)}^{\frac{s-1}{2}}L^2} \le C(N,\tiv_0,\tiG_0) T^{\delta_i} $ for some $ \delta_i>0, i=1, \cdots, 15, $ and therefore
\begin{equation}\label{equ4.15}
	\|\tif^{(n)}_w\|_{\mathcal{K}_{(0)}^{s-1}}+\|\tif^{(n)}_\phi\|_{\mathcal{K}_{(0)}^{s-1}}+\|\tif^{(n)}_q\|_{\mathcal{K}_{(0)}^{s-1}}+\|\tif^{(n)}_G- \tif_{G_0}\|_{\mathcal{K}_{(0)}^{s-1}}\le T^{\delta}C(N,\tiv_0, \tiG_0)
\end{equation}
for some $ \delta>0 $.

\textbf{ Estimate for $ \bar{g}^{(n)} $.}
We split $ \bar{g}^{(n)} $ as in \cite{castro2019splash}:
\begin{align}
	\bar{g}^{(n)}  =&\trace (\nabla (\tiw^{(n)}+\phi) J)-\trace (\nabla (\tiw^{(n)}+\phi) \tizeta^{(n)} J (\tiX^{(n)} )+\operatorname{Tr}(\nabla \phi \tilde{\zeta}_\phi J_\phi)-\operatorname{Tr}(\nabla \phi J)\nonumber \\
	=& \trace(\nabla\tiw^{(n)}(J-J(\tiX^{(n)})))+\trace(\nabla\tiw^{(n)}(\id-\tizeta^{(n)})J(\tiX^{(n)}))\nonumber\\
	&+\trace(\nabla\phi\tizeta_\phi(J_\phi-J(\tiX^{(n)})))+\trace(\nabla\phi(\tizeta_\phi-\tizeta^{(n)})J(\tiX^{(n)})).\nonumber
\end{align}
For all computations we recall \cite{castro2019splash} and the final estimate will depend on both $ \tiv_0 $ and $ \tiG_0, $ i.e.,
\begin{equation}\label{equ4.16}
	\|\bar{g}^{(n)}\|_{\bar{\mathcal{K}}_{(0)}^{s}}\le T^\theta C(N,\tiv_0,\tiG_0)
\end{equation}
for some $ \theta>0. $

\textbf{ Estimate for $ \tih^{(n)} $.}
We rewrite $ \tih^{(n)}  $ as follows:
\begin{align}
	\tih^{(n)}
	=&  -\tiq^{(n)}J^{-1} \tilde{n}_0+\tiq^{(n)}J(\tiX^{(n)})^{-1} \nabla_{\Lambda} \tiX^{(n)} \tilde{n}_0\nonumber\\
	&+((\nabla (\tiw^{(n)}+\phi) J) +(\nabla (\tiw^{(n)}+\phi) J)^\top )J^{-1} \tilde{n}_0\nonumber \\
	& - ( (\nabla (\tiw^{(n)}+\phi) \tizeta^{(n)} J (\tiX^{(n)} ) )\nonumber \\
	&	+ (\nabla (\tiw^{(n)}+\phi) \tizeta^{(n)} J (\tiX^{(n)} ) )^\top  ) J (\tiX^{(n)} ) ^{-1} \nabla_{\Lambda} \tiX^{(n)} \tilde{n}_0\nonumber \\
	&-\tiG^{(n)}\tiG^{(n)\top}J(\tiX^{(n)})^{-1} \nabla_{\Lambda} \tiX^{(n)} \tilde{n}_0\nonumber \\
	= &\nabla \tiw^{(n)}\tilde{n}_0-\nabla \tiw^{(n)}\tizeta^{(n)} \nabla_{\Lambda} \tiX^{(n)} \tilde{n}_0 \nonumber \\
	&+(\nabla\tiw^{(n)}J)^\top  J^{-1} \tilde{n}_0-(\nabla\tiw^{(n)}J(\tiX^{(n)}))^\top J(\tiX^{(n)})^{-1} \nabla_{\Lambda} \tiX^{(n)} \tilde{n}_0 \nonumber \\	
	&+ \nabla\phi \tilde{n}_0-\nabla \tiw^{(n)}\tizeta^{(n)} \nabla_{\Lambda} \tiX^{(n)} \tilde{n}_0 \nonumber \\
	&+(\nabla\phi J)^\top  J^{-1} \tilde{n}_0-(\nabla\phi J(\tiX^{(n)}))^\top J(\tiX^{(n)})^{-1} \nabla_{\Lambda} \tiX^{(n)} \tilde{n}_0\nonumber \\
	&+(-\tiq^{(n)} J^{-1} \tilde{n}_0+\tiq^{(n)}J(\tiX^{(n)})^{-1} \nabla_{\Lambda} \tiX^{(n)} \tilde{n}_0) \nonumber \\
	&+(-\tiG^{(n)}\tiG^{(n)\top}J(\tiX^{(n)})^{-1} \nabla_{\Lambda} \tiX^{(n)} \tilde{n}_0)\nonumber \\
	=&:\tih^{(n)}_w+\tih^{(n)}_{w^\top }+\tih^{(n)}_\phi+\tih^{(n)}_{\phi^\top }+\tih^{(n)}_q+\tih^{(n)}_G.\label{equ4.17}
\end{align}
As for $ \tif^{(n)}, $ the estimates of $ \tih^{(n)}_w, \tih^{(n)}_{w^\top }, \tih^{(n)}_\phi, \tih^{(n)}_{\phi^\top } $ and  $\tih^{(n)}_q $ are the same as in \cite{castro2019splash} expect $ \tih^{(n)}_G- \tih_{G_0}. $

For the estimate of $ \tih^{(n)}_G- \tih_{G_0} $ in $ L^2H^{s-\frac12}, $ we use the following splitting:
\begin{align}
	\tih_{G_0}-\tih^{(n)}_G=&\tiG^{(n)}\tiG^{(n)\top}J(\tiX^{(n)})^{-1} \nabla_{\Lambda} \tiX^{(n)} \tilde{n}_0-\tiG_0\tiG_0^{(n)\top}J^{-1} \tilde{n}_0\nonumber\\
	=&(\tiG^{(n)}-\tiG_0)(\tiG^{(n)}-\tiG_0)^\top J(\tiX^{(n)})^{-1} \nabla_{\Lambda} \tiX^{(n)}\tilde{n}_0\nonumber\\
	&+\tiG_0(\tiG^{(n)}-\tiG_0)^\top J(\tiX^{(n)})^{-1} \nabla_{\Lambda} \tiX^{(n)}\tilde{n}_0\nonumber\\ &+(\tiG^{(n)}-\tiG_0)\tiG_0^\top J(\tiX^{(n)})^{-1} \nabla_{\Lambda} \tiX^{(n)}\tilde{n}_0\nonumber\\
	&+\tiG_0\tiG_0(J(\tiX^{(n)})^{-1}-J^{-1}) \nabla_{\Lambda} \tiX^{(n)}\tilde{n}_0\nonumber\\
	&+\tiG_0\tiG_0^\top J^{-1} \nabla_{\Lambda} \tiX^{(n)}\tilde{n}_0
	=:\sum_{i=1}^{5} d_i^h.\nonumber
\end{align}
We estimate the most difficult term $ d_1^h. $ Note that these products involve the inverse matrix of $ J. $ By direct computation, we have 
\begin{equation}\label{Jni}
	J(\tiX)^{-1}=
	\left( \begin{array}{cc}
		2 \tiX^{1}	& -2 \tiX^{2}  \\
		2 \tiX^{2}	& 2 \tiX^{1}
	\end{array}\right). 
\end{equation}
Now from  $ \tiX^{(n)}\in B(X), \tiG^{(n)}\in B(G), $  equation \eqref{Jni},    Lemmas \ref{x-omega}, \ref{G-G0} and Theorem \ref{lem3.9}, we obtain
\begin{align}
	&\|d_1^h\|_{L^2H^{s-\frac 12}}\nonumber\\
	=&\|(\tiG^{(n)}-\tiG_0)(\tiG^{(n)}-\tiG_0)^\top J(\tiX^{(n)})^{-1} \nabla_{\Lambda} \tiX^{(n)}\tilde{n}_0\|_{L^2H^{s-\frac 12}}\nonumber\\
	\le &\|\tiG^{(n)}-\tiG_0\|_{L^2H^{s-\frac 12}}\|(\tiG^{(n)}-\tiG_0)^\top \|_{L^\infty H^{s-\frac 12}}\|J(\tiX^{(n)})^{-1}\|_{L^\infty H^{s-\frac 12}}\| \nabla_{\Lambda} \tiX^{(n)}\tilde{n}_0\|_{L^\infty H^{s-\frac 12}}\nonumber\\
	\le &CT^{\frac 12}\|\tiG^{(n)}-\tiG_0\|^2_{L^\infty H^{s-\frac 12}}\|J(\tiX^{(n)})^{-1}\|_{L^\infty H^{s-\frac 12}}\| \nabla_{\Lambda} \tiX^{(n)}\tilde{n}_0\|_{L^\infty H^{s-\frac 12}}\nonumber\\
	\le &CT^{\frac 12}\|\tiG^{(n)}-\tiG_0\|^2_{L^\infty H^s}\|\tiX^{(n)}\|_{L^\infty H^{s}}^2\nonumber\\
	\le &C(N,\tiv_0,\tiG_0)T.\nonumber
\end{align}
Similarly, we deduce
\begin{align*}
	\sum_{i=2}^{5}\|d_i^h\|_{L^2H^{s-\frac 12}}\le C(N,\tiv_0,\tiG_0)T^{\frac 34}.
\end{align*}

For the estimate in $ H_{(0)}^{\frac{s}{2}-\frac 14}L^2, $ we use the following splitting:
\begin{align*}
	&\tih_{G_0}-\tih^{(n)}_G\nonumber\\
	=&\tiG^{(n)}\tiG^{(n)\top}J(\tiX^{(n)})^{-1} \nabla_{\Lambda} \tiX^{(n)} \tilde{n}_0-\tiG_0\tiG_0^{(n)\top}J^{-1} \tilde{n}_0\nonumber\\
	=&[(\tiG^{(n)}-\tiG_0)+\tiG_0][(\tiG^{(n)}-\tiG_0)+\tiG_0]^\top [(J(\tiX^{(n)})^{-1}-J^{-1})+J^{-1}] \\
	&\cdot[(\nabla_{\Lambda} \tiX^{(n)}-\id)+\id]\tilde{n}_0-\tiG_0\tiG_0^{(n)\top}J^{-1} \tilde{n}_0\nonumber\\
	=:& \sum_{i=1}^{15} \bar{d}_i^h.\nonumber
\end{align*}
For the most difficult term $ \bar{d}_1^h, $ from  $ \tiX^{(n)}\in B(X),  \tiG^{(n)}\in B(G), $ equation \eqref{Jni}, Lemmas \ref{x-omega}, \ref{G-G0},   \ref{lem3.4},  \ref{lem3.5} and  Theorem \ref{lem3.9}, we  obtain
\begin{align*}
	&\|\bar{d}_1^h\|_{H_{(0)}^{\frac{s}{2}-\frac 14}L^2}\nonumber\\
	=&\|(\tiG^{(n)}-\tiG_0)(\tiG^{(n)}-\tiG_0)^\top (J(\tiX^{(n)})^{-1}-J^{-1} )(\nabla_{\Lambda} \tiX^{(n)}-\id)\tilde{n}_0\|_{H_{(0)}^{\frac{s}{2}-\frac 14}L^2}\nonumber\\
	\le &\|(\tiG^{(n)}-\tiG_0)(\tiG^{(n)}-\tiG_0)^\top \|_{H_{(0)}^{\frac{s}{2}-\frac 14}L^2}\\
	&\cdot\|(J(\tiX^{(n)})^{-1}-J^{-1} )(\nabla_{\Lambda} \tiX^{(n)}-\id)\|_{H_{(0)}^{\frac{s}{2}-\frac 14}H^{\frac 12+\eta}}\nonumber\\
	\le &\|\tiG^{(n)}-\tiG_0\|_{H_{(0)}^{\frac{s}{2}-\frac 14}H^{\frac 12+\eta}}\|(\tiG^{(n)}-\tiG_0)^\top \|_{H_{(0)}^{\frac{s}{2}-\frac 14}H^{\frac 12-\eta}}\|J(\tiX^{(n)})^{-1}-J^{-1} \|_{H_{(0)}^{\frac{s}{2}-\frac 14}H^{\frac 12+\eta}}\nonumber\\
	&\cdot\|\nabla_{\Lambda} \tiX^{(n)}-\id \|_{H_{(0)}^{\frac{s}{2}-\frac 14}H^{\frac 12+\eta}}\nonumber\\
	\le &\|\tiG^{(n)}-\tiG_0\|_{H_{(0)}^{\frac{s}{2}-\frac 14}H^{1+\eta}}\|(\tiG^{(n)}-\tiG_0)^\top \|_{H_{(0)}^{\frac{s}{2}-\frac 14}H^{1-\eta}}\|J(\tiX^{(n)})^{-1}-J^{-1} \|_{H_{(0)}^{\frac{s}{2}-\frac 14}H^{1+\eta}}\nonumber\\
	&\cdot\|\nabla_{\Lambda} \tiX^{(n)}-\id \|_{H_{(0)}^{\frac{s}{2}-\frac 14}H^{1+\eta}}\nonumber\\
	\le &C(N,\tiv_0,\tiG_0)T^{2\delta}\| \tiX^{(n)}-\tiomega \|_{H_{(0)}^{\frac{s}{2}-\frac 14}H^{2+\eta}}^2\nonumber\\
	\le &C(N,\tiv_0,\tiG_0)T^{4\delta}\nonumber
\end{align*}
for $ \eta, \delta>0 $  small enough.
Similarly, we have
\begin{align*}
	\|\bar{d}_i^h\|_{H_{(0)}^{\frac{s}{2}-\frac 14}L^2}\le C(N,\tiv_0,\tiG_0)T^{\delta_i}
\end{align*}
for some $ \delta_i>0,   i=2,3,\cdots, 15. $ Therefore, we obtain
\begin{align}
	\|\tih^{(n)}_w\|_{\mathcal{K}_{(0)}^{s-\frac1 2}}+&\|\tih^{(n)}_{w^\top }\|_{\mathcal{K}_{(0)}^{s-\frac 12}}+\|\tih^{(n)}_\phi\|_{\mathcal{K}_{(0)}^{s-\frac 12}}+\|\tih^{(n)}_{\phi^\top }\|_{\mathcal{K}_{(0)}^{s-\frac 12}}+\|\tih^{(n)}_q\|_{\mathcal{K}_{(0)}^{s-\frac 12}}\nonumber\\
	+&\|\tih^{(n)}_G-\tih_{G_0}\|_{\mathcal{K}_{(0)}^{s-\frac 12}}\le C(N, \tiv_0, \tiG_0) T^\beta\label{equ4.18}
\end{align}
for some $ \beta>0. $

The proof of the first part  is complete if we substitute  estimates \eqref{equ4.15}, \eqref{equ4.16} and  \eqref{equ4.18} into \eqref{equ4.13}.

\textbf{Part 2.}

Note that
\begin{align}
	&L(\tiw^{(n+1)}-\tiw^{(n)}, \tiq^{(n+1)}_w-\tiq^{(n)}_w)
	=(\tilde{f}^{(n)}-\tilde{f}^{(n-1)}, \bar{g}^{(n)}-\bar{g}^{(n-1)}, \tilde{h}^{(n)}-\tilde{h}^{(n-1)}, 0)\nonumber
\end{align}
and  from Lemma \ref{lem4.1}, we have
\begin{align}
	&\|(\tiw^{(n+1)}-\tiw^{(n)}, \tiq^{(n+1)}_w-\tiq^{(n)}_w)\|_{X_0}\nonumber\\
	=&\|L^{-1}(\tilde{f}^{(n)}-\tilde{f}^{(n-1)}, \bar{g}^{(n)}-\bar{g}^{(n-1)}, \tilde{h}^{(n)}-\tilde{h}^{(n-1)}, 0)\|_{X_0}\nonumber\\
	\le & C(\|\tilde{f}^{(n)}-\tilde{f}^{(n-1)}\|_{\mathcal{K}_{(0)}^{s-1}}+\|\bar{g}^{(n)}-\bar{g}^{(n-1)}\|_{\bar{\mathcal{K}}_{(0)}^{s}}+\| \tilde{h}^{(n)}-\tilde{h}^{(n-1)}\|_{\mathcal{K}_{(0)}^{s-\frac 12}}).\nonumber
\end{align}
Therefore, it is enough to estimate $  \|\tilde{f}^{(n)}-\tilde{f}^{(n-1)}\|_{\mathcal{K}_{(0)}^{s-1}}, \|\bar{g}^{(n)}-\bar{g}^{(n-1)}\|_{\bar{\mathcal{K}}_{(0)}^{s}}$  and  $\| \tilde{h}^{(n)}-\tilde{h}^{(n-1)}\|_{\mathcal{K}_{(0)}^{s-\frac 12}}$. 

\textbf{Estimate for $ \tif^{(n)}-\tif^{(n-1)} $.} 
In \eqref{equ4.14}, we have split the term $ \tif^{(n)} $ into four terms $\tif^{(n)}= \tif^{(n)}_w+\tif^{(n)}_\phi+\tif^{(n)}_q+\tif^{(n)}_G. $ The study for $ \tilde{f}^{(n)}-\tilde{f}^{(n-1)} $ involves estimates of $ \tilde{f}^{(n)}_w-\tilde{f}_w^{(n-1)},  \tilde{f}_\phi^{(n)}-\tilde{f}_\phi^{(n-1)}$  and  $  \tilde{f}_q^{(n)}-\tilde{f}_q^{(n-1)}$ which can be found in \cite[Proposition 5.4]{castro2019splash}.  Therefore, we merely show  the estimates of $ \tif^{(n)}_G- \tif^{(n-1)}_{G}$ in $ L^2H^{s-1} $ and $  H_{(0)}^{\frac{s-1}{2}}L^2. $
For the estimate in $ L^2H^{s-1} $, we use the following splitting
\begin{align*}
	&\tif^{(n)}_G- \tif^{(n-1)}_{G}\nonumber\\
	=&\nabla\tiG^{(n)}\tizeta^{(n)}J(\tiX^{(n)})\tiG^{(n)}-\nabla\tiG^{(n-1)}\tizeta^{(n-1)}J(\tiX^{(n-1)})\tiG^{(n-1)}\nonumber\\
	=&\nabla\tiG^{(n)}[(\tizeta^{(n)}-\tizeta^{(n-1)})+\tizeta^{(n-1)}][(J(\tiX^{(n)})-J(\tiX^{(n-1)}))+J(\tiX^{(n)})]\tiG^{(n)}\nonumber\\
	=&\nabla\tiG^{(n)}[(\tizeta^{(n)}-\tizeta^{(n-1)})J(\tiX^{(n)})+\tizeta^{(n-1)} (J(\tiX^{(n)})-J(\tiX^{(n-1)}))  \nonumber\\
	&+\tizeta^{(n-1)}J(\tiX^{(n-1)})]\tiG^{(n)}-\nabla\tiG^{(n-1)}\tizeta^{(n-1)}J(\tiX^{(n-1)})\tiG^{(n-1)}\nonumber\\
	=&\nabla\tiG^{(n)}(\tizeta^{(n)}-\tizeta^{(n-1)})J(\tiX^{(n)})\tiG^{(n)}+\nabla\tiG^{(n)}\tizeta^{(n-1)} (J(\tiX^{(n)})-J(\tiX^{(n-1)}) \nonumber\\
	&\tiG^{(n)}+\nabla\tiG^{(n)}\tizeta^{(n-1)}J(\tiX^{(n-1)})\tiG^{(n)}-\nabla\tiG^{(n-1)}\tizeta^{(n-1)}J(\tiX^{(n-1)})\tiG^{(n-1)} \nonumber\\
	=&[(\nabla\tiG^{(n)}-\nabla\tiG_0)+\nabla\tiG_0](\tizeta^{(n)}-\tizeta^{(n-1)})J(\tiX^{(n)})[(\tiG^{(n)}-\tiG_0)+\tiG_0]\nonumber\\
	&+[(\nabla\tiG^{(n)}-\nabla\tiG_0)+\nabla\tiG_0]\tizeta^{(n-1)} (J(\tiX^{(n)})-J(\tiX^{(n-1)})[(\tiG^{(n)}-\tiG_0)+\tiG_0] \nonumber\\
	&+[(\nabla\tiG^{(n-1)}-\nabla\tiG_0)+\nabla\tiG_0
	]\tizeta^{(n-1)}J(\tiX^{(n-1)})(\tiG^{(n)}-\tiG^{(n-1)}) \nonumber\\
	&+(\nabla\tiG^{(n)}-\nabla\tiG^{(n-1)})\tizeta^{(n-1)}J(\tiX^{(n-1)})[(\tiG^{(n-1)}-\tiG_0)+\tiG_0] \nonumber\\	
	=&:\sum_{i=1}^{4}\bar{d}_i^f+\sum_{i=5}^{8}\bar{d}_i^f+\sum_{i=9}^{10}\bar{d}_i^f+\sum_{i=11}^{12}\bar{d}_i^f.\nonumber
\end{align*}
In the above, we first avoid multiplying $  (\tizeta^{(n)}-\tizeta^{(n-1)}) $ and $ (J(\tiX^{(n)})-J(\tiX^{(n-1)}))$. Then, we deal with the difference $ \tiG^{(n)}-\tiG^{(n-1)} $ and leave only one item as the difference of the iterative steps $ n-1 $ and $ n $ in each product. Finally, we  subtract the initial values to apply the estimates.

We estimate  the most difficult terms, such as $ \bar{d}_1^f $ and $ \bar{d}_9^f $ and the others are similar. For $ \bar{d}_1^f, $ since $ \tiX^{(n)}, \tiX^{(n-1)}\in B(X) $ and $ \tiG^{(n)}\in B(G) $, we apply
Lemmas \ref{G-G0}, \ref{jx-j} and  \ref{zetan-zetan-1} to obtain
\begin{align}
	&\|\bar{d}_1^f\|_{L^2H^{s-1}}\nonumber\\
	= &\| (\nabla\tiG^{(n)}-\nabla\tiG_0)(\tizeta^{(n)}-\tizeta^{(n-1)})J(\tiX^{(n)})(\tiG^{(n)}-\tiG_0)\|_{L^2H^{s-1}}\nonumber\\
	\le  &\| \nabla\tiG^{(n)}-\nabla\tiG_0\|_{L^\infty H^{s-1}}\|\tizeta^{(n)}-\tizeta^{(n-1)}\|_{L^\infty H^{s-1}}\|J(\tiX^{(n)})\|_{L^\infty H^{s-1}}\|\tiG^{(n)}-\tiG_0\|_{L^2H^{s-1}}\nonumber\\
	\le  & T^{\frac 12}\| \tiG^{(n)}-\tiG_0\|_{L^\infty H^{s}}^2\|\tizeta^{(n)}-\tizeta^{(n-1)}\|_{L^\infty H^{s-1}}\|J(\tiX^{(n)})\|_{L^\infty H^{s-1}}\nonumber\\
	\le  & C(N,\tiv_0, \tiG_0)T\|\tiX^{(n)}-\tiX^{(n-1)}\|_{L^\infty H^{s}} \nonumber\\
	\le  & C(N,\tiv_0, \tiG_0)T^{\frac 54}\|\tiX^{(n)}-\tiX^{(n-1)}\|_{L^\infty_{\frac 14} H^{s+1}} \nonumber\\
	\le  & C(N,\tiv_0, \tiG_0)T^{\frac 54}\|\tiX^{(n)}-\tiX^{(n-1)}\|_{\mathcal{A}^{s+1,\gamma+1}}. \nonumber
\end{align}
For $ \bar{d}_9^f, $ we apply
Lemmas \ref{G-G0},  \ref{jx-j} and  \ref{zeta-} to get
\begin{align}
	&\|\bar{d}_9^f\|_{L^2H^{s-1}}\nonumber\\
	= &\| (\nabla\tiG^{(n-1)}-\nabla\tiG_0)
	\tizeta^{(n-1)}J(\tiX^{(n-1)})(\tiG^{(n)}-\tiG^{(n-1)})\|_{L^2H^{s-1}}\nonumber\\
	\le  &\| \nabla\tiG^{(n)}-\nabla\tiG_0\|_{L^\infty H^{s-1}}\|\tizeta^{(n-1)}\|_{L^\infty H^{s-1}}\|J(\tiX^{(n-1)})\|_{L^\infty H^{s-1}}\|\tiG^{(n)}-\tiG^{(n-1)}\|_{L^2H^{s-1}}\nonumber\\
	\le  & T^{\frac 12}\| \tiG^{(n)}-\tiG_0\|_{L^\infty H^{s}}\|\tizeta^{(n-1)}\|_{L^\infty H^{s-1}}\|J(\tiX^{(n-1)})\|_{L^\infty H^{s-1}}\|\tiG^{(n)}-\tiG^{(n-1)}\|_{L^\infty H^{s-1}}\nonumber\\
	\le  & C(N,\tiv_0, \tiG_0)T^{\frac 34}\|\tiG^{(n)}-\tiG^{(n-1)}\|_{L^\infty H^{s-1}} \nonumber\\ 
	\le  & C(N,\tiv_0, \tiG_0)T \|\tiX^{(n)}-\tiX^{(n-1)}\|_{\mathcal{A}^{s ,\gamma }}. \nonumber
\end{align}
Similarly, we have 
\begin{align}
	\sum_{i=2}^{8}\|\bar{d}_i^f\|_{L^2H^{s-1}}\le& C(N,\tiv_0, \tiG_0)T^{\delta_1}\|\tiX^{(n)}-\tiX^{(n-1)}\|_{\mathcal{A}^{s+1,\gamma+1}},\nonumber\\
	\sum_{i=10}^{12}\|\bar{d}_i^f\|_{L^2H^{s-1}}\le& C(N,\tiv_0, \tiG_0)T^{\delta_2}\|\tiG^{(n)}-\tiG^{(n-1)}\|_{\mathcal{A}^{s,\gamma}}, \nonumber
\end{align}
for some $ \delta_1,\delta_2>0. $

It remains to show the estimates in
$ H_{(0)}^{\frac{s-1}{2}}L^2. $ We use the following splitting
\begin{align}
	&\tif^{(n)}_G- \tif^{(n-1)}_{G}\nonumber\\
	=&\nabla\tiG^{(n)}\tizeta^{(n)}J(\tiX^{(n)})\tiG^{(n)}-\nabla\tiG^{(n-1)}\tizeta^{(n-1)}J(\tiX^{(n-1)})\tiG^{(n-1)}\nonumber\\
	=&(\nabla\tiG^{(n)}-\nabla\tiG^{(n-1)})\tizeta^{(n)}J(\tiX^{(n)})\tiG^{(n)}\nonumber\\
	&+\nabla\tiG^{(n-1)}(\tizeta^{(n)}-\tizeta^{(n-1)})J(\tiX^{(n)})\tiG^{(n)}\nonumber\\
	&+\nabla\tiG^{(n-1)}\tizeta^{(n-1)}(J(\tiX^{(n)})-J(\tiX^{(n-1)}))\tiG^{(n)}\nonumber\\
	&+\nabla\tiG^{(n-1)}\tizeta^{(n-1)}J(\tiX^{(n-1)})(\tiG^{(n)}-\tiG^{(n-1)})\nonumber\\
	=&:\sum_{i=1}^{4}I_i.\nonumber
\end{align}
We only show the estimates of $ I_1 $ and $ I_3. $ For $ I_1, $ we have the following splitting:
\begin{align}
	I_1=&(\nabla\tiG^{(n)}-\nabla\tiG^{(n-1)})\tizeta^{(n)}J(\tiX^{(n)})\tiG^{(n)}\nonumber\\
	=&(\nabla\tiG^{(n)}-\nabla\tiG^{(n-1)})[(\tizeta^{(n)}-\id)+\id][(J(\tiX^{(n)})-J)+J][(\tiG^{(n)}-\tiG_0)+\tiG_0]\nonumber\\
	=&:\sum_{i=1}^{8}d_{1,i}^f.\nonumber
\end{align}
For $ d_{1,1}^f=(\nabla\tiG^{(n)}-\nabla\tiG^{(n-1)})(\tizeta^{(n)}-\id)(J(\tiX^{(n)})-J)(\tiG^{(n)}-\tiG_0),  $ since $ \tiX^{(n-1)}$, $\tiX^{(n)} \in B(X) $ and $ \tiG^{(n-1)}, \tiG^{(n)}\in B(G), $ we apply Lemmas \ref{x-omega},  \ref{G-G0},  \ref{jx-j}, \ref{zeta-} and  \ref{lem3.7} to obtain
\begin{align*}
	&\|d_{1,1}^f\|_{H_{(0)}^{\frac{s-1}{2}}L^2}\nonumber\\
	=&\|(\nabla\tiG^{(n)}-\nabla\tiG^{(n-1)})(\tizeta^{(n-1)}-\id)(J(\tiX^{(n)})-J)(\tiG^{(n)}-\tiG_0)\|_{H_{(0)}^{\frac{s-1}{2}}L^2}\nonumber\\
	\le &\|(\nabla\tiG^{(n)}-\nabla\tiG^{(n-1)})(\tizeta^{(n-1)}-\id)(J(\tiX^{(n)})-J)\|_{H_{(0)}^{\frac{s-1}{2}}L^2}\|\tiG^{(n)}-\tiG_0\|_{H_{(0)}^{\frac{s-1}{2}}H^{1+\eta}}\nonumber\\
	\le&\|\nabla\tiG^{(n)}-\nabla\tiG^{(n-1)}\|_{H_{(0)}^{\frac{s-1}{2}}L^2}\|(\tizeta^{(n-1)}-\id)(J(\tiX^{(n)})-J )\|_{H_{(0)}^{\frac{s-1}{2}}H^{1+\eta}}\\ &\cdot\|\tiG^{(n)}-\tiG_0\|_{H_{(0)}^{\frac{s-1}{2}}H^{1+\eta}}\nonumber\\
	\le&\|\tiG^{(n)}-\tiG^{(n-1)}\|_{H_{(0)}^{\frac{s-1}{2}}H^1}\|\tizeta^{(n-1)}-\id\|_{H_{(0)}^{\frac{s-1}{2}}H^{1+\eta}}\|J(\tiX^{(n)})-J\|_{H_{(0)}^{\frac{s-1}{2}}H^{1+\eta}}\nonumber\\
	&\cdot\|\tiG^{(n)}-\tiG_0\|_{H_{(0)}^{\frac{s-1}{2}}H^{1+\eta}}\nonumber\\
	\le & T^{3\delta}\|\tiG^{(n)}-\tiG^{(n-1)}\|_{\mathcal{A}^{s,\gamma}},\nonumber
\end{align*}
where $ \delta,\eta>0 $ are sufficiently small.
For $ d_{1,8}^f=(\nabla\tiG^{(n)}-\nabla\tiG^{(n-1)})J\tiG_0, $ we apply
Lemmas \ref{lem3.3} and  \ref{lem3.7} to have
\begin{align}
	\|d_{1,8}^f\|_{H_{(0)}^{\frac{s-1}{2}}L^2}
	=&\|(\nabla\tiG^{(n)}-\nabla\tiG^{(n-1)})J\tiG_0\|_{H_{(0)}^{\frac{s-1}{2}}L^2}\nonumber\\
	\le &\|(\nabla\tiG^{(n)}-\nabla\tiG^{(n-1)})\|_{H_{(0)}^{\frac{s-1}{2}}L^2}\|J\tiG_0\|_{H_{(0)}^{\frac{s-1}{2}}H^{1+\eta}}\nonumber\\
	\le&C(\tiG_0)\|\tiG^{(n)}-\tiG^{(n-1)}\|_{H_{(0)}^{\frac{s-1}{2}}H^{1}}\nonumber\\
	\le&C(N, \tiv_0, \tiG_0)\left\|\partial_t \int_{0}^t\tiG^{(n)}-\tiG^{(n-1)}d\tau \right\|_{H_{(0)}^{\frac{s-1}{2}+\delta_1-\delta_1}H^{1}}\nonumber\\
	\le & C(N, \tiv_0, \tiG_0)T^{\delta_1}\|\tiG^{(n)}-\tiG^{(n-1)}\|_{H_{(0)}^{\frac{s-1}{2}+\delta_1}H^{1}}\nonumber\\
	\le & C(N, \tiv_0, \tiG_0)T^{\delta_1}\|\tiG^{(n)}-\tiG^{(n-1)}\|_{\mathcal{A}^{s,\gamma}},\nonumber
\end{align}
where $ \delta_1,\eta>0 $ are sufficiently small. We rewrite $ I_3 $ as
\begin{align*}
	I_3=&\nabla\tiG^{(n-1)}\tizeta^{(n-1)}(J(\tiX^{(n)})-J(\tiX^{(n-1)}))\tiG^{(n)}\nonumber\\
	=&[(\nabla\tiG^{(n-1)}-\nabla\tiG_0)+\nabla\tiG_0][(\tizeta^{(n-1)}-\id)+\id]\\
	&\cdot(J(\tiX^{(n)})-J(\tiX^{(n-1)}))[(\tiG^{(n)}-\tiG_0)+\tiG_0]\nonumber\\
	=&:\sum_{i=1}^{8}d_{3,i}^f.\nonumber
\end{align*}
For $ d_{3,1}^f=(\nabla\tiG^{(n-1)}-\nabla\tiG_0)(\tizeta^{(n-1)}-\id)(J(\tiX^{(n)})-J(\tiX^{(n-1)}))(\tiG^{(n)}-\tiG_0),  $ we apply Lemmas  \ref{x-omega},  \ref{G-G0}, \ref{jx-jy}, \ref{zeta-} and \ref{lem3.7} to obtain
\begin{align*}
	&\|d_{3,1}^f\|_{H_{(0)}^{\frac{s-1}{2}}L^2}\nonumber\\
	=&\|(\nabla\tiG^{(n-1)}-\nabla\tiG_0)(\tizeta^{(n-1)}-\id)(J(\tiX^{(n)})-J(\tiX^{(n-1)}))(\tiG^{(n)}-\tiG_0)\|_{H_{(0)}^{\frac{s-1}{2}}L^2}\nonumber\\
	\le &\|(\nabla\tiG^{(n-1)}-\nabla\tiG_0)(\tizeta^{(n-1)}-\id)(J(\tiX^{(n)})-J(\tiX^{(n-1)}))\|_{H_{(0)}^{\frac{s-1}{2}}L^2}\\
	&\cdot\|\tiG^{(n)}-\tiG_0\|_{H_{(0)}^{\frac{s-1}{2}}H^{1+\eta}}\nonumber\\
	\le&\|\nabla\tiG^{(n-1)}-\nabla\tiG_0\|_{H_{(0)}^{\frac{s-1}{2}}L^2}\|(\tizeta^{(n-1)}-\id)(J(\tiX^{(n)})-J(\tiX^{(n-1)}))\|_{H_{(0)}^{\frac{s-1}{2}}H^{1+\eta}}\nonumber\\
	&\cdot\|\tiG^{(n)}-\tiG_0\|_{H_{(0)}^{\frac{s-1}{2}}H^{1+\eta}}\nonumber\\
	\le&\|\tiG^{(n-1)}-\tiG_0\|_{H_{(0)}^{\frac{s-1}{2}}H^1}\|\tizeta^{(n-1)}-\id\|_{H_{(0)}^{\frac{s-1}{2}}H^{1+\eta}}\|J(\tiX^{(n)})-J(\tiX^{(n-1)})\|_{H_{(0)}^{\frac{s-1}{2}}H^{1+\eta}}\nonumber\\
	&\cdot\|\tiG^{(n)}-\tiG_0\|_{H_{(0)}^{\frac{s-1}{2}}H^{1+\eta}}\nonumber\\
	\le & T^{3\delta}\|\tiX^{(n)}-\tiX^{(n-1)}\|_{\mathcal{A}^{s+1,\gamma+1}},\nonumber
\end{align*}
where $ \delta,\eta>0 $ are sufficiently small.
For $ d_{3,8}^f=\nabla\tiG_0(J(\tiX^{(n)})-J(\tiX^{(n-1)}))\tiG_0, $ we apply
 Lemmas \ref{jx-jy} , \ref{lem3.3} and \ref{lem3.7} to obtain
\begin{align*}
	\|d_{3,8}^f\|_{H_{(0)}^{\frac{s-1}{2}}L^2}
	=&\|\nabla\tiG_0(J(\tiX^{(n)})-J(\tiX^{(n-1)}))\tiG_0\|_{H_{(0)}^{\frac{s-1}{2}}L^2}\nonumber\\
	\le &\|\nabla\tiG_0(J(\tiX^{(n)})-J(\tiX^{(n-1)}))\|_{H_{(0)}^{\frac{s-1}{2}}L^2}\|\tiG_0\|_{H_{(0)}^{\frac{s-1}{2}}H^{1+\eta}}\nonumber\\
	\le&\|\nabla\tiG_0\|_{H_{(0)}^{\frac{s-1}{2}}L^2}\|J(\tiX^{(n)})-J(\tiX^{(n-1)})\|_{H_{(0)}^{\frac{s-1}{2}}H^{1+\eta}}\|\tiG_0\|_{H_{(0)}^{\frac{s-1}{2}}H^{1+\eta}}\nonumber\\
	\le&C(\tiG_0)\|J(\tiX^{(n)})-J(\tiX^{(n-1)})\|_{H_{(0)}^{\frac{s-1}{2}}H^{1+\eta}}\nonumber\\
	\le&C(N, \tiv_0, \tiG_0)\left\|\partial_t \int_{0}^t\tiX^{(n)}-\tiX^{(n-1)}d\tau \right\|_{H_{(0)}^{\frac{s-1}{2}+\delta_1-\delta_1}H^{1+\eta}}\nonumber\\
	\le & C(N, \tiv_0, \tiG_0)T^{\delta_1}\|\tiX^{(n)}-\tiX^{(n-1)}\|_{H_{(0)}^{\frac{s-1}{2}+\delta_1}H^{1+\eta}}\nonumber\\
	\le & C(N, \tiv_0, \tiG_0)T^{\delta_1}\|\tiX^{(n)}-\tiX^{(n-1)}\|_{\mathcal{A}^{s+1,\gamma+1}},\nonumber
\end{align*}
where $ \delta_1,\eta>0 $ are sufficiently small. Similarly, we split $ I_2 $ and $ I_4 $  as follows:
\begin{align*}
	I_2=&\nabla\tiG^{(n-1)}(\tizeta^{(n)}-\tizeta^{(n-1)})J(\tiX^{(n)})\tiG^{(n)}\nonumber\\
	=&[(\nabla\tiG^{(n-1)}-\nabla\tiG_0)+\nabla\tiG_0](\tizeta^{(n)}-\tizeta^{(n-1)})\\
	&\cdot[(J(\tiX^{(n)}-J)+J)][(\tiG^{(n)}-\tiG_0)+\tiG_0],\nonumber\\
	I_4=&\nabla\tiG^{(n-1)}\tizeta^{(n-1)}J(\tiX^{(n-1)})(\tiG^{(n)}-\tiG^{(n-1)})\nonumber\\
	=&[(\nabla\tiG^{(n-1)}-\nabla\tiG_0)+\nabla\tiG_0][(\tizeta^{(n-1)}-\id)+\id]\\
	&\cdot[(J(\tiX^{(n)}-J)+J)](\tiG^{(n)}-\tiG^{(n-1)})\nonumber
\end{align*}
and we conclude  
\begin{align*}
	\|I_1\|_{H_{(0)}^{\frac{s-1}{2}}L^2}\le &C(N,\tiv_0, \tiG_0)T^{\delta_1}\|\tiG^{(n)}-\tiG^{(n-1)}\|_{\mathcal{A}^{s,\gamma}},\nonumber\\
	\|I_2\|_{H_{(0)}^{\frac{s-1}{2}}L^2}\le &C(N,\tiv_0, \tiG_0)T^{\delta_2}\|\tiX^{(n)}-\tiX^{(n-1)}\|_{\mathcal{A}^{s+1,\gamma+1}},\nonumber\\
	\|I_3\|_{H_{(0)}^{\frac{s-1}{2}}L^2}\le &C(N,\tiv_0, \tiG_0)T^{\delta_3}\|\tiX^{(n)}-\tiX^{(n-1)}\|_{\mathcal{A}^{s+1,\gamma+1}},\nonumber\\
	\|I_4\|_{H_{(0)}^{\frac{s-1}{2}}L^2}\le &C(N,\tiv_0, \tiG_0)T^{\delta_4}\|\tiG^{(n)}-\tiG^{(n-1)}\|_{\mathcal{A}^{s,\gamma}}, \nonumber
\end{align*}
for some $ \delta_i>0. $

\textbf{Estimate for $ \tig^{(n)}-\tig^{(n-1)} $.}
As in \cite[Proposition 5.4]{castro2019splash}, we have
\begin{align*}
	&\|\tig^{(n)}-\tig^{(n-1)}\|_{\bar{\mathcal{K}}_{(0)}^s}\\
	\le& C(N,\tiv_0, \tiG_0)T^{\theta}(\|\tiX^{(n)}-\tiX^{(n-1)}\|_{\mathcal{A}^{s+1,\gamma+1}}+\|\tiw^{(n)}-\tiw^{(n-1)}\|_{\mathcal{K}_{(0)}^{s+1}}).
\end{align*}

\textbf{Estimate for $ \tih^{(n)}-\tih^{(n-1)} $.}
In \eqref{equ4.17}, we split $ \tih^{(n)} $ into six terms
$ \tih^{(n)}=\tih^{(n)}_w+\tih^{(n)}_{w^\top }+\tih^{(n)}_\phi+\tih^{(n)}_{\phi^\top }+\tih^{(n)}_q+\tih^{(n)}_G. $
We merely show the estimates of $ \tih^{(n)}_G- \tih^{(n-1)}_{G}$ in $ L^2H^{s-\frac 12} $ and $ H_{(0)}^{\frac{s}{2}-\frac 14}L^2 $, and the other estimates  can be found in \cite[Proposition 5.4]{castro2019splash}.
We start with the estimate in   $ L^2H^{s-\frac 12} $ and we use the following splitting:
\begin{align}
	&\tih^{(n-1)}_G- \tih^{(n)}_{G}\nonumber\\
	=& \tiG^{(n)}\tiG^{(n)\top}J(\tiX^{(n)})^{-1}\nabla_{\Lambda}\tiX^{(n)}\tilde{n}_0-\tiG^{(n-1)}\tiG^{(n-1)\top}J(\tiX^{(n-1)})^{-1}\nabla_{\Lambda}\tiX^{(n-1)}\tilde{n}_0\nonumber\\	
	=& \tiG^{(n)}\tiG^{(n)\top}(J(\tiX^{(n)})^{-1}-J(\tiX^{(n-1)})^{-1})\nabla_{\Lambda}\tiX^{(n)}\tilde{n}_0\nonumber\\
	&+ \tiG^{(n)}\tiG^{(n)\top}J(\tiX^{(n-1)})^{-1}\nabla_{\Lambda}\tiX^{(n)}\tilde{n}_0\nonumber\\
	&-\tiG^{(n-1)}\tiG^{(n-1)\top}J(\tiX^{(n-1)})^{-1}\nabla_{\Lambda}\tiX^{(n-1)}\tilde{n}_0\nonumber\\
	=& [(\tiG^{(n)}-\tiG_0)+\tiG_0][(\tiG^{(n)}-\tiG_0)+\tiG_0]^\top (J(\tiX^{(n)})^{-1}-J(\tiX^{(n-1)})^{-1})\nabla_{\Lambda}\tiX^{(n)}\tilde{n}_0\nonumber\\
	&+ \tiG^{(n)}\tiG^{(n)\top}J(\tiX^{(n-1)})^{-1}[(\nabla_{\Lambda}\tiX^{(n)}-\nabla_{\Lambda}\tiX^{(n-1)})]\tilde{n}_0\nonumber\\
	&+ [\tiG^{(n)}\tiG^{(n)\top}-\tiG^{(n-1)}\tiG^{(n-1)\top}]J(\tiX^{(n-1)})^{-1}\nabla_{\Lambda}\tiX^{(n-1)}\tilde{n}_0\nonumber\\
	=& [(\tiG^{(n)}-\tiG_0)+\tiG_0][(\tiG^{(n)}-\tiG_0)+\tiG_0]^\top (J(\tiX^{(n)})^{-1}-J(\tiX^{(n-1)})^{-1})\nabla_{\Lambda}\tiX^{(n)}\tilde{n}_0\nonumber\\
	&+ [(\tiG^{(n)}-\tiG_0)+\tiG_0][(\tiG^{(n)}-\tiG_0)+\tiG_0]^\top J(\tiX^{(n-1)})^{-1}[(\nabla_{\Lambda}\tiX^{(n)}-\nabla_{\Lambda}\tiX^{(n-1)})]\tilde{n}_0\nonumber\\
	&+ [(\tiG^{(n)}-\tiG_0)+\tiG_0](\tiG^{(n)\top}-\tiG^{(n-1)\top})J(\tiX^{(n-1)})^{-1}\nabla_{\Lambda}\tiX^{(n-1)}\tilde{n}_0\nonumber\\
	&+ (\tiG^{(n)}-\tiG^{(n-1)})[(\tiG^{(n-1)}-\tiG_0)+\tiG_0]^\top J(\tiX^{(n-1)})^{-1}\nabla_{\Lambda}\tiX^{(n-1)}\tilde{n}_0\nonumber\\
	=&:\sum_{i=1}^{4}\bar{d}_i^h+\sum_{i=4}^{8}\bar{d}_i^h+\sum_{i=9}^{10}\bar{d}_i^h+\sum_{i=11}^{12}\bar{d}_i^h.\nonumber
\end{align}
In the above, we first deal with  $(J(\tiX^{(n)}))^{-1}$  and $(J(\tiX^{(n-1)}))^{-1}$. Then, we  split $ \tiG^{(n)}-\tiG^{(n-1)} $ in each product and  subtract the initial values for $ \tiG^{(n)} $ and $ \tiG^{(n-1)} $ to apply the estimates.

We show the estimate for $ \bar{d}_1^h $ and $ \bar{d}_9^h $ since  the others can be deduced similarly.
For $ \bar{d}_1^h, $ from $ \tiX^{(n-1)}, \tiX^{(n)} \in B(X), \tiG^{(n)}\in B(G), $ and  \eqref{Jni},   we apply Lemmas \ref{x-omega} and  \ref{G-G0} to obtain
\begin{align}
	&\|\bar{d}_1^h\|_{L^2H^{s-\frac 12}} \nonumber\\
	= &\| (\tiG^{(n)}-\tiG_0)(\tiG^{(n)}-\tiG_0)^\top (J(\tiX^{(n)})^{-1}-J(\tiX^{(n-1)})^{-1})\nabla_{\Lambda}\tiX^{(n)}\tilde{n}_0\|_{L^2H^{s-\frac 12}}\nonumber\\
	\le  &\| \tiG^{(n)}-\tiG_0\|_{L^2H^{s-\frac 12}}\|(\tiG^{(n)}-\tiG_0)^\top \|_{L^\infty H^{s-\frac 12}}\|J(\tiX^{(n)})^{-1}-J(\tiX^{(n-1)})^{-1}\|_{L^\infty H^{s-\frac 12}}\nonumber\\
	&\|\nabla_{\Lambda}\tiX^{(n)}\tilde{n}_0\|_{L^\infty H^{s-\frac 12}}\nonumber\\
	\le  &C T^{\frac 12}\|\tiG^{(n)}-\tiG_0\|_{L^\infty H^{s-\frac 12}}^2\|\tiX^{(n)}-\tiX^{(n-1)}\|_{L^\infty H^{s-\frac 12}}\|\tiX^{(n)}\|_{L^\infty H^{s+\frac 12}}\nonumber\\
	\le  &C T^{\frac 12}\|\tiG^{(n)}-\tiG_0\|_{L^\infty H^{s}}^2\|\tiX^{(n)}-\tiX^{(n-1)}\|_{L^\infty H^{s}}\|\tiX^{(n)}\|_{L^\infty H^{s+1}}\nonumber\\
	\le  & C(N,\tiv_0, \tiG_0)T\|\tiX^{(n)}-\tiX^{(n-1)}\|_{L^\infty H^{s}} \nonumber\\
	\le  & C(N,\tiv_0, \tiG_0)T^{\frac 54}\|\tiX^{(n)}-\tiX^{(n-1)}\|_{L^\infty_{\frac 14} H^{s+1}} \nonumber\\
	\le  & C(N,\tiv_0, \tiG_0)T^{\frac 54}\|\tiX^{(n)}-\tiX^{(n-1)}\|_{\mathcal{A}^{s+1,\gamma+1}}. \nonumber
\end{align}
For $ \bar{d}_9^h, $ we have from \eqref{Jni},  Lemmas \ref{x-omega} and  \ref{G-G0} that
\begin{align}
	&\|\bar{d}_9^h\|_{L^2H^{s-\frac 12}}\nonumber\\
	= &\| (\tiG^{(n)}-\tiG_0)(\tiG^{(n)\top}-\tiG^{(n-1)\top})J(\tiX^{(n-1)})^{-1}\nabla_{\Lambda}\tiX^{(n-1)}\tilde{n}_0\|_{L^2H^{s-\frac 12}}\nonumber\\
	\le  &\| \tiG^{(n)}-\tiG_0\|_{L^2H^{s-\frac 12}}\|(\tiG^{(n)}-\tiG^{(n-1)})^\top \|_{L^\infty H^{s-\frac 12}}\|J(\tiX^{(n-1)})^{-1}\|_{L^\infty H^{s-\frac 12}}\nonumber\\
	&\cdot\|\nabla_{\Lambda}\tiX^{(n)}\|_{L^\infty H^{s-\frac 12}}\nonumber\\
	\le  &C T^{\frac 12}\|\tiG^{(n)}-\tiG_0\|_{L^\infty H^{s-\frac 12}}\|\tiG^{(n)}-\tiG^{(n-1)}\|_{L^\infty H^{s-\frac 12}}\|\tiX^{(n)}\|_{L^\infty H^{s+\frac 12}}^2\nonumber\\
	\le  &C T^{\frac 12}\|\tiG^{(n)}-\tiG_0\|_{L^\infty H^{s}}\|\tiG^{(n)}-\tiG^{(n-1)}\|_{L^\infty H^{s}}\|\tiX^{(n)}\|_{L^\infty H^{s+1}}\nonumber\\
	\le  & C(N,\tiv_0, \tiG_0)T^{\frac34}\|\tiG^{(n)}-\tiG^{(n-1)}\|_{L^\infty H^{s}} \nonumber\\
	\le  & C(N,\tiv_0, \tiG_0)T \|\tiG^{(n)}-\tiG^{(n-1)}\|_{L^\infty_{\frac 14} H^{s }} \nonumber\\
	\le  & C(N,\tiv_0, \tiG_0)T \|\tiG^{(n)}-\tiG^{(n-1)}\|_{\mathcal{A}^{s,\gamma}}. \nonumber
\end{align}
Similarly, we have
\begin{align*}
	\sum_{i=2}^{8}\|\bar{d}_i^h\|_{L^2H^{s-\frac 12}}\le  & C(N,\tiv_0, \tiG_0)T^{\delta_1} \|\tiG^{(n)}-\tiG^{(n-1)}\|_{\mathcal{A}^{s,\gamma}},
	\nonumber\\
	\sum_{i=10}^{12}\|\bar{d}_i^h\|_{L^2H^{s-\frac 12}}\le  &C(N,\tiv_0, \tiG_0)T^{\delta_2}\|\tiX^{(n)}-\tiX^{(n-1)}\|_{\mathcal{A}^{s+1,\gamma+1}},\nonumber
\end{align*}
for some $ \delta_1,\delta_2>0. $

It remains to show the estimates in $ H_{(0)}^{\frac{s}{2}-\frac 14}L^2. $ For this, we leave only one term as the difference of the iterative steps $ n-1 $ and $ n$  in each product. For the remaining terms, we  subtract the initial values to apply the estimates. Indeed, we need the following splitting:
\begin{align*}
&\tih^{(n-1)}_G- \tih^{(n)}_{G}\nonumber\\
=& \tiG^{(n)}\tiG^{(n)\top}J(\tiX^{(n)})^{-1}\nabla_{\Lambda}\tiX^{(n)}\tilde{n}_0-\tiG^{(n-1)}\tiG^{(n-1)\top}J(\tiX^{(n-1)})^{-1}\nabla_{\Lambda}\tiX^{(n-1)}\tilde{n}_0\nonumber\\
=&[(\tiG^{(n)}-\tiG^{(n-1)})]\tiG^{(n)\top}J(\tiX^{(n)})^{-1}\nabla_{\Lambda}\tiX^{(n)}\tilde{n}_0+ \tiG^{(n-1)}\tiG^{(n)\top}J(\tiX^{(n)})^{-1}\nabla_{\Lambda}\tiX^{(n)}\tilde{n}_0\nonumber\\
&-\tiG^{(n-1)}\tiG^{(n-1)\top}J(\tiX^{(n-1)})^{-1}\nabla_{\Lambda}\tiX^{(n-1)}\tilde{n}_0\nonumber\\
=&[(\tiG^{(n)}-\tiG^{(n-1)})][(\tiG^{(n)\top}-\tiG_0^\top )+\tiG_0^\top ][(J(\tiX^{(n)})^{-1}-J^{-1})+J^{-1}]\\
&\cdot[(\nabla_{\Lambda}\tiX^{(n)}-\id)+\id]\tilde{n}_0\nonumber\\
&+\tiG^{(n-1)}(\tiG^{(n)\top}J(\tiX^{(n)})^{-1}\nabla_{\Lambda}\tiX^{(n)}\tilde{n}_0-\tiG^{(n-1)\top}J(\tiX^{(n-1)})^{-1}\nabla_{\Lambda}\tiX^{(n-1)}\tilde{n}_0)\nonumber\\
=&[(\tiG^{(n)}-\tiG^{(n-1)})][(\tiG^{(n)\top}-\tiG_0^\top )+\tiG_0^\top ][(J(\tiX^{(n)})^{-1}-J^{-1})+J^{-1}]\\
&\quad\cdot[(\nabla_{\Lambda}\tiX^{(n)}-\id)+\id]\tilde{n}_0\nonumber\\
&+ [(\tiG^{(n-1)}-\tiG_0)+\tiG_0](\tiG^{(n)}-\tiG^{(n-1)})^\top [(J(\tiX^{(n)})^{-1}-J^{-1})+J^{-1}]\\
&\quad\cdot[(\nabla_{\Lambda}\tiX^{(n)}-\id)+\id]\tilde{n}_0\nonumber\\
&+ [(\tiG^{(n-1)}-\tiG_0)+\tiG_0][(\tiG^{(n-1)}-\tiG_0)+\tiG_0]^\top (J(\tiX^{(n)})^{-1}-J(\tiX^{(n-1)})^{-1})\\
&\quad\cdot[(\nabla_{\Lambda}\tiX^{(n)}-\id)+\id]\tilde{n}_0\nonumber\\
&+ [(\tiG^{(n-1)}-\tiG_0)+\tiG_0][(\tiG^{(n-1)}-\tiG_0)+\tiG_0]^\top [(J(\tiX^{(n-1)})^{-1}-J^{-1})+J^{-1}]\nonumber\\
&\quad (\nabla_{\Lambda}\tiX^{(n)}-\nabla_{\Lambda}\tiX^{(n-1)}  )\tilde{n}_0\nonumber\\
=&:\sum_{i=1}^{8}\dot{d}_i^h+\sum_{i=9}^{16}\dot{d}_i^h+\sum_{i=17}^{24}\dot{d}_i^h+\sum_{i=25}^{32}\dot{d}_i^h\nonumber
\end{align*}
where we have used 
\begin{align*}
	&\tiG^{(n)\top}J(\tiX^{(n)})^{-1}\nabla_{\Lambda}\tiX^{(n)} -\tiG^{(n-1)\top}J(\tiX^{(n-1)})^{-1}\nabla_{\Lambda}\tiX^{(n-1)}\nonumber\\
	=&(\tiG^{(n)\top}-\tiG^{(n-1)\top})J(\tiX^{(n)})^{-1}\nabla_{\Lambda}\tiX^{(n)} +\tiG^{(n-1)\top}J(\tiX^{(n)})^{-1}\nabla_{\Lambda}\tiX^{(n)} \nonumber\\&-\tiG^{(n-1)\top}J(\tiX^{(n-1)})^{-1}\nabla_{\Lambda}\tiX^{(n-1)}\nonumber\\		=&(\tiG^{(n)\top}-\tiG^{(n-1)\top})[(J(\tiX^{(n)})^{-1}-J^{-1})+J^{-1}][(\nabla_{\Lambda}\tiX^{(n)}-\id)+\id] \nonumber\\
	&+\tiG^{(n-1)\top}(J(\tiX^{(n)})^{-1}\nabla_{\Lambda}\tiX^{(n)} -J(\tiX^{(n-1)})^{-1}\nabla_{\Lambda}\tiX^{(n-1)})\nonumber\\
	=&(\tiG^{(n)\top}-\tiG^{(n-1)\top})[(J(\tiX^{(n)})^{-1}-J^{-1})+J^{-1}][(\nabla_{\Lambda}\tiX^{(n)}-\id)+\id] \nonumber\\
	&+\tiG^{(n-1)\top}(J(\tiX^{(n)})^{-1}-J(\tiX^{(n-1)})^{-1})\nabla_{\Lambda}\tiX^{(n)}\nonumber\\
	&+\tiG^{(n-1)\top}J(\tiX^{(n-1)})^{-1}(\nabla_{\Lambda}\tiX^{(n)}-\nabla_{\Lambda}\tiX^{(n-1)}  ). \nonumber
\end{align*}
For $ \dot{d}_1^h,  $ from $  \tiX^{(n)} \in B(X), \tiG^{(n-1)}, \tiG^{(n)} \in B(G) $ and \eqref{Jni}, we apply Lemmas \ref{x-omega},  \ref{G-G0}, \ref{lem3.3}, \ref{lem3.5},  \ref{lem3.7}  and   Theorem \ref{lem3.9} to have
\begin{align*}
	&\|\dot{d}_1^h\|_{ H_{(0)}^{\frac{s}{2}-\frac 14}L^2}\nonumber\\
	=&\|
	(\tiG^{(n)}-\tiG^{(n-1)})(\tiG^{(n)\top}-\tiG_0^\top )(J(\tiX^{(n)})^{-1}-J^{-1})(\nabla_{\Lambda}\tiX^{(n)}-\id)\tilde{n}_0\|_{ H_{(0)}^{\frac{s}{2}-\frac 14}L^2}\nonumber\\
	\le &\|
	(\tiG^{(n)}-\tiG^{(n-1)})(\tiG^{(n)\top}-\tiG_0^\top )\|_{ H_{(0)}^{\frac{s}{2}-\frac 14}L^2}\\
	&\cdot\|(J(\tiX^{(n)})^{-1}-J^{-1})(\nabla_{\Lambda}\tiX^{(n)}-\id)\|_{ H_{(0)}^{\frac{s}{2}-\frac 14}H^{\frac 12+\eta}}\nonumber\\
	\le &\|
	\tiG^{(n)}-\tiG^{(n-1)}\|_{ H_{(0)}^{\frac{s}{2}-\frac 14}H^{\frac 12-\mu}}\|(\tiG^{(n)\top}-\tiG_0^\top )\|_{ H_{(0)}^{\frac{s}{2}-\frac 14}H^{\frac 12+\mu}}\\
	&\cdot\|J(\tiX^{(n)})^{-1}-J^{-1}\|_{ H_{(0)}^{\frac{s}{2}-\frac 14}H^{\frac 12+\eta}}\|\nabla_{\Lambda}\tiX^{(n)}-\id\|_{ H_{(0)}^{\frac{s}{2}-\frac 14}H^{\frac 12+\eta}}\nonumber\\
	\le &\|
	\tiG^{(n)}-\tiG^{(n-1)}\|_{ H_{(0)}^{\frac{s}{2}-\frac 14}H^{1-\mu}}\|\tiG^{(n)}-\tiG_0\|_{ H_{(0)}^{\frac{s}{2}-\frac 14}H^{1+\mu}}\|J(\tiX^{(n)})^{-1}-J^{-1}\|_{ H_{(0)}^{\frac{s}{2}-\frac 14}H^{1+\eta}}\nonumber\\
	&\cdot\|\nabla_{\Lambda}\tiX^{(n)}-\id\|_{ H_{(0)}^{\frac{s}{2}-\frac 14}H^{1+\eta}}\nonumber\\
	\le &C\|
	\tiG^{(n)}-\tiG^{(n-1)}\|_{ H_{(0)}^{\frac{s}{2}-\frac 14}H^{1-\mu}}\|\tiG^{(n)}-\tiG_0\|_{ H_{(0)}^{\frac{s}{2}-\frac 14}H^{1+\mu}}\|\tiX^{(n)}-\tiomega\|_{ H_{(0)}^{\frac{s}{2}-\frac 14}H^{2+\eta}}^2\nonumber\\
	\le &C(N,\tiv_0,\tiG_0)T^{3\delta}\|
	\tiG^{(n)}-\tiG^{(n-1)}\|_{ H_{(0)}^{\frac{s}{2}-\frac 14}H^{1-\mu}}\nonumber\\
	\le&C(N,\tiv_0,\tiG_0)T^{3\delta}T^\delta\|\tiG^{(n)}-\tiG^{(n-1)}\|_{ H_{(0)}^{\frac{s}{2}-\frac 14+\delta}H^{1-\mu}}\nonumber\\	\le&C(N,\tiv_0,\tiG_0)T^{4\delta} \|\tiG^{(n)}-\tiG^{(n-1)}\|_{ \mathcal{A}^{s,\gamma}}\nonumber
\end{align*}
for $ \delta, \eta$ and $ \mu>0 $ small enough. For 
$ \dot{d}_{17}^h, $ we apply the same Lemmas as $ \dot{d}_1^h $ to obtain
\begin{align*}
	&\|\dot{d}_{17}^h\|_{ H_{(0)}^{\frac{s}{2}-\frac 14}L^2}\nonumber\\
	=&\|
	(\tiG^{(n-1)}-\tiG_0)(\tiG^{(n-1)}-\tiG_0)^\top (J(\tiX^{(n)})^{-1}-J(\tiX^{(n-1)})^{-1})\\
	&\qquad\cdot(\nabla_{\Lambda}\tiX^{(n)}-\id)\tilde{n}_0\|_{ H_{(0)}^{\frac{s}{2}-\frac 14}L^2}\nonumber\\
	\le &\|
	(\tiG^{(n-1)}-\tiG_0)(\tiG^{(n-1)\top}-\tiG_0^\top )\|_{ H_{(0)}^{\frac{s}{2}-\frac 14}L^2}\\
	&\cdot\|(J(\tiX^{(n)})^{-1}-J(\tiX^{(n-1)})^{-1})(\nabla_{\Lambda}\tiX^{(n)}-\id)\|_{ H_{(0)}^{\frac{s}{2}-\frac 14}H^{\frac 12+\eta}}\nonumber\\
	\le &\|
	\tiG^{(n-1)}-\tiG_0\|_{ H_{(0)}^{\frac{s}{2}-\frac 14}H^{\frac 12-\mu}}\|\tiG^{(n-1)}-\tiG_0\|_{ H_{(0)}^{\frac{s}{2}-\frac 14}H^{\frac 12+\mu}}\nonumber\\
	&\cdot\|J(\tiX^{(n)})^{-1}-J(\tiX^{(n-1)})^{-1}\|_{ H_{(0)}^{\frac{s}{2}-\frac 14}H^{\frac 12+\eta}}\|\nabla_{\Lambda}\tiX^{(n)}-\id\|_{ H_{(0)}^{\frac{s}{2}-\frac 14}H^{\frac 12+\eta}}\nonumber\\
	\le &C\|
	\tiG^{(n-1)}-\tiG_0\|_{ H_{(0)}^{\frac{s}{2}-\frac 14}H^{1-\mu}}\|\tiG^{(n-1)}-\tiG_0\|_{ H_{(0)}^{\frac{s}{2}-\frac 14}H^{1+\mu}}\|\tiX^{(n)}-\tiX^{(n-1)}\|_{ H_{(0)}^{\frac{s}{2}-\frac 14}H^{1+\eta}}\nonumber\\
	&\cdot\|\nabla_{\Lambda}\tiX^{(n)}-\id\|_{ H_{(0)}^{\frac{s}{2}-\frac 14}H^{1+\eta}}\nonumber\\
	\le &C(N,\tiv_0,\tiG_0)T^{2\delta}\|\tiX^{(n)}-\tiomega\|_{ H_{(0)}^{\frac{s}{2}-\frac 14}H^{2+\eta}}\|\tiX^{(n)}-\tiX^{(n-1)}\|_{ H_{(0)}^{\frac{s}{2}-\frac 14}H^{1+\eta}}\nonumber\\
	\le &C(N,\tiv_0,\tiG_0)T^{3\delta}\|\tiX^{(n)}-\tiX^{(n-1)}\|_{ H_{(0)}^{\frac{s}{2}-\frac 14}H^{1+\eta}}\nonumber\\
	\le&C(N,\tiv_0,\tiG_0)T^{3\delta} \|
	\tiX^{(n)}-\tiX^{(n-1)}\|_{ \mathcal{A}^{s+1,\gamma+1}}\nonumber
\end{align*}
for $ \delta, \eta$ and $ \mu>0 $ small enough.
Similarly, we have
\begin{align}
	\sum_{i=2}^{16}\|\dot{d}_i^h\|_{ H_{(0)}^{\frac{s}{2}-\frac 14}L^2}\le&C(N,\tiv_0,\tiG_0)T^{\delta_1} \|
	\tiG^{(n)}-\tiG^{(n-1)}\|_{ \mathcal{A}^{s,\gamma}},\nonumber\\
	\sum_{i=18}^{32}\|\dot{d}_i^h\|_{ H_{(0)}^{\frac{s}{2}-\frac 14}L^2}\le&C(N,\tiv_0,\tiG_0)T^{\delta_2} \|
	\tiX^{(n)}-\tiX^{(n-1)}\|_{ \mathcal{A}^{s+1,\gamma+1}}\nonumber
\end{align}
for some $ \delta_1,\delta_2>0. $

The proof is complete if we put all the estimates together. 

Now, we have proved Propositions \ref{pro2} and \ref{pro3}. Let us explain how to prove Proposition \ref{pro1} and Theorem \ref{thm1}. Indeed,  the initial iterative sequence satisfies the conditions in the first part of Lemma \ref{lem1}, Propositions \ref{pro2} and  \ref{pro3}, so we know that all the iterative sequences belong to the corresponding spaces and uniformly lie in the predetermined balls when $ T>0 $ is sufficiently small.  From this, the conditions in the second part of Lemma \ref{lem1}, Propositions \ref{pro2} and  \ref{pro3} automatically hold for any $ n\ge 1. $ Therefore, we obtain the desired estimates and prove that for small $ T>0, $ the iterative sequence is  Cauchy  and its limit satisfies the same estimates. In particular, we have solved system \eqref{MHD Lagrangian coordinate2}. 
\end{proof}

\section{Stability estimates}\label{Stability estimates}

\par Let  $ (\tiw, \tiq_w, \tiX, \tiG) $ be a solution in $ \tiOmega_0 $  with initial data $ \tiv_0 $ and $\tiG_0$ as in Theorem \ref{thm1}. In order to prove the stability, we recall  that $ \tiOmega_0=P(\Omega_0) $ and we choose a family of initial data $ \tiv_{\vare, 0}^\prime,\tiG_{\vare,0}^\prime $  and $ \tiOmega_\vare(0) $ such that
\begin{align*}
	\tiOmega_{\vare}(0)=\tiOmega_0+\vare b,
\end{align*}
where $ |b|=1 $ is a constant vector such that $ P^{-1}(\tiOmega_{\vare}(0)) $ is not a self-intersecting domain as in Fig. \ref{fig:1} and Fig. \ref{fig:4}.

%\begin{figure}[h]
%	\centering
%	\includegraphics[width=0.8\linewidth]{splash singularity}
%	\caption{Initial domains $ \Omega_0 $ and $ \tiOmega_0. $}
%	\label{fig:stability estimates 0}
%\end{figure}
%
%\begin{figure}[h]
%	\centering
%	\includegraphics[width=0.8\linewidth]{splash b}
%	\caption{A family of initial domains.}
%	\label{fig:stability estimates}
%\end{figure}

Let $ (\tiw_\vare^\prime, \tiq_{w,\vare}^\prime, \tiX_\vare^\prime,\tiG_\vare^\prime) $ be a solution defined in $ \tiOmega_\vare(0) $  with initial data $ \tiv_{\vare, 0}^\prime,\tiG_{\vare,0}^\prime $ and a constant $ N_\vare $  from Theorem \ref{thm1}. Indeed, we need another $ \phi_\vare^\prime $ to apply Theorem \ref{thm1}, i.e., 
\begin{align*}
	\phi_\vare^\prime:=\tiv_{\vare, 0}^\prime+t \hat{\phi}_\vare^\prime
	:=\tiv_{\vare, 0}^\prime+t (Q^2 \Delta \tiv_{\vare, 0}^\prime- J^\top  \nabla \tiq_{\phi,\vare}^\prime+\nabla \tiG_{\vare,0}^\prime J \tiG_{\vare,0}^\prime  ),
\end{align*}
where $\tiq_{\phi,\vare}^\prime$  satisfies
\begin{align*}
	\begin{cases}
		-Q^2 \Delta \tiq_{\phi,\vare}^\prime=\trace (\nabla \tiv_{\vare, 0}^\prime J  \nabla \tiv_{\vare, 0}^\prime J )-\trace (\nabla \tiG_{\vare, 0}^\prime J \nabla \tiG_{\vare, 0}^\prime J ), & \text { in }  \tiOmega_\vare(0), \\ 
		\tiq_{\phi,\vare}^\prime J^{-1} \tilde{n}_0=( \nabla \tiv_{\vare, 0}^\prime J+( \nabla \tiv_{\vare, 0}^\prime J )^\top +\tiG_{\vare, 0}^\prime \tiG_{\vare, 0}^{\prime \top} )J^{-1} \tilde{n}_0, & \text { on }  \partial \tiOmega_\vare(0).
	\end{cases}
\end{align*}
  
In order to compare the solutions, we define $ (\tiw_\vare, \tiq_{w,\vare}, \tiX_\vare,\tiG_\vare) $ by shifting from $ \tiOmega_\vare(0) $ to $ \tiOmega_0 $ 
\begin{align*}
	(\tiw_\vare, \tiq_{w,\vare}, \tiX_\vare,\tiG_\vare)(\tiomega,t):=(\tiw_\vare^\prime, \tiq_{w,\vare}^\prime, \tiX_\vare^\prime,\tiG_\vare^\prime)(\tiomega+\vare b,t),\quad  \tiomega\in \tiOmega_0.  
\end{align*}
Similarly, we can define $ \phi_\vare, v_\vare $ and $ \tiq_\vare. $
Meanwhile, we choose initial data  $  \tiv_{\vare, 0}^\prime,\tiG_{\vare, 0}^\prime  $ such that
\begin{align*}
	(\tiv_{\vare, 0}^\prime,\tiG_{\vare, 0}^\prime)(\tiomega+\vare b)=(\tiv_{0},\tiG_{0})(\tiomega), \quad \tiomega\in \tiOmega_0.
\end{align*}
Then we have
\begin{align*}
	(\tiv_{\vare},\tiG_{\vare})(\tiomega,0)=(\tiv,\tiG)(\tiomega,0).
\end{align*}

By Theorem \ref{thm1}, the solution
$ (\tiw, \tiq_w, \tiX,\tiG) $ satisfies the following equations if we let $ n $ tend to infinity in iterative systems  \eqref{equ4.9}, \eqref{equ4.10} and \eqref{equ4.11} 
\begin{align*}
	\begin{cases}
		\partial_t \tiw-Q^2 \Delta \tiw+ J^\top \nabla \tiq_w=\tilde{f}+ \tilde{f}_\phi^L, \\
		\trace (\nabla \tiw J )=\tig+\tig_\phi^L, \\
		 [-\tiq_w \id+ ((\nabla \tiw J )+ (\nabla \tiw J )^\top  )] J^{-1} \tilde{n}_0=\tilde{h}+\tilde{h}_\phi^L,\\
		\tiw(0, \cdot)=0,
	\end{cases}
\end{align*}
where
\begin{align*}
		\tif= & -Q^2 \Delta (\tiw+\phi)+ J^\top \nabla (\tiq_w+\tiq_\phi)+Q^2(\tiX) \nabla(\nabla(\tiw+\phi)\tizeta)\tizeta\nonumber \\
		&-J(\tiX)^\top  \tizeta^\top \nabla (\tiq_w+\tiq_\phi) +\nabla \tiG\tizeta J(\tiX)\tiG, \nonumber \\
		\tilde{f}_\phi^L =& -\partial_t \phi+Q^2 \Delta \phi- J^\top \nabla \tilde{q}_\phi, \nonumber \\
		\tig=&\trace (\nabla (\tiw+\phi) J)-\trace (\nabla (\tiw+\phi) \tizeta J (\tiX )),\nonumber \\
		\tig_\phi^L =& -\operatorname{Tr}(\nabla \phi J),\nonumber \\
		\tih= & -(\tiq_w+\tiq_\phi)J^{-1} \tilde{n}_0+(\tiq_w+\tiq_\phi)(J(\tiX))^{-1} \nabla_{\Lambda} \tiX \tilde{n}_0 \nonumber \\
		&+((\nabla (\tiw+\phi) J) +(\nabla (\tiw+\phi) J)^\top )J^{-1} \tilde{n}_0 \nonumber \\
		& - ( (\nabla (\tiw+\phi) \tizeta J (\tiX ) )+ (\nabla (\tiw+\phi) \tizeta J (\tiX ) )^\top  ) J (\tiX ) ^{-1} \nabla_{\Lambda} \tiX \tilde{n}_0\nonumber \\
		&-\tiG\tiG^\top J(\tiX)^{-1} \nabla_{\Lambda} \tiX\tilde{n}_0,\nonumber \\
		\tilde{h}_\phi^L=&\tilde{q}_\phi J^{-1} \tilde{n}_0-(\nabla \phi J+(\nabla \phi J)^\top )J^{-1} \tilde{n}_0,\nonumber
\end{align*}
with
\begin{align*} 
	\tiG (t, \tiomega)  =\tiG_0+\int_0^t\nabla(\tiw (\tau,\tiomega)+\phi(\tau,\tiomega))\tizeta (\tiomega)J(\tiX (\tau,\tiomega))\tiG (\tau,\tiomega)d\tau,
\end{align*}
and
\begin{align*} 
	\tiX (t, \tiomega)=\tiomega+\int_0^t J(\tiX (\tau,\tiomega))(\tiw (\tau,\tiomega)+\phi(\tau,\tiomega))d\tau .
\end{align*}
 
Similarly, the shifted solution  
 $ (\tiw_\vare, \tiq_{w,\vare}, \tiX_\vare,\tiG_\vare) $ defined in $ \tiOmega_0 $ satisfies
\begin{align*}
	\begin{cases}
		\partial_t \tiw_\vare-Q_\vare^2 \Delta \tiw_\vare+ J_\vare^\top  \nabla \tiq_{w,\vare}=\tilde{f}_\vare+ \tilde{f}_{\phi,\vare}^L, \\
		\trace (\nabla \tiw_\vare J_\vare )=\tig_\vare+\tig_{\phi,\vare}^L, \\
		[-\tiq_{w,\vare} \id+ ((\nabla \tiw_\vare J_\vare )+ (\nabla \tiw_\vare J_\vare )^\top  )] J_\vare^{-1} \tilde{n}_0=\tilde{h}_\vare+\tilde{h}_{\phi,\vare}^L,\\
		\tiw_\vare(0, \cdot)=0,
	\end{cases}
\end{align*}
where
\begin{align}
		\tif_\vare= & -Q_\vare^2 \Delta (\tiw_\vare+\phi_\vare)+ J_\vare^\top  \nabla (\tiq_{w,\vare}+\tiq_{\phi,\vare})+Q_\vare^2(\tiX_\vare) \nabla(\nabla(\tiw_\vare+\phi_\vare)\tizeta_\vare)\tizeta_\vare \nonumber\\
		& -J_\vare(\tiX_\vare)^\top  \tizeta_\vare^\top \nabla (\tiq_{w,\vare}+\tiq_{\phi,\vare})+\nabla \tiG_\vare\tizeta_\vare J_\vare(\tiX_\vare)\tiG_\vare,\nonumber\\
		\tilde{f}_{\phi,\vare}^L =& -\partial_t \phi_\vare+Q_\vare^2 \Delta \phi_\vare- J_\vare^\top  \nabla \tilde{q}_{\phi,\vare}, \nonumber\\
		\tig_\vare=&\trace (\nabla (\tiw_\vare+\phi_\vare) J_\vare)-\trace (\nabla (\tiw_\vare+\phi_\vare) \tizeta_\vare J_\vare (\tiX_\vare )),\nonumber\\
		\tig_{\phi,\vare}^L =& -\operatorname{Tr}(\nabla \phi_\vare J_\vare),\nonumber\\
		\tih_\vare= & -(\tiq_{w,\vare}+\tiq_{\phi,\vare})J_\vare^{-1} \tilde{n}_0+(\tiq_{w,\vare}+\tiq_{\phi,\vare})(J_\vare(\tiX_\vare))^{-1} \nabla_{\Lambda} \tiX_\vare \tilde{n}_0\nonumber\\
		&+((\nabla (\tiw_\vare+\phi_\vare) J_\vare) +(\nabla (\tiw_\vare+\phi_\vare) J_\vare)^\top )J_\vare^{-1} \tilde{n}_0 \nonumber\\
		& - ( (\nabla (\tiw_\vare+\phi_\vare) \tizeta_\vare J_\vare (\tiX_\vare ) )+ (\nabla (\tiw_\vare+\phi_\vare) \tizeta_\vare J_\vare (\tiX_\vare ) )^\top  ) J_\vare (\tiX_\vare ) ^{-1} \nabla_{\Lambda} \tiX_\vare \tilde{n}_0\nonumber\\
		&-\tiG_\vare\tiG_\vare^\top J_\vare(\tiX_\vare)^{-1} \nabla_{\Lambda} \tiX_\vare\tilde{n}_0,\nonumber\\
		\tilde{h}_{\phi,\vare}^L=&\tilde{q}_{\phi,\vare} J_\vare^{-1} \tilde{n}_0-(\nabla \phi_\vare J_\vare+(\nabla \phi_\vare J_\vare)^\top )J_\vare^{-1} \tilde{n}_0,\nonumber
\end{align}
with $ J_\vare(\tiomega):= J(\tiomega+\vare b), 	Q^2_\vare(\tiomega):=Q^2(\tiomega+\vare b), $ and
\begin{align*}
	\phi_\vare:=\tiv_0+t \hat{\phi}_\vare
	:=\tiv_0+t (Q_\vare^2 \Delta \tiv_0- J_\vare^\top  \nabla \tiq_{\phi,\vare}+\nabla \tiG_0 J_\vare \tiG_0  ),
\end{align*}
where $\tiq_{\phi,\vare}$  satisfies
\begin{align*}
	\begin{cases}
		-Q_\vare^2 \Delta \tiq_{\phi,\vare}=\trace (\nabla \tiv_0 J_\vare  \nabla \tiv_0 J_\vare  )-\trace (\nabla \tiG_0 J_\vare  \nabla \tiG_0 J_\vare  ), & \text { in }  \tiOmega_0, \\ 
		\tiq_{\phi,\vare} J_\vare^{-1} \tilde{n}_0=( \nabla \tiv_0 J_\vare+( \nabla \tiv_0 J_\vare)^\top +\tiG_0 \tiG_0^\top )J_\vare^{-1} \tilde{n}_0, & \text { on }  \partial \tiOmega_0.
	\end{cases}
\end{align*}

Now, we consider the following system for the difference $ (\tiw-\tiw_\vare, \tiq_w-\tiq_{w,\vare},$  $ \tiX- \tiX_\vare, \tiG-\tiG_\vare) $
\begin{align*}
	\begin{cases}
		\partial_t (\tiw-\tiw_\vare)-Q^2 \Delta (\tiw-\tiw_\vare)+ J^\top \nabla (\tiq_w-\tiq_{w,\vare})=\tiF_\vare, \\
		\trace (\nabla (\tiw-\tiw_\vare) J )=\tiK_\vare, \\
		[-(\tiq_w-\tiq_{w,\vare}) \id+ ((\nabla (\tiw-\tiw_\vare) J )+ (\nabla (\tiw-\tiw_\vare) J )^\top  )] J^{-1} \tilde{n}_0=\tiH_\vare,\\
		(\tiw-\tiw_\vare)(0, \cdot)=0,
	\end{cases}
\end{align*}
where 
\begin{align*}
	\tiF_\vare=& \tilde{f}-\tilde{f}_\vare+ \tilde{f}_\phi^L- \tilde{f}_{\phi,\vare}^L+(Q^2-Q_\vare^2) \Delta \tiw_\vare
	+ (J_\vare^\top -J^\top) \nabla \tiq_{w,\vare},\nonumber\\
	\tiK_\vare=&\tig-\tig_\vare+\tig_\phi^L-\tig_{\phi,\vare}^L+	\trace( \nabla \tiw_\vare (J_\vare-J) ),\nonumber\\
	\tiH_\vare=&\tilde{h}-\tilde{h}_\vare+\tilde{h}_\phi^L-\tilde{h}_{\phi,\vare}^L
	+\tiq_{w,\vare}(J^{-1}-J_\vare^{-1})\tilde{n}_0\\
	&+(\nabla \tiw_\vare J_\vare)^\top J_\vare^{-1}\tilde{n}_0-(\nabla \tiw_\vare J)^\top J^{-1}\tilde{n}_0.\nonumber
\end{align*}
For the flux and the magnetic field, we have 
\begin{align*}
	\begin{cases}
		\dfrac{d}{d t} \tiX_\vare (t, \tiomega)=J  (\tiX_\vare (t, \tiomega) ) \tiv_\vare (t, \tiomega), \\ 
		\tiX_\vare(0, \tiomega)=\tiomega+\vare b, \quad \text { in } \tiOmega_0,
	\end{cases}
\end{align*}
and 
\begin{align*}
	\begin{cases}
		\partial_t \tiG_\vare (t, \tiomega)=\nabla \tiv_\vare (t, \tiomega) \tizeta_\vare (\tiomega) J(\tiX_\vare (t, \tiomega) )   \tiG_\vare(t,\tiomega), \\
		\tiG_\vare(0, \tiomega)=\tiG_0(\tiomega), \quad \text { in } \tiOmega_0 .
	\end{cases}
\end{align*}
Therefore, we obtain
\begin{align}
	\tiG_\vare  =&\tiG_0+\int_0^t\nabla(\tiw_\vare +\phi_\vare )\tizeta_\vare J(\tiX_\vare )\tiG_\vare d\tau,\nonumber\\
	\tiX_\vare =&\tiomega+\vare b+\int_0^t J(\tiX_\vare)(\tiw_\vare+\phi_\vare ) d\tau\nonumber,
\end{align}
and so 
\begin{align}
	\tiG-\tiG_\vare  =&\int_0^t \nabla(\tiw +\phi )\tizeta J(\tiX )\tiG-\nabla(\tiw_\vare +\phi_\vare )\tizeta_\vare J(\tiX_\vare )\tiG_\vare d\tau,\nonumber\\
	\tiX-\tiX_\vare =&-\vare b+\int_0^t J(\tiX )(\tiw +\phi )- J(\tiX_\vare)(\tiw_\vare+\phi_\vare )d\tau .\nonumber
\end{align}

Now we prove the following stability result.
\begin{theorem}\label{thm2}
	Let $2<s<\frac{5}{2}, 1<\gamma<s-1$ and a suitable $\delta>0$. If $0<T< C^{-\frac{1}{\delta}},$ then
\begin{align*}
\|\tilde{X}-\tilde{X}_{\vare}\|_{L^{\infty} H^{s+1}} \leq C \vare
\end{align*}
where the constant $ C $ depends only on the initial data. In particular, we have 
\begin{equation}\label{equthm2}
	\operatorname{dist}(\partial\tiOmega(t), \partial\tiOmega_{\vare}(t))\le C\vare 
\end{equation}
for all $ t>0 $ small enough.
\end{theorem}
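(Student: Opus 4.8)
The plan is to run a fixed-point/Gronwall-type argument on the difference of the two solutions. Introduce
\[
E(T):=\|\tiw-\tiw_\vare\|_{\mathcal{K}_{(0)}^{s+1}}+\|\tiq_w-\tiq_{w,\vare}\|_{\mathcal{K}_{pr(0)}^{s}}+\|\tiX-\tiX_\vare\|_{\mathcal{A}^{s+1,\gamma+1}_\ast}+\|\tiG-\tiG_\vare\|_{\mathcal{A}^{s,\gamma}_\ast},
\]
where $\|\cdot\|_{\mathcal{A}^{s+1,\gamma+1}_\ast}$ denotes the $\mathcal{A}^{s+1,\gamma+1}$-norm of $\tiX-\tiX_\vare$ \emph{after subtracting} the explicit affine-in-$t$ correction $-\vare b+t(J-J_\vare)\tiv_0$ --- this is forced on us because $(\tiX-\tiX_\vare)(0)=-\vare b$ and $\partial_t(\tiX-\tiX_\vare)(0)=(J-J_\vare)\tiv_0$ are nonzero, so $\tiX-\tiX_\vare$ itself does not lie in $\mathcal{A}^{s+1,\gamma+1}$, exactly the phenomenon already met for $\tiX-\hat X$ in Section~\ref{Local existence of smooth solutions} --- and similarly for $\tiG-\tiG_\vare$; both corrections are $O(\vare)$ in every relevant norm. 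The goal is the closed inequality $E(T)\le C_0\vare+C_0T^\delta E(T)$, which for $0<T<(2C_0)^{-1/\delta}=:C^{-1/\delta}$ absorbs into $E(T)\le 2C_0\vare$.

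\textbf{Step 1: the velocity--pressure difference.} The system written down above for $(\tiw-\tiw_\vare,\tiq_w-\tiq_{w,\vare})$ is precisely the linear system~\eqref{system lemma 4.1} with coefficients $Q^2,J$, right-hand side $(\tiF_\vare,\tiK_\vare,\tiH_\vare)$, and zero initial velocity. After a further $O(\vare)$ affine-in-$t$ modification of $\tiw-\tiw_\vare$ restoring the compatibility conditions~\eqref{compatibility lemma4.1} (analogous to the role of $\phi$ in the local existence proof, and needed because e.g.\ $\tiK_\vare(0)=\trace(\nabla\tiv_0(J_\vare-J))=O(\vare)$), Lemma~\ref{lem4.1} yields, with a constant uniform for $T\le 1$,
\[
\|(\tiw-\tiw_\vare,\tiq_w-\tiq_{w,\vare})\|_{X_0}\le C\vare+C\big(\|\tiF_\vare\|_{\mathcal{K}_{(0)}^{s-1}}+\|\tiK_\vare\|_{\bar{\mathcal{K}}_{(0)}^{s}}+\|\tiH_\vare\|_{\mathcal{K}_{(0)}^{s-\frac12}}\big).
\]

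\textbf{Step 2: estimating $\tiF_\vare,\tiK_\vare,\tiH_\vare$.} Each of these splits into a ``coefficient-error'' part and a ``solution-difference'' part. The coefficient-error terms $(Q^2-Q_\vare^2)\laplace\tiw_\vare$, $(J_\vare^\top-J^\top)\nabla\tiq_{w,\vare}$, $\trace(\nabla\tiw_\vare(J_\vare-J))$, $\tiq_{w,\vare}(J^{-1}-J_\vare^{-1})\tilde{n}_0$ and $(\nabla\tiw_\vare J_\vare)^\top J_\vare^{-1}\tilde{n}_0-(\nabla\tiw_\vare J)^\top J^{-1}\tilde{n}_0$ are all $\le C\vare$, since $Q^2,J$ are smooth off the branch cut (hence $\|Q^2-Q_\vare^2\|,\|J-J_\vare\|,\|J^{-1}-J_\vare^{-1}\|\le C\vare$ via $J_\vare(\cdot)=J(\cdot+\vare b)$, etc.) and $\tiw_\vare,\tiq_{w,\vare}$ lie in the iteration balls. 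The solution-difference terms $\tif-\tif_\vare$, $\tig-\tig_\vare$, $\tih-\tih_\vare$, $\tif_\phi^L-\tif_{\phi,\vare}^L$, $\tig_\phi^L-\tig_{\phi,\vare}^L$, $\tih_\phi^L-\tih_{\phi,\vare}^L$ are handled by the same many-fold multilinear splittings used in Propositions~\ref{pro1}--\ref{pro3} (Part~2): in each product one leaves a single ``difference'' factor and subtracts the $t=0$ value from the rest, so that the product carries exactly one smallness factor --- either $\vare$, when the isolated factor is a coefficient difference $Q^2-Q_\vare^2$, $J-J_\vare$, $\phi-\phi_\vare$ or $\tiq_\phi-\tiq_{\phi,\vare}$ (here $\|\phi-\phi_\vare\|\le C\vare$ and $\|\tiq_\phi-\tiq_{\phi,\vare}\|\le C\vare$ by elliptic estimates for the problems they solve), or $\tiw-\tiw_\vare$, $\tiX-\tiX_\vare$, $\tiG-\tiG_\vare$, which are controlled by $E(T)$ --- while the remaining factors are bounded via $\tiX\in B(X)$, $(\tiw,\tiq_w)\in B(w,q)$, $\tiG\in B(G)$ and Lemmas~\ref{x-omega}, \ref{G-G0}, \ref{jx-j}, \ref{jx-jy}, \ref{zeta-}, \ref{zetan-zetan-1}, \ref{lem3.3}--\ref{lem3.7}, with the time factor $T^\delta$ produced exactly as there. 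Hence $\|\tiF_\vare\|_{\mathcal{K}_{(0)}^{s-1}}+\|\tiK_\vare\|_{\bar{\mathcal{K}}_{(0)}^{s}}+\|\tiH_\vare\|_{\mathcal{K}_{(0)}^{s-\frac12}}\le C\vare+CT^\delta E(T)$.

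\textbf{Step 3: flux, field and conclusion.} Applying the arguments of Lemmas~\ref{x-omega} and~\ref{G-G0} to the integral identities for $\tiX-\tiX_\vare$ and $\tiG-\tiG_\vare$ displayed above --- again splitting each integrand so it carries at most one difference factor and subtracting $t=0$ values from the rest, using that $\tiX-\tiomega$, $\tiG-\tiG_0$ and their $\vare$-counterparts lie in the balls --- one gets $\|\tiX-\tiX_\vare\|_{\mathcal{A}^{s+1,\gamma+1}_\ast}\le C\vare+CT^\delta E(T)$ and $\|\tiG-\tiG_\vare\|_{\mathcal{A}^{s,\gamma}_\ast}\le C\vare+CT^\delta E(T)$. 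Summing the three bounds gives $E(T)\le C_0\vare+C_0T^\delta E(T)$, so for $0<T<C^{-1/\delta}$ we obtain $E(T)\le 2C_0\vare$, in particular $\|\tiX-\tiX_\vare\|_{\mathcal{A}^{s+1,\gamma+1}_\ast}\le C\vare$; since $\mathcal{A}^{s+1,\gamma+1}\hookrightarrow L^\infty H^{s+1}$ (for $T\le1$) and the subtracted $\ast$-correction is itself $O(\vare)$ in $L^\infty H^{s+1}$, this yields $\|\tiX-\tiX_\vare\|_{L^\infty H^{s+1}}\le C\vare$. Finally $\partial\tiOmega(t)$ and $\partial\tiOmega_\vare(t)$ are the images of the common set $\partial\tiOmega_0$ under $\tiX(t,\cdot)$ and $\tiX_\vare(t,\cdot)$ (recall $\tiX_\vare(t,\tiomega)=\tiX_\vare^\prime(t,\tiomega+\vare b)$), so the embedding $H^{s+1}\hookrightarrow C^0$ (valid since $s>1$) gives $\operatorname{dist}(\partial\tiOmega(t),\partial\tiOmega_\vare(t))\le\sup_{\tiomega\in\partial\tiOmega_0}|\tiX(t,\tiomega)-\tiX_\vare(t,\tiomega)|\le C\vare$, which is~\eqref{equthm2}. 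The main obstacle is Step~2 (and the parallel splitting in Step~3): the magnetic nonlinearity $\nabla\tiG\tizeta J(\tiX)\tiG$ and the Maxwell boundary term $\tiG\tiG^\top J(\tiX)^{-1}\nabla_\Lambda\tiX\tilde{n}_0$ have to be decomposed into large sums of products each isolating exactly one smallness factor ($\vare$ from a frozen-coefficient difference, or $T^\delta$ from a time integration), and this bookkeeping must be carried out consistently with the $\ast$-modification that keeps $\tiX-\tiX_\vare$ and $\tiG-\tiG_\vare$ inside the Beale-type spaces despite their nonzero data at $t=0$.
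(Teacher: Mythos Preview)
Your proposal is correct and follows essentially the same route as the paper: define the difference energy with the affine-in-$t$ corrections for $\tiX-\tiX_\vare$ and $\tiG-\tiG_\vare$, apply Lemma~\ref{lem4.1} to the difference system, split $\tiF_\vare,\tiK_\vare,\tiH_\vare$ into $O(\vare)$ coefficient-error pieces and $T^\delta E(T)$ solution-difference pieces via the same multilinear decompositions as in Propositions~\ref{pro2}--\ref{pro3}, and close by absorption. One small over-complication: the extra $O(\vare)$ affine modification of $\tiw-\tiw_\vare$ in your Step~1 is not needed, because in fact $\tiK_\vare(0)=\trace(\nabla\tiv_0(J_\vare-J))=0$ exactly (both $\trace(\nabla\tiv_0 J)$ and $\trace(\nabla\tiv_0 J_\vare)$ vanish by the divergence-free conditions for $u_0$ and its shift), and the other compatibility conditions likewise hold on the nose---the paper applies $L^{-1}$ directly to $(\tiF_\vare,\tiK_\vare,\tiH_\vare,0)$.
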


From Theorem  \ref{thm1}, we know that the solutions belong to the predetermined balls whose radii  depend only on the initial data. With the help of this estimate, the proof of the stability theorem is a  consequence of the following results. 
\begin{lemma}\label{lem5.2}
For $2<s<\frac{5}{2}, 1<\gamma<s-1$ and a suitable $\delta_1>0$ , we have

(1) $\|J-J_{\vare}\|_{H^r} \leq C \vare$  and $ \|Q^2-Q_{\vare}^2\|_{H^r} \leq C \vare$ for all $r\ge 0$.

(2)
 $\|\phi-\phi_{\vare}\|_{L^{\infty} H^{s+1}} \leq C \vare, $  and  $\|\phi-\phi_{\vare}\|_{H_{(0)}^1 H^r} \leq C \vare$  for smooth $\tilde{v}_0, \tilde{G}_0$ and $r\le s+1 $.
 
(3) $\|\tilde{q}_\phi-\tilde{q}_{\phi, \vare}\|_{H^{r+1}} \leq C \vare $ for all $ r \geq 0$.

(4)
\begin{align}
	 &\|\tilde{X}-\tilde{X}_{\vare}+\vare b-t (J-J_{\vare} ) \tilde{v}_0\|_{\mathcal{A}^{s+1, \gamma+1}}\nonumber\\
	  \le& C \vare +C T^{\delta_1} ( \|\tilde{X}-\tilde{X}_{\vare}+\vare b- (J-J_{\vare}  ) \tilde{v}_0 \|_{\mathcal{A}^{s+1, \gamma+1}}+ \|\tilde{w}-\tilde{w}_{\vare} \|_{\mathcal{K}_{(0)}^{s+1}} ) \label{Stability X}
\end{align}
for some constant $C$ depending only on the initial data.
\end{lemma}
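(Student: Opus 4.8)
The plan is to establish the four parts in the stated order, since (2) rests on (1) and (3), and (4) combines all of them with the uniform bounds from Theorem~\ref{thm1}. For part (1), recall that $P$ is an analytic branch of $\sqrt{z}$ on an open neighbourhood of $\overline{\Omega_0}$ disjoint from the branch cut $\Gamma$, so that $J=(\partial_jP^k)\circ P^{-1}$ and $Q^2=|dP/dz\circ P^{-1}|^2$, together with all their derivatives, are $C^\infty$ and bounded on a fixed neighbourhood $\mathcal U$ of $\overline{\tiOmega_0}$; for $|b|=1$ and $\vare>0$ small we have $\overline{\tiOmega_0}+\theta\vare b\subset\mathcal U$ for all $\theta\in[0,1]$. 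Writing $J-J_\vare=-\vare\int_0^1(b\cdot\nabla)J(\,\cdot+\theta\vare b)\,d\theta$ and estimating the $H^r(\tiOmega_0)$ norm under the integral sign gives $\|J-J_\vare\|_{H^r}\le C\vare$ for every $r\ge0$, and the same argument applies to $Q^2-Q_\vare^2$.

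For part (3), I would subtract the two elliptic problems defining $\tiq_\phi$ (coefficient $Q^2$, data built from $J$) and $\tiq_{\phi,\vare}$ (coefficient $Q_\vare^2$, data built from $J_\vare$), both posed on the fixed domain $\tiOmega_0$. Setting $r_\vare:=\tiq_\phi-\tiq_{\phi,\vare}$ and using the equation for $\tiq_{\phi,\vare}$ to substitute $\Delta\tiq_{\phi,\vare}$, one finds that $r_\vare$ solves $-Q^2\Delta r_\vare=F_\vare$ in $\tiOmega_0$ with $r_\vare|_{\partial\tiOmega_0}$ equal to the difference of the two scalar compatibility relations, where $F_\vare$ is the difference of two multilinear expressions in $(\tiv_0,\tiG_0,J)$ versus $(\tiv_0,\tiG_0,J_\vare)$ plus $(1-Q^2Q_\vare^{-2})Q_\vare^2\Delta\tiq_{\phi,\vare}$. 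By part (1), smoothness of $\tiv_0,\tiG_0$, the uniform bound $\|\tiq_{\phi,\vare}\|_{H^{r+2}}\le C$ coming from elliptic regularity (valid since $Q_\vare^2$ is uniformly elliptic on $\tiOmega_0$ for small $\vare$), and the algebra inequalities of the appendix, one gets $\|F_\vare\|_{H^{r-1}}+\|r_\vare|_{\partial\tiOmega_0}\|_{H^{r+1/2}}\le C\vare$, whence $\|r_\vare\|_{H^{r+1}}\le C\vare$ by elliptic regularity for $-Q^2\Delta$. Part (2) then follows by writing $\phi-\phi_\vare=t(\hat\phi-\hat\phi_\vare)$ with
\[
	\hat\phi-\hat\phi_\vare=(Q^2-Q_\vare^2)\laplace\tiv_0-(J^\top-J_\vare^\top)\nabla\tiq_\phi-J_\vare^\top\nabla r_\vare+\nabla\tiG_0(J-J_\vare)\tiG_0,
\]
which is $O(\vare)$ in every $H^r$ by parts (1) and (3); multiplying by $t$, which is bounded in $L^\infty([0,T])$ and in $H^1_{(0)}([0,T])$ since $T<1$, yields both claimed bounds.

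For part (4), set $Z_\vare:=\tiX-\tiX_\vare+\vare b-t(J-J_\vare)\tiv_0$; then $Z_\vare(0)=0$ and $\partial_tZ_\vare(0)=0$, so $Z_\vare$ can lie in $\mathcal A^{s+1,\gamma+1}$, and with $\tiv=\tiw+\phi$, $\tiv_\vare=\tiw_\vare+\phi_\vare$, $J:=J(\tiomega)$, $J_\vare:=J(\tiomega+\vare b)$,
\[
	\tfrac{d}{dt}Z_\vare=[J(\tiX)-J](\tiv-\tiv_\vare)+\big[(J(\tiX)-J)-(J(\tiX_\vare)-J_\vare)\big]\tiv_\vare+J(\tiv-\tiv_\vare)+(J-J_\vare)(\tiv_\vare-\tiv_0).
\]
I would integrate in $t$ and estimate each term in both components of $\mathcal A^{s+1,\gamma+1}$, namely $L_{\frac14}^\infty H^{s+1}$ and $H^2_{(0)}H^{\gamma+1}$, following the scheme of Lemma~\ref{x-omega} and Proposition~\ref{pro1}: the time integration supplies a positive power $T^{\delta_1}$; the factor $J(\tiX)-J$ is controlled through $\|\tiX-\tiomega\|$ (small in $T$) by Lemmas~\ref{jx-j}, \ref{jx-jy}, \ref{lem3.3} and \ref{lem3.7}; the difference $\tiv-\tiv_\vare=(\tiw-\tiw_\vare)+t(\hat\phi-\hat\phi_\vare)$ contributes $\|\tiw-\tiw_\vare\|_{\mathcal K^{s+1}_{(0)}}$ plus, by part (2), an $O(\vare)$ term; the bracket $(J(\tiX)-J)-(J(\tiX_\vare)-J_\vare)=[J(\tiX)-J(\tiX_\vare)]-(J-J_\vare)$ is, because $\tiX-\tiX_\vare=Z_\vare-\vare b+t(J-J_\vare)\tiv_0$, bounded by $C(\|Z_\vare\|_{\mathcal A^{s+1,\gamma+1}}+\vare)$ with the extra $T^{\delta_1}$ coming from the integral; and $(J-J_\vare)(\tiv_\vare-\tiv_0)$ is $O(\vare)$ by part (1) since $\tiv_\vare-\tiv_0=\tiw_\vare+t\hat\phi_\vare$ is bounded uniformly in $\vare$ by Theorem~\ref{thm1}. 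Since $\tiX,\tiX_\vare,\tiw,\tiw_\vare,\tiG,\tiG_\vare$ stay in the corresponding $N$-balls uniformly in $\vare$, every constant depends only on the initial data, and collecting the bounds yields \eqref{Stability X}. The step I expect to be the main obstacle is part (4): as in the local existence proof, the difficulty lies not in any single estimate but in the combinatorial bookkeeping of the multilinear splittings inside the space-time coupled Beale norm $\mathcal A^{s+1,\gamma+1}$, arranging every appearance of the unknowns $Z_\vare$ and $\tiw-\tiw_\vare$ to carry a positive power of $T$ (so they can be absorbed once $T$ is small) while keeping the remaining terms genuinely $O(\vare)$ with constants independent of $\vare$ and $T$; the borderline index $\gamma>1$ and the fractional-time spaces $H^{(s\pm1)/2}_{(0)}$ again force the careful choice of auxiliary exponents $\eta,\mu,\delta$ used in Lemmas~\ref{x-omega}--\ref{G-G0}.
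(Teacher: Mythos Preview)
Your proposal is correct and follows essentially the same approach as the paper. In fact, the paper's own proof of this lemma is nothing more than a reference to \cite[Lemma 6.1]{castro2019splash} together with the remark that the only new feature is the dependence of $\phi,\phi_\vare$ on $\tiG_0$; your sketch spells out precisely that argument---the mean-value estimate for $J-J_\vare$ and $Q^2-Q^2_\vare$, elliptic regularity for $\tiq_\phi-\tiq_{\phi,\vare}$, the explicit formula for $\hat\phi-\hat\phi_\vare$, and the ODE decomposition for $Z_\vare$---so there is nothing to add.
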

\begin{proof}
	The proof can be found in \cite[Lemma 6.1]{castro2019splash}  with small modifications due to the definition of $ \phi $ and $ \phi_\vare, $ which now depend on both $ \tiv_0 $ and $ \tiG_0. $
\end{proof}

\begin{proposition}
For $2<s<\frac{5}{2}, 1<\gamma<s-1$ and a suitable $\delta_2>0$, we have 
	\begin{align} 
	&	\|\tilde{G}-\tilde{G}_{\vare}-t\nabla \tilde{v}_0(J-J_{\vare})  \tilde{G}_0\|_{\mathcal{A}^{s, \gamma}}  \nonumber\\
	\leq&   C T^{\delta_2}(\|\tilde{w}-\tilde{w}_{\vare}\|_{\mathcal{K}_{(0)}^{s+1}}+\|\tilde{X}-\tilde{X}_{\vare}+\vare b-t(J-J_{\vare}) \tilde{v}_0\|_{\mathcal{A}^{s+1, \gamma+1}}\nonumber\\
	& +\|\tilde{G}-\tilde{G}_{\vare}-t\nabla \tilde{v}_0(J-J_{\vare}) \tilde{G}_0\|_{\mathcal{A}^{s, \gamma}})+C \vare,\label{Stability G}
	\end{align}
where the constant $C$ depends only on the initial data.
\end{proposition}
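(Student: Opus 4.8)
The plan is to estimate the difference $\tiG-\tiG_\vare$ directly from the integral representation, mirroring the structure of Proposition \ref{pro2} but now tracking the $\vare$-dependence instead of the iteration index. Starting from
\[
\tiG-\tiG_\vare = \int_0^t \nabla(\tiw+\phi)\tizeta J(\tiX)\tiG - \nabla(\tiw_\vare+\phi_\vare)\tizeta_\vare J(\tiX_\vare)\tiG_\vare \, d\tau,
\]
I would first subtract off the leading linear-in-time term $t\nabla\tiv_0(J-J_\vare)\tiG_0$, which is exactly the obstruction to $\tiG-\tiG_\vare$ lying in $\mathcal{A}^{s,\gamma}$ (its time derivative at $t=0$ is nonzero, by the same reasoning as in the paragraph after Lemma \ref{G-G0}), and show the modified quantity has vanishing initial time derivative so that it belongs to $\mathcal{A}^{s,\gamma}$. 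As in the proof of Proposition \ref{pro2}, the $\mathcal{A}^{s,\gamma}$ norm splits into the $L^\infty_{1/4}H^s$ piece and the $H^2_{(0)}H^\gamma$ piece; for the latter I would invoke Lemma \ref{lem3.3} to pass to $H^1_{(0)}H^\gamma$ of the integrand.

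The core of the argument is a telescoping decomposition of the integrand difference into four groups, isolating one factor at a time:
\[
(\nabla\tiw+\nabla\phi-\nabla\tiw_\vare-\nabla\phi_\vare)\tizeta J(\tiX)\tiG,\quad
\nabla(\tiw_\vare+\phi_\vare)(\tizeta-\tizeta_\vare)J(\tiX)\tiG,
\]
\[
\nabla(\tiw_\vare+\phi_\vare)\tizeta_\vare(J(\tiX)-J(\tiX_\vare))\tiG,\quad
\nabla(\tiw_\vare+\phi_\vare)\tizeta_\vare J(\tiX_\vare)(\tiG-\tiG_\vare).
\]
The first group contributes $\|\tiw-\tiw_\vare\|_{\mathcal{K}^{s+1}_{(0)}}$ together with a remainder from $\phi-\phi_\vare$, which is $O(\vare)$ by Lemma \ref{lem5.2}(2); the second and third contribute $\|\tiX-\tiX_\vare\|$ in the appropriate norm, noting that $\tizeta-\tizeta_\vare$ is controlled by $\nabla(\tiX-\tiX_\vare)$ via the identity $\tizeta-\tizeta_\vare = \tizeta(\nabla\tiX_\vare-\nabla\tiX)\tizeta_\vare$ together with Lemma \ref{zetan-zetan-1}-type estimates, and $J(\tiX)-J(\tiX_\vare)$ is handled by Lemma \ref{jx-jy}; the fourth group closes the estimate on $\|\tiG-\tiG_\vare\|_{\mathcal{A}^{s,\gamma}}$ itself with a small $T^{\delta_2}$ prefactor. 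Throughout, the boundedness of all the $B(X), B(w,q), B(G)$-type quantities (with radius depending only on initial data, from Theorem \ref{thm1}) supplies the constants $C(N,\tiv_0,\tiG_0)$, and to each product I would — exactly as in Proposition \ref{pro2} — add and subtract initial values of the factors that are \emph{not} the designated difference, so that the multiplicative lemmas in the appendix (Lemmas \ref{lem3.3}--\ref{lem3.7}, \ref{x-omega}, \ref{G-G0}) apply and produce extra positive powers of $T$.

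The main obstacle I anticipate is the same low-regularity bilinear bookkeeping that the authors flagged in the introduction: the factor $\tizeta J(\tiX)$ only lives in $H^\gamma$ with $1<\gamma<s-1$, so in the $H^1_{(0)}H^\gamma$ estimates I cannot freely apply a product rule unless both factors sit in spaces summing to enough regularity, and the difference factors like $\nabla\tiw-\nabla\tiw_\vare$ only have $H^{s-1}$ (or $L^2$-in-time) control. The resolution is to always pair the rough difference factor against the \emph{smooth} factors $\tizeta, J(\tiX), \tiG$ (which enjoy $L^\infty H^s$ bounds from Lemmas \ref{zeta-}, \ref{jx-j}, \ref{G-G0}), using $\gamma>1$ to make $H^\gamma$ an algebra for the smooth parts, and to absorb the $\vare$ terms $\phi-\phi_\vare$, $\tiq_\phi-\tiq_{\phi,\vare}$, $J-J_\vare$, $Q^2-Q^2_\vare$ cleanly via Lemma \ref{lem5.2}. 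A secondary technical point is verifying the initial-time-derivative cancellation after subtracting $t\nabla\tiv_0(J-J_\vare)\tiG_0$, which follows by differentiating the integral representation at $t=0$ and using $\tiX(0)=\tiomega$, $\tiX_\vare(0)=\tiomega+\vare b$, $\tizeta(0)=\tizeta_\vare(0)=\id$, $\tiG(0)=\tiG_\vare(0)=\tiG_0$, $\tiw(0)=\tiw_\vare(0)=0$, $\phi(0)=\phi_\vare(0)=\tiv_0$.
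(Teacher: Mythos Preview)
Your proposal is correct and follows essentially the same approach as the paper. The paper organizes the telescoping slightly differently---it first splits $\nabla(\tiw+\phi)$ into $\nabla\tiw+\nabla\tiv_0+t\nabla\hat\phi$ and telescopes within each piece (producing $25$ terms $I_1,\dots,I_{25}$), which makes the cancellation of the subtracted constant $\nabla\tiv_0(J-J_\vare)\tiG_0$ with the $\nabla\tiv_0$-group visible; your four-group telescope achieves the same cancellation in group 3 at $t=0$, and the remaining bookkeeping (passing from $\tiX-\tiX_\vare$ and $\tiG-\tiG_\vare$ to their $\mathcal{A}$-modified versions plus $O(\vare)$, and controlling $J(\tiX)-J(\tiX_\vare)$ via the intermediate $J(\tiX+\vare b)$) is identical to the paper's.
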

\begin{proof}
%Without loss of generality, we may	assume $ T<1. $

First, we deal with the estimate in
$ L^\infty_{\frac 14}H^s $ and  have
\begin{align}
	&\|\tilde{G}-\tilde{G}_{\vare}-t\nabla \tilde{v}_0(J-J_{\vare})  \tilde{G}_0\|_{L^\infty_{\frac 14}H^s}\nonumber\\
	= &\sup_{t\in[0,T]}t^{-\frac 14}\left\|\int_0^t \nabla(\tiw +\phi )\tizeta J(\tiX )\tiG-\nabla(\tiw_\vare +\phi_\vare )\tizeta_\vare J(\tiX_\vare )\tiG_\vare -\nabla \tilde{v}_0(J-J_{\vare})  \tilde{G}_0d\tau \right\|_{H^s}\nonumber\\
	\leq &\sup_{t\in[0,T]}t^{\frac 14}\|  \nabla(\tiw +\phi )\tizeta J(\tiX )\tiG-\nabla(\tiw_\vare +\phi_\vare )\tizeta_\vare J(\tiX_\vare )\tiG_\vare  -\nabla \tilde{v}_0(J-J_{\vare})  \tilde{G}_0 \|_{L^2([0,t];H^s)}\nonumber\\
	\leq &T^{\frac 14}\|  \nabla(\tiw +\phi )\tizeta J(\tiX )\tiG-\nabla(\tiw_\vare +\phi_\vare )\tizeta_\vare J(\tiX_\vare )\tiG_\vare  -\nabla \tilde{v}_0(J-J_{\vare})  \tilde{G}_0 \|_{L^2([0,T];H^s)}.\nonumber
\end{align}	
We split the term $ \nabla(\tiw +\phi )\tizeta J(\tiX )\tiG-\nabla(\tiw_\vare +\phi_\vare )\tizeta_\vare J(\tiX_\vare )\tiG_\vare  -\nabla \tilde{v}_0(J-J_{\vare})  \tilde{G}_0 $ as follows:
\begin{align}
	&\nabla(\tiw +\phi )\tizeta J(\tiX )\tiG-\nabla(\tiw_\vare +\phi_\vare )\tizeta_\vare J(\tiX_\vare )\tiG_\vare  -\nabla \tilde{v}_0(J-J_{\vare})  \tilde{G}_0\nonumber\\
	=&[\nabla  \tiw \tizeta J(\tiX )\tiG-\nabla \tiw_\vare  \tizeta_\vare J(\tiX_\vare )\tiG_\vare]  +[t\nabla  \hat{\phi} \tizeta J(\tiX )\tiG-t\nabla \hat{\phi}_\vare \tizeta_\vare J(\tiX_\vare )\tiG_\vare] \nonumber\\
	&+[\nabla  \tiv_0 \tizeta J(\tiX )\tiG-\nabla \tiv_0  \tizeta_\vare J(\tiX_\vare )\tiG_\vare-\nabla \tilde{v}_0(J-J_{\vare})  \tilde{G}_0 ]\nonumber\\
	=&:\sum_{i=1}^{25}I_i.\nonumber
\end{align}
Indeed, $ I_1,\cdots, I_9 $ are defined by
\begin{align*}
	&\nabla  \tiw \tizeta J(\tiX )\tiG-\nabla \tiw_\vare  \tizeta_\vare J(\tiX_\vare )\tiG_\vare\nonumber\\
	=&\nabla  \tiw \tizeta [(J(\tiX)-J(\tiX_\vare ) - J+J_\vare)+(J-J_\vare)+J(\tiX_\vare )]\tiG-\nabla \tiw_\vare  \tizeta_\vare J(\tiX_\vare )\tiG_\vare\nonumber\\
	=&\nabla  \tiw \tizeta [(J(\tiX)-J(\tiX_\vare ) - J+J_\vare)+(J-J_\vare)]\tiG+\nabla  \tiw [(\tizeta-\tizeta_\vare)+\tizeta_\vare] J(\tiX_\vare )\tiG\\
	&-\nabla \tiw_\vare  \tizeta_\vare J(\tiX_\vare )\tiG_\vare\nonumber\\
	=&\nabla  \tiw \tizeta [(J(\tiX)-J(\tiX_\vare ) - J+J_\vare)]\tiG+\nabla  \tiw \tizeta(J-J_\vare)\tiG+\nabla  \tiw [(\tizeta-\tizeta_\vare)]J(\tiX_\vare )\tiG\nonumber\\
	&+\nabla  [(\tiw-\tiw_\vare)+\tiw_\vare] \tizeta_\vare J(\tiX_\vare )\tiG-\nabla \tiw_\vare  \tizeta_\vare J(\tiX_\vare )\tiG_\vare\nonumber\\
	=&\nabla  \tiw \tizeta [(J(\tiX)-J(\tiX_\vare ) - J+J_\vare)][(\tiG-\tiG_0)+\tiG_0]\nonumber\\
	&+\nabla  \tiw \tizeta(J-J_\vare)[(\tiG-\tiG_0)+\tiG_0]\nonumber\\
	&+\nabla  \tiw (\tizeta-\tizeta_\vare)J(\tiX_\vare )[(\tiG-\tiG_0)+\tiG_0]\nonumber\\
	&+\nabla  (\tiw-\tiw_\vare) \tizeta_\vare J(\tiX_\vare )[(\tiG-\tiG_0)+\tiG_0]\nonumber\\
	&+\nabla \tiw_\vare  \tizeta_\vare J(\tiX_\vare )(\tiG-\tiG_\vare)\nonumber\\
	=&:\sum_{i=1}^{2}I_i+\sum_{i=3}^{4}I_i+\sum_{i=5}^{6}I_i+\sum_{i=7}^{8}I_i+I_9.\nonumber
\end{align*}
$ I_{10},\cdots, I_{18} $ are defined  by
\begin{align*}
	&t\nabla  \hat{\phi} \tizeta J(\tiX )\tiG-t\nabla \hat{\phi}_\vare \tizeta_\vare J(\tiX_\vare )\tiG_\vare\nonumber\\
	=&t\nabla  \hat{\phi} \tizeta [(J(\tiX)-J(\tiX_\vare ) - J+J_\vare)+(J-J_\vare)+J(\tiX_\vare )]\tiG-t\nabla \hat{\phi}_\vare  \tizeta_\vare J(\tiX_\vare )\tiG_\vare\nonumber\\
	=&t\nabla  \hat{\phi} \tizeta [(J(\tiX)-J(\tiX_\vare ) - J+J_\vare)+(J-J_\vare)]\tiG+t\nabla \hat{\phi} [(\tizeta-\tizeta_\vare)+\tizeta_\vare] J(\tiX_\vare )\tiG\\
	&-t\nabla \hat{\phi}_\vare  \tizeta_\vare J(\tiX_\vare )\tiG_\vare\nonumber\\
	=&t\nabla  \hat{\phi} \tizeta [(J(\tiX)-J(\tiX_\vare ) - J+J_\vare)]\tiG+t\nabla  \hat{\phi} \tizeta(J-J_\vare)\tiG+t\nabla  \hat{\phi} (\tizeta-\tizeta_\vare)J(\tiX_\vare )\tiG\nonumber\\
	&+t\nabla  [(\hat{\phi}-\hat{\phi}_\vare)+\hat{\phi}_\vare] \tizeta_\vare J(\tiX_\vare )\tiG-t\nabla \hat{\phi}_\vare  \tizeta_\vare J(\tiX_\vare )\tiG_\vare\nonumber\\
	=&t\nabla  \hat{\phi} \tizeta [(J(\tiX)-J(\tiX_\vare ) - J+J_\vare)][(\tiG-\tiG_0)+\tiG_0]\nonumber\\
	&+t\nabla \hat{\phi} \tizeta(J-J_\vare)[(\tiG-\tiG_0)+\tiG_0]\nonumber\\
	&+t\nabla  \hat{\phi} (\tizeta-\tizeta_\vare)J(\tiX_\vare )[(\tiG-\tiG_0)+\tiG_0]\nonumber\\
	&+t\nabla  (\hat{\phi}-\hat{\phi}_\vare)\tizeta_\vare J(\tiX_\vare )[(\tiG-\tiG_0)+\tiG_0]\nonumber\\
	&+t\nabla \hat{\phi}_\vare  \tizeta_\vare J(\tiX_\vare )(\tiG-\tiG_\vare)\nonumber\\
	=&:\sum_{i=10}^{11}I_i+\sum_{i=12}^{13}I_i+\sum_{i=14}^{15}I_i+\sum_{i=16}^{17}I_i+I_{18}.\nonumber
\end{align*}	
Finally, $ I_{19},\cdots, I_{25} $ are defined by
\begin{align}
	&\nabla  \tiv_0 \tizeta J(\tiX )\tiG-\nabla \tiv_0  \tizeta_\vare J(\tiX_\vare )\tiG_\vare-\nabla \tilde{v}_0(J-J_{\vare})  \tilde{G}_0\nonumber\\
	=&\nabla  \tiv_0 \tizeta [(J(\tiX)-J(\tiX_\vare ) - J+J_\vare)+(J-J_\vare)+J(\tiX_\vare )]\tiG\nonumber\\
	&-\nabla \tiv_0 \tizeta_\vare J(\tiX_\vare )\tiG_\vare-\nabla \tilde{v}_0(J-J_{\vare})  \tilde{G}_0\nonumber\\
	=&\nabla  \tiv_0 \tizeta [(J(\tiX)-J(\tiX_\vare ) - J+J_\vare)+(J-J_\vare)]\tiG+\nabla  \tiv_0 [(\tizeta-\tizeta_\vare)+\tizeta_\vare] J(\tiX_\vare )\tiG\nonumber\\
	&-\nabla \tiv_0 \tizeta_\vare J(\tiX_\vare )\tiG_\vare-\nabla \tilde{v}_0(J-J_{\vare})  \tilde{G}_0\nonumber\\
	=&\nabla  \tiv_0 \tizeta [(J(\tiX)-J(\tiX_\vare ) - J+J_\vare)]\tiG+\nabla  \tiv_0 \tizeta(J-J_\vare)\tiG+\nabla  \tiv_0 (\tizeta-\tizeta_\vare)J(\tiX_\vare )\tiG\nonumber\\
	&+\nabla  \tiv_0 \tizeta_\vare J(\tiX_\vare )\tiG-\nabla \tiv_0  \tizeta_\vare J(\tiX_\vare )\tiG_\vare-\nabla \tilde{v}_0(J-J_{\vare})  \tilde{G}_0\nonumber\\
	=&\nabla  \tiv_0 \tizeta [(J(\tiX)-J(\tiX_\vare ) - J+J_\vare)][(\tiG-\tiG_0)+\tiG_0]\nonumber\\
	&+\nabla  \tiv_0 \tizeta(J-J_\vare)[(\tiG-\tiG_0)+\tiG_0]-\nabla \tilde{v}_0(J-J_{\vare})  \tilde{G}_0\nonumber\\
	&+\nabla \tiv_0 (\tizeta-\tizeta_\vare)J(\tiX_\vare )[(\tiG-\tiG_0)+\tiG_0]\nonumber\\
	&+\nabla \tiv_0  \tizeta_\vare J(\tiX_\vare )(\tiG-\tiG_\vare)\nonumber\\
	=&\nabla  \tiv_0 \tizeta [(J(\tiX)-J(\tiX_\vare ) - J+J_\vare)][(\tiG-\tiG_0)+\tiG_0]\nonumber\\
	&+\nabla  \tiv_0 \tizeta(J-J_\vare)(\tiG-\tiG_0)\nonumber\\
	&+\nabla \tiv_0 (\tizeta-\tizeta_\vare)J(\tiX_\vare )[(\tiG-\tiG_0)+\tiG_0]\nonumber\\
	&+\nabla \tiv_0  \tizeta_\vare J(\tiX_\vare )(\tiG-\tiG_\vare)\nonumber\\
	&+\nabla  \tiv_0 (\tizeta-\id)(J-J_\vare)\tiG_0\nonumber\\
	=&:\sum_{i=19}^{20}I_i+I_{21}+\sum_{i=22}^{23}I_i+I_{24}+I_{25}.\nonumber
\end{align}
In each product, we leave only one item as the difference of the iterative step $ n-1 $ and $ n. $ Then, we  subtract the initial values as before.

We only show the estimate of $ I_1, I_7 $ and $ I_{18}. $
For $ I_1, $ we recall Theorem \ref{thm1} and apply Lemmas \ref{G-G0}, \ref{lem5.2},  \ref{jx-jy} and \ref{zeta-}  to obtain
 \begin{align*}
 	&T^\frac 14\|I_1\|_{L^2H^s}\nonumber\\
 	=&T^\frac 14\|\nabla  \tiw \tizeta [(J(\tiX)-J(\tiX_\vare ) - J+J_\vare)](\tiG-\tiG_0)\|_{L^2H^s}\nonumber\\
 	\le&T^\frac 14\|\nabla  \tiw\|_{L^2H^s}\| \tizeta\|_{L^\infty H^s}\| J(\tiX)-J(\tiX_\vare ) - J+J_\vare\|_{L^\infty H^s}\|\tiG-\tiG_0\|_{L^\infty H^s}\nonumber\\
 	\le&C(N,N_\vare, \tiv_{0},\tiG_{0})T^\frac 12  \| J(\tiX)-J(\tiX_\vare ) - J+J_\vare\|_{L^\infty H^s} \nonumber\\
 	\le&C(N,N_\vare, \tiv_{0},\tiG_{0})T^\frac 12 \\
 	&\cdot (\| J(\tiX+\vare b)-J(\tiX_\vare)\|_{L^\infty H^s}+\|J(\tiX)-J(\tiX+\vare b)+J_\vare-J\|_{L^\infty H^s}) \nonumber\\
 	\le&C(N,N_\vare, \tiv_{0},\tiG_{0})T^\frac 12 \\
 	&\cdot (\| \tiX+\vare b-\tiX_\vare\|_{L^\infty H^s}+\|J(\tiX)-J(\tiX+\vare b)\|_{L^\infty H^s}+\|J_\vare-J\|_{L^\infty H^s}) \nonumber\\
 	\le&C(N,N_\vare, \tiv_{0},\tiG_{0})T^\frac 12  (\| \tiX-\tiX_\vare+\vare b\|_{L^\infty H^s}+C\vare) \nonumber\\
 	\le&C(N,N_\vare, \tiv_{0},\tiG_{0})T^\frac 12  (\|\tilde{X}-\tilde{X}_{\vare}+\vare b-t (J-J_{\vare} ) \tilde{v}_0\|_{L^\infty H^s}+C(\tiv_{0})\vare) \nonumber\\
 	\le&C(N,N_\vare, \tiv_{0},\tiG_{0})T^\frac 12  \|\tilde{X}-\tilde{X}_{\vare}+\vare b-t (J-J_{\vare} ) \tilde{v}_0\|_{\mathcal{A}^{s+1,\gamma+1}}+C(N,N_\vare, \tiv_{0},\tiG_{0})T^\frac 12 \vare .\nonumber
 \end{align*}
For $ I_7$,   by Theorem \ref{thm1}, Lemmas \ref{G-G0}, \ref{jx-j} and  \ref{zeta-}, we have
 \begin{align}
 	&T^\frac14\|I_7\|_{L^2H^s}\nonumber\\
 	=&T^\frac14\|\nabla  (\tiw-\tiw_\vare) \tizeta_\vare J(\tiX_\vare )  (\tiG-\tiG_0) \|_{L^2H^s}\nonumber\\
 	\le&T^\frac14\|\tiw-\tiw_\vare\|_{L^2H^{s+1}}\| \tizeta_\vare\|_{L^\infty H^s}\| J(\tiX_\vare )\|_{L^\infty H^s}\|  (\tiG-\tiG_0) \|_{L^\infty H^s}\nonumber\\
 	\le&C(N,N_\vare,\tiv_{0},\tiG_{0})T^\frac12\|\tiw-\tiw_\vare\|_{\mathcal{K}_{(0)}^{s+1}}. \nonumber
 \end{align}
 From Theorem \ref{thm1}, Lemmas \ref{lem5.2}, \ref{jx-j} and  \ref{zeta-}, we obtain for   $ I_{18} $  that
\begin{align}
 	&T^\frac 14\|I_{18}\|_{L^2 H^s}\nonumber\\
 	=&T^\frac 14\|t\nabla \hat{\phi}_\vare\|_{L^2 H^s}  \|\tizeta_\vare\|_{L^\infty H^s} \|J(\tiX_\vare )\|_{L^\infty H^s}(\|(\tiG-\tiG_\vare)-t\nabla \tilde{v}_0(J-J_{\vare}) \tilde{G}_0\|_{L^\infty H^s}\nonumber\\
 	&+\|t\nabla \tilde{v}_0(J-J_{\vare}) \tilde{G}_0\|_{L^\infty H^s})\nonumber\\
 	\le &C(N,N_\vare,\tiv_{0},\tiG_{0})T^\frac 74  (\|(\tiG-\tiG_\vare)-t\nabla \tilde{v}_0(J-J_{\vare}) \tilde{G}_0\|_{\mathcal{A}^{s,\gamma}} +\vare )\nonumber\\
 	\le &C(N,N_\vare,\tiv_{0},\tiG_{0})T^\frac 74  \|(\tiG-\tiG_\vare)-t\nabla \tilde{v}_0(J-J_{\vare}) \tilde{G}_0\|_{\mathcal{A}^{s,\gamma}}.\nonumber
\end{align}
The  estimates of   other terms are similar since $ \tiv_0,  \hat{\phi} $  and $ \hat{\phi}_\vare  $ depend only on the initial data.
 
For the estimates in $ H_{(0)}^2H^\gamma, $ we first apply Lemma \ref{lem3.3}  to obtain
\begin{align}
	&\|\tilde{G}-\tilde{G}_{\vare}-t\nabla \tilde{v}_0(J-J_{\vare})  \tilde{G}_0\|_{H_{(0)}^2H^\gamma}\nonumber\\
	= &\left\|\int_0^t \nabla(\tiw +\phi )\tizeta J(\tiX )\tiG-\nabla(\tiw_\vare +\phi_\vare )\tizeta_\vare J(\tiX_\vare )\tiG_\vare  -\nabla \tilde{v}_0(J-J_{\vare})  \tilde{G}_0d\tau \right\|_{H_{(0)}^2H^\gamma}\nonumber\\
	\leq &\|  \nabla(\tiw +\phi )\tizeta J(\tiX )\tiG-\nabla(\tiw_\vare +\phi_\vare )\tizeta_\vare J(\tiX_\vare )\tiG_\vare  -\nabla \tilde{v}_0(J-J_{\vare})  \tilde{G}_0 \|_{H_{(0)}^1H^\gamma}.\nonumber
\end{align}	
Then, we rewrite  $  \nabla(\tiw +\phi )\tizeta J(\tiX )\tiG-\nabla(\tiw_\vare +\phi_\vare )\tizeta_\vare J(\tiX_\vare )\tiG_\vare  -\nabla \tilde{v}_0(J-J_{\vare})  \tilde{G}_0 $ as follows:
\begin{align}
	&\nabla(\tiw +\phi )\tizeta J(\tiX )\tiG-\nabla(\tiw_\vare +\phi_\vare )\tizeta_\vare J(\tiX_\vare )\tiG_\vare  -\nabla \tilde{v}_0(J-J_{\vare})  \tilde{G}_0\nonumber\\
	=&(\nabla  \tiw \tizeta J(\tiX )\tiG-\nabla \tiw_\vare  \tizeta_\vare J(\tiX_\vare )\tiG_\vare)  +(t\nabla  \hat{\phi} \tizeta J(\tiX )\tiG-t\nabla \hat{\phi}_\vare \tizeta_\vare J(\tiX_\vare )\tiG_\vare) \nonumber\\
	&+(\nabla  \tiv_0 \tizeta J(\tiX )\tiG-\nabla \tiv_0  \tizeta_\vare J(\tiX_\vare )\tiG_\vare-\nabla \tilde{v}_0(J-J_{\vare})  \tilde{G}_0 ).\nonumber
\end{align}
We focus on $ (\nabla  \tiw \tizeta J(\tiX )\tiG-\nabla \tiw_\vare  \tizeta_\vare J(\tiX_\vare )\tiG_\vare) $ and the others are similar since $ \tiv_0,  \hat{\phi} $  and $ \hat{\phi}_\vare  $ depend only on the initial data. Indeed, we have
\begin{align}
	&\nabla  \tiw \tizeta J(\tiX )\tiG-\nabla \tiw_\vare  \tizeta_\vare J(\tiX_\vare )\tiG_\vare\nonumber\\
	=&\nabla  \tiw \tizeta [(J(\tiX)-J(\tiX_\vare ) - J+J_\vare)][(\tiG-\tiG_0)+\tiG_0] +\nabla  \tiw \tizeta(J-J_\vare)[(\tiG-\tiG_0)+\tiG_0]\nonumber\\
	&+\nabla  \tiw [(\tizeta-\tizeta_\vare)]J(\tiX_\vare )[(\tiG-\tiG_0)+\tiG_0] +\nabla  (\tiw-\tiw_\vare) \tizeta_\vare J(\tiX_\vare )[(\tiG-\tiG_0)+\tiG_0]\nonumber\\
	&+\nabla \tiw_\vare  \tizeta_\vare J(\tiX_\vare )(\tiG-\tiG_\vare)\nonumber\\
	=&\nabla  \tiw [(\tizeta-\id)+\id] [(J(\tiX)-J(\tiX_\vare ) - J+J_\vare)][(\tiG-\tiG_0)+\tiG_0]\nonumber\\
	&+\nabla  \tiw [(\tizeta-\id)+\id](J-J_\vare)[(\tiG-\tiG_0)+\tiG_0]\nonumber\\
	&+\nabla  \tiw (\tizeta-\tizeta_\vare)[(J(\tiX_\vare)-J_\vare)+J_\vare] [(\tiG-\tiG_0)+\tiG_0]\nonumber\\
	&+\nabla  (\tiw-\tiw_\vare) [(\tizeta_\vare-\id)+\id] [(J(\tiX_\vare )-J_\vare)+J_\vare][(\tiG-\tiG_0)+\tiG_0]\nonumber\\
	&+\nabla \tiw_\vare  [(\tizeta_\vare-\id)+\id] [(J(\tiX_\vare )-J_\vare)+J_\vare](\tiG-\tiG_\vare)\nonumber\\
	=&:\sum_{i=1}^{4}I\!\!I_i+\sum_{i=5}^{8}I\!\!I_i+\sum_{i=9}^{12}I\!\!I_i+\sum_{i=13}^{20}I\!\!I_i+\sum_{i=21}^{24}I\!\!I_i.\nonumber
\end{align}
We show the estimate of $ I\!\!I_1 $ and $ I\!\!I_{24}. $ For $ I\!\!I_1, $ we apply Lemmas \ref{x-omega}, \ref{G-G0},  \ref{lem5.2},  \ref{jx-jy},  \ref{zeta-},  \ref{lem3.7}, and Theorem \ref{thm1} to get
\begin{align*}
	&\|I\!\!I_1\|_{H_{(0)}^1H^\gamma}\nonumber\\
	=&\|\nabla  \tiw\|_{H_{(0)}^1H^\gamma}\| \tizeta-\id \|_{H_{(0)}^1H^\gamma}\| J(\tiX)-J(\tiX_\vare ) - J+J_\vare\|_{H_{(0)}^1H^\gamma}\|\tiG-\tiG_0\|_{H_{(0)}^1H^\gamma}
	\nonumber\\
	\le &C(N,N_\vare,\tiv_{0},\tiG_{0})T^{2\delta} \| J(\tiX)-J(\tiX_\vare ) - J+J_\vare\|_{H_{(0)}^1H^\gamma}
	\nonumber\\
	\le&C(N,N_\vare, \tiv_{0},\tiG_{0})T^{2\delta} \\
	&\cdot (\| J(\tiX+\vare b)-J(\tiX_\vare)\|_{H_{(0)}^1H^\gamma}+\|J(\tiX)-J(\tiX+\vare b)+J_\vare-J\|_{H_{(0)}^1H^\gamma}) \nonumber\\
	\le&C(N,N_\vare, \tiv_{0},\tiG_{0})T^{2\delta} \\
	&\cdot (\| \tiX+\vare b-\tiX_\vare\|_{H_{(0)}^1H^\gamma}+\|J(\tiX)-J(\tiX+\vare b)\|_{H_{(0)}^1H^\gamma}+\|J_\vare-J\|_{H_{(0)}^1H^\gamma}) \nonumber\\
	\le&C(N,N_\vare, \tiv_{0},\tiG_{0})T^{2\delta}  (\| \tiX+\vare b-\tiX_\vare-t(J-J_\vare)\tiv_{0}+t(J-J_\vare)\tiv_{0}\|_{H_{(0)}^1H^\gamma})\\
	&+C(N,N_\vare, \tiv_{0},\tiG_{0})T^{2\delta}\vare \nonumber\\
	\le&C(N,N_\vare, \tiv_{0},\tiG_{0})T^{2\delta}  (\| \tiX+\vare b-\tiX_\vare-t(J-J_\vare)\tiv_{0}\|_{H_{(0)}^1H^\gamma}+\|t(J-J_\vare)\tiv_{0}\|_{H_{(0)}^1H^\gamma})\nonumber\\
	&+C(N,N_\vare, \tiv_{0},\tiG_{0})T^{2\delta}\vare \nonumber\\
	\le&C(N,N_\vare, \tiv_{0},\tiG_{0})T^{2\delta}  \| \tiX+\vare b-\tiX_\vare-t(J-J_\vare)\tiv_{0}\|_{H_{(0)}^1H^\gamma}+C(N,N_\vare, \tiv_{0},\tiG_{0})T^{2\delta}\vare \nonumber\\
	\le&C(N,N_\vare, \tiv_{0},\tiG_{0})T^{2\delta}  \| \tiX+\vare b-\tiX_\vare-t(J-J_\vare)\tiv_{0}\|_{\mathcal{A}^{s+1,\gamma+1}}\\
	&+C(N,N_\vare, \tiv_{0},\tiG_{0})T^{2\delta}\vare, \nonumber
\end{align*}
where $ \delta>0 $ is sufficiently small. For $ I\!\!I_{24}, $ we use  Theorem \ref{thm1}, Lemmas  \ref{lem5.2}, \ref{jx-j} and  \ref{lem3.7} to have
\begin{align*}
	&\|I\!\!I_{24}\|_{H_{(0)}^1H^\gamma}\nonumber\\
	=&\|\nabla \tiw_\vare \id  J_\vare(\tiG-\tiG_\vare)\|_{H_{(0)}^1H^\gamma}\nonumber\\
	\le &\|\nabla \tiw_\vare\|_{H_{(0)}^1H^\gamma}\|  J_\vare\|_{H_{(0)}^1H^\gamma}\|\tiG-\tiG_\vare\|_{H_{(0)}^1H^\gamma}\nonumber\\
	\le &C(N,N_\vare,\tiv_{0},\tiG_{0})\|\tiG-\tiG_\vare-t\nabla\tiv_{0}(J-J_\vare)\tiG_{0}\|_{H_{(0)}^1H^\gamma}\\
	&+C(N,N_\vare,\tiv_{0},\tiG_{0})\|t\nabla\tiv_{0}(J-J_\vare)\tiG_{0}\|_{H_{(0)}^1H^\gamma}\nonumber\\
	\le &C(N,N_\vare,\tiv_{0},\tiG_{0})\left\|\int_0^t \partial_\tau(\tiG-\tiG_\vare-\tau\nabla\tiv_{0}(J-J_\vare)\tiG_{0})d\tau \right\|_{H_{(0)}^{1+\eta-\delta}H^\gamma}\\
	&+C(N,N_\vare,\tiv_{0},\tiG_{0})\vare\nonumber\\
	\le &C(N,N_\vare,\tiv_{0},\tiG_{0})T^\delta\| \tiG-\tiG_\vare-t\nabla\tiv_{0}(J-J_\vare)\tiG_{0} \|_{\mathcal{A}^{s,\gamma} }+C(N,N_\vare,\tiv_{0},\tiG_{0})\vare.\nonumber
\end{align*}
Similarly, we have the desired estimates of the other terms. Therefore, this proposition holds if we put all the estimates together.
\end{proof}

For the velocity and the pressure, the following result holds:
\begin{proposition}
	For $2<s<\frac{5}{2}, 1<\gamma<s-1$ and a suitable $\delta_3>0$, we have 
		\begin{align} 
		&\| \tilde{w}-   \tilde{w}_{\vare}\|_{\mathcal{K}_{(0)}^{s+1}}+\| \tilde{q}_w-\tilde{q}_{w, \vare}\|_{\mathcal{K}_{p r}^s(0)}\nonumber\\
		\leq&  C \vare+ C T^{\delta_3}(\|\tilde{w}-\tilde{w}_{\vare}\|_{\mathcal{K}_{(0)}^{s+1}}+\| \tilde{q}_w-\tilde{q}_{w, \vare}\|_{\mathcal{K}_{p r}^s(0)}\nonumber\\
		&+\|\tilde{X}-\tilde{X}_{\vare}+\vare b-t(J-J_{\vare}) \tilde{v}_0\|_{\mathcal{A}^{s+1, \gamma+1}}  \nonumber\\
		&+\|\tilde{G}-\tilde{G}_{\vare}-t\nabla \tilde{v}_0(J-J_{\vare})   \tilde{G}_0\|_{\mathcal{A}^{s, \gamma}})\label{Stability w,q}
		\end{align}
where the constant $C$ depends only on the initial data.
\end{proposition}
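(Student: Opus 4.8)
## Proof Proposal for the Stability Estimate of the Velocity and Pressure

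The plan is to apply Lemma \ref{lem4.1} to the system satisfied by the difference $(\tiw-\tiw_\vare,\tiq_w-\tiq_{w,\vare})$. That system is already recorded above: its data triple is $(\tiF_\vare,\tiK_\vare,\tiH_\vare)$, which belongs to $Y_0$ because all the modifications we introduced (subtraction of $\phi,\phi_\vare$ and of the linear-in-time correctors) guarantee that the required compatibility conditions \eqref{compatibility lemma4.1} hold and that the appropriate time-trace vanishes. Hence Lemma \ref{lem4.1} gives
\begin{align*}
	\|\tiw-\tiw_\vare\|_{\mathcal{K}_{(0)}^{s+1}}+\|\tiq_w-\tiq_{w,\vare}\|_{\mathcal{K}_{pr(0)}^{s}}\le C\big(\|\tiF_\vare\|_{\mathcal{K}_{(0)}^{s-1}}+\|\tiK_\vare\|_{\bar{\mathcal{K}}_{(0)}^{s}}+\|\tiH_\vare\|_{\mathcal{K}_{(0)}^{s-\frac12}}\big),
\end{align*}
with $C$ independent of $T$ for $T$ bounded. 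So the whole task reduces to estimating the three right-hand side norms by $C\vare$ plus $CT^{\delta_3}$ times the norms appearing in \eqref{Stability w,q}.

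First I would split $\tiF_\vare,\tiK_\vare,\tiH_\vare$ exactly along the lines used in Proposition \ref{pro3}, separating the ``magnetic'' contributions from the rest. The terms $\tilde f-\tilde f_\vare$, $\tilde f_\phi^L-\tilde f_{\phi,\vare}^L$, $(Q^2-Q_\vare^2)\Delta\tiw_\vare$, $(J_\vare^\top-J^\top)\nabla\tiq_{w,\vare}$ and their analogues in $\tiK_\vare,\tiH_\vare$ are (apart from the magnetic pieces $\nabla\tiG\tizeta J(\tiX)\tiG-\nabla\tiG_\vare\tizeta_\vare J(\tiX_\vare)\tiG_\vare$ in $\tilde f$, and $\tiG\tiG^\top J(\tiX)^{-1}\nabla_\Lambda\tiX\tilde n_0-\tiG_\vare\tiG_\vare^\top J(\tiX_\vare)^{-1}\nabla_\Lambda\tiX_\vare\tilde n_0$ in $\tilde h$) handled verbatim as in \cite[Proposition 6.2]{castro2019splash}, with constants now depending on $\tiG_0$. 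For the frozen-coefficient terms one uses Lemma \ref{lem5.2}(1) and (3) to gain the factor $\vare$, together with the a priori bounds of Theorem \ref{thm1}. For the genuinely new magnetic terms I would do the same multi-linear telescoping as in Part 2 of Proposition \ref{pro3}: write each factor as (difference between the $\vare$ and non-$\vare$ quantity) plus the $\vare$-quantity, keep exactly one difference factor in each product, subtract initial values from the others so as to produce a positive power of $T$, and bound the difference factors either by $C\vare$ (from Lemma \ref{lem5.2}, using $\|J-J_\vare\|_{H^r}\le C\vare$, $\|\phi-\phi_\vare\|\le C\vare$, $\|\tiq_\phi-\tiq_{\phi,\vare}\|\le C\vare$ and the identity \eqref{Jni}) or by the unknown norms $\|\tiX-\tiX_\vare+\vare b-t(J-J_\vare)\tiv_0\|_{\mathcal{A}^{s+1,\gamma+1}}$, $\|\tiG-\tiG_\vare-t\nabla\tiv_0(J-J_\vare)\tiG_0\|_{\mathcal{A}^{s,\gamma}}$, $\|\tiw-\tiw_\vare\|_{\mathcal{K}^{s+1}_{(0)}}$, $\|\tiq_w-\tiq_{w,\vare}\|_{\mathcal{K}^{s}_{pr(0)}}$, invoking Lemmas \ref{x-omega}, \ref{G-G0}, \ref{jx-j}, \ref{jx-jy}, \ref{zeta-}, \ref{zetan-zetan-1}, and the product and time-gain lemmas \ref{lem3.3}--\ref{lem3.7} and Theorem \ref{lem3.9}. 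In each case the estimate is done separately in the two components of each $\mathcal{K}$ or $\bar{\mathcal K}$ space (the spatial $L^2 H^r$ or $L^\infty_{1/4}H^r$ part and the temporal $H^{\sigma}_{(0)}H^\rho$ part).

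A subtlety worth flagging is the appearance of the extra commutator-type terms $(J_\vare^\top-J^\top)\nabla\tiq_{w,\vare}$, $\tiq_{w,\vare}(J^{-1}-J_\vare^{-1})\tilde n_0$, $(\nabla\tiw_\vare J_\vare)^\top J_\vare^{-1}-(\nabla\tiw_\vare J)^\top J^{-1}$, etc.: these carry the full $\mathcal{K}$-norm of $\tiw_\vare$ or $\tiq_{w,\vare}$, which is $O(N_\vare)$ and \emph{not} small, so the smallness here must come purely from $\|J-J_\vare\|_{H^r}\le C\vare$ and the $H^r$-algebra property with $r$ large enough that the Beale-space norms are controlled; these terms therefore contribute the ``$C\vare$'' (not the ``$CT^{\delta_3}$'') part of \eqref{Stability w,q}. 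Collecting everything and choosing $\delta_3=\min$ of all the positive powers of $T$ produced, we arrive at \eqref{Stability w,q}. The main obstacle is purely organizational: the magnetic boundary term $\tih_G^{(n)}$-type contribution involves the inverse matrix $J(\tiX)^{-1}$ and the trace space $\mathcal{K}_{(0)}^{s-1/2}$, so one must be careful that after telescoping every factor still lands in a function space on which the product lemmas apply with the correct indices $1+\eta$, $1/2\pm\eta$ (as in the estimates of $\bar d_1^h$ and $\dot d_1^h$ above); getting these bookkeeping choices right, uniformly over the roughly thirty summands, is where essentially all the work lies, but no new idea beyond what is already in Propositions \ref{pro2} and \ref{pro3} is needed.
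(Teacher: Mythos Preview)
Your proposal is correct and follows essentially the same approach as the paper's proof: apply Lemma \ref{lem4.1} to reduce to estimating $\|\tiF_\vare\|_{\mathcal{K}_{(0)}^{s-1}}+\|\tiK_\vare\|_{\bar{\mathcal{K}}_{(0)}^{s}}+\|\tiH_\vare\|_{\mathcal{K}_{(0)}^{s-\frac12}}$, refer the non-magnetic pieces to \cite[Lemma 6.2]{castro2019splash}, bound the frozen-coefficient commutator terms by $C\vare$ via Lemma \ref{lem5.2}, and treat the new magnetic contributions by the same telescoping-and-subtract-initial-values strategy as in Propositions \ref{pro2} and \ref{pro3}. The paper carries out exactly this program, with the detailed splittings of the magnetic force term $I_2$ and the magnetic boundary term $\tilde I_4$ matching your outline (including your correct observation about the $J(\tiX)^{-1}$ factor and the trace-space indices $\tfrac12\pm\eta$).
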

\begin{proof}
%Without loss of generality, we may	assume $ T<1. $

Since 
\begin{align*}
	(\tiw-\tiw_\vare,\tiq_w-\tiq_{w,\vare})=L^{-1}(\tiF_\vare,\tiK_\vare,\tiH_\vare,0
	),
\end{align*}
we obtain from Lemma \ref{lem4.1} that
\begin{align*}
	\| \tilde{w}-   \tilde{w}_{\vare}\|_{\mathcal{K}_{(0)}^{s+1}}+\| \tilde{q}_w-\tilde{q}_{w, \vare}\|_{\mathcal{K}_{p r}^s(0)}\le C(\|\tiF_\vare\|_{\mathcal{K}_{(0)}^{s-1}}+\|\tiK_\vare\|_{\bar{\mathcal{K}}_{(0)}^{s}}+\|\tiH_\vare\|_{\mathcal{K}_{(0)}^{s-\frac 12}}).
\end{align*}
Therefore, it is sufficient to estimate $ \|\tiF_\vare\|_{\mathcal{K}_{(0)}^{s-1}},\|\tiK_\vare\|_{\bar{\mathcal{K}}_{(0)}^{s}} $ and  $\|\tiH_\vare\|_{\mathcal{K}_{(0)}^{s-\frac 12}}. $

\textbf{Estimate for $ \tiF_\vare $.}
We recall  that 
\begin{align*}
		\tiF_\vare= \tilde{f}-\tilde{f}_\vare+ \tilde{f}_\phi^L- \tilde{f}_{\phi,\vare}^L+(Q^2-Q_\vare^2) \Delta \tiw_\vare
	+ (J_\vare^\top -J^\top) \nabla \tiq_{w,\vare}
\end{align*}
and analyze  $ (Q^2-Q_\vare^2) \Delta \tiw_\vare
+ (J_\vare^\top -J^\top) \nabla \tiq_{w,\vare} $ in the above expression  first. For the $L^2 H^{s-1}$-norm, we apply Lemma \ref{lem5.2} to obtain
\begin{align}
		&  \|  (Q^2-Q_{\vare}^2 ) \Delta \tilde{w}_{\vare} \|_{L^2 H^{s-1}} \leq \|Q^2-Q_{\vare}^2 \|_{L^{\infty} H^{s-1}} \|\tilde{w}_{\vare} \|_{L^2 H^{s+1}} \leq C \vare,\nonumber \\
		&  \| (J^\top- J_{\vare}^\top  ) \nabla \tilde{q}_{w, \vare} \|_{L^2 H^{s-1}} \leq \| J^\top- J_{\vare}^\top  \|_{L^{\infty} H^{s-1}} \|\tilde{q}_{w, \vare} \|_{L^2 H^s} \leq C \vare.\nonumber
\end{align}
For the $H_{(0)}^{\frac{s-1}{2}} L^2$-norm, we use Lemmas \ref{lem3.2}, \ref{lem3.4} and Theorem \ref{thm1} to have
\begin{align*}
	 \| (Q^2-Q_{\vare}^2 ) \Delta \tilde{w}_{\vare} \|_{H_{(0)}^{\frac{s-1}{2}} L^2} \leq& \|Q^2-Q_{\vare}^2 \|_{H^{1+\eta}} \|\Delta \tilde{w}_{\vare} \|_{H_{(0)}^{\frac{s-1}{2}} L^2} \\
	 \leq& C \vare \|\tilde{w}_{\vare} \|_{H_{(0)}^{\frac{s-1}{2}} H^2} \leq C   \vare,\nonumber\\
	 \| (J^\top-J_{\vare}^\top  ) \nabla \tiq_{w,\vare}   \|_{H_{(0)}^{\frac{s-1}{2}} L^2} \leq& \|J^\top-J_{\vare}^\top  \|_{H^{1+\eta}} \|\nabla \tiq_{w,\vare}   \|_{H_{(0)}^{\frac{s-1}{2}} L^2} \leq C \vare.\nonumber 
\end{align*}
Next, we pay attention to the estimate of $  \tilde{f}-\tilde{f}_\vare+ \tilde{f}_\phi^L- \tilde{f}_{\phi,\vare}^L.   $ First, we rewrite $ \tilde{f}_\phi^L- \tilde{f}_{\phi,\vare}^L $ as follows:
\begin{align}
	\tilde{f}_\phi^L- \tilde{f}_{\phi,\vare}^L=&\partial_t \phi_\vare-\partial_t\phi +(Q^2 \Delta \phi-Q_\vare^2 \Delta \phi_\vare)+ (J_\vare^\top  \nabla \tilde{q}_{\phi,\vare}- J^\top \nabla \tilde{q}_{\phi})\nonumber\\
	=&Q_\vare^2\laplace\tiv_{0}+\nabla\tiG_{0}J_\vare\tiG_{0}- Q^2\laplace\tiv_{0}-\nabla\tiG_{0}J\tiG_{0}  +(Q^2 \Delta \phi-Q_\vare^2 \Delta \phi_\vare)\nonumber \\
	=& \nabla\tiG_{0}J_\vare \tiG_{0} -\nabla\tiG_{0}J\tiG_{0}  +t(Q^2 \Delta \hat{\phi}-Q_\vare^2 \Delta \hat{\phi}_\vare)\nonumber \\
	=& \nabla\tiG_{0}(J_\vare -J)\tiG_{0}  +t(Q^2-Q_\vare^2) \Delta \hat{\phi}+t Q_\vare^2 (\Delta\hat{\phi}-\Delta \hat{\phi}_\vare). \nonumber
\end{align}
Then, we split $  \tilde{f}-\tilde{f}_\vare+ \tilde{f}_\phi^L- \tilde{f}_{\phi,\vare}^L   $ as 
\begin{align}
	&\tilde{f}-\tilde{f}_\vare+ \tilde{f}_\phi^L- \tilde{f}_{\phi,\vare}^L\nonumber\\
	=&\tilde{f}_w-\tilde{f}_{w,\vare}+\tilde{f}_\phi-\tilde{f}_{\phi,\vare}+\tilde{f}_q-\tilde{f}_{q,\vare}+\tilde{f}_G-\tilde{f}_{G,\vare}+ \tilde{f}_\phi^L- \tilde{f}_{\phi,\vare}^L\nonumber\\
	=&(\tilde{f}_w-\tilde{f}_{w,\vare}+\tilde{f}_\phi-\tilde{f}_{\phi,\vare}+\tilde{f}_q-\tilde{f}_{q,\vare})+(\tilde{f}_G-\tilde{f}_{G,\vare}+\nabla\tiG_{0}(J_\vare -J)\tiG_{0})\nonumber\\
	&  +t(Q^2-Q_\vare^2) \Delta \hat{\phi}+t Q_\vare^2 (\Delta\hat{\phi}-\Delta \hat{\phi}_\vare)\nonumber\\
	=&:\sum_{i=1}^{4}I_i.\nonumber
\end{align}
We  point out that $ I_1 $ has already been estimated in \cite[Lemma 6.2]{castro2019splash}. We observe that in $ I_3 $ and $ I_4, $ we have all the terms depending on the  initial data and we have $ t $ in front of these terms. Thus, for $ T>0 $ small enough, we apply   Lemma \ref{lem5.2} to obtain $ \|I_3\|_{\mathcal{K}_{(0)}^{s-1}}+\|I_4\|_{\mathcal{K}_{(0)}^{s-1}}\le C(N,N_\vare,\tiv_{0},\tiG_{0})\vare.  $ Now we have to estimate $ I_2 $ in $ L^2H^{s-1} $ and $ H_{(0)}^{\frac{s-1}{2}}L^2. $ From \eqref{equ4.14}, we have
\begin{align}
	I_2= \nabla\tiG \tizeta J(\tiX)\tiG-\nabla\tiG_\vare \tizeta_\vare J(\tiX_\vare)\tiG_\vare+ \nabla\tiG_{0}(J_\vare -J)\tiG_{0}.	\nonumber
\end{align}
For the estimate in  $ L^2H^{s-1}, $ we use the following splitting:
\begin{align*}
	&\nabla\tiG \tizeta J(\tiX)\tiG-\nabla\tiG_\vare \tizeta_\vare J(\tiX_\vare)\tiG_\vare+ \nabla\tiG_{0}(J_\vare -J)\tiG_{0}\nonumber\\
	=&\nabla\tiG \tizeta[(J(\tiX)-J(\tiX_\vare ) - J+J_\vare)+(J-J_\vare)+J(\tiX_\vare )]\tiG\\
	&-\nabla\tiG_\vare \tizeta_\vare J(\tiX_\vare)\tiG_\vare+ \nabla\tiG_{0}(J_\vare -J)\tiG_{0}\nonumber\\
	=&\nabla\tiG \tizeta(J(\tiX)-J(\tiX_\vare ) - J+J_\vare)\tiG+\nabla\tiG \tizeta(J-J_\vare)\tiG  \nonumber\\
	 &+\nabla\tiG \tizeta J(\tiX_\vare )\tiG-\nabla\tiG_\vare \tizeta_\vare J(\tiX_\vare)\tiG_\vare+ \nabla\tiG_{0}(J_\vare -J)\tiG_{0}\nonumber\\
	 =&\nabla\tiG \tizeta(J(\tiX)-J(\tiX_\vare ) - J+J_\vare)\tiG+\nabla\tiG \tizeta(J-J_\vare) \tiG+\nabla\tiG \tizeta J(\tiX_\vare )(\tiG-\tiG_\vare)  \nonumber\\
	 &+\nabla\tiG \tizeta J(\tiX_\vare )\tiG_\vare-\nabla\tiG_\vare \tizeta_\vare J(\tiX_\vare)\tiG_\vare+ \nabla\tiG_{0}(J_\vare -J)\tiG_{0}\nonumber\\
	 =&\nabla\tiG \tizeta(J(\tiX)-J(\tiX_\vare ) - J+J_\vare)\tiG+\nabla\tiG \tizeta (J-J_\vare) \tiG- \nabla\tiG_{0}(J -J_\vare)\tiG_{0} \nonumber\\
	 &+\nabla\tiG \tizeta J(\tiX_\vare )(\tiG-\tiG_\vare)+\nabla\tiG (\tizeta-\tizeta_\vare) J(\tiX_\vare )\tiG_\vare +\nabla(\tiG-\tiG_\vare)\tizeta_\vare J(\tiX_\vare )\tiG_\vare\nonumber\\
	 =&\nabla[(\tiG -\tiG_{0})+\tiG_0] \tizeta(J(\tiX)-J(\tiX_\vare ) - J+J_\vare)[(\tiG -\tiG_{0})+\tiG_0]\nonumber\\
	 &+\nabla[(\tiG -\tiG_{0})+\tiG_0] \tizeta (J-J_\vare) [(\tiG -\tiG_{0})+\tiG_0]-\nabla\tiG_{0} \tizeta (J-J_\vare) \tiG_0\nonumber\\
	 &+\nabla\tiG_{0} (\tizeta-\id) (J-J_\vare) \tiG_0  \nonumber\\
	 &+\nabla[(\tiG -\tiG_{0})+\tiG_0] \tizeta J(\tiX_\vare )(\tiG-\tiG_\vare)\nonumber\\
	 &+\nabla[(\tiG -\tiG_{0})+\tiG_0] (\tizeta-\tizeta_\vare) J(\tiX_\vare )[(\tiG_\vare-\tiG_{0})+\tiG_0]\nonumber\\
	 & +\nabla(\tiG-\tiG_\vare)\tizeta_\vare J(\tiX_\vare )[(\tiG_\vare-\tiG_{0})+\tiG_0]\nonumber\\
	 =&:\sum_{i=1}^{4}I_{2,i}+\sum_{i=5}^{7}I_{2,i}+I_{2,8
	 }+\sum_{i=9}^{10}I_{2,i}+\sum_{i=11}^{14}I_{2,i}+\sum_{i=15}^{16}I_{2,i}.\nonumber
\end{align*}
In the same way,  we leave only one difference of the adjacent iterative steps  and  subtract the initial values in each product.

For $ I_{2,1}, $ we apply Lemmas \ref{G-G0}, \ref{lem5.2},  \ref{jx-jy} and  \ref{zeta-} to obtain
\begin{align*}
	&\|I_{2,1}\|_{L^2H^{s-1}}\nonumber\\
	\le &\|\nabla(\tiG -\tiG_{0})\|_{L^\infty H^{s-1}}\| \tizeta\|_{L^\infty H^{s-1}}\|J(\tiX)-J(\tiX_\vare ) - J+J_\vare\|_{L^\infty H^{s-1}}\|\tiG -\tiG_{0}\|_{L^2H^{s-1}}\nonumber\\
	\le &T^\frac 12\| \tiG -\tiG_{0} \|_{L^\infty H^{s}}\| \tizeta\|_{L^\infty H^{s-1}}\|J(\tiX)-J(\tiX_\vare ) - J+J_\vare\|_{L^\infty H^{s-1}}\|\tiG -\tiG_{0}\|_{L^\infty H^{s-1}}\nonumber\\
	\le &C(N,N_\vare,\tiv_{0},\tiG_{0})T \|J(\tiX)-J(\tiX_\vare ) - J+J_\vare\|_{L^\infty H^{s-1}}\nonumber\\
	\le &C(N,N_\vare,\tiv_{0},\tiG_{0})T [\| J(\tiX+\vare b)-J(\tiX_\vare )  \|_{L^\infty H^{s-1}}\\
	&+\| J(\tiX)-J(\tiX_\vare+\vare b ) - J+J_\vare \|_{L^\infty H^{s-1}}]\nonumber\\
	\le &C(N,N_\vare,\tiv_{0},\tiG_{0})T [\| \tiX+\vare b -  \tiX_\vare \|_{L^\infty H^{s-1}}+\| J(\tiX) - J  \|_{L^\infty H^{s-1}}\\
	&+\| J(\tiX_\vare+\vare b ) -  J_\vare \|_{L^\infty H^{s-1}}]\nonumber\\
	\le &C(N,N_\vare,\tiv_{0},\tiG_{0})T \| \tiX+\vare b -  \tiX_\vare \|_{L^\infty H^{s-1}}+C(N,N_\vare,\tiv_{0},\tiG_{0})T\vare\nonumber\\	
	\le &C(N,N_\vare,\tiv_{0},\tiG_{0})T [\| \tiX+\vare b -  \tiX_\vare-t(J-J_\vare)\tiv_{0} \|_{L^\infty H^{s-1}}+\| t(J-J_\vare)\tiv_{0} \|_{L^\infty H^{s-1}}]\nonumber\\
	&+C(N,N_\vare,\tiv_{0},\tiG_{0})T\vare\nonumber\\
	\le &C(N,N_\vare,\tiv_{0},\tiG_{0})T \| \tiX+\vare b -  \tiX_\vare-t(J-J_\vare)\tiv_{0} \|_{L^\infty H^{s-1}}+C(N,N_\vare,\tiv_{0},\tiG_{0})T\vare\nonumber\\
	\le &C(N,N_\vare,\tiv_{0},\tiG_{0})T^\frac 54 \| \tiX+\vare b -  \tiX_\vare-t(J-J_\vare)\tiv_{0} \|_{\mathcal{A}^{s+1,\gamma+1}}+C(N,N_\vare,\tiv_{0},\tiG_{0})T\vare.\nonumber
\end{align*}
For $ I_{2,15}, $ we use Lemmas  \ref{G-G0}, \ref{lem5.2},  \ref{jx-j} and \ref{zeta-} to get
\begin{align*}
	&\|I_{2,15}\|_{L^2H^{s-1}}\nonumber\\
	\le &\|\nabla(\tiG-\tiG_\vare)\|_{L^\infty H^{s-1}}\|\tizeta_\vare\|_{L^\infty H^{s-1}}\| J(\tiX_\vare )\|_{L^\infty H^{s-1}}\|\tiG_\vare-\tiG_{0}\|_{L^2 H^{s-1}}\nonumber\\
	\le &T^\frac 14\| \tiG-\tiG_\vare \|_{L^\infty H^{s}}\|\tizeta_\vare\|_{L^\infty H^{s-1}}\| J(\tiX_\vare )\|_{L^\infty H^{s-1}}\|\tiG_\vare-\tiG_{0}\|_{L^\infty H^{s-1}}\nonumber\\
	\le &C(N,N_\vare,\tiv_{0},\tiG_{0})T^\frac 12[\| \tiG-\tiG_\vare-t\nabla \tiv_{0}(J-J_\vare)\tiG_0 \|_{L^\infty H^{s}}+\| t\nabla \tiv_{0}(J-J_\vare)\tiG_0 \|_{L^\infty H^{s}}] \nonumber\\
	\le &C(N,N_\vare,\tiv_{0},\tiG_{0})T^\frac 34 \| \tiG-\tiG_\vare-t\nabla \tiv_{0}(J-J_\vare)\tiG_0 \|_{\mathcal{A}^{s,\gamma}}+C(N,N_\vare,\tiv_{0},\tiG_{0})T^\frac 12\vare.  \nonumber
\end{align*}
The other terms are estimated similarly.

For the estimate in $ H_{(0)}^{\frac{s-1}{2}}L^2, $ we rewrite $ I_2 $ as follows:
\begin{align}
	&\nabla\tiG \tizeta J(\tiX)\tiG-\nabla\tiG_\vare \tizeta_\vare J(\tiX_\vare)\tiG_\vare+ \nabla\tiG_{0}(J_\vare -J)\tiG_{0}\nonumber\\
	=&\nabla\tiG \tizeta J(\tiX)(\tiG-\tiG_\vare) +\nabla\tiG (\tizeta-\tizeta_\vare) J(\tiX) \tiG_\vare  +\nabla(\tiG-\tiG_\vare) \tizeta_\vare  J(\tiX) \tiG_\vare\nonumber\\
	&+[\nabla \tiG_\vare  \tizeta_\vare  (J(\tiX)- J(\tiX_\vare))\tiG_\vare- \nabla\tiG_{0}(J -J_\vare)\tiG_{0}]\nonumber\\
	=&:\sum_{i=1}^{4}I'_{2,i}.\nonumber
\end{align}
These terms need to be estimated  separately, and we show the main idea. We split $ I'_{2,1} $ as follows
\begin{align}
	I'_{2,1}
	=&\nabla\tiG \tizeta J(\tiX)(\tiG-\tiG_\vare)\nonumber\\
	=&\nabla[(\tiG-\tiG_0)+\tiG_0] [(\tizeta-\id)+\id] [(J(\tiX)-J)+J](\tiG-\tiG_\vare)\nonumber\\
	=&:\sum_{i=1}^{8}I'_{2,1,i}\nonumber
\end{align}
For $ I'_{2,1,1},  $ we use
Lemmas  \ref{x-omega},  \ref{G-G0},  \ref{lem5.2}, \ref{jx-j}, \ref{zeta-}, \ref{lem3.4},  \ref{lem3.5} and \ref{lem3.7} to obtain
\begin{align*}
	&\|I'_{2,1,1}\|_{H_{(0)}^{\frac{s-1}{2}}L^2}\nonumber\\
	=&\|\nabla (\tiG-\tiG_0)   (\tizeta-\id)   (J(\tiX)-J) (\tiG-\tiG_\vare)\|_{H_{(0)}^{\frac{s-1}{2}}L^2}\nonumber\\
	\le &\|\nabla (\tiG-\tiG_0)   (\tizeta-\id) \|_{H_{(0)}^{\frac{s-1}{2}}L^2}\|  (J(\tiX)-J) (\tiG-\tiG_\vare)\|_{H_{(0)}^{\frac{s-1}{2}}H^{1+\eta}}\nonumber\\
	\le &\|\nabla (\tiG-\tiG_0)\|_{H_{(0)}^{\frac{s-1}{2}}H^{1-\mu}}\|  \tizeta-\id \|_{H_{(0)}^{\frac{s-1}{2}}H^{1+\mu}}\\
	&\cdot\|  J(\tiX)-J\|_{H_{(0)}^{\frac{s-1}{2}}H^{1+\eta}}\| \tiG-\tiG_\vare\|_{H_{(0)}^{\frac{s-1}{2}}H^{1+\eta}}\nonumber\\
	\le &\|  \tiG-\tiG_0\|_{H_{(0)}^{\frac{s-1}{2}}H^{1+(1-\mu)}}\|   \tizeta-\id \|_{H_{(0)}^{\frac{s-1}{2}}H^{1+\mu}}\\
	&\cdot\|  J(\tiX)-J\|_{H_{(0)}^{\frac{s-1}{2}}H^{1+\eta}}\| \tiG-\tiG_\vare\|_{H_{(0)}^{\frac{s-1}{2}}H^{1+\eta}}\nonumber\\
	\le &C(N,N_\vare,\tiv_0,\tiG_0)T^{2\delta}  \bigg[\| \tiG-\tiG_\vare-t\nabla \tiv_{0}(J-J_\vare)\tiG_0\|_{H_{(0)}^{\frac{s-1}{2}}H^{1+\eta}}\\
	&\qquad\qquad\qquad\qquad\qquad+\| t\nabla \tiv_{0}(J-J_\vare)\tiG_0\|_{H_{(0)}^{\frac{s-1}{2}}H^{1+\eta}}\bigg]\nonumber\\
	\le &C(N,N_\vare,\tiv_0,\tiG_0)T^{2\delta}  \| \tiG-\tiG_\vare-t\nabla \tiv_{0}(J-J_\vare)\tiG_0\|_{\mathcal{A}^{s,\gamma}}+C(N,N_\vare,\tiv_0,\tiG_0)T^{2\delta}\vare.\nonumber
\end{align*}
The other terms are estimated similarly. 

For $ i=2,3 $ and $ 4,$  we split $ I'_{2,i} $ as follows 
\begin{align*}
	I'_{2,2}=&\nabla\tiG (\tizeta-\tizeta_\vare) J(\tiX) \tiG_\vare\nonumber\\
	=&[(\nabla\tiG-\nabla\tiG_0)+\nabla\tiG_0] (\tizeta-\tizeta_\vare) [(J(\tiX)-J)+J] [(\tiG_\vare-\tiG_0)+\tiG_0]\nonumber\\
	=&:\sum_{i=1}^{8}I'_{2,2,i},\nonumber\\
	I'_{2,3}=&\nabla(\tiG-\tiG_\vare) \tizeta_\vare  J(\tiX) \tiG_\vare\nonumber\\
	=&\nabla(\tiG-\tiG_\vare) [(\tizeta_\vare-\id)+\id]  [(J(\tiX)-J)+J] [(\tiG_\vare-\tiG_0)+\tiG_0]\nonumber\\
	=&:\sum_{i=1}^{8}I'_{2,3,i},\nonumber\\
	I'_{2,4}=&\nabla \tiG_\vare  \tizeta_\vare  (J(\tiX)- J(\tiX_\vare))\tiG_\vare- \nabla\tiG_{0}(J -J_\vare)\tiG_{0}\nonumber\\
	=&\nabla [(\tiG_\vare-\tiG_0)+\tiG_0]  [(\tizeta_\vare-\id)+\id]  [(J(\tiX)- J(\tiX_\vare)-J+J_\vare)+(J-J_\vare)]\\
	&\cdot[(\tiG_\vare-\tiG_0)+\tiG_0]\nonumber\\
	&- \nabla\tiG_{0}(J -J_\vare)\tiG_{0}\nonumber\\
	=&:\sum_{i=1}^{15}I'_{2,4,i}.\nonumber
\end{align*}
Finally, we conclude that
\begin{align*}
	\sum_{i,j}\|I'_{2,i,j}\|_{H_{(0)}^{\frac{s-1}{2}}L^2}\le &C(N,N_\vare,\tiv_0,\tiG_0)T^{\delta_1}  \| \tiX-\tiX_\vare-t\nabla \tiv_{0}(J-J_\vare)\tiG_0\|_{\mathcal{A}^{s+1,\gamma+1}}\nonumber\\
	& +C(N,N_\vare,\tiv_0,\tiG_0)T^{\delta_2}\| \tiG-\tiG_\vare-t\nabla \tiv_{0}(J-J_\vare)\tiG_0\|_{\mathcal{A}^{s,\gamma}}\\
	&+C(N,N_\vare,\tiv_0,\tiG_0)\vare\nonumber
\end{align*}
for some $ \delta_1, \delta_2>0. $

\textbf{Estimate for $ \tiK_\vare $.}
It can be found in \cite[Lemma 6.2]{castro2019splash}, and we omit the details.

\textbf{Estimate for $ \tiH_\vare $.}
We rewrite $ \tiH_\vare $ as 
\begin{align}
	\tiH_\vare=&\tilde{h}-\tilde{h}_\vare+\tilde{h}_\phi^L-\tilde{h}_{\phi,\vare}^L
	+\tiq_{w,\vare}(J^{-1}-J_\vare^{-1})\tilde{n}_0+(\nabla \tiw_\vare J_\vare)^\top J_\vare^{-1}\tilde{n}_0-(\nabla \tiw_\vare J)^\top J^{-1}\tilde{n}_0\nonumber\\
	=&\tilde{h}-\tilde{h}_\vare+\tilde{h}_\phi^L-\tilde{h}_{\phi,\vare}^L
	+\bar{H}_\vare.\nonumber
\end{align}
We start by estimating  $ \bar{H}_\vare $ by rewriting it as
\begin{align}
	\bar{H}_\vare=&\tiq_{w,\vare}(J^{-1}-J_\vare^{-1})\tilde{n}_0+(\nabla \tiw_\vare J_\vare)^\top J_\vare^{-1}\tilde{n}_0-(\nabla \tiw_\vare J)^\top J^{-1}\tilde{n}_0\nonumber\\
	=&\tiq_{w,\vare}(J^{-1}-J_\vare^{-1})\tilde{n}_0 +[(\nabla \tiw_\vare J_\vare)^\top J_\vare^{-1} -(\nabla \tiw_\vare J)^\top J_\vare^{-1}]\tilde{n}_0\nonumber\\ &+[(\nabla \tiw_\vare J)^\top J_\vare^{-1} -(\nabla \tiw_\vare J)^\top J^{-1}]\tilde{n}_0\nonumber\\		=&\tiq_{w,\vare}(J^{-1}-J_\vare^{-1})\tilde{n}_0 +(\nabla \tiw_\vare (J_\vare-J))^\top J_\vare^{-1} \tilde{n}_0+ (\nabla \tiw_\vare J)^\top (J_\vare^{-1} - J^{-1})\tilde{n}_0\nonumber\\
	=&:\sum_{i=1}^{3}\bar{I}_i.\nonumber
\end{align}
We notice that 
\begin{align*}
	\|\bar{I}_1\|_{\mathcal{K}_{(0)}^{s-\frac 12}}+\|\bar{I}_2\|_{\mathcal{K}_{(0)}^{s-\frac 12}}+\|\bar{I}_3\|_{\mathcal{K}_{(0)}^{s-\frac 12}}\le C(N,N_\vare,\tiv_0,\tiG_0)\vare
\end{align*}
by Lemma \ref{lem5.2}.

Now we estimate $ \tilde{h}-\tilde{h}_\vare+\tilde{h}_\phi^L-\tilde{h}_{\phi,\vare}^L. $ From the definition of $ \tiq_\phi $ and $ \tiq_{\phi,\vare}, $ we have
\begin{align*}
	&\tilde{h}-\tilde{h}_\vare+\tilde{h}_\phi^L-\tilde{h}_{\phi,\vare}^L\nonumber\\
	=&\tih_w+\tih_{w^\top }+\tih_\phi+\tih_{\phi^\top }+\tih_q+\tih_G-(\tih_{w,\vare}+\tih_{w^\top ,\vare}+\tih_{\phi,\vare}+\tih_{\phi^\top ,\vare}+\tih_{q,\vare}+\tih_{G,\vare})\nonumber\\
	&+\tilde{q}_{\phi } J^{-1} \tilde{n}_0-(\nabla \phi  J +(\nabla \phi  J )^\top )J^{-1} \tilde{n}_0\\
	&-(\tilde{q}_{\phi,\vare} J_\vare^{-1} \tilde{n}_0-(\nabla \phi_\vare J_\vare+(\nabla \phi_\vare J_\vare)^\top )J_\vare^{-1} \tilde{n}_0)\nonumber\\
	=&\tih_w-\tih_{w,\vare}+\tih_{w^\top }-\tih_{w^\top ,\vare}  +\tih_\phi-\tih_{\phi,\vare} +\tih_{\phi^\top }-\tih_{\phi^\top ,\vare}+\tih_q-\tih_{q,\vare}\nonumber\\
	&+\tilde{q}_{\phi } J^{-1} \tilde{n}_0-(\nabla \phi  J +(\nabla \phi  J )^\top )J^{-1} \tilde{n}_0\nonumber\\
	&-(\tilde{q}_{\phi,\vare} J_\vare^{-1} \tilde{n}_0-(\nabla \phi_\vare J_\vare+(\nabla \phi_\vare J_\vare)^\top )J_\vare^{-1} \tilde{n}_0)\nonumber\\
	&+\tih_G-\tih_{G,\vare}\nonumber\\
	=&\tih_w-\tih_{w,\vare}+\tih_{w^\top }-\tih_{w^\top ,\vare}  +\tih_\phi-\tih_{\phi,\vare} +\tih_{\phi^\top }-\tih_{\phi^\top ,\vare}+\tih_q-\tih_{q,\vare}\nonumber\\
	&+( \nabla \tiv_0 J+( \nabla \tiv_0 J)^\top +\tiG_0 \tiG_0^\top )J^{-1} \tilde{n}_0-(\nabla \phi  J +(\nabla \phi  J )^\top )J^{-1} \tilde{n}_0\nonumber\\
	&-(( \nabla \tiv_0 J_\vare+( \nabla \tiv_0 J_\vare)^\top +\tiG_0 \tiG_0^\top )J_\vare^{-1} \tilde{n}_0-(\nabla \phi_\vare J_\vare+(\nabla \phi_\vare J_\vare)^\top )J_\vare^{-1} \tilde{n}_0)\nonumber\\
	&+\tih_G-\tih_{G,\vare}\nonumber\\
	=&\tih_w-\tih_{w,\vare}+\tih_{w^\top }-\tih_{w^\top ,\vare}  +\tih_\phi-\tih_{\phi,\vare} +\tih_{\phi^\top }-\tih_{\phi^\top ,\vare}+\tih_q-\tih_{q,\vare}\nonumber\\
	&+ \tiG_0 \tiG_0^\top J^{-1} \tilde{n}_0-t(\nabla \hat{\phi}  J +(\nabla \hat{\phi}  J )^\top )J^{-1} \tilde{n}_0\nonumber\\
	&-(  \tiG_0 \tiG_0^\top  J_\vare^{-1} \tilde{n}_0-t(\nabla \hat{\phi}_\vare J_\vare+(\nabla \hat{\phi}_\vare J_\vare)^\top )J_\vare^{-1} \tilde{n}_0)\nonumber\\
	&+\tih_G-\tih_{G,\vare}\nonumber\\
	=&\tih_w-\tih_{w,\vare}+\tih_{w^\top }-\tih_{w^\top ,\vare}  +\tih_\phi-\tih_{\phi,\vare} +\tih_{\phi^\top }-\tih_{\phi^\top ,\vare}+\tih_q-\tih_{q,\vare}\nonumber\\
	& +t\nabla (\hat{\phi}_\vare-\hat{\phi})\tilde{n}_0  \nonumber\\
	&+t[ (\nabla \hat{\phi}_\vare J_\vare)^\top  J_\vare^{-1}-(\nabla \hat{\phi}  J )^\top  J^{-1}] \tilde{n}_0\nonumber\\
	&+\tiG_0 \tiG_0^\top (J^{-1}-J_\vare^{-1}) \tilde{n}_0-\tiG\tiG^\top J(\tiX)^{-1} \nabla_{\Lambda} \tiX \tilde{n}_0+\tiG_\vare\tiG_\vare^\top J(\tiX_\vare)^{-1} \nabla_{\Lambda} \tiX_\vare \tilde{n}_0\nonumber\\
	=&:\sum_{i=1}^{4}\tilde{I}_i.\nonumber
\end{align*}

For $ \tilde{I}_1,$ the estimate of $ \tih_w-\tih_{w,\vare},\tih_{w^\top }-\tih_{w^\top ,\vare},\tih_\phi-\tih_{\phi,\vare},\tih_{\phi^\top }-\tih_{\phi^\top ,\vare} $ and $\tih_q-\tih_{q,\vare}$ have already been studied in \cite[Lemma 6.2]{castro2019splash}. For $  \tilde{I}_2 $ and $ \tilde{I}_3, $ we use Lemma \ref{lem5.2} to obtain
\begin{align*}
	\|\tilde{I}_2\|_{\mathcal{K}_{(0)}^{s-\frac 12}}+	\|\tilde{I}_3\|_{\mathcal{K}_{(0)}^{s-\frac 12}}\le C(N,N_\vare,\tiv_0,\tiG_0)\vare.
\end{align*}

For $ \tilde{I}_4, $ we have the following splitting:
\begin{align}
	\tilde{I}_4
	=& \tiG_0 \tiG_0^\top (J^{-1}-J_\vare^{-1}) \tilde{n}_0-\tiG\tiG^\top J(\tiX)^{-1} \nabla_{\Lambda} \tiX \tilde{n}_0+\tiG_\vare\tiG_\vare^\top J(\tiX_\vare)^{-1} \nabla_{\Lambda} \tiX_\vare \tilde{n}_0\nonumber\\
	=&\tiG_0 \tiG_0^\top (J^{-1}-J_\vare^{-1}) \tilde{n}_0+\tiG_\vare\tiG_\vare^\top (J(\tiX_\vare)^{-1}-J(\tiX)^{-1}) \nabla_{\Lambda} \tiX\tilde{n}_0\nonumber\\
	&+\tiG_\vare\tiG_\vare^\top  J(\tiX)^{-1}\nabla_{\Lambda} \tiX \tilde{n}_0-\tiG\tiG^\top J(\tiX)^{-1} \nabla_{\Lambda} \tiX \tilde{n}_0 \nonumber\\
	&+\tiG_\vare\tiG_\vare^\top J(\tiX_\vare)^{-1} \nabla_{\Lambda} \tiX_\vare \tilde{n}_0-\tiG_\vare\tiG_\vare^\top J(\tiX_\vare)^{-1}   \nabla_{\Lambda} \tiX \tilde{n}_0\nonumber\\
	=&\tiG_0 \tiG_0^\top (J^{-1}-J_\vare^{-1}) \tilde{n}_0+\tiG_\vare\tiG_\vare^\top (J(\tiX_\vare)^{-1}-J(\tiX)^{-1}) \nabla_{\Lambda} \tiX\tilde{n}_0\nonumber\\
	& +\tiG_\vare\tiG_\vare^\top J(\tiX_\vare)^{-1} (\nabla_{\Lambda} \tiX_\vare-\nabla_{\Lambda} \tiX )\tilde{n}_0+\tiG_\vare\tiG_\vare^\top  J(\tiX)^{-1}\nabla_{\Lambda} \tiX \tilde{n}_0\nonumber\\
	&- \tiG\tiG_\vare^\top  J(\tiX)^{-1}\nabla_{\Lambda} \tiX \tilde{n}_0+ \tiG\tiG_\vare^\top  J(\tiX)^{-1}\nabla_{\Lambda} \tiX \tilde{n}_0-\tiG\tiG^\top J(\tiX)^{-1} \nabla_{\Lambda} \tiX \tilde{n}_0 \nonumber\nonumber\\
	=&\tiG_0 \tiG_0^\top (J^{-1}-J_\vare^{-1}) \tilde{n}_0 +\tiG_\vare\tiG_\vare^\top (J(\tiX_\vare)^{-1}-J(\tiX)^{-1}) \nabla_{\Lambda} \tiX\tilde{n}_0\nonumber\\
	&+\tiG_\vare\tiG_\vare^\top J(\tiX_\vare)^{-1} (\nabla_{\Lambda} \tiX_\vare-\nabla_{\Lambda} \tiX )\tilde{n}_0\nonumber\\
	&+(\tiG_\vare-\tiG)\tiG_\vare^\top  J(\tiX)^{-1}\nabla_{\Lambda} \tiX \tilde{n}_0\nonumber\\
	&+ \tiG(\tiG_\vare^\top -\tiG^\top ) J(\tiX)^{-1}\nabla_{\Lambda} \tiX \tilde{n}_0 \nonumber \\
	=&:\sum_{i=1}^{4}\tilde{I}_{4,i}\label{equ666}
\end{align}
We start with the estimates in $ L^2H^{s-\frac 12}. $ We only show the estimates of $ \tilde{I}_{4,1} $ and $ \tilde{I}_{4,3} $, and the others are similar.   We split $ \tilde{I}_{4,1} $ as follows:
\begin{align}
	&\tiG_\vare\tiG_\vare^\top (J(\tiX)^{-1}-J(\tiX_\vare)^{-1}) \nabla_{\Lambda} \tiX\tilde{n}_0-\tiG_0 \tiG_0^\top (J^{-1}-J_\vare^{-1}) \tilde{n}_0\nonumber\\
	=&\tiG_\vare\tiG_\vare^\top (J(\tiX)^{-1}-J(\tiX_\vare)^{-1}) \nabla_{\Lambda} \tiX\tilde{n}_0-\tiG_0 \tiG_0^\top (J^{-1}-J_\vare^{-1}) \nabla_{\Lambda} \tiX\tilde{n}_0\nonumber\\
	+&\tiG_0 \tiG_0^\top (J^{-1}-J_\vare^{-1}) \nabla_{\Lambda} \tiX\tilde{n}_0-\tiG_0 \tiG_0^\top (J^{-1}-J_\vare^{-1})  \tilde{n}_0\nonumber\\
	=&[(\tiG_\vare-\tiG_0)+\tiG_0][(\tiG_\vare-\tiG_0)+\tiG_0]^\top [(J(\tiX)^{-1}-J(\tiX_\vare)^{-1})-(J^{-1}-J_\vare^{-1})\nonumber\\
	&+(J^{-1}-J_\vare^{-1}) ]\nabla_{\Lambda} \tiX\tilde{n}_0-\tiG_0 \tiG_0^\top (J^{-1}-J_\vare^{-1}) \nabla_{\Lambda} \tiX\tilde{n}_0\nonumber\\
	&+\tiG_0 \tiG_0^\top (J^{-1}-J_\vare^{-1}) (\nabla_{\Lambda} \tiX-\id)\tilde{n}_0\nonumber\\
	=&:\sum_{i=1}^{7}\tilde{I}_{4,1,i}+\tilde{I}_{4,1,8}.\nonumber
\end{align}
For $ \tilde{I}_{4,1,1},  $ we apply  Theorem \ref{lem3.9}, Lemmas  \ref{x-omega}, \ref{G-G0}, \ref{lem5.2} and \ref{jx-jy} to obtain
\begin{align}
	&\|\tilde{I}_{4,1,1}\|_{L^2H^{s-\frac 12}}\nonumber\\
	=&\|(\tiG_\vare-\tiG_0)(\tiG_\vare-\tiG_0)[(J(\tiX)^{-1}-J(\tiX_\vare)^{-1})-(J^{-1}-J_\vare^{-1})]\nabla_{\Lambda} \tiX\tilde{n}_0\|_{L^2H^{s-\frac 12}}\nonumber\\
	\le &\|\tiG_\vare-\tiG_0\|_{L^2H^{s-\frac 12}}\|\tiG_\vare-\tiG_0\|_{L^\infty H^{s-\frac 12}}\|(J(\tiX)^{-1}-J(\tiX_\vare)^{-1})-(J^{-1}-J_\vare^{-1})\|_{L^\infty H^{s-\frac 12}}\nonumber\\
	&\cdot\|\nabla_{\Lambda} \tiX\tilde{n}_0\|_{L^\infty H^{s-\frac 12}}\nonumber\\
	\le &T^\frac 12\|\tiG_\vare-\tiG_0\|_{L^\infty H^{s }}\|\tiG_\vare-\tiG_0\|_{L^\infty H^{s }}\|(J(\tiX)^{-1}-J(\tiX_\vare)^{-1})-(J^{-1}-J_\vare^{-1})\|_{L^\infty H^{s }}\nonumber\\
	&\cdot\|\nabla_{\Lambda} \tiX\tilde{n}_0\|_{L^\infty H^{s }}\nonumber\\ 	
	\le &T C(N,N_\vare,\tiv_{0},\tiG_0)\|(J(\tiX)^{-1}-J(\tiX_\vare)^{-1})-(J^{-1}-J_\vare^{-1})\|_{L^\infty H^{s }}\nonumber\\ 
	\le &T C(N,N_\vare,\tiv_{0},\tiG_0)\|J(\tiX+\vare b)^{-1}-J(\tiX_\vare)^{-1}\|_{L^\infty H^{s }}\nonumber\\
	&+T C(N,N_\vare,\tiv_{0},\tiG_0)(\|J(\tiX)^{-1}-J(\tiX+\vare b)^{-1}\|_{L^\infty H^{s }}+\|J^{-1}-J_\vare^{-1}\|_{L^\infty H^{s }})\nonumber\\
	\le &T C(N,N_\vare,\tiv_{0},\tiG_0)\|  \tiX+\vare b  -  \tiX_\vare \|_{L^\infty H^{s }} +T C(N,N_\vare,\tiv_{0},\tiG_0)\vare\nonumber\\
	\le &T^\frac 54 C(N,N_\vare,\tiv_{0},\tiG_0)\|  \tiX+\vare b  -  \tiX_\vare-t(J-J_\vare)\tiv_{0} \|_{\mathcal{A}^{s+1,\gamma+1}} +T C(N,N_\vare,\tiv_{0},\tiG_0)\vare.\nonumber
\end{align}
For $ \tilde{I}_{4,3}, $ we use the following splitting:
\begin{align}
	\tilde{I}_{4,3}=&(\tiG_\vare-\tiG)\tiG_\vare^\top  J(\tiX)^{-1}\nabla_{\Lambda} \tiX \tilde{n}_0\nonumber\\
	=&(\tiG_\vare-\tiG)(\tiG_\vare-\tiG_0)^\top  J(\tiX)^{-1}\nabla_{\Lambda} \tiX \tilde{n}_0+(\tiG_\vare-\tiG)\tiG_0^\top  J(\tiX)^{-1}\nabla_{\Lambda} \tiX \tilde{n}_0\nonumber\\
	=&:\tilde{I}_{4,3,1}+\tilde{I}_{4,3,2}.\nonumber
\end{align}
For $ \tilde{I}_{4,3,1},  $ we use   Lemmas \ref{x-omega}, \ref{G-G0}, \ref{lem5.2}, \ref{jx-j} and  Theorem \ref{lem3.9} to have
\begin{align*}
	&\|\tilde{I}_{4,3,1}\|_{L^2H^{s-\frac 12}}\nonumber\\
	=&\|(\tiG_\vare-\tiG)(\tiG_\vare-\tiG_0)^\top  J(\tiX)^{-1}\nabla_{\Lambda} \tiX \tilde{n}_0\|_{L^2H^{s-\frac 12}}\nonumber\\
	\le &\|\tiG_\vare-\tiG\|_{L^\infty H^{s-\frac 12}}\|(\tiG_\vare-\tiG_0)^\top \|_{L^2H^{s-\frac 12}}\| J(\tiX)^{-1}\|_{L^\infty H^{s-\frac 12}}\|\nabla_{\Lambda} \tiX \|_{L^\infty H^{s-\frac 12}}\nonumber\\
	\le &T^\frac 12\|\tiG_\vare-\tiG\|_{L^\infty H^{s}}\|(\tiG_\vare-\tiG_0)^\top \|_{L^\infty H^{s}}\| J(\tiX)^{-1}\|_{L^\infty H^{s}}\|\nabla_{\Lambda} \tiX \|_{L^\infty H^{s}}\nonumber\\
	\le &C(N,N_\vare,\tiv_0,\tiG_0)T^\frac 34\|\tiG_\vare-\tiG\|_{L^\infty H^{s}}\nonumber\\
	\le &C(N,N_\vare,\tiv_0,\tiG_0)T^\frac 34(\|\tiG-\tiG_\vare-t\nabla \tiv_{0}(J-J_\vare)\tiG_0\|_{L^\infty H^{s}}+\|t\nabla \tiv_{0}(J-J_\vare)\tiG_0\|_{L^\infty H^{s}})\nonumber\\
	\le &C(N,N_\vare,\tiv_0,\tiG_0)T \| \tiG-\tiG_\vare-t\nabla \tiv_{0}(J-J_\vare)\tiG_0\|_{\mathcal{A}^{s,\gamma}}+C(N,N_\vare,\tiv_0,\tiG_0)T^\frac 34\vare.\nonumber
\end{align*}
The other terms $ \tilde{I}_{4,1,i} $ and $ \tilde{I}_{4,3,j} $ are estimated similarly. Also, we can estimate $ \tilde{I}_{4,2} $ and $ \tilde{I}_{4,4} $ in the same way.

For the estimates in $ H_{(0)}^{\frac s2-\frac 14}L^2, $ we use the same splitting \eqref{equ666}. We show only the estimate of $ \tilde{I}_{4,2} $ and $ \tilde{I}_{4,4} $. We split $ \tilde{I}_{4,2} $ as follows:
\begin{align}
	\tilde{I}_{4,2}=&[(\tiG_\vare-\tiG_0)+\tiG_0][(\tiG_\vare-\tiG_0)+\tiG_0]^\top [(J(\tiX_\vare)^{-1}-J^{-1})+J^{-1}] (\nabla_{\Lambda} \tiX_\vare-\nabla_{\Lambda} \tiX )\tilde{n}_0\nonumber\\
	=&:\sum_{i=1}^{8}\tilde{I}_{4,2,i}.\nonumber
\end{align}
For $ \tilde{I}_{4,2,1}, $ we apply Lemmas \ref{G-G0}, \ref{lem5.2},  \ref{jx-j}, \ref{lem3.4}, \ref{lem3.5} and  \ref{lem3.7} to obtain
\begin{align*}
	&\|\tilde{I}_{4,2,1}\|_{H_{(0)}^{\frac s2-\frac 14}L^2}\nonumber\\
	=&\|(\tiG_\vare-\tiG_0)(\tiG_\vare-\tiG_0)^\top (J(\tiX_\vare)^{-1}-J^{-1}) (\nabla_{\Lambda} \tiX_\vare-\nabla_{\Lambda} \tiX )\tilde{n}_0\|_{H_{(0)}^{\frac s2-\frac 14}L^2}\nonumber\\
	\le&\|(\tiG_\vare-\tiG_0)(\tiG_\vare-\tiG_0)^\top \|_{H_{(0)}^{\frac s2-\frac 14}H^{\frac12 +\eta}}\\
	&\cdot\|(J(\tiX_\vare)^{-1}-J^{-1}) (\nabla_{\Lambda} \tiX_\vare-\nabla_{\Lambda} \tiX )\tilde{n}_0\|_{H_{(0)}^{\frac s2-\frac 14}L^2}\nonumber\\
	\le &\|\tiG_\vare-\tiG_0\|_{H_{(0)}^{\frac s2-\frac 14}H^{\frac12 +\eta}}\|(\tiG_\vare-\tiG_0)^\top \|_{H_{(0)}^{\frac s2-\frac 14}H^{\frac12 +\eta}}\|J(\tiX_\vare)^{-1}-J^{-1}\|_{H_{(0)}^{\frac s2-\frac 14}H^{\frac12+\mu}}\nonumber\\
	&\cdot\| (\nabla_{\Lambda} \tiX_\vare-\nabla_{\Lambda} \tiX )\tilde{n}_0\|_{H_{(0)}^{\frac s2-\frac 14}H^{\frac12-\mu}}\nonumber\\
	\le &\|\tiG_\vare-\tiG_0\|_{H_{(0)}^{\frac s2-\frac 14}H^{1 +\eta}}\|(\tiG_\vare-\tiG_0)^\top \|_{H_{(0)}^{\frac s2-\frac 14}H^{1 +\eta}}\|J(\tiX_\vare)^{-1}-J^{-1}\|_{H_{(0)}^{\frac s2-\frac 14}H^{1+\mu}}\nonumber\\
	&\cdot\| (\nabla_{\Lambda} \tiX_\vare-\nabla_{\Lambda} \tiX )\tilde{n}_0\|_{H_{(0)}^{\frac s2-\frac 14}H^{1-\mu}}\nonumber\\
	\le &C(N,N_\vare,\tiv_{0},\tiG_0)T^{3\delta}\|  \tiX_\vare- \tiX \|_{H_{(0)}^{\frac s2-\frac 14}H^{1+(1-\mu)}}\nonumber\\
	\le &C(N,N_\vare,\tiv_{0},\tiG_0)T^{3\delta}\|  \tiX_\vare- \tiX -\vare b-t(J_\vare-J)\tiv_{0}\|_{H_{(0)}^{\frac s2-\frac 14}H^{1+(1-\mu)}}\\
	&+C(N,N_\vare,\tiv_{0},\tiG_0)T^{3\delta}\vare\nonumber\\
	\le &C(N,N_\vare,\tiv_{0},\tiG_0)T^{3\delta}\|  \tiX_\vare- \tiX -\vare b-t(J_\vare-J)\tiv_{0}\|_{\mathcal{A}^{s+1,\gamma+1}}\\
	&+C(N,N_\vare,\tiv_{0},\tiG_0)T^{3\delta}\vare\nonumber
\end{align*}
for some $ \eta,\delta $ and $\mu>0 $  small enough. For $ \tilde{I}_{4,4}, $ we use the following splitting:
\begin{align}
	\tilde{I}_{4,4} 
	=&[(\tiG-\tiG_0)+\tiG_0](\tiG_\vare^\top -\tiG^\top ) [(J(\tiX)^{-1}-J^{-1})+J^{-1}][(\nabla_{\Lambda} \tiX-\id)+\id] \tilde{n}_0\nonumber\\
	=&:\sum_{i=1}^{8}\tilde{I}_{4,4,i}.\nonumber
\end{align}
For $ \tilde{I}_{4,4,1}, $ we apply Lemmas \ref{G-G0}, \ref{lem5.2}, \ref{jx-j}, \ref{lem3.4},  \ref{lem3.5} and \ref{lem3.7} to have
\begin{align*}
	&\|\tilde{I}_{4,4,1}\|_{H_{(0)}^{\frac s2-\frac 14}L^2}\nonumber\\
	=&\|(\tiG-\tiG_0)(\tiG_\vare^\top -\tiG^\top ) (J(\tiX)^{-1}-J^{-1})(\nabla_{\Lambda} \tiX-\id) \tilde{n}_0\|_{H_{(0)}^{\frac s2-\frac 14}L^2}\nonumber\\
	\le&\|(\tiG-\tiG_0)(\tiG_\vare-\tiG)^\top \|_{H_{(0)}^{\frac s2-\frac 14}H^{\frac12 +\eta}}\|(J(\tiX)^{-1}-J^{-1}) (\nabla_{\Lambda} \tiX-\id )\tilde{n}_0\|_{H_{(0)}^{\frac s2-\frac 14}L^2}\nonumber\\
	\le &\|\tiG-\tiG_0\|_{H_{(0)}^{\frac s2-\frac 14}H^{\frac12 +\eta}}\|\tiG_\vare-\tiG\|_{H_{(0)}^{\frac s2-\frac 14}H^{\frac12 +\eta}}\|J(\tiX)^{-1}-J^{-1}\|_{H_{(0)}^{\frac s2-\frac 14}H^{\frac12+\mu}}\nonumber\\
	&\| (\nabla_{\Lambda} \tiX-\id )\tilde{n}_0\|_{H_{(0)}^{\frac s2-\frac 14}H^{\frac12-\mu}}\nonumber\\
	\le &\|\tiG-\tiG_0\|_{H_{(0)}^{\frac s2-\frac 14}H^{1 +\eta}}\|\tiG_\vare-\tiG\|_{H_{(0)}^{\frac s2-\frac 14}H^{1 +\eta}}\|J(\tiX)^{-1}-J^{-1}\|_{H_{(0)}^{\frac s2-\frac 14}H^{1+\mu}}\nonumber\\
	&\| (\nabla_{\Lambda} \tiX-\id )\tilde{n}_0\|_{H_{(0)}^{\frac s2-\frac 14}H^{1-\mu}}\nonumber\\
	\le &C(N,N_\vare,\tiv_{0},\tiG_0)T^{3\delta}\|\tiG_\vare-\tiG\|_{H_{(0)}^{\frac s2-\frac 14}H^{1 +\eta}}\nonumber\\
	\le &C(N,N_\vare,\tiv_{0},\tiG_0)T^{3\delta}\|\tiG-\tiG_\vare-t\nabla \tiv_{0}(J-J_\vare)\tiG_0\|_{H_{(0)}^{\frac s2-\frac 14}H^{1 +\eta}}\\
	&+C(N,N_\vare,\tiv_{0},\tiG_0)T^{3\delta}\vare\nonumber\\
	\le &C(N,N_\vare,\tiv_{0},\tiG_0)T^{3\delta}\| \tiG-\tiG_\vare-t\nabla \tiv_{0}(J-J_\vare)\tiG_0\|_{\mathcal{A}^{s,\gamma}}+C(N,N_\vare,\tiv_{0},\tiG_0)T^{3\delta}\vare\nonumber
\end{align*}
for some $ \eta,\delta$ and $\mu>0 $ small enough. The other terms $ \tilde{I}_{4,2,i} $ and $ \tilde{I}_{4,4,j} $ are estimated similarly. Also, we can estimate $ \tilde{I}_{4,1} $ and $ \tilde{I}_{4,3} $ in the same manner.

This proposition holds if we put together the estimates of  $ \tiF_\vare $ in  ${\mathcal{K}_{(0)}^{s-1}},  \tiK_\vare$ in  ${\bar{\mathcal{K}}_{(0)}^{s}} $ and  $ \tiH_\vare $ in ${\mathcal{K}_{(0)}^{s-\frac 12}}. $

From \eqref{Stability X}, \eqref{Stability G} and \eqref{Stability w,q} we conclude that
\begin{align}
	&\|\tilde{X}-\tilde{X}_{\vare}+\vare b-t (J-J_{\vare} ) \tilde{v}_0\|_{\mathcal{A}^{s+1, \gamma+1}}+\| \tilde{w}-   \tilde{w}_{\vare}\|_{\mathcal{K}_{(0)}^{s+1}}+\| \tilde{q}_w-\tilde{q}_{w, \vare}\|_{\mathcal{K}_{p r}^s(0)}\nonumber\\
	&+\|\tilde{G}-\tilde{G}_{\vare}-t\nabla \tilde{v}_0(J-J_{\vare})  \tilde{G}_0\|_{\mathcal{A}^{s, \gamma}}\nonumber\\
	\leq & C \vare+ C T^{\delta}(\|\tilde{X}-\tilde{X}_{\vare}+\vare  b-t(J-J_{\vare}) \tilde{v}_0\|_{\mathcal{A}^{s+1, \gamma+1}}+\|\tilde{w}-\tilde{w}_{\vare}\|_{\mathcal{K}_{(0)}^{s+1}}\nonumber\\
	& +\| \tilde{q}_w-\tilde{q}_{w, \vare}\|_{\mathcal{K}_{p r}^s(0)}+\|\tilde{G}-\tilde{G}_{\vare}-t\nabla \tilde{v}_0(J-J_{\vare})   \tilde{G}_0\|_{\mathcal{A}^{s, \gamma}})\nonumber
\end{align}
where $ \delta=\min_{i=1,2,3}\delta_i>0 $ and $ C $ depends only on the initial data. If $ 0<T<(\frac C2)^{-\frac 1\delta}, $ we have
\begin{align}
	&\|\tilde{X}-\tilde{X}_{\vare}+\vare b-t (J-J_{\vare} ) \tilde{v}_0\|_{\mathcal{A}^{s+1, \gamma+1}}+\| \tilde{w}-   \tilde{w}_{\vare}\|_{\mathcal{K}_{(0)}^{s+1}}+\| \tilde{q}_w-\tilde{q}_{w, \vare}\|_{\mathcal{K}_{p r}^s(0)}\nonumber\\
	&+\|\tilde{G}-\tilde{G}_{\vare}-t\nabla \tilde{v}_0(J-J_{\vare})  \tilde{G}_0\|_{\mathcal{A}^{s, \gamma}}
	\leq  2 C \vare.\nonumber
\end{align}
In particular, we obtain
\begin{align}
	\|\tilde{X}-\tilde{X}_{\vare}\|_{L^\infty H^{s+1}}
	\leq  C\vare,\nonumber
\end{align}
thus we have proved Theorem \ref{thm2}.
\end{proof}

\section{Existence of splash singularity}\label{Existence of splash singularity}
\par The choice of the initial velocity is essential to form splash singularities. We are looking for initial velocity such that the compatibility conditions \eqref{equinitialdata} is satisfied and the inner product of the velocity field and the positive normal vector is positive as in Fig. \ref{fig:2}.  The evolution of the splash curve is demonstrated in Fig. \ref{fig:3}.

%\begin{figure}[h]
%	\centering
%	\includegraphics[width=0.8\linewidth]{splash initial velocity}
%	\caption{Initial velocity on   $ \tiOmega_0. $}
%	\label{fig:initial velocity}
%\end{figure}
%\begin{figure}[h]
%	\centering
%	\includegraphics[width=0.8\linewidth]{after splash}
%	\caption{Evolution of the splash curve.}
%	\label{fig:after splash}
%\end{figure}
\par For our problem, we extend the analysis for the choice of the initial velocity already
made in \cite[Section 7] {castro2019splash}. For the convenience of the reader, we recall the arguments in \cite{castro2019splash}.

In the following discussion, we denote the typical domain by $ \Omega $ and consider the following parametrization of the boundary $ \partial \Omega: $
\begin{equation}\label{eque1}
	z(r)=(z_1(r),z_2(r)),\quad | \dot{z}(r)|=1.
\end{equation}
We also choose a small enough neighborhood $ U $ of $ \partial\Omega. $ In $ U, $ we use the coordinates $ (r,\lambda) $ given by
\begin{align}
	x(r, \lambda):=&z(r)+\lambda  \dot{z}(r)^\perp,\nonumber\nonumber\\
	(x_1(r, \lambda),x_2(r, \lambda)):=&(z_1(r)-\lambda\dot{z}_2(r),z_2(r)+\lambda\dot{z}_1(r)).\nonumber
\end{align}
Then   the stream function $ \psi(x_1,x_2) $ in $ U $ will be given by
\begin{align}
	\bar{\psi}(r,\lambda):=&\psi_0(r)+\psi_1(r)\lambda+\frac 12\psi_2(r)\lambda^2,\nonumber\\
	\psi(x(r,\lambda)):=&\bar{\psi}(r,\lambda),\label{eque2}
\end{align}
and we extend smoothly  $ \psi $ to the rest of the domain $ \Omega. $ 

Now we take $ u_0(x_1,x_2):=\nabla^\perp\psi(x_1,x_2) $ and notice that $ u_0 $ is divergence free 
\begin{align*}
	\dive u_0=\partial_1(-\partial_2\psi)+\partial_2(\partial_1\psi)=0.
\end{align*}
From \eqref{equinitialdata}, the initial velocity $ u_0 $ must satisfy
\begin{align*}
	n^\perp ( (\nabla u_0+\nabla u_0^\top  )+ H_0 \otimes H_0 ) n=0,
\end{align*} 
where $ n^\perp=(-n^2, n^1) $ and $ n=(n^1, n^2)^\top  $ are the tangential and normal vectors to $ \partial\Omega, $ respectively.
If $ T $ and $ N $ are an extension of $ n^\perp $ and $ n $ to $ U, $ respectively, we can rewrite it as
\begin{equation}\label{eque4}
(	T  (\nabla u_0+\nabla u_0^\top  ) N)|_{\partial\Omega}=-(T H_0 \otimes H_0  N)|_{\partial\Omega}.
\end{equation}
From \eqref{eque1}, we have 
\begin{align*}
	\ddot{z}_2\dot{z}_2+\ddot{z}_1\dot{z}_1=0,
\end{align*}
then we can take 
\begin{align}
	T(r,\lambda):=&\partial_r x(r,\lambda)=\dot{z}(r)+\lambda\ddot{z}^\perp(r)=(1-\lambda\kappa(r))\dot{z}(r),\nonumber\\
	N(r,\lambda):=&\partial_\lambda x(r,\lambda)=\dot{z}^\perp(r),\nonumber
\end{align}
where 
\begin{align*}
	\kappa(r):=\ddot{z}(r)\cdot\dot{z}^\perp(r).
\end{align*}
We also notice that
\begin{align*}
 	(\ddot{z}_2,-\ddot{z}_1)=\kappa(\dot{z}_1, \dot{z}_2).
\end{align*}

From \eqref{eque1} and \eqref{eque2}, we have
\begin{align*}
	(\nabla\psi)(x(r,\lambda))=\frac{1}{1-\kappa(r)\lambda}\partial_r\bar{\psi}(r,\lambda) \dot{z}(r)+\partial_\lambda\bar{\psi}(r,\lambda)\dot{z}^\perp(r).
\end{align*}
By defining 
\begin{equation}\label{eque3}
	\bar{u}_0(r,\lambda):=u_0(x(r,\lambda)),
\end{equation}
we have
\begin{align*}
	\bar{u}_0(r,\lambda)=	(\nabla\psi)^\perp(x(r,\lambda))=\frac{1}{1-\kappa(r)\lambda}\partial_r\bar{\psi}(r,\lambda) \dot{z}^\perp(r)+\partial_\lambda\bar{\psi}(r,\lambda)\dot{z}(r).
\end{align*}
Taking derivatives of \eqref{eque3}, we obtain
\begin{align}
	(\partial_r\bar{u}_0^j)(r,\lambda)=&T^i(r,\lambda)(\partial_iu_0^j)(x(r,\lambda)),\nonumber\\	(\partial_\lambda\bar{u}_0^j)(r,\lambda)=&N^i(r,\lambda)(\partial_iu_0^j)(x(r,\lambda)).\nonumber
\end{align}
Now the terms on the left side of \eqref{eque4} defined in $ U $ become
\begin{align}
	T(r,\lambda)\nabla u_0(x(r,\lambda)) N(r,\lambda)=&T^i(r,\lambda)(\partial_iu_0^j)(x(r,\lambda))N^j(r,\lambda) =(\partial_r\bar{u}_0^j)(r,\lambda)N^j(r,\lambda),\nonumber\\
	T(r,\lambda)\nabla u_0^\top (x(r,\lambda)) N(r,\lambda)=&T^i(r,\lambda)(\partial_ju_0^i)(x(r,\lambda))N^j(r,\lambda) =(\partial_\lambda\bar{u}_0^i)(r,\lambda)T^i(r,\lambda),\nonumber
\end{align}
i.e.,
\begin{align}
	T\nabla u_0(x(r,\lambda)) N=& \partial_r (\bar{u}_0\cdot N) -\bar{u}_0\cdot \partial_r N,\nonumber\\
	T\nabla u_0^\top (x(r,\lambda)) N=&\partial_\lambda (\bar{u}_0\cdot T) -\bar{u}_0\cdot \partial_\lambda T.\nonumber
\end{align}
Note that
\begin{align*}
	\begin{cases}
		\bar{u}_0\cdot N=\frac{1}{1-\lambda\kappa}\partial_r\bar{\psi},\\
		\bar{u}_0\cdot T=-(1-\lambda\kappa)\partial_\lambda\bar{\psi},\\
		\partial_r N=\ddot{z}^\perp=-\kappa\dot{z},\\
		\partial_\lambda T=\ddot{z}^\perp=-\kappa\dot{z},
	\end{cases}
\end{align*}
and we obtain
\begin{align*}
	\begin{cases}
		\partial_r(\bar{u}_0\cdot N)|_{\lambda=0}= \partial_r^2\bar{\psi}(r,0),\\
		\partial_\lambda(\bar{u}_0\cdot T)|_{\lambda=0}=\kappa\partial_\lambda\bar{\psi}(r,0)-\partial_\lambda^2\bar{\psi}(r,0),\\
		(\bar{u}_0\cdot \partial_r N)|_{\lambda=0}=\kappa\partial_\lambda\bar{\psi}(r,0),\\
		(\bar{u}_0\cdot \partial_\lambda T)|_{\lambda=0}=\kappa\partial_\lambda\bar{\psi}(r,0).
	\end{cases}
\end{align*}
Then the left side of \eqref{eque4}
becomes
\begin{align}
	(T\nabla u_0(x(r,\lambda)) N+T\nabla u_0^\top (x(r,\lambda)) N)|_{\lambda=0}=\partial_r^2\bar{\psi}(r,0)-\kappa\partial_\lambda\bar{\psi}(r,0)-\partial_\lambda^2\bar{\psi}(r,0),\nonumber
\end{align}
i.e.,
\begin{align*}
	\partial_r^2\bar{\psi}(r,0)-\kappa\partial_\lambda\bar{\psi}(r,0)-\partial_\lambda^2\bar{\psi}(r,0)=-(T H_0 \otimes H_0  N)|_{\lambda=0}.
\end{align*}
More precisely, we have
\begin{equation}\label{eque5}
	\partial_r^2{\psi}_0(r)-\kappa{\psi}_1(r)-{\psi}_2(r)=-(T H_0 \otimes H_0  N)|_{\lambda=0}.
\end{equation}
Thus for any given $\psi_0$ such that $u_0\cdot n|_{\partial\Omega}=\partial_r \psi_0>0$ and for any $H_0$, there exist $\psi_1$ and $\psi_2$, such that \eqref{eque5} is satisfied and in particular,  compatibility condition \eqref{eque4} is satisfied. For convenience, we can  choose $ \psi_1=0$ and $ \psi_2= \partial_r^2{\psi}_0+(T H_0 \otimes H_0  N)|_{\partial\Omega}.$

Therefore, we state the following proposition:
\begin{proposition}\label{pro4}
	For a given $ \psi_0\in C^2(U), $ such that $ \partial_r \psi_0>0 $ and for any divergence free magnetic field $ H_0\in C^1(U), $ there exists  a stream function of type \eqref{eque2} with $ \psi_1=0 $ and $ \psi_2= \partial_r^2{\psi}_0+(T H_0 \otimes H_0  N)|_{\partial\Omega}  $ such that the initial velocity defined by $ u_0(x_1,x_2)=\nabla^\perp\psi(x_1,x_2) $ is divergence free and the compatibility conditions \eqref{equinitialdata} are satisfied. Moreover, $ $ we have $ u_0\cdot n>0 $ on $ {\partial\Omega}. $
\end{proposition}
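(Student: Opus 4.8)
The plan is to take $u_0 := \nabla^\perp\psi$ with $\psi$ of the form \eqref{eque2} and to reduce the compatibility conditions \eqref{equinitialdata} to the single scalar identity \eqref{eque5}. The divergence-free condition $\dive u_0 = 0$ is immediate since $u_0$ is a perpendicular gradient, so only the boundary relation $n^\perp((\nabla u_0 + \nabla u_0^\top) + H_0\otimes H_0)n = 0$ on $\partial\Omega$ requires attention. Working in the tubular neighborhood $U$ of $\partial\Omega$ with the coordinates $(r,\lambda)$ introduced above, I would rewrite this relation in the form \eqref{eque4} using the extensions $T = \partial_r x$ and $N = \partial_\lambda x$ of $n^\perp$ and $n$, and then invoke the already-derived expressions for $\partial_r(\bar u_0\cdot N)$, $\partial_\lambda(\bar u_0\cdot T)$, $\bar u_0\cdot\partial_r N$ and $\bar u_0\cdot\partial_\lambda T$ at $\lambda=0$ to obtain \eqref{eque5}, namely $\partial_r^2\psi_0 - \kappa\psi_1 - \psi_2 = -(T H_0\otimes H_0 N)|_{\lambda=0}$.

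Next, given $\psi_0 \in C^2(U)$ with $\partial_r\psi_0 > 0$ and any divergence-free $H_0 \in C^1(U)$, I would simply set $\psi_1 := 0$ and $\psi_2 := \partial_r^2\psi_0 + (T H_0\otimes H_0 N)|_{\partial\Omega}$; this is well defined by the assumed regularity of $\psi_0$ and $H_0$, and it is exactly what is needed for the first-order boundary condition, since only $\partial_r^2\bar\psi$, $\partial_\lambda^2\bar\psi$ and $\partial_\lambda\bar\psi$ at $\lambda=0$ (equivalently $\partial_r^2\psi_0$, $\psi_2$, $\psi_1$) enter \eqref{eque5}. By construction this choice makes \eqref{eque5}, hence \eqref{eque4}, hold. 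The resulting $\bar\psi(r,\lambda) = \psi_0(r) + \tfrac12\psi_2(r)\lambda^2$ defines $\psi$ on $U$ via $\psi(x(r,\lambda)) = \bar\psi(r,\lambda)$; I would then extend $\psi$ smoothly to the rest of $\Omega$ (the extension is irrelevant for the boundary statements). Thus $u_0 = \nabla^\perp\psi$ is divergence free on $\Omega$ and satisfies \eqref{equinitialdata}, and the positivity claim follows from $u_0\cdot n|_{\partial\Omega} = (\bar u_0\cdot N)|_{\lambda=0} = \partial_r\bar\psi(r,0) = \partial_r\psi_0(r) > 0$.

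The only place requiring genuine care — and the main obstacle — is the reduction step: one must verify that the contracted tensor identity $n^\perp((\nabla u_0 + \nabla u_0^\top) + H_0\otimes H_0)n = 0$ is equivalent, on $\partial\Omega$, to precisely the scalar equation \eqref{eque5}, with no further hidden constraints on the $2$-jet $(\psi_0,\psi_1,\psi_2)$ of the stream function, and that the curvature term $\kappa\psi_1$ enters as stated. This is exactly what the computation of $T\nabla u_0 N$ and $T\nabla u_0^\top N$ in the $(r,\lambda)$-frame provides, so the remaining work is bookkeeping: confirm the identities $\bar u_0\cdot N = (1-\lambda\kappa)^{-1}\partial_r\bar\psi$, $\bar u_0\cdot T = -(1-\lambda\kappa)\partial_\lambda\bar\psi$, $\partial_r N = \partial_\lambda T = -\kappa\dot z$, differentiate, and restrict to $\lambda=0$. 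Once this is in hand, the choice of $\psi_1,\psi_2$ closes the argument and the positivity of $u_0\cdot n$ is automatic.
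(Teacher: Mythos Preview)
Your proposal is correct and follows essentially the same route as the paper: the argument preceding the proposition in the paper sets up the tubular coordinates, defines $u_0=\nabla^\perp\psi$, reduces the tangential part of the compatibility condition to the scalar equation \eqref{eque5} via the same computation of $T\nabla u_0 N$ and $T\nabla u_0^\top N$ that you outline, and then observes that the choice $\psi_1=0$, $\psi_2=\partial_r^2\psi_0+(TH_0\otimes H_0 N)|_{\partial\Omega}$ closes the identity, with $u_0\cdot n|_{\partial\Omega}=\partial_r\psi_0>0$ following immediately.
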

%\begin{figure}[h]
%	\centering
%	\includegraphics[width=0.8\linewidth]{"vare bar t"}
%	\caption{Self-intersecting domains $ \tiOmega_\vare(\bar{t}) $ and $ P^{-1}( \tiOmega_\vare(\bar{t})). $ }
%	\label{fig:vare-bar-t}
%\end{figure}
%\begin{figure}[h]
%	\centering
%	\includegraphics[width=0.8\linewidth]{splash b}
%	\caption{Regular initial domains   $ \tiOmega_\vare(0) $ and $ P^{-1}(\tiOmega_\vare(0)). $}
%	\label{fig:b,vare,0}
%\end{figure}
This choice of the initial velocity $ \tiv_0 $ allows us to obtain  domain $ \tiOmega(t) $ with $ 0<t<T $ for $ T $ small enough from  (local existence) Theorem \ref{thm1}. Therefore, we can choose sufficiently small $ \bar{t}\in (0,T) $ such that $ P^{-1}(\partial\tiOmega(\bar{t})) $ is a self-intersecting domain since $ \tiv_{0}\cdot\tilde{n}_0>0 $ on $ \partial\tiOmega_0. $ But \eqref{equthm2} in  (stability) Theorem \ref{thm2} ensures that $ P^{-1}(\partial\tiOmega_\vare(\bar{t})) $ is also self-intersecting for $ \vare $  small enough as in Fig. \ref{fig:5}. Note that $ P^{-1}(\partial\tiOmega_\vare(0)) $ is regular due to our choice of $ b $ and for a later time we end up in a self-intersecting domain as in Fig. \ref{fig:4}. Finally, the continuity argument guarantees the existence of a splash time $ t^*_\vare\in (0,\bar{t}) $ and $ P^{-1}(\partial\tiOmega_{\vare}(t^*_\vare)) $ has a splash singularity. 

Indeed, we fix $ \vare>0 $ as above and define
\begin{align*}
	t^*_\vare:=\inf\{ t\in [0,\bar{t}] :P^{-1}(\partial\tiOmega_\vare(t)) \text{ and } \partial\tiOmega_\vare(t) \text{ are as in Fig. \ref{fig:proof-of-final-theorem} (2) and (3)} \}.
\end{align*}
\begin{figure}[h]
	\centering
	\includegraphics[width=0.8\linewidth]{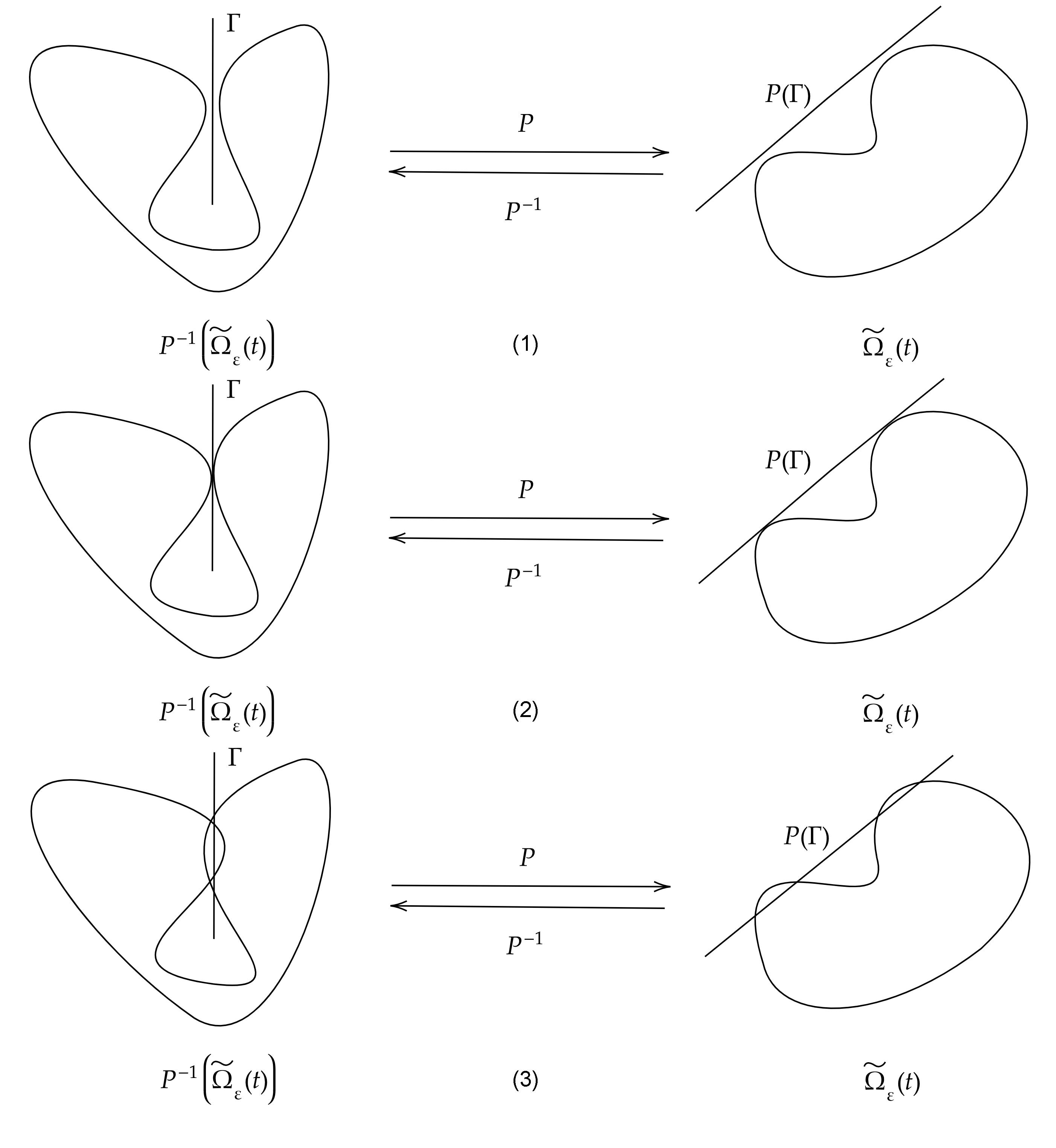}
	\caption{Possibilities for time $ t>0 .$}
	\label{fig:proof-of-final-theorem}
\end{figure}
Then, we have $ 0<t^*_\vare<\bar{t} $ and $ P^{-1}(\partial\tiOmega_\vare(t^*_\vare)) $ is as in Fig. \ref{fig:proof-of-final-theorem} (2). For $ 0\le t< t^*_\vare, $ $ P^{-1}(\partial\tiOmega_\vare(t^*_\vare)) $ is as in Fig. \ref{fig:proof-of-final-theorem} (1). Therefore, $ (\tiOmega_{\vare}(t), \tiw^\prime_\vare, \tiq^\prime_\vare, \tiX^\prime_\vare, \tiG^\prime_\vare) $ gives rise to a solution of the viscous MHD equation, for $ t\in[0,t^*_\vare). $ Moreover, the domain $ \tiOmega_{\vare}(t) $ form a splash singularity at time $ t^*_\vare. $

Finally, we state the above argument as the following theorem.
\begin{theorem}\label{thm3}
	There exists a bounded domain $ \Omega_0= P^{-1}(\partial\tiOmega_\vare(0))  $ with a sufficiently  smooth boundary,  as in the above discussion, such that for any divergence free $ H_0\in H^k(\Omega_0)$ with $ k $ large enough, we can construct a suitable initial  velocity  $ u_0\in H^k(\Omega_0),$ and  there exists  a  solution $  (u, p, H) $ for the viscous MHD \eqref{mhd} in $ [0, t^*_\vare) $ for  $ t^*_\vare>0  $ such that $ (\tiw, \tiq_w, \tiG-\hat{G})  \in   \mathcal{K}_{(0)}^{s+1} \times \mathcal{K}_{p r(0)}^s \times \mathcal{A}^{s, \gamma}$ for $ 2<s<\frac 52 $ and $ 1<\gamma<s-1. $  Moreover, the interface $ \partial\Omega(t^*_\vare) $   remains regular but self-intersects  in at least one point which creates a splash singularity.
\end{theorem}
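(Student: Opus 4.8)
The plan is to combine the three pillars already established --- the construction of the initial velocity (Proposition \ref{pro4}), the local existence theorem (Theorem \ref{thm1}), and the stability estimate (Theorem \ref{thm2}) --- and then to close the argument with a continuity argument that produces the splash time, following the bullet-point scheme sketched in the introduction. We would first fix a non-regular, self-intersecting ``splash-type'' shape and let $\tiOmega_0$ be its image under the conformal map $P$, which is then a genuine bounded domain with smooth boundary (Fig.~\ref{fig:1}). Given the divergence-free $H_0\in H^k$, Proposition \ref{pro4} supplies a stream function of type \eqref{eque2} with $\psi_1=0$ and $\psi_2=\partial_r^2\psi_0+(TH_0\otimes H_0 N)|_{\partial\Omega}$; then $u_0=\nabla^\perp\psi$ is divergence free, satisfies the compatibility conditions \eqref{equinitialdata}, and --- choosing $\psi_0$ with $\partial_r\psi_0>0$ --- has $u_0\cdot n>0$ at the two boundary points destined to collide, which via the conformal change of variables becomes $\tiv_0(\tiomega_i)\cdot\tilde{n}_0(\tiomega_i)>0$, $i=1,2$, on $\partial\tiOmega_0$. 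Passing to the conformal Lagrangian coordinates of Section \ref{Coordinate transformations} and applying Theorem \ref{thm1} to $(\tiOmega_0,\tiv_0,\tiG_0)$ produces, for some $T>0$, a solution with $(\tiw,\tiq_w,\tiX-\hat{X},\tiG-\hat{G})\in\mathcal{K}_{(0)}^{s+1}\times\mathcal{K}_{pr(0)}^s\times\mathcal{A}^{s+1,\gamma+1}\times\mathcal{A}^{s,\gamma}$, hence a smooth $(\tiOmega(t),\tiv,\tiq,\tiG)$ on $[0,T]$; since $\tiv_0\cdot\tilde{n}_0>0$ at $\tiomega_1,\tiomega_2$, for $\bar{t}\in(0,T)$ small enough the two arcs of $P^{-1}(\partial\tiOmega(\bar{t}))$ have already crossed, so $P^{-1}(\partial\tiOmega(\bar{t}))$ self-intersects (Fig.~\ref{fig:3}). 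This alone is not a solution in the non-tilde domain, because $P^{-1}(\partial\tiOmega_0)$ is itself self-intersecting, which is why the perturbation step below is needed.

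Next we would set up the one-parameter family $\tiOmega_\vare(0)=\tiOmega_0+\vare b$, $|b|=1$, with $b$ chosen so that $P^{-1}(\partial\tiOmega_\vare(0))$ is a regular curve (Fig.~\ref{fig:4}), together with shifted data satisfying $(\tiv'_{\vare,0},\tiG'_{\vare,0})(\tiomega+\vare b)=(\tiv_0,\tiG_0)(\tiomega)$. Theorem \ref{thm1} again gives solutions $(\tiw'_\vare,\tiq'_{w,\vare},\tiX'_\vare,\tiG'_\vare)$ on $[0,T]$, and the radius constant $N_\vare$ stays uniformly bounded in $\vare$ because the shifted data, together with $\phi'_\vare$, $\tiq'_{\phi,\vare}$ and all the source terms defining $N_\vare$, converge as $\vare\to0$ (this is essentially the content of Lemma \ref{lem5.2}). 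Shifting back to the fixed domain $\tiOmega_0$ and applying Theorem \ref{thm2}, we obtain $\operatorname{dist}(\partial\tiOmega_\vare(\bar{t}),\partial\tiOmega(\bar{t}))\le C\vare$ with $C$ depending only on the initial data; since $P^{-1}$ is smooth and injective off the branch cut near the region of interest, this gives $\operatorname{dist}(P^{-1}(\partial\tiOmega_\vare(\bar{t})),P^{-1}(\partial\tiOmega(\bar{t})))\le C\vare$, and for $\vare$ small enough this forces $P^{-1}(\partial\tiOmega_\vare(\bar{t}))$ to self-intersect as well (Fig.~\ref{fig:5}).

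Finally we would run the continuity argument. Fix such a small $\vare>0$. The curve $t\mapsto P^{-1}(\partial\tiOmega_\vare(t))$ depends continuously --- indeed smoothly --- on $t$ through $\tiX'_\vare$, it is regular at $t=0$ (Fig.~\ref{fig:proof-of-final-theorem}~(1)) and self-intersecting at $t=\bar{t}$ (Fig.~\ref{fig:proof-of-final-theorem}~(3)). Setting
\begin{align*}
t^*_\vare:=\inf\{t\in[0,\bar{t}] : P^{-1}(\partial\tiOmega_\vare(t)) \text{ is not regular}\},
\end{align*}
one gets $0<t^*_\vare<\bar{t}$, and, by minimality, at $t=t^*_\vare$ the curve $P^{-1}(\partial\tiOmega_\vare(t^*_\vare))$ touches itself at exactly one point with the two arcs otherwise disjoint and tangent there, i.e. it is a splash curve in the sense of \cite{castro2013finite,castro2019splash} (Fig.~\ref{fig:proof-of-final-theorem}~(2)). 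For $0\le t<t^*_\vare$ the curve $P^{-1}(\partial\tiOmega_\vare(t))$ is non-self-intersecting, so $P^{-1}$ transports $(\tiOmega_\vare(t),\tiv'_\vare,\tiq'_\vare,\tiG'_\vare)$ to a genuine solution $(\Omega_\vare(t),u'_\vare,p'_\vare,H'_\vare)$ of \eqref{mhd} on $[0,t^*_\vare)$, whose regularity class is inherited from $(\tiw,\tiq_w,\tiG-\hat{G})\in\mathcal{K}_{(0)}^{s+1}\times\mathcal{K}_{pr(0)}^s\times\mathcal{A}^{s,\gamma}$ for $2<s<\frac52$, $1<\gamma<s-1$, and $\partial\Omega_\vare(t^*_\vare)$ is a splash curve; taking $\Omega_0:=\Omega_\vare(0)=P^{-1}(\partial\tiOmega_\vare(0))$, which is smooth and bounded, together with the $u_0$ constructed above, completes the proof.

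The hard part will be this last continuity step: certifying that the first loss of regularity of $P^{-1}(\partial\tiOmega_\vare(t))$ is a simple tangential self-contact (a splash) rather than a splat or a higher-order degeneracy. This should follow from the transversality encoded in $\tiv_0\cdot\tilde{n}_0>0$ at $\tiomega_1,\tiomega_2$, propagated to small positive times by the quantitative bounds of Theorems \ref{thm1} and \ref{thm2} and the uniform-in-$\vare$ smallness of the perturbation; a secondary technical point is keeping every constant in Theorem \ref{thm2} bounded as $\vare\to0$, which comes from the convergence of the perturbed initial data (Lemma \ref{lem5.2}) but deserves to be made explicit.
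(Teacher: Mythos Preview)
Your proposal is correct and follows essentially the same approach as the paper: construct $u_0$ via Proposition~\ref{pro4}, apply the local existence Theorem~\ref{thm1} in the tilde domain, pick $\bar t$ so that $P^{-1}(\partial\tiOmega(\bar t))$ self-intersects, use the stability Theorem~\ref{thm2} to transfer this to the $\vare$-perturbed family, and close with the continuity argument defining $t^*_\vare$. The paper's own argument is in fact more terse than yours on the final step; it simply defines $t^*_\vare$ as the infimum of times at which the configuration is as in Fig.~\ref{fig:proof-of-final-theorem}~(2) or~(3) and asserts the conclusion, without explicitly discussing the splash-versus-splat distinction or the uniform-in-$\vare$ control of $N_\vare$ that you flag as the delicate points.
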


\appendix
\renewcommand{\thesection}{\Alph{section}}
\section{Further estimates and key lemmas}\label{Further estimates and key lemmas}
\begin{lemma}\label{jx-j} (\cite[Lemma 3.10, Lemma 3.11]{castro2019splash}, \cite[Lemma A.1]{di2020splash}).
	Let $2<s<\frac{5}{2}, 1<\gamma< s-1, \delta, \mu>0 $ small enough and $\tilde{X}-\hat{X} \in \mathcal{A}^{s+1, \gamma+1}$. Then, for $T>0$ small enough, we have
\begin{align*}
	 &\|J(\tilde{X}) \|_{L^{\infty} H^{s+1}} \leq  C (M, \tilde{v}_0,  \|\tilde{X}-\hat{X} \|_{\mathcal{A}^{s+1, \gamma+1}} ),\nonumber\\
	 &\|J(\tilde{X})-J \|_{L^{\infty} H^{s+1}} 
    \leq   C (M, \tilde{v}_0, \|\tilde{X}-\hat{X} \|_{\mathcal{A}^{s+1, \gamma+1}} ) ( \|\tilde{X}-\hat{X} \|_{L^{\infty} H^{s+1}}   + \|t J\tilde{v}_0 \|_{L^{\infty} H^{s+1}} ),\nonumber\\
	 &\|J (\tilde{X})-J \|_{H_{(0)}^1 H^{\gamma+1}}  \leq C (M, \tilde{v}_0, \|\tilde{X}-\hat{X} \|_{\mathcal{A}^{s+1, \gamma+1}} )\|\tilde{X}-\tiomega\|_{H_{(0)}^1 H^{\gamma+1}},\nonumber\\
	 &\|J (\tilde{X})-J \|_{H_{(0)}^{\frac{s-1}{2}} H^{1+\mu}}  \leq C (M, \tilde{v}_0, \|\tilde{X}-\hat{X} \|_{\mathcal{A}^{s+1, \gamma+1}} )(\|\tilde{X}-\hat{X}\|_{H_{(0)}^{\frac{s-1}{2}+\delta} H^{1+\mu}}+T),\nonumber
\end{align*}
with 
\begin{align*}
	M=\frac{1}{\inf_{\tiomega}|\tiomega|-C(\tiv_0) T-T^{\frac 14}\|\tiX-\hat{X}\|_{\mathcal{A}^{s+1,\gamma+1}}}.
\end{align*}
\end{lemma}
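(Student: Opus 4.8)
The plan is to reduce the four inequalities to a single mechanism: $J$ is, away from the branch point $0$, a fixed real-analytic matrix-valued function of its argument, so each estimate is a composition (Moser-type) bound for that function applied either to $\tilde X$ or to a convex combination of $\tilde X$ with $\tilde\omega$; the only geometric input is that $\tilde X(t,\cdot)$ stays uniformly away from $0$ on the short time interval, with the quantitative lower bound being exactly $M^{-1}$. (This is the argument of \cite[Lemmas 3.10--3.11]{castro2019splash} and \cite[Lemma A.1]{di2020splash}; we only indicate the steps.) Concretely, by \eqref{Jni} the matrix $J$ composed with a point $w=(w_1,w_2)$ is the inverse of $\bigl(\begin{smallmatrix}2w_1 & -2w_2\\ 2w_2 & 2w_1\end{smallmatrix}\bigr)$, i.e.\ a rational function whose only singularity is at $w=0$; write $J(\tilde X)=F(\tilde X)$ and $J=F(\tilde\omega)$ with this fixed $F$.

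First I would prove the lower bound. Since $\hat X=\tilde\omega+tJ\tilde v_0$ and $s+1>2$, the embedding $H^{s+1}\hookrightarrow L^\infty$ gives $\|\tilde X-\tilde\omega\|_{L^\infty L^\infty}\le \|\tilde X-\hat X\|_{L^\infty H^{s+1}}+\|tJ\tilde v_0\|_{L^\infty H^{s+1}}$; bounding the first term by $T^{\frac14}\|\tilde X-\hat X\|_{L^\infty_{\frac14} H^{s+1}}\le T^{\frac14}\|\tilde X-\hat X\|_{\mathcal A^{s+1,\gamma+1}}$ and the second by $C(\tilde v_0)T$ yields $|\tilde X(t,\tilde\omega)|\ge \inf_{\tilde\omega}|\tilde\omega|-C(\tilde v_0)T-T^{\frac14}\|\tilde X-\hat X\|_{\mathcal A^{s+1,\gamma+1}}=M^{-1}$, which is positive once $T$ is small. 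The same bound holds with $\tilde X$ replaced by $\tilde\omega+\theta(\tilde X-\tilde\omega)$, $\theta\in[0,1]$. Hence $F$ and all of its derivatives are bounded, along all these arguments, by a constant depending only on $M$ and the fixed geometry of $\tilde\Omega_0$.

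For the first inequality I would then use that $H^{s+1}(\tilde\Omega_0)$ is a Banach algebra ($s+1>1$) together with the composition estimate $\|F(\tilde X)\|_{H^{s+1}}\le C\bigl(M,\|\tilde X\|_{L^\infty}\bigr)\bigl(1+\|\tilde X\|_{H^{s+1}}\bigr)$, and bound $\|\tilde X\|_{L^\infty H^{s+1}}$ and $\|\tilde X\|_{L^\infty L^\infty}$ by $\tilde v_0$ and $\|\tilde X-\hat X\|_{\mathcal A^{s+1,\gamma+1}}$ as above. For the three difference estimates I would write $F(\tilde X)-F(\tilde\omega)=\Bigl(\int_0^1 DF\bigl(\tilde\omega+\theta(\tilde X-\tilde\omega)\bigr)\,d\theta\Bigr)(\tilde X-\tilde\omega)$ and apply the product estimates (Lemmas \ref{lem3.3}, \ref{lem3.7} and their appendix companions), which are valid because in each target space the spatial exponent is strictly above $1$ ($s+1$, $\gamma+1>2$, $1+\mu$, $1+\eta$): the first factor is controlled by the composition estimate exactly as before, and the second factor $\|\tilde X-\tilde\omega\|$ splits as $\|\tilde X-\hat X\|+\|tJ\tilde v_0\|$ in the relevant norm; the $+T$ appearing in the last line is what survives after estimating $\|tJ\tilde v_0\|_{H^{\frac{s-1}{2}+\delta}_{(0)}H^{1+\mu}}\lesssim \|t\|_{H^{\frac{s-1}{2}+\delta}_{(0)}}\,C(\tilde v_0)\lesssim C(\tilde v_0)T$.

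The one point requiring care — and the reason the statement is quoted rather than reproved here — is the bookkeeping that makes every constant time-independent: the composition and product inequalities must be run inside the Beale spaces $H^a_{(0)}([0,T];H^b)$ with the interpolation structure of Section~\ref{Function spaces and preliminary lemmas}, so that the $T$-dependence always appears as an explicit positive power, and the spatial exponents must be kept strictly above $1$ throughout (which is exactly why $\gamma+1$, $1+\mu$, $1+\eta$ replace the borderline value $1$). The geometric ingredient, that $\tilde X$ never reaches the branch point, is elementary once the lower bound on $|\tilde X|$ is in hand, so the substance of the proof is the functional-analytic composition estimate in these coupled space-time spaces.
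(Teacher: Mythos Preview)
Your sketch is correct and matches the approach of the cited references \cite[Lemmas 3.10--3.11]{castro2019splash} and \cite[Lemma A.1]{di2020splash}; note that the paper itself does not give a proof of this lemma but simply quotes it from those sources. The mechanism you identify --- a pointwise lower bound $|\tilde X|\ge M^{-1}$ obtained from $\|\tilde X-\tilde\omega\|_{L^\infty L^\infty}\le T^{1/4}\|\tilde X-\hat X\|_{\mathcal A^{s+1,\gamma+1}}+C(\tilde v_0)T$, followed by a Moser-type composition estimate for the smooth function $F=J$ via the integral mean-value formula --- is exactly what is done there, and your remark that the spatial indices $s+1$, $\gamma+1$, $1+\mu$ are all kept strictly above $1$ so that the algebra and product lemmas (Lemmas \ref{lem3.4}, \ref{lem3.6}) apply is the key structural point.

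One small technical caveat worth making explicit: in the $H^{(s-1)/2}_{(0)}H^{1+\mu}$ and $H^1_{(0)}H^{\gamma+1}$ estimates, the factor $\int_0^1 DF(\tilde\omega+\theta(\tilde X-\tilde\omega))\,d\theta$ does \emph{not} vanish at $t=0$, so Lemma \ref{lem3.6}(2) cannot be applied to the product directly with both factors in the $(0)$-space. The cited proofs handle this by splitting $DF(\cdots)=DF(\tilde\omega)+[DF(\cdots)-DF(\tilde\omega)]$: the first piece is time-independent and acts as a fixed $H^r$-multiplier via Lemma \ref{lem3.4}, while the second piece vanishes at $t=0$ and is controlled by iterating the same mean-value argument. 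Your final paragraph alludes to this bookkeeping, but it is the one place where a reader might otherwise see a gap.
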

\begin{lemma}\label{jx-jy}(\cite[Lemma 3.12]{castro2019splash}).	Let $2<s<\frac{5}{2}, 1<\gamma< s-1,  \delta,\mu>0 $ small enough and $\tilde{X}-\hat{X}, \tilde{Y}-\hat{X} \in \mathcal{A}^{s+1, \gamma+1}$. Then, for $T>0$ small enough, we have
	\begin{align}
		 \|J(\tilde{X})-J(\tilde{Y}) \|_{L^{\infty} H^{s+1}} 
		&\leq  C (M, \tilde{v}_0, \|\tilde{X}-\hat{X} \|_{\mathcal{A}^{s+1, \gamma+1}},  \|\tilde{Y}-\hat{X} \|_{\mathcal{A}^{s+1, \gamma+1}} ) \|\tilde{X}-\tilde{Y} \|_{L^{\infty} H^{s+1}},\nonumber\\
		 \|J (\tilde{X})-J(\tilde{Y}) \|_{H_{(0)}^1 H^{\gamma+1}} & \leq C (M, \tilde{v}_0, \|\tilde{X}-\hat{X} \|_{\mathcal{A}^{s+1, \gamma+1}},  \|\tilde{Y}-\hat{X} \|_{\mathcal{A}^{s+1, \gamma+1}} )\|\tilde{X}-\tilde{Y}\|_{H_{(0)}^1 H^{\gamma+1}},\nonumber
	\end{align}
with 
\begin{align*}
	M=\max  \bigg\{&\frac{1}{\inf|\tiomega|-C(\tiv_0) T-T^{\frac 14}\|\tiX-\hat{X}\|_{\mathcal{A}^{s+1,\gamma+1}}}, \\
	&\frac{1}{\inf|\tiomega|-C(\tiv_0) T-T^{\frac 14}\|\tilde{Y}-\hat{X}\|_{\mathcal{A}^{s+1,\gamma+1}}} \bigg\} .
\end{align*}		
\end{lemma}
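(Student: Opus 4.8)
## Proof proposal for Lemma~\ref{jx-jy}

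The plan is to reduce everything to the structure of $J$ as an explicit function of $\tiX$. Recall from the conformal map $P(z)=\sqrt z$ that $P^{-1}(w)=w^2$, so $J_{kj}=(\partial_j P^k)\circ P^{-1}$ is (up to constants and the $2\times2$ rotation/antisymmetry bookkeeping) a \emph{linear} function of the point; more precisely $J(\tiX)$ has entries that are affine in $\tiX^1,\tiX^2$, and in fact $J(\tiX)^{-1}$ is exactly the matrix displayed in \eqref{Jni}. Consequently $J(\tiX)-J(\tilde Y)$ is, up to a fixed constant matrix, simply $\tiX-\tilde Y$ composed with a fixed smooth (indeed polynomial) map, and the whole lemma becomes a statement about how the Beale-type norms $L^\infty H^{s+1}$ and $H^1_{(0)}H^{\gamma+1}$ behave under this composition. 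First I would write $J(\tiX)-J(\tilde Y)=\int_0^1 DJ\big(\tilde Y+\theta(\tiX-\tilde Y)\big)\,d\theta\,\cdot(\tiX-\tilde Y)$, so that the difference is a product of a ``coefficient'' factor $\int_0^1 DJ(\cdots)\,d\theta$ and the factor $\tiX-\tilde Y$.

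Next I would control the coefficient factor. Since $DJ$ is constant (or at worst a fixed smooth function with bounded derivatives on the relevant region), $\int_0^1 DJ(\tilde Y+\theta(\tiX-\tilde Y))d\theta$ is bounded in $L^\infty H^{s+1}$ and in $H^1_{(0)}H^{\gamma+1}$ by a constant depending only on $M$, $\tiv_0$, and $\|\tiX-\hat X\|_{\mathcal A^{s+1,\gamma+1}}$, $\|\tilde Y-\hat X\|_{\mathcal A^{s+1,\gamma+1}}$; this is precisely the content already extracted in Lemma~\ref{jx-j} for the single-argument case, and the two-argument version follows verbatim by the same argument applied along the segment $\theta\mapsto \tilde Y+\theta(\tiX-\tilde Y)$ (note $\tilde Y+\theta(\tiX-\tilde Y)-\hat X$ still lies in $\mathcal A^{s+1,\gamma+1}$ with norm $\le\|\tiX-\hat X\|+\|\tilde Y-\hat X\|$, which is why $M$ is taken as the maximum of the two denominators). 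The need for $T$ small enough, and for $\tiX-\hat X,\tilde Y-\hat X\in\mathcal A^{s+1,\gamma+1}$, enters exactly here: it guarantees $\inf_{\tiomega}|\tiomega|-C(\tiv_0)T-T^{1/4}\|\cdot\|_{\mathcal A^{s+1,\gamma+1}}>0$ so that $P^{-1}$ and hence $J$ are evaluated inside the region where they are smooth with controlled derivatives, and $M<\infty$.

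Then I would close the product estimate. Using that $H^{s+1}$ and $H^{\gamma+1}$ with $s>2$, $\gamma>1$ are Banach algebras (or more precisely using Lemmas~\ref{lem3.3}, \ref{lem3.6} type bilinear estimates, which hold because $s+1>2$ and $\gamma+1>2$), the $L^\infty H^{s+1}$-norm of the product is bounded by the product of the $L^\infty H^{s+1}$-norms, giving the first inequality; for the second, $\|fg\|_{H^1_{(0)}H^{\gamma+1}}\lesssim\|f\|_{H^1_{(0)}H^{\gamma+1}}\|g\|_{H^1_{(0)}H^{\gamma+1}}$ (Lemma~\ref{lem3.3}) combined with the coefficient bound yields $\|J(\tiX)-J(\tilde Y)\|_{H^1_{(0)}H^{\gamma+1}}\le C(M,\tiv_0,\dots)\|\tiX-\tilde Y\|_{H^1_{(0)}H^{\gamma+1}}$. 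I would be slightly careful that in the $H^1_{(0)}$ estimate one needs the coefficient $\int_0^1 DJ\,d\theta$ to have vanishing trace at $t=0$ only up to the order required by the $(0)$-subscript space; since $DJ$ is affine in $\tiX$ and $\tiX(0)=\tiomega$ is time-independent, the time-derivative structure at $t=0$ is controlled, and this is again exactly the bookkeeping already done for Lemma~\ref{jx-j}.

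The main obstacle — really the only non-bookkeeping point — is making sure the composition/Moser-type estimate for $G\mapsto J(\tiX+\theta(\tilde Y-\tiX))$ is applied with the \emph{right} regularity indices so that the constants come out independent of $T$: one must keep the ``outer'' factor in a space with enough derivatives ($H^{s+1}$ or $H^{\gamma+1}$, both $>H^2$ in dimension two, hence algebras) while only ever paying $T$-powers on the ``inner'' small factor $\tiX-\tilde Y$, never on the coefficient. Since $J$ here is genuinely affine, this Moser estimate degenerates to the trivial statement that $DJ$ is a constant, so in fact no composition lemma beyond Lemma~\ref{jx-j} is needed; the proof is a one-line reduction to Lemma~\ref{jx-j} plus the Banach-algebra property, and that is how I would present it, citing \cite[Lemma 3.12]{castro2019splash} for the identical water-wave computation.
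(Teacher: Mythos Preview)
Your overall strategy---writing $J(\tiX)-J(\tilde Y)=\int_0^1 DJ\big(\tilde Y+\theta(\tiX-\tilde Y)\big)\,d\theta\cdot(\tiX-\tilde Y)$ and then invoking the algebra property of $H^{s+1}$ and $H^{\gamma+1}$---is the right one, and you correctly identify the role of $M$ as keeping the argument away from the singularity of $J$. But your central simplification rests on a genuine error: you assert that ``$J(\tiX)$ has entries that are affine in $\tiX^1,\tiX^2$'' and hence that ``$DJ$ is a constant.'' This is false. What is linear in $\tiX$ is $J(\tiX)^{-1}$, the matrix in \eqref{Jni}, which represents complex multiplication by $2\tiX$. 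The matrix $J(\tiX)$ itself corresponds to the complex number $\tfrac{1}{2\tiX}$, so its entries are of the form $\tfrac{\pm\tiX^k}{2|\tiX|^2}$, rational and certainly not affine. Therefore $DJ$ is \emph{not} constant and the ``one-line reduction'' you advertise at the end does not go through.

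What does survive is exactly the parenthetical alternative you yourself offer: $DJ$ is a fixed smooth function on $\{|\tiX|\ge c>0\}$, with all derivatives controlled by powers of $M$, and one needs a genuine composition (Moser-type) estimate to bound the coefficient $\int_0^1 DJ(\cdots)\,d\theta$ in $L^\infty H^{s+1}$ and (after subtracting its nonzero value at $t=0$) in $H^1_{(0)}H^{\gamma+1}$. That is precisely the mechanism already established for a single argument in Lemma~\ref{jx-j}, here applied along the interpolation segment, and it is what \cite[Lemma~3.12]{castro2019splash} carries out; the present paper gives no independent proof and simply cites that reference. In short: your middle paragraph is the correct proof, but the affine claim and the concluding ``one-line'' shortcut must be discarded. (Minor slip: the bilinear estimate you want is Lemma~\ref{lem3.6}, not Lemma~\ref{lem3.3}.)
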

\begin{lemma}\label{zeta-}(\cite[Lemma 3.13]{castro2019splash}, \cite[Lemma A.4]{di2020splash}).
	Let $2<s<\frac{5}{2}, 1<\gamma< s-1,  \delta,\mu>0 $ small enough and $\tilde{X}-\hat{X} \in \mathcal{A}^{s+1, \gamma+1},$ and $\tilde{\zeta}=(\nabla \tilde{X})^{-1}$. Then for $T>0$ small enough, we have
\begin{align}
	\|\tilde{\zeta}\|_{L^{\infty} H^s}+\sum_{i=1}^2 \|\partial_i \tilde{\zeta} \|_{L^{\infty} H^s} &\leq C (M, \|\tilde{X}-\hat{X} \|_{\mathcal{A}^{s+1, \gamma+1}} ),\nonumber\\
	\|\tilde{\zeta}-\mathcal{I}\|_{L^{\infty} H^s} &\leq C (M, \|\tilde{X}-\hat{X} \|_{\mathcal{A}^{s+1, \gamma+1}} )\|\tilde{X}-\tiomega\|_{L^{\infty} H^{s+1}},\nonumber\\	
	\|\tilde{\zeta}-\mathcal{I}\|_{H_{(0)}^{\frac{s-1}{2}+\delta} H^{1+\mu}} &\leq
	C (M, \|\tilde{X}-\hat{X} \|_{\mathcal{A}^{s+1, \gamma+1}} ) \|\tilde{X}-\tiomega\|_{H_{(0)}^{\frac{s-1}{2}+\delta} H^{2+\mu}},\nonumber\\
	\|\tilde{\zeta}-\mathcal{I}\|_{H_{(0)}^1 H^{\gamma}} &\leq C (M, \|\tilde{X}-\hat{X} \|_{\mathcal{A}^{s+1, \gamma+1}} )\|\tilde{X}-\tiomega\|_{H_{(0)}^1 H^{\gamma+1}},\nonumber
\end{align}
where 
\begin{align*}
	M=\frac{1}{1-C(\tiv_0) T-CT^{\frac 14}\|\tiX-\hat{X}\|_{\mathcal{A}^{s+1,\gamma+1}} -CT^{\frac 12}\|\tiX-\hat{X}\|^2_{\mathcal{A}^{s+1,\gamma+1}} }.
\end{align*}
\end{lemma}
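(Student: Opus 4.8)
The plan is to reduce the statement to the explicit formula for the inverse of a $2\times 2$ matrix together with the composition and product estimates in the Beale spaces collected in this appendix. Write $\tizeta=(\nabla\tiX)^{-1}=(\det\nabla\tiX)^{-1}\operatorname{adj}(\nabla\tiX)$, where $\operatorname{adj}\left(\begin{smallmatrix}a&b\\c&d\end{smallmatrix}\right)=\left(\begin{smallmatrix}d&-b\\-c&a\end{smallmatrix}\right)$, so that each entry of $\tizeta$ is, up to relabelling and sign, the product of an entry of $\nabla\tiX$ with $(\det\nabla\tiX)^{-1}$. First I would establish the invertibility and the precise form of $M$: decomposing $\nabla\tiX=\id+t\nabla(J\tiv_0)+\nabla(\tiX-\hat X)$, using $H^{s+1}(\tiOmega_0)\hookrightarrow W^{1,\infty}$ (valid since $s+1>3$) and the weighted embedding $\mathcal{A}^{s+1,\gamma+1}\subset L^\infty_{\frac14}([0,T];H^{s+1})$, one gets $\|\nabla(\tiX-\tiomega)\|_{L^\infty_{t,x}}\le C(\tiv_0)T+CT^{1/4}\|\tiX-\hat X\|_{\mathcal{A}^{s+1,\gamma+1}}$. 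Expanding $\det(\id+A)=1+\trace A+\det A$ in two dimensions and keeping track of the three contributions --- the trace of $t\nabla(J\tiv_0)$, the trace of $\nabla(\tiX-\hat X)$, and the (quadratic) determinant term which therefore carries a factor $T^{1/2}\|\cdot\|^2$ --- gives $\det\nabla\tiX\ge M^{-1}>0$ uniformly on $[0,T]$ for $T$ small, with $M$ exactly as in the statement, so that $(\det\nabla\tiX)^{-1}$ is well defined and bounded by $M$.

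For the $L^\infty H^s$ bounds I would use that $H^s(\tiOmega_0)$ is a Banach algebra for $s>1$, so that $\|\tizeta\|_{H^s}\le C\|(\det\nabla\tiX)^{-1}\|_{H^s}\|\nabla\tiX\|_{H^s}$; the first factor is estimated by the Moser composition estimate for $z\mapsto 1/z$ on $[M^{-1},\infty)$, giving a bound in terms of $M$ and $\|\det\nabla\tiX-1\|_{H^s}\le C\|\nabla(\tiX-\tiomega)\|_{H^s}(1+\|\nabla(\tiX-\tiomega)\|_{H^s})$, which in turn is controlled by $\|\tiX-\hat X\|_{\mathcal{A}^{s+1,\gamma+1}}$ and $\tiv_0$ via Lemmas \ref{x-omega} and \ref{jx-j}; taking the supremum in $t$ yields the first inequality, and the bound for $\partial_i\tizeta$ follows analogously from the identity $\partial_i\tizeta=-\tizeta(\partial_i\nabla\tiX)\tizeta$ and the product estimates of this appendix. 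For the remaining three inequalities I would use the key algebraic identity $\tizeta-\id=-\tizeta\,\nabla(\tiX-\tiomega)$, obtained by multiplying $\nabla\tiX\,\tizeta=\id$ by $\tizeta$ and using $\id-\nabla\tiX=-\nabla(\tiX-\tiomega)$. In $L^\infty H^s$ the algebra property gives directly $\|\tizeta-\id\|_{L^\infty H^s}\le\|\tizeta\|_{L^\infty H^s}\|\tiX-\tiomega\|_{L^\infty H^{s+1}}$. For $H^{\frac{s-1}{2}+\delta}_{(0)}H^{1+\mu}$ and $H^1_{(0)}H^\gamma$ I would invoke the mixed time--space product estimates (Lemmas \ref{lem3.3}, \ref{lem3.4}, \ref{lem3.7}), whose hypotheses hold because $1+\mu>1$ and $\gamma>1$, so that the spatial slices are algebras; the factor $\nabla(\tiX-\tiomega)$ then produces exactly $\|\tiX-\tiomega\|_{H^{\frac{s-1}{2}+\delta}_{(0)}H^{2+\mu}}$ and $\|\tiX-\tiomega\|_{H^1_{(0)}H^{\gamma+1}}$ on the right, while the factor $\tizeta$ is controlled by writing $\tizeta=\id+(\tizeta-\id)$ and feeding back the $L^\infty H^s$ bound together with a short bootstrap.

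The genuinely delicate points are not the algebra but (i) the careful bookkeeping in the first step of which perturbative term enters at order $T$, $T^{1/4}\|\cdot\|_{\mathcal{A}^{s+1,\gamma+1}}$ or $T^{1/2}\|\cdot\|^2_{\mathcal{A}^{s+1,\gamma+1}}$, which is what pins down the exact expression for $M$; and (ii) ensuring that every constant is independent of $T$ for $T$ bounded, which is precisely why one must work in the weighted, vanishing-at-zero Beale spaces and invoke the embedding and interpolation theorems recalled after their definition. The bootstrap needed to feed $\tizeta$ rather than $\tizeta-\id$ into the mixed-norm product estimates is the other somewhat technical ingredient, and it closes because each such estimate carries a favourable power of $T$ or of $\|\tiX-\hat X\|_{\mathcal{A}^{s+1,\gamma+1}}$. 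Since this is Lemma~3.13 of \cite{castro2019splash} (cf. Lemma~A.4 of \cite{di2020splash}) with the present choice of indices, one may alternatively just quote those references; the argument above is the one carried out there.
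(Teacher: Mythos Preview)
The paper does not prove this lemma: it is stated in the appendix with a direct citation to \cite[Lemma~3.13]{castro2019splash} and \cite[Lemma~A.4]{di2020splash}, and no argument is given. Your sketch is correct and is exactly the argument carried out in those references---the explicit $2\times 2$ adjugate formula, the lower bound on $\det\nabla\tiX$ via $\det(\id+A)=1+\trace A+\det A$ to identify $M$, and the identity $\tizeta-\id=-\tizeta\,\nabla(\tiX-\tiomega)$ (which the present paper itself uses verbatim in the proof of Proposition~\ref{pro2}) combined with the algebra and product estimates of Lemmas~\ref{lem3.4}--\ref{lem3.7}.
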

\begin{lemma}\label{zetan-zetan-1}(\cite[Lemma 3.15]{castro2019splash},  \cite[Lemma A.5]{di2020splash}).
	Let $2<s<\frac{5}{2}, 1<\gamma< s-1,  \delta,\mu>0 $ small enough and $\tilde{X}^{(n)}-\hat{X}, \tilde{X}^{(n-1)}-\hat{X} \in \mathcal{A}^{s+1, \gamma+1}$ and $\tilde{\zeta}^{(n)}=(\nabla \tilde{X}^{(n)})^{-1}$, $\tilde{\zeta}^{(n-1)}=(\nabla \tilde{X}^{(n-1)})^{-1}$. Then, for $T>0$ small enough we have
\begin{align}
	 \|\tilde{\zeta}^{(n)}-\tilde{\zeta}^{(n-1)} \|_{H_{(0)}^{\frac{s-1}{2}+\delta} H^{1+\mu}} &\leq C (M, \tilde{v}_0 ) \|\tilde{X}^{(n)}-\tilde{X}^{(n-1)} \|_{H_{(0)}^{\frac{s-1}{2}+\delta} H^{2+\mu}},\nonumber\\
	 \|\tilde{\zeta}^{(n)}-\tilde{\zeta}^{(n-1)} \|_{H_{(0)}^1 H^{\gamma}} &\leq C (M, \tilde{v}_0 ) \|\tilde{X}^{(n)}-\tilde{X}^{(n-1)} \|_{H_{(0)}^1 H^{\gamma+1}},\nonumber
\end{align}
	where
	\begin{align*}
		M=\max _{m=n-1, n} \frac{1}{1-C(\tiv_0) T-CT^{\frac 14}\|\tiX^{(m)}-\hat{X}\|_{\mathcal{A}^{s+1,\gamma+1}} -CT^{\frac 12}\|\tiX^{(m)}-\hat{X}\|^2_{\mathcal{A}^{s+1,\gamma+1}} }.
	\end{align*}
\end{lemma}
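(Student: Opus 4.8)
The plan is to reduce everything to the algebraic resolvent identity for matrix inverses. Since $\tizeta^{(n)}=(\nabla\tiX^{(n)})^{-1}$ and $\tizeta^{(n-1)}=(\nabla\tiX^{(n-1)})^{-1}$, one has pointwise
\begin{align*}
	\tizeta^{(n)}-\tizeta^{(n-1)}=\tizeta^{(n)}\big(\nabla\tiX^{(n-1)}-\nabla\tiX^{(n)}\big)\tizeta^{(n-1)}
	=-\tizeta^{(n)}\,\nabla\big(\tiX^{(n)}-\tiX^{(n-1)}\big)\,\tizeta^{(n-1)}.
\end{align*}
For this to be meaningful I first record that, since $\tiX^{(m)}-\hat{X}\in\mathcal{A}^{s+1,\gamma+1}$ with $\hat{X}=\tiomega+tJ\tiv_0$, Lemma \ref{zeta-} applied with $m=n-1$ and $m=n$ guarantees that for $T>0$ small $\nabla\tiX^{(m)}$ stays uniformly close to $\id$ in $L^\infty H^s$, hence is invertible, and $\|\tizeta^{(m)}\|_{L^\infty H^s}$, $\|\partial_i\tizeta^{(m)}\|_{L^\infty H^s}$, $\|\tizeta^{(m)}-\id\|_{L^\infty H^s}$ are all bounded by $C(M,\tiv_0)$ with $M$ as in the statement. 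This is precisely where the factor $M=\max_{m=n-1,n}(\dots)$ enters.

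With the identity in hand, both inequalities follow from the same scheme: estimate the triple product $\tizeta^{(n)}\,\nabla(\tiX^{(n)}-\tiX^{(n-1)})\,\tizeta^{(n-1)}$ in the relevant space--time Beale norm, treating $\nabla(\tiX^{(n)}-\tiX^{(n-1)})$ as the distinguished factor carrying the difference and the two $\tizeta$ factors by Lemma \ref{zeta-}. For the $H^1_{(0)}H^\gamma$ bound, $H^\gamma(\tiOmega_0)$ is a Banach algebra because $\gamma>1$ in two dimensions, so together with the mixed time--space product inequalities of the appendix (Lemmas \ref{lem3.3}--\ref{lem3.7}) one controls, schematically, $\|\tizeta^{(n)}\,\nabla(\tiX^{(n)}-\tiX^{(n-1)})\,\tizeta^{(n-1)}\|_{H^1_{(0)}H^\gamma}$ by $C(M,\tiv_0)\,\|\nabla(\tiX^{(n)}-\tiX^{(n-1)})\|_{H^1_{(0)}H^\gamma}$, and the last factor is dominated by $\|\tiX^{(n)}-\tiX^{(n-1)}\|_{H^1_{(0)}H^{\gamma+1}}$. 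For the $H^{\frac{s-1}{2}+\delta}_{(0)}H^{1+\mu}$ bound the argument is identical, with $H^{1+\mu}(\tiOmega_0)$ playing the role of the algebra and the distinguished factor now contributing $\|\nabla(\tiX^{(n)}-\tiX^{(n-1)})\|_{H^{\frac{s-1}{2}+\delta}_{(0)}H^{1+\mu}}\le\|\tiX^{(n)}-\tiX^{(n-1)}\|_{H^{\frac{s-1}{2}+\delta}_{(0)}H^{2+\mu}}$.

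The point that needs care is the compatibility of the $H_{(0)}$ time structure under multiplication: the product lemmas of the appendix are stated for spaces with built-in vanishing at $t=0$, so one must verify that the factors (in particular $\tizeta^{(m)}-\id$ and $\tiX^{(n)}-\tiX^{(n-1)}$) vanish, together with the appropriate time derivatives, at $t=0$, so that no boundary terms are produced; this is exactly why the hypothesis $\tiX^{(m)}-\hat{X}\in\mathcal{A}^{s+1,\gamma+1}$, i.e.\ $\tiX^{(m)}-\tiomega-tJ\tiv_0$ has vanishing trace and first time derivative at $0$, is imposed, and it feeds into Lemma \ref{zeta-}. The main obstacle is therefore not the algebra --- the resolvent identity collapses everything to three factors --- but this bookkeeping of the exponents $\tfrac{s-1}{2}+\delta$, $1$ in time and $1+\mu$, $\gamma$ in space against the available product inequalities; once it is checked, the estimate is a direct adaptation of \cite[Lemma 3.15]{castro2019splash} and \cite[Lemma A.5]{di2020splash}, the only difference being that the constants now also track $\tiv_0$ through $\hat{X}$.
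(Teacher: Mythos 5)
Your argument is correct and is essentially the standard proof: the paper itself states this lemma without proof, citing \cite[Lemma 3.15]{castro2019splash} and \cite[Lemma A.5]{di2020splash}, and those proofs proceed exactly as you do, via the resolvent identity $\tizeta^{(n)}-\tizeta^{(n-1)}=-\tizeta^{(n)}\nabla(\tiX^{(n)}-\tiX^{(n-1)})\tizeta^{(n-1)}$ combined with the uniform bounds of Lemma \ref{zeta-} and the product estimates in the Beale spaces. Your closing remark about the $H_{(0)}$ bookkeeping is the right one to flag: in practice one writes $\tizeta^{(m)}=(\tizeta^{(m)}-\id)+\id$ before applying Lemma \ref{lem3.6}, since $\tizeta^{(m)}$ itself does not vanish at $t=0$, and the difference factor $\tiX^{(n)}-\tiX^{(n-1)}$ does vanish to first order in time because both iterates share the data $\tiomega$ and $J\tiv_0$.
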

\begin{lemma}\label{lem3.2}(\cite[Lemma2.3]{beale1981initial}).
	Suppose $ 0 \leq r \leq 4. $
	
	(1) The Identity extends to a bounded operator
	\begin{align*}
		\mathcal{K}^r((0, T) ; \Omega) \rightarrow H^p(0, T) H^{r-2 p}(\Omega)
	\end{align*}
	for $p \leq \frac{r}{2}$.
	
	(2) If $r$ is not an odd integer, the restriction of this operator to the subspace with $\partial_t^k v(0)=0,0 \leq k<\frac{r-1}{2}$ is bounded independently on $T$, indeed
	\begin{align*}
		\|v\|_{H_{(0)}^p H^{r-2 p}} \leq C\|v\|_{\mathcal{K}_{(0)}^r} .
	\end{align*}
\end{lemma}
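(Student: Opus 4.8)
The plan is to follow the spectral/interpolation argument behind \cite[Lemma 2.3]{beale1981initial}: both assertions reduce to a pointwise (weighted) Young inequality once the appropriate joint transforms are set up. Recall that $\mathcal{K}^r((0,T);\Omega)=L^2([0,T];H^r(\Omega))\cap H^{r/2}([0,T];L^2(\Omega))$. Expand $v$ simultaneously in the eigenfunctions $\{e_n(t)\}$ of the time operator $S=1-\partial_t^2$ on $(0,T)$ (eigenvalues $\mu_n\simeq 1+n^2/T^2$) and in an orthonormal basis $\{\phi_k(x)\}$ of $L^2(\Omega)$ diagonalizing a realization of the Laplacian (eigenvalues $\lambda_k$), writing $v=\sum_{n,k}v_{n,k}\,e_n(t)\phi_k(x)$. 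Then $\|v\|_{L^2H^r}^2\simeq\sum_{n,k}\langle\lambda_k\rangle^{r}|v_{n,k}|^2$, $\|v\|_{H^{r/2}L^2}^2\simeq\sum_{n,k}\mu_n^{r/2}|v_{n,k}|^2$, and $\|v\|_{H^pH^{r-2p}}^2\simeq\sum_{n,k}\mu_n^{p}\langle\lambda_k\rangle^{r-2p}|v_{n,k}|^2$. Setting $\theta=2p/r\in[0,1]$ — this is exactly where $p\le r/2$ (and $r-2p\ge 0$, so that $H^{r-2p}(\Omega)$ makes sense) enters — the elementary bound $\mu_n^{p}\langle\lambda_k\rangle^{r-2p}=(\mu_n^{r/2})^{\theta}(\langle\lambda_k\rangle^{r})^{1-\theta}\le\theta\,\mu_n^{r/2}+(1-\theta)\langle\lambda_k\rangle^{r}$ gives termwise $\|v\|_{H^pH^{r-2p}}^2\le\|v\|_{H^{r/2}L^2}^2+\|v\|_{L^2H^r}^2$, i.e.\ the boundedness of the identity $\mathcal{K}^r\to H^pH^{r-2p}$ asserted in part (1).

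For part (2) the only subtlety is the $T$-uniformity of the constant: the $\mu_n$ depend on $T$, so near the bottom of the time spectrum the equivalence between the spectral quantity and the genuine $H^{r/2}_{(0)}$ norm is not $T$-free. The remedy is to extend $v$ by zero from $(0,T)$ to all of $\mathbb{R}$ in the time variable. Because $\partial_t^k v(0)=0$ for $0\le k<\tfrac{r-1}{2}$ and $r$ is not an odd integer — so $r/2$ is not a half-integer requiring a compatibility condition that the zero extension would violate — the extension $\tilde v$ lies in $H^{r/2}(\mathbb{R};L^2(\Omega))$ with $\|\tilde v\|_{H^{r/2}(\mathbb{R})L^2}\le C\|v\|_{H^{r/2}_{(0)}(0,T)L^2}$ for a universal $C$, while $\|\tilde v\|_{L^2(\mathbb{R})H^r}=\|v\|_{L^2(0,T)H^r}$. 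On the full line one repeats the interpolation of part (1) using the genuine Fourier transform in $t$, where the time weight is $\langle\tau\rangle$ and all equivalences are $T$-free; restriction back to $(0,T)$ only decreases the norm. This yields $\|v\|_{H^p_{(0)}H^{r-2p}}\le C\|v\|_{\mathcal{K}^r_{(0)}}$ with $C$ independent of $T$, and by inspection the vanishing-trace conditions persist through each step, so the left-hand side really is an $H^p_{(0)}$ norm.

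The main obstacle is precisely this $T$-uniformity in part (2), namely checking that extension by zero is bounded $H^{r/2}_{(0)}(0,T)\to H^{r/2}(\mathbb{R})$ with $T$-independent norm; this is where the exclusion of odd-integer $r$ is essential and where one must keep careful track of which time derivatives vanish at $t=0$. Everything else — the spectral diagonalization, the Young inequality, the restriction step — is routine. Since this lemma coincides with \cite[Lemma 2.3]{beale1981initial}, we content ourselves with the sketch above and refer the reader there for the remaining details.
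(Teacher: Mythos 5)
The paper does not prove this lemma; it is quoted verbatim from Beale \cite[Lemma 2.3]{beale1981initial}, whose argument runs through the Lions--Magenes interpolation theorem for intersection spaces rather than an explicit double diagonalization. Your overall strategy (reduce to a weighted Young inequality between the time weight to the power $r/2$ and the space weight to the power $r$) is the right idea and is what ultimately powers the cited result, but two steps in your sketch would fail as written.

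First, the claimed equivalence $\|v\|_{L^2H^r}^2\simeq\sum_{n,k}\langle\lambda_k\rangle^{r}|v_{n,k}|^2$ is false on a bounded domain for the range $0\le r\le 4$ at stake. The spectral characterization of $H^r(\Omega)$ by eigenfunctions of a realization of the Laplacian holds only for small $r$ (e.g.\ $r<1/2$ for the Dirichlet realization): a function that is smooth up to $\partial\Omega$ but does not satisfy the boundary conditions of the chosen realization lies in $H^4(\Omega)$ yet has eigenfunction coefficients decaying too slowly for $\sum_k\lambda_k^{r}|v_k|^2$ to converge. Since elements of $\mathcal{K}^r$ carry no boundary conditions, you cannot diagonalize in the space variable this way; you must either extend in $x$ to $\mathbb{R}^n$ (or a torus) with a bounded extension operator and use the Fourier transform there, or invoke the abstract interpolation identity $\left[L^2(H^r)\cap H^{r/2}(L^2)\right]\hookrightarrow H^{p}(H^{r-2p})$ directly, as Beale does. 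Second, in part (2) you extend $v$ \emph{by zero from $(0,T)$ to all of $\mathbb{R}$}, but the hypotheses give vanishing of time derivatives only at $t=0$, not at $t=T$; extension by zero past $t=T$ is not bounded on $H^{r/2}$. The correct device is a one-sided zero extension to $t<0$ (this is where the conditions $\partial_t^kv(0)=0$, $k<(r-1)/2$, and the exclusion of odd integer $r$ enter, exactly as you say) combined with a $T$-uniform extension past $t=T$, e.g.\ reflection about $t=T$, whose operator norm does not depend on $T$. With those two repairs your sketch becomes a legitimate proof of the lemma.
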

\begin{lemma}\label{lem3.3}(\cite[Lemma 3.3]{di2020splash}).
	Let $\bar{T}>0$ be arbitrary, $B$ a Hilbert space and choose $T \leq \bar{T}$.
	
	(1) For $v \in L^2((0, T) ; B)$, we define $V \in H^1((0, T) ; B)$ by
	\begin{align*}
		V(t)=\int_0^t v(\tau) d \tau .
	\end{align*}
	For $0<s<\frac{1}{2}$ and $0 \leq \vare<s$, then the map $v \rightarrow V$ is a bounded operator from $H^s((0, T) ; B)$ to $H^{s+1-\vare}((0, T) ; B)$, and
	\begin{align*}
		\|V\|_{H^{s+1-\vare}((0, T) ; B)} \leq C_0 T^{\vare}\|v\|_{H^s((0, T) ; B)},
	\end{align*}
	where $C_0$ is independent of $T$ for $0<T \leq \bar{T}$.
	
	(2) For $\frac{1}{2}<s<1$, we impose $v(0)=0$ and $0 \leq \vare<s$. Then $v \rightarrow V$ is a bounded operator from $H_{(0)}^s((0, T) ; B)$ to $H_{(0)}^{s+1-\vare}((0, T) ; B)$ and
	\begin{align*}
		\|V\|_{H_{(0)}^{s+1-\vare}((0, T) ; B)} \leq C_0 T^{\vare}\|v\|_{H_{(0)}^s((0, T) ; B)},
	\end{align*}
	where $C_0$ is independent of $T$ for $0<T<\bar{T}$.
\end{lemma}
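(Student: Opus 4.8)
The statement to prove is Lemma \ref{lem3.3}, the time-integration estimate for Bochner-valued fractional Sobolev spaces. The plan is to treat parts (1) and (2) in parallel, the only difference being whether a vanishing trace at $t=0$ is available. First I would recall the explicit spectral description of $H^s((0,T);B)$ and $H^s_{(0)}((0,T);B)$ given in Section \ref{Function spaces and preliminary lemmas}: a function $v$ expands in the cosine (respectively sine) eigenbasis of $S=1-\partial_t^2$ with the appropriate boundary conditions, and the $H^s$-norm is the weighted $\ell^2$ sum of the expansion coefficients against the $s$-th powers of the eigenvalues. The key structural fact is that the primitive $V(t)=\int_0^t v(\tau)\,d\tau$ interacts nicely with these bases: integrating a sine turns it into a shifted cosine plus a constant, which is precisely why the vanishing-trace condition in part (1) must be dropped at $s>\tfrac12$ and retained (for $v$, not $V$) in part (2).

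\textbf{Key steps.} The core estimate is a scalar interpolation inequality: for the one-dimensional operator $v\mapsto V$ on $(0,T)$ one shows $\|V\|_{H^{1}} \le C\|v\|_{L^2}$ trivially (by Cauchy--Schwarz, $|V(t)|\le t^{1/2}\|v\|_{L^2([0,t])}$ and $\dot V=v$), and $\|V\|_{H^{1}}\le C T\|v\|_{L^\infty}\le C T \|v\|_{H^s}$ is false for the gain we want, so instead I would prove the two endpoint bounds $\|V\|_{H^{s+1}}\le C\|v\|_{H^s}$ (no time gain) and $\|V\|_{H^{1}}\le C_0 T^{s}\|v\|_{H^s}$ when $0\le s<\tfrac12$ (a genuine power of $T$, obtained by writing $V(t)=\int_0^t v$ and estimating in $L^2$ with Hölder, then controlling $\dot V = v$ crudely), and then interpolate between exponents $s+1$ and $1$ to land at $s+1-\vare$ with the factor $T^{\vare}$. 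Concretely: (i) establish the no-loss bound $\|V\|_{H^{s+1}}\le C\|v\|_{H^s}$ via the spectral expansion, noting $\partial_t V = v$ reduces this to $\|V\|_{L^2}\lesssim \|v\|_{H^s}$ plus $\|v\|_{H^s}$ itself; (ii) establish $\|V\|_{H^{s+\sigma}}\le C_0 T^{1-\sigma}\|v\|_{H^s}$ for $\sigma\in(0,1]$ — for $\sigma$ near $1$ this is (i), for $\sigma$ near $0$ this is the gain; (iii) rename $\sigma = 1-\vare$. Throughout, uniformity of $C_0$ in $T\le\bar T$ is guaranteed because the eigenvalues $1+\tfrac{n^2\pi^2}{T^2}$ scale so that the relevant ratios are controlled once $T\le\bar T$; the scaling argument (rescale $(0,T)$ to $(0,1)$, apply the fixed estimate, rescale back, tracking powers of $T$) is the cleanest way to see the exact exponent of $T$ and to get the constant independent of $T$. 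For part (2), the only extra point is that $V(0)=0$ automatically and $\dot V(0)=v(0)=0$ by hypothesis, which places $V$ in the $(0)$-subspace at level $s+1-\vare$ (using the characterization of $H^{m+s}_{(0)}$ recalled earlier: vanishing of derivatives up to order $m$ or $m-1$ according as $s\lessgtr\tfrac12$), so the spectral expansion is in the sine basis and the same computation applies verbatim.

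\textbf{Main obstacle.} The delicate point is getting the \emph{sharp} power $T^{\vare}$ with a constant independent of $T$, rather than some $T^{\vare'}$ with $\vare'<\vare$ or a $T$-dependent constant. Naive estimates (Hölder in time, then Sobolev embedding) lose too much; the rescaling-to-unit-interval argument must be combined carefully with the interpolation so that the dilation factor multiplying the norm comes out exactly as $T^{\vare}$. One must also be careful at $s=\tfrac12$ (excluded) and track whether the primitive of a function with $v(0)=0$ genuinely gains the full unit of regularity at the left endpoint exponent — this is where the hypothesis $v(0)=0$ in part (2) is essential and where in part (1) the restriction $s<\tfrac12$ is what makes the trace condition vacuous. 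I expect the bulk of the work to be the bookkeeping in the spectral/scaling estimate rather than any conceptual difficulty, so I would set it up once as a clean one-dimensional lemma on $(0,1)$ and then dilate, rather than working on $(0,T)$ throughout.
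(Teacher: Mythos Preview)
The paper does not prove this lemma at all: it is quoted verbatim from \cite[Lemma 3.3]{di2020splash} and stated in the appendix without argument. So there is no ``paper's own proof'' to compare against; your proposal is being measured only against whether it would actually establish the statement.

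Your outline contains the right ingredients (spectral description, rescaling to the unit interval, checking that $V$ lands in the correct $(0)$-subspace), but one of your two interpolation endpoints is wrong. You claim $\|V\|_{H^1}\le C_0 T^s\|v\|_{H^s}$ for $0<s<\tfrac12$, arguing that the $L^2$ part gains a power of $T$ by H\"older while ``$\dot V=v$ crudely'' handles the derivative. But $\|\dot V\|_{L^2}=\|v\|_{L^2}$ carries \emph{no} power of $T$, so $\|V\|_{H^1}$ is only bounded by $C\|v\|_{H^s}$, not $CT^s\|v\|_{H^s}$. With that endpoint gone, the interpolation you sketch does not produce $T^\vare$.

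The repair is already implicit in what you wrote. First prove the no-loss bound $\|V\|_{H^{s+1}_{(0)}}\le C_0\|v\|_{H^{s}_{(0)}}$ uniformly in $T$ (this is the genuine content, and your spectral/rescaling plan handles it). Then observe that in the $(0)$-scale the eigenvalues $\omega_n^2=\big(\tfrac{(2n+1)\pi}{2T}\big)^2$ are bounded below by $c/T^2$, so the trivial embedding already gives
\[
\|u\|_{H^{r-\vare}_{(0)}}\le \Big(\tfrac{2T}{\pi}\Big)^{\vare}\|u\|_{H^{r}_{(0)}}.
\]
Applying this with $u=V$ and $r=s+1$ yields the statement directly, with no interpolation needed. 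For part (1) this works because $V(0)=0$ places $V$ in the $(0)$-space at level $s+1-\vare$ (since $s-\vare<\tfrac12$), and for $0<s<\tfrac12$ the spaces $H^s$ and $H^s_{(0)}$ have equivalent norms uniformly in $T\le\bar T$; for part (2) it is immediate. Your rescaling-to-$(0,1)$ idea is an equivalent way to package the same computation.
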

\begin{lemma}\label{lem3.4}
	Suppose $\Omega \in \mathbb{R}^n, r>\frac n2$ and $r \geq s \geq 0$. If $v \in H^r(\Omega)$ and $w \in H^s(\Omega)$, then  $vw \in$ $H^s(\Omega)$ and
	\begin{align*}
		\|v w\|_{H^s(\Omega)} \leq C\|v\|_{H^r(\Omega)}\|w\|_{H^s(\Omega)}.
	\end{align*}
\end{lemma}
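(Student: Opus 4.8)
The plan is to prove this standard product (Sobolev multiplier) estimate by isolating the two endpoint exponents $s=0$ and $s=r$, and then interpolating in $s$ with the factor $v$ held fixed. A first reduction sends everything to $\Omega=\mathbb{R}^n$: for the smooth bounded domains considered here there is a bounded extension operator $E\colon H^\sigma(\Omega)\to H^\sigma(\mathbb{R}^n)$ acting simultaneously for all $\sigma\in[0,r]$ (Stein's extension), and since $(Ev)(Ew)$ restricts to $vw$ on $\Omega$, it suffices to bound $\|(Ev)(Ew)\|_{H^s(\mathbb{R}^n)}$ by $C\|Ev\|_{H^r(\mathbb{R}^n)}\|Ew\|_{H^s(\mathbb{R}^n)}$; alternatively one works directly on $\Omega$, using that it is a Sobolev extension and interpolation domain. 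From now on I would work on $\mathbb{R}^n$.

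For the endpoint $s=0$: since $r>n/2$, the Sobolev embedding gives $H^r(\mathbb{R}^n)\hookrightarrow L^\infty(\mathbb{R}^n)$, so $\|vw\|_{L^2}\le\|v\|_{L^\infty}\|w\|_{L^2}\le C\|v\|_{H^r}\|w\|_{L^2}$. For the endpoint $s=r$: this is the Banach-algebra property of $H^r(\mathbb{R}^n)$ for $r>n/2$, namely $\|vw\|_{H^r}\le C\|v\|_{H^r}\|w\|_{H^r}$, which I would either invoke as classical or establish through the Littlewood--Paley paraproduct decomposition $vw=T_vw+T_wv+R(v,w)$ into low-high, high-low and high-high pieces: the paraproducts $T_vw,\,T_wv$ are controlled using $\|v\|_{L^\infty},\|w\|_{L^\infty}\lesssim\|v\|_{H^r},\|w\|_{H^r}$, while the remainder $R(v,w)$, whose dyadic blocks are only frequency-supported in balls of radius $\sim 2^j$, is square-summable in $H^r$ precisely because $r>0$ (to recombine the almost-orthogonal output blocks for $s\ge 0$) and $r>n/2$ (a Bernstein gain of size $2^{j(n/2-r)}$ that is square-summable in $j$).

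Finally, interpolation closes the argument. For fixed $v\in H^r(\mathbb{R}^n)$, the multiplication map $M_v\colon w\mapsto vw$ is bounded $L^2\to L^2$ and $H^r\to H^r$ with operator norms both $\le C\|v\|_{H^r}$ by the two endpoint estimates just discussed. Since $[L^2,H^r]_\theta=H^{\theta r}$ for $\theta\in[0,1]$, complex interpolation yields, for $s=\theta r$ — hence for every $s\in[0,r]$ — that $M_v\colon H^s\to H^s$ is bounded with $\|M_v\|\le C\|v\|_{H^r}$, which is exactly $\|vw\|_{H^s}\le C\|v\|_{H^r}\|w\|_{H^s}$; transferring back through $E$ gives the statement on $\Omega$.

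The main obstacle is the $s=r$ endpoint, i.e. the algebra property of $H^r$: if one does not quote it, one must run the Littlewood--Paley estimate by hand, where the high-high interaction term $R(v,w)$ is the only genuinely delicate piece, requiring $s=r>0$ to sum the output frequency blocks and $r>n/2$ to absorb the Bernstein loss; the strict inequality $r>n/2$ is what keeps all exponents summable (the borderline $r=n/2$ would incur a logarithmic loss). One can also bypass the algebra endpoint altogether and carry out the paraproduct estimate directly for general $0\le s\le r$, the high-high term again being the sole subtle point, with the mild borderline value $s=n/2$ handled by a slight refinement of the high-low estimate or by bilinear interpolation from nearby exponents.
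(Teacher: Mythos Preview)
Your proof is correct and follows a standard route: reduce to $\mathbb{R}^n$ via a Stein extension, establish the two endpoints $s=0$ (Sobolev embedding $H^r\hookrightarrow L^\infty$) and $s=r$ (the algebra property of $H^r$ for $r>n/2$), then interpolate. This is a perfectly valid and clean argument.

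However, there is nothing to compare against: the paper does not prove this lemma. It is listed in the appendix among ``Further estimates and key lemmas'' as a standard result and is simply stated without proof or citation (unlike several neighboring lemmas, which carry references to \cite{castro2019splash}, \cite{di2020splash}, or \cite{beale1981initial}). So your proposal supplies a proof where the paper offers none; the approach you chose is the natural one and would be accepted without reservation.
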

\begin{lemma}\label{lem3.5}(\cite[Lemma 3.6]{castro2019splash}).
	If $v \in H^{\frac{1}{q}}$ and $w \in H^{\frac{1}{p}}$ with $\frac{1}{p}+\frac{1}{q}=1$ and $1<p<\infty$ then
	\begin{align*}
		\|v w\|_{L^2} \leq C\|v\|_{H^{\frac{1}{q}}}\|w\|_{H^{\frac{1}{p}}}.
	\end{align*}
\end{lemma}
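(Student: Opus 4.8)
\textbf{Proof proposal for Lemma \ref{lem3.5}.}

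The plan is to obtain the bound $\|vw\|_{L^2}\le C\|v\|_{H^{1/q}}\|w\|_{H^{1/p}}$ from the Sobolev embedding $H^{1/r}(\mathbb{R}^n)\hookrightarrow L^{2r'}(\mathbb{R}^n)$ (or its local version on a bounded domain $\Omega$), combined with Hölder's inequality, where the fractional Sobolev exponents are chosen precisely so that the Hölder exponents are conjugate. First I would reduce to the case $1<p,q<\infty$ with $\frac1p+\frac1q=1$ and observe that $\frac1p,\frac1q\in(0,1)$, so $H^{1/p}$ and $H^{1/q}$ are genuinely fractional-order spaces of low regularity. The key dimensional bookkeeping is: by the fractional Sobolev embedding, $H^{s}(\mathbb{R}^n)\hookrightarrow L^{\theta}(\mathbb{R}^n)$ with $\frac1\theta=\frac12-\frac sn$ for $0\le s<\frac n2$; applying this with $s=\frac1q$ gives $H^{1/q}\hookrightarrow L^{\theta_1}$ with $\frac1{\theta_1}=\frac12-\frac1{nq}$, and with $s=\frac1p$ gives $H^{1/p}\hookrightarrow L^{\theta_2}$ with $\frac1{\theta_2}=\frac12-\frac1{np}$. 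Then $\frac1{\theta_1}+\frac1{\theta_2}=1-\frac1n(\frac1p+\frac1q)=1-\frac1n<\frac12$ is not what we want directly, so instead of using both embeddings at full strength I would split the loss: pick $\sigma_1+\sigma_2=1$ with $\sigma_1\le\frac1q$, $\sigma_2\le\frac1p$ chosen so that $H^{\sigma_1}\hookrightarrow L^{a}$, $H^{\sigma_2}\hookrightarrow L^{b}$ with $\frac1a+\frac1b=\frac12$, giving $\|vw\|_{L^2}\le\|v\|_{L^a}\|w\|_{L^b}\le C\|v\|_{H^{\sigma_1}}\|w\|_{H^{\sigma_2}}\le C\|v\|_{H^{1/q}}\|w\|_{H^{1/p}}$; in one space dimension $n=1$ one takes $\sigma_1=\frac1q$, $\sigma_2=\frac1p$ exactly, and then $\frac1a=\frac12-\frac1q$, $\frac1b=\frac12-\frac1p$ sum to $\frac12$ precisely, which is the borderline case that makes the statement sharp. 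Since the application in the paper is two-dimensional on a bounded domain $\tiOmega_0$, I would carry out the computation for general $n\ge1$, noting the constant $C$ depends on $n$, $p$, and $\Omega$ through the extension operator.

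In detail, the steps are: (1) extend $v,w$ from $\Omega$ to $\mathbb{R}^n$ with control of the $H^{1/q}$, $H^{1/p}$ norms, using a bounded extension operator (this is where the geometry of $\Omega$ enters; it suffices that $\partial\Omega$ is Lipschitz, which holds in our setting); (2) invoke the Gagliardo–Nirenberg–Sobolev embedding for fractional Sobolev spaces, $\|f\|_{L^{\theta}(\mathbb{R}^n)}\le C\|f\|_{H^{s}(\mathbb{R}^n)}$ whenever $0\le s<\frac n2$ and $\frac1\theta\ge\frac12-\frac sn$ (for $s<\frac n2$ we may take equality or any larger $\theta$; for $\frac1q,\frac1p$ small relative to $\frac n2$ this is comfortably satisfied); (3) apply Hölder's inequality $\|vw\|_{L^2}\le\|v\|_{L^a}\|w\|_{L^b}$ with $\frac1a+\frac1b=\frac12$, choosing $a,b$ compatible with the embeddings in step (2); (4) chain the inequalities and restrict back to $\Omega$ (trivial, since the $L^2(\Omega)$ norm is dominated by the $L^2(\mathbb{R}^n)$ norm of the extensions). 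I would remark that when $n=1$ the choice is forced and equality of exponents gives exactly the stated inequality with no room to spare.

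The main obstacle — really the only subtle point — is verifying that the exponents can be matched up at the critical (Sobolev-endpoint) balance $\frac1a+\frac1b=\frac12$ while keeping $\sigma_1\le\frac1q$, $\sigma_2\le\frac1p$, i.e., that one does not ``run out of regularity.'' This amounts to the elementary check that $\frac1a=\frac12-\frac{\sigma_1}{n}$ and $\frac1b=\frac12-\frac{\sigma_2}{n}$ with $\sigma_1+\sigma_2=1$ yields $\frac1a+\frac1b=1-\frac1n$; for $n=1$ this equals $0$, which is too small, so instead one uses the embeddings with a slight deficit — $H^{\sigma}\hookrightarrow L^{\theta}$ for any $\theta<\infty$ when $\sigma=\frac n2$ and for $\theta\le\frac{2n}{n-2\sigma}$ when $\sigma<\frac n2$ — and picks $\theta$ strictly below the critical value so the two exponents add to exactly $\frac12$. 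Concretely, for $n=1$: $H^{1/q}\hookrightarrow L^{r_1}$ for all $r_1<\frac{2}{1-2/q}=\frac{2q}{q-2}$ (interpreting this as all $r_1<\infty$ when $q\le2$), and similarly for $H^{1/p}$; one then verifies that admissible $(r_1,r_2)$ with $\frac1{r_1}+\frac1{r_2}=\frac12$ exist, which is immediate. Everything else is the standard extension/embedding/Hölder chain, so the proof is short; the paper cites \cite[Lemma 3.6]{castro2019splash} precisely because the argument is routine and self-contained.
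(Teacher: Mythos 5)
The paper offers no proof of this lemma; it is quoted verbatim from \cite[Lemma 3.6]{castro2019splash}, so there is nothing to compare against except the standard argument, which is indeed the extension--embedding--H\"older chain you describe. Your overall strategy is correct, but your exponent bookkeeping is off in a way worth fixing. Writing $\frac1{\theta}=\frac12-\frac{s}{n}$ for $H^{s}(\mathbb{R}^n)\hookrightarrow L^{\theta}$, the two embeddings give $\frac1{\theta_1}+\frac1{\theta_2}=1-\frac1n\bigl(\frac1p+\frac1q\bigr)=1-\frac1n$, and the case where this equals the H\"older target $\frac12$ is $n=2$, not $n=1$: in $n=1$ the sum is $0$ (so there is slack, and on a bounded domain one simply lowers the Lebesgue exponents), while in $n\ge3$ the sum exceeds $\frac12$ and the lemma actually fails by this scaling (e.g.\ $H^{1/2}\cdot H^{1/2}\not\subset L^2$ in $\mathbb{R}^3$). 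Your sentence claiming that for $n=1$ the exponents $\frac1a=\frac12-\frac1q$ and $\frac1b=\frac12-\frac1p$ ``sum to $\frac12$ precisely'' is an arithmetic slip: they sum to $1-\bigl(\frac1p+\frac1q\bigr)=0$. Since the lemma is applied on $\tiOmega_0\subset\mathbb{R}^2$, the clean version of your argument is the two-dimensional one with no splitting of regularity at all: $H^{1/q}(\Omega)\hookrightarrow L^{2p}(\Omega)$ because $\frac12-\frac{1/q}{2}=\frac1{2p}$, and symmetrically $H^{1/p}(\Omega)\hookrightarrow L^{2q}(\Omega)$, whence
\begin{align*}
\|vw\|_{L^2(\Omega)}\le \|v\|_{L^{2p}(\Omega)}\|w\|_{L^{2q}(\Omega)}\le C\|v\|_{H^{1/q}(\Omega)}\|w\|_{H^{1/p}(\Omega)},
\end{align*}
using $\frac1{2p}+\frac1{2q}=\frac12$ and the subcriticality $\frac1q<1=\frac n2$. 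With that correction (and the extension step for the bounded Lipschitz domain, which you already have), the proof is complete.
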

\begin{lemma}\label{lem3.6}(\cite[Lemma 2.6]{beale1981initial}).
	Suppose $B, Y, Z$ are Hilbert spaces, and $M: B \times Y \rightarrow Z$ is a bounded, bilinear operator. Suppose $w \in H^s((0, T) ; B)$ and $v \in H^s$ $((0, T) ; Y)$, where $s>\frac{1}{2}$. If $v w$ is defined by $M(v, w)$, then $v w \in H^s((0, T) ; Z)$ and the following hold:
	
	(1)	$		\|v w\|_{H^s((0, T) ; Z)} \leq C\|v\|_{H^s((0, T) ; Y)}\|w\|_{H^s((0, T) ; B)}$ .
		
	(2) In addition, if $s \leq 2$ and $\partial_t^k v(0)=\partial_t^k w(0)=0,0 \leq k<s-\frac{1}{2}$ and $s-\frac{1}{2}$ is not an integer, then the constant $C$ in (1) can be chosen independently on $T$. Indeed
	\begin{align*}
		\|v w\|_{H_{(0)}^s((0, T) ; Z)} \leq C\|v\|_{H_{(0)}^s((0, T) ; Y)}\|w\|_{H_{(0)}^s((0, T) ; B)}.
	\end{align*}
\end{lemma}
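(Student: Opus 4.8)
The plan is to derive both statements from the classical fact that the one-dimensional Sobolev space $H^s(\mathbb{R})$ is a multiplication algebra for $s>\tfrac12$, transferring this to the interval $(0,T)$ via extension operators chosen so that the constant in statement (2) does not deteriorate as $T\to 0$. The first step is the model estimate on the whole line: if $\bar v\in H^s(\mathbb{R};Y)$ and $\bar w\in H^s(\mathbb{R};B)$ with $s>\tfrac12$, then $M(\bar v,\bar w)\in H^s(\mathbb{R};Z)$ and
\[
\|M(\bar v,\bar w)\|_{H^s(\mathbb{R};Z)}\le C_s\,\|M\|\,\|\bar v\|_{H^s(\mathbb{R};Y)}\,\|\bar w\|_{H^s(\mathbb{R};B)},
\]
with $C_s$ depending only on $s$. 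Since $|M(a,b)|_Z\le\|M\|\,|a|_Y|b|_B$ pointwise, this is just the Hilbert-space-valued version of the scalar algebra property, established in the usual way: for $\tfrac12<s<1$ one uses the Gagliardo--Slobodeckij seminorm together with the splitting $M(\bar v,\bar w)(t)-M(\bar v,\bar w)(\tau)=M(\bar v(t)-\bar v(\tau),\bar w(t))+M(\bar v(\tau),\bar w(t)-\bar w(\tau))$ and the embedding $H^s(\mathbb{R})\hookrightarrow L^\infty(\mathbb{R})$ to absorb one factor in $L^\infty$; for integer $s$ one differentiates by the Leibniz rule and places the lower-order factor in $L^\infty$ and the other in $L^2$; and for $1<s<2$ one differentiates once and invokes the standard one-dimensional product estimate $\|fg\|_{H^{s-1}}\le C\|f\|_{H^s}\|g\|_{H^{s-1}}$, valid because $s>\tfrac12$ and $s\ge s-1$.

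Statement (1) then follows by fixing any bounded linear extension $E:H^s((0,T);\,\cdot\,)\to H^s(\mathbb{R};\,\cdot\,)$ (its norm may depend on $T$, which is harmless here), applying the model estimate to $Ev$ and $Ew$, and restricting the product back to $(0,T)$; since the restriction map is norm-nonincreasing and $M(Ev,Ew)=M(v,w)$ on $(0,T)$, this gives the claimed bound. For statement (2), where the point is a $T$-independent constant, the hypotheses $\partial_t^kv(0)=\partial_t^kw(0)=0$ for $0\le k<s-\tfrac12$, together with $s-\tfrac12\notin\mathbb{Z}$, are precisely the conditions under which the zero-extension across the left endpoint — $v\mapsto\hat v$ with $\hat v=v$ on $(0,T)$ and $\hat v\equiv 0$ on $(-\infty,0)$ — is bounded from $H^s_{(0)}((0,T))$ into $H^s((-\infty,T))$ with a norm depending only on $s$ and the fixed upper bound $\bar T$, not on $T$; this can be checked by rescaling $(0,T)$ to $(0,1)$ and using the Fourier characterization of $H^s$, tracking how the relevant seminorm transforms. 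Composing $\hat v$ with a translation-invariant Stein-type extension $H^s((-\infty,T))\to H^s(\mathbb{R})$, which is likewise bounded uniformly in $T$, produces $\bar v\in H^s(\mathbb{R};Y)$ with $\|\bar v\|_{H^s(\mathbb{R};Y)}\le C\|v\|_{H^s_{(0)}((0,T);Y)}$ and $C$ independent of $T$, and similarly $\bar w$. The model estimate on $\mathbb{R}$ and restriction to $(0,T)$ then yield the bound with a $T$-uniform constant, while the Leibniz rule gives $\partial_t^kM(v,w)(0)=\sum_{j=0}^{k}\binom{k}{j}M(\partial_t^jv(0),\partial_t^{k-j}w(0))=0$ for $k<s-\tfrac12$, so that $M(v,w)\in H^s_{(0)}((0,T);Z)$.

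The main obstacle is the $T$-uniform boundedness of the zero-extension used in statement (2): it is exactly there that both the vanishing traces and the exclusion $s-\tfrac12\notin\mathbb{Z}$ enter, since the $H^s$ functions on a half-line that are extendable by zero are characterized as those with $\lfloor s-\tfrac12\rfloor+1$ vanishing derivatives at the endpoint only when $s-\tfrac12$ is not an integer. Making this extension operator bounded with a constant that is genuinely independent of the length of the interval — rather than merely finite — is the technical heart of the lemma; by contrast the multiplication estimate on $\mathbb{R}$ is classical Littlewood--Paley or paraproduct bookkeeping, and the assumption $s\le 2$ serves only to keep the number of trace conditions at most two.
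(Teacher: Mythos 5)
The paper gives no proof of this lemma — it is quoted verbatim from Beale \cite[Lemma 2.6]{beale1981initial}, so there is no internal argument to compare against. Your route (the Hilbert-space-valued algebra property of $H^s(\mathbb{R})$ for $s>\frac{1}{2}$, extension--restriction for part (1), and for part (2) the $T$-uniform boundedness of the zero extension on $H^s_{(0)}$ under the vanishing-trace conditions with $s-\frac{1}{2}\notin\mathbb{Z}$) is correct and is in substance the same as the classical proof in the cited source; you rightly single out the uniformity in $T$ of the zero-extension operator as the only genuinely delicate point.
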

\begin{lemma}\label{lem3.7}(\cite[Lemma 3.8]{castro2019splash},  \cite[Lemma 3.7]{di2020splash}).
	For $2<s<\frac{5}{2}$, $\vare, \delta>0$ small enough and $v \in \mathcal{A}^{s+1, \gamma}$ the following estimates hold:
	
	(1) $\|v\|_{H_{(0)}^{\frac{s+1}{2}} H^{1-\vare}} \leq C\|v\|_{\mathcal{A}^{s+1, \gamma}},$
	
	(2) $\|v\|_{H_{(0)}^{\frac{s+1}{2}+\vare} H^{1+\delta}} \leq C\|v\|_{\mathcal{A}^{s+1, \gamma}},$
	
	(3) $\|v\|_{H_{(0)}^{\frac{s-1}{2}+\vare} H^{2+\delta}} \leq C\|v\|_{\mathcal{A}^{s+1, \gamma}},$
	
	(4) $\|v\|_{H_{(0)}^{\frac{s-1}{2}+\vare} H^{1+\delta}} \leq C\|v\|_{\mathcal{A}^{s, \gamma-1}},$
	
	(5) $\|v\|_{H_{(0)}^{\frac{s}{2}-\frac{1}{4}+\vare} H^{2+\delta}} \leq C\|v\|_{\mathcal{A}^{s+1, \gamma}}$,
	
	(6) $\|v\|_{H_{(0)}^{\frac{s}{2}-\frac{1}{4}+\vare} H^{1+\delta}} \leq C\|v\|_{\mathcal{A}^{s, \gamma-1}}$,
	
	(7) $\|v\|_{H_{(0)}^1 H^{s-1}} \leq C\|v\|_{\mathcal{A}^{s+1, \gamma}},$
	
	(8) $\|v\|_{H_{(0)}^{\frac{1}{2}+2 \vare} H^s} \leq C\|v\|_{\mathcal{A}^{s+1, \gamma}}.$
\end{lemma}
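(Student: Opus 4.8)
The plan is to obtain all eight estimates from a single anisotropic interpolation inequality together with an elementary count of exponents. By definition $v\in\mathcal{A}^{s+1,\gamma}([0,T];\Omega)$ means $v\in L^{\infty}_{\frac14}([0,T];H^{s+1}(\Omega))\cap H^2_{(0)}([0,T];H^{\gamma}(\Omega))$, and since $\|v(t)\|_{H^{s+1}}\le t^{1/4}\|v\|_{L^{\infty}_{\frac14}H^{s+1}}$ and $T\le1$ one has $\|v\|_{L^2([0,T];H^{s+1})}\le C\|v\|_{\mathcal{A}^{s+1,\gamma}}$. Hence it suffices to treat $v$ in the intersection $L^2([0,T];H^{s+1}(\Omega))\cap H^2_{(0)}([0,T];H^{\gamma}(\Omega))$ (and, for parts (4) and (6), in $L^2([0,T];H^{s}(\Omega))\cap H^2_{(0)}([0,T];H^{\gamma-1}(\Omega))$).

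First I would record the interpolation estimate: for every $\mu\in(0,1)$ with $2\mu\notin\{\tfrac12,\tfrac32\}$,
\begin{align*}
\|v\|_{H^{2\mu}_{(0)}([0,T];H^{(1-\mu)(s+1)+\mu\gamma}(\Omega))}\le C\,\|v\|_{L^2([0,T];H^{s+1}(\Omega))}^{1-\mu}\,\|v\|_{H^2_{(0)}([0,T];H^{\gamma}(\Omega))}^{\mu}\le C\,\|v\|_{\mathcal{A}^{s+1,\gamma}},
\end{align*}
with $C$ independent of $T$ for $T\le1$. This is the standard identification $[L^2(\mathbb{R};H^{s+1}),H^2(\mathbb{R};H^{\gamma})]_{\mu}=H^{2\mu}(\mathbb{R};H^{(1-\mu)(s+1)+\mu\gamma})$ (\cite{lions1972hilbert}), transplanted to $[0,T]$ exactly as in Beale's treatment of the $(0)$-spaces (\cite{beale1981initial}, cf. Lemma \ref{lem3.2}): the vanishing traces $v(0)=\partial_tv(0)=0$ carried by the second endpoint allow extension by zero to $t<0$ and a $T$-uniform bounded extension to $t>T$, which makes the interpolation constant independent of $T$; the half-integer values of $2\mu$ are precisely those at which the intrinsic $(0)$-norm differs from the restriction norm, and they are dodged by taking $\vare$ small. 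Combining with the spatial embedding $H^{b}(\Omega)\hookrightarrow H^{b'}(\Omega)$ for $b\ge b'$ on the bounded smooth domain yields $\|v\|_{H^{2\mu}_{(0)}H^{b'}}\le C\|v\|_{\mathcal{A}^{s+1,\gamma}}$ whenever $b'\le(1-\mu)(s+1)+\mu\gamma$, and likewise with $(s+1,\gamma)$ replaced by $(s,\gamma-1)$.

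It then remains to pick, for each of the eight targets, $\mu$ so that $2\mu$ equals the required time exponent — $(s+1)/2$ in (1), $(s+1)/2+\vare$ in (2), $(s-1)/2+\vare$ in (3)--(4), $s/2-\tfrac14+\vare$ in (5)--(6), $1$ in (7), $\tfrac12+2\vare$ in (8) — and to check that the corresponding interpolated space exponent dominates the required one. For $2<s<\tfrac52$ and $\vare$ small all these values of $2\mu$ lie in $(0,2)\setminus\{\tfrac12,\tfrac32\}$, so $\mu\in(0,1)$ is admissible; and using $s-1-\vare<\gamma<s-1$ one computes that the interpolated space exponent exceeds the target by a fixed positive margin — e.g. in (7) it equals $(s+1+\gamma)/2>s-\tfrac{\vare}{2}>s-1$; in (1) it equals $(s+1)(3-s+\gamma)/4$, which is close to $(s+1)/2>\tfrac32$, hence $>1-\vare$; in (3) it is $\ge(s+3)/2-O(\vare)>2+\delta$; in (5) it is $\ge(2s+5)/4-O(\vare)>2+\delta$; in (8) it is $s+\tfrac12-O(\vare)>s$ — and the remaining cases (2), (4), (6) are identical in structure. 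Choosing $\vare,\delta$ small enough to make all eight margins positive and to keep every time exponent away from $\tfrac12$ and $\tfrac32$ completes the proof. The only delicate point is the $T$-uniformity of the interpolation constant, which is exactly why one works in the $(0)$-subscript scale rather than with plain Sobolev spaces in time, and why the slightly supercritical $+\vare$ shifts appear in the time exponents of (2), (3), (5), (6), (8): they simultaneously avoid the exceptional half-integers and leave the slack that is absorbed by the spatial embedding losses.
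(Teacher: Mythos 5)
Your proof is correct and coincides with the standard argument: the paper itself does not prove Lemma \ref{lem3.7} (it is quoted from \cite{castro2019splash} and \cite{di2020splash}), and the cited proof proceeds exactly as you do --- embed $L^{\infty}_{\frac14}H^{s+1}\hookrightarrow L^2H^{s+1}$ uniformly for $T\le 1$, interpolate the pair $L^2H^{s+1}$, $H^2_{(0)}H^{\gamma}$ in the Lions--Magenes/Beale scale (the $T$-uniformity of the constant being precisely what the $(0)$-spaces provide, cf.\ Lemma \ref{lem3.2}), and then verify the eight exponent inequalities, with the $+\vare$ shifts keeping the time exponents away from the exceptional values $\frac12$ and $\frac32$. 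Your exponent checks are all valid; in fact you work with the lower bound $\gamma>s-1-\vare$, which is weaker than the $\gamma>s-\vare$ that the definition of $\mathcal{A}^{s+1,\gamma}$ nominally imposes, so your margins are if anything conservative.
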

\begin{lemma}\label{lem3.9}(\cite[Lemma 2.1]{beale1981initial}).
	Let $\Omega$ be a bounded set with a sufficiently smooth boundary then the following trace theorems hold:
	
	(1) Suppose $\frac{1}{2}<s \leq 5$. The mapping $v \rightarrow \partial_n^j v$ extends to a bounded operator $\mathcal{K}^s([0, T] ; \Omega) \rightarrow \mathcal{K}^{s-j-\frac{1}{2}}([0, T] ; \partial \Omega)$, where $j$ is an integer $0 \leq j<s-\frac{1}{2}$. The mapping $v \rightarrow \partial_t^k v(\alpha, 0)$ extends to a bounded operator $\mathcal{K}^s([0, T] ; \Omega) \rightarrow H^{s-2 k-1}(\Omega)$, if $k$ is an integer $0 \leq k<\frac{1}{2}(s-1)$.
	
	(2) Suppose $\frac{3}{2}<s<5, s \neq 3$ and $s-\frac{1}{2}$ not an integer. Let
	\begin{align*}
		\mathcal{W}^s=\prod_{0 \leq j \leq s-\frac{1}{2}} \mathcal{K}^{s-j-\frac{1}{2}}([0, T] ; \partial \Omega) \times \prod_{0 \leq k<\frac{s-1}{2}} H^{s-2 k-1}(\Omega),
	\end{align*}
	and let $\mathcal{W}_0^s$ the subspace consisting of $ \{a_j, w_k \}$, which are the traces described in the previous point, so that $\partial_t^k a_j(\alpha, 0)=\partial_n^j w_k(\alpha), \alpha \in \partial \Omega$, for $j+2 k<s-\frac{3}{2}$. Then the traces in the previous point form a bounded operator $\mathcal{K}^s([0, T] ; \Omega) \rightarrow \mathcal{W}_0^s$ and this operator has a bounded right inverse.
\end{lemma}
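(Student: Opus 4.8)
The statement is \cite[Lemma 2.1]{beale1981initial}, so the plan is to follow the classical Lions--Magenes approach to anisotropic (parabolic-type) trace theorems, as carried out by Beale, and I only sketch it. First I would reduce to a model geometry: using a finite cover of $\overline{\Omega}$ by coordinate charts, a subordinate partition of unity, and diffeomorphisms flattening $\partial\Omega$, it suffices to prove the estimates for the half-space $\Omega=\mathbb{R}^{n-1}\times(0,\infty)$ with boundary $\{x_n=0\}$, since for the ranges of $s$ in the statement the spaces $\mathcal{K}^s([0,T];\Omega)$ and $H^s(\Omega)$ transform to equivalent ones under such changes of variables. After extending functions in the time variable from $[0,T]$ to $\mathbb{R}$ (and checking this extension is bounded on $\mathcal{K}^s$ for $s\le 5$), one passes to the Fourier transform in the tangential space variables $\xi'\in\mathbb{R}^{n-1}$ and in the time frequency $\tau$.

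In these variables the Beale norm is equivalent to a weighted $L^2$ norm with the anisotropic weight $\bigl(1+|\xi'|^2+|\tau|^{2/s}\bigr)^{s}$, plus the analogous term carrying the normal frequency $\xi_n$; thus the effective differentiation order is $s$ in the $n$ space directions and $s/2$ in time, with the parabolic-type scaling $|\tau|\sim|\xi'|^{s}$. With this bookkeeping the normal trace $v\mapsto\partial_n^j v|_{x_n=0}$ costs $j+\tfrac12$ derivatives in the tangential--normal scale, which yields boundedness $\mathcal{K}^s([0,T];\Omega)\to\mathcal{K}^{s-j-\frac12}([0,T];\partial\Omega)$ whenever $0\le j<s-\tfrac12$; the time trace $v\mapsto\partial_t^k v(\cdot,0)$ costs $k+\tfrac12$ in the time scale, i.e. $2k+1$ in the space scale, producing a bounded map into $H^{s-2k-1}(\Omega)$ for $0\le k<\tfrac12(s-1)$. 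This gives part (1).

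For part (2), the first observation is that when $j+2k<s-\tfrac32$ both the normal trace $a_j=\partial_n^j v|_{x_n=0}$ and the time trace $w_k=\partial_t^k v(\cdot,0)$ are regular enough that their further traces on the corner $\partial\Omega\times\{t=0\}$ exist and necessarily agree, $\partial_t^k a_j=\partial_n^j w_k$, so the full trace map lands in the closed subspace $\mathcal{W}_0^s$. The substantive point is the bounded right inverse: one constructs an extension operator by a Seeley-type Fourier-multiplier lifting, first producing a function with the prescribed normal traces $\{a_j\}$ on $\{x_n=0\}$, then correcting it near $\{t=0\}$ so that it also carries the prescribed initial traces $\{w_k\}$; the compatibility relations defining $\mathcal{W}_0^s$ are exactly what makes this correction bounded and consistent with the first lifting in a neighbourhood of the corner. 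The exclusions $\tfrac32<s<5$, $s\neq 3$ and $s-\tfrac12\notin\mathbb{Z}$ are imposed precisely to stay away from the borderline exponents at which the interpolation identification of the spaces, or the continuity of the corner traces, degenerates.

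The main obstacle is this simultaneous extension in part (2): reconciling the normal lifting and the initial-time lifting across the corner $\partial\Omega\times\{0\}$ while controlling all the norms is the delicate step, and it is where the compatibility conditions and the excluded values of $s$ genuinely enter. By comparison, the reduction to the half-space and the one-variable trace estimates of part (1) are routine once the anisotropic Fourier characterization of $\mathcal{K}^s$ is available.
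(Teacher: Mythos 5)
The paper offers no proof of this lemma: it is quoted verbatim as \cite[Lemma 2.1]{beale1981initial} in the appendix, and Beale's own argument rests on the anisotropic trace theory of Lions--Magenes. Your outline (localization and flattening, extension in time, Fourier characterization of $\mathcal{K}^s$, derivative counting for the two trace maps, and a Seeley-type lifting made consistent across the corner $\partial\Omega\times\{0\}$ by the compatibility conditions) is exactly that standard route, so in substance you are reproducing the cited proof rather than proposing an alternative. One slip worth correcting: for $\mathcal{K}^s=L^2H^s\cap H^{s/2}L^2$ the correct anisotropic weight is $\bigl(1+|\xi'|^2+|\tau|\bigr)^{s}$ and the parabolic scaling is $|\tau|\sim|\xi'|^{2}$ (one time derivative is worth two space derivatives), not the weight $\bigl(1+|\xi'|^2+|\tau|^{2/s}\bigr)^{s}$ and scaling $|\tau|\sim|\xi'|^{s}$ you wrote; your subsequent bookkeeping ($k+\tfrac12$ in time becoming $2k+1$ in space, hence the target $H^{s-2k-1}(\Omega)$) is nevertheless done with the correct convention, so the stated conclusions are unaffected.
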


\bigskip

{\bf Acknowledgments.} Both the authors were partially supported by NSF of China under Grants 12171460. Hao was also partially supported by the CAS Project for Young Scientists in Basic Research under Grant YSBR-031 and the National Key R\&D Program of China under Grant 2021YFA1000800.

%\bigskip
%\textbf{Data availability}
%
%No data was used for the research described in the article.

%\bibliographystyle{elsarticle-num}
%\bibliography{MHD,splash}

\end{document}